\theoremstyle{plain}
\newtheorem{proposition}{Proposition}
\newtheorem{theorem}[proposition]{Theorem}
\newtheorem{definition}[proposition]{Definition}
\newtheorem{corollary}[proposition]{Corollary}
\newtheorem{lemma}[proposition]{Lemma}
\newtheorem{remark}[proposition]{Remark}
\newtheorem{proposition-definition}[proposition]{Proposition/Definition}
\newtheorem*{proposition*}{Proposition}
\newtheorem*{theorem*}{Theorem}
\newtheorem*{maintheorem*}{Main Theorem}
\newtheorem*{maincorollary*}{Main Corollary}
\newtheorem*{corollary*}{Corollary}
\newtheorem*{lemma*}{Lemma}
\newtheorem*{remark*}{Remark}
\newtheorem*{example*}{Example}
\def\co{\colon\thinspace}
\newcommand{\LN}{\mathcal{L}_0 N}
\newcommand{\N}{\mathbb{N}}
\newcommand{\Z}{\mathbb{Z}}
\newcommand{\R}{\mathbb{R}}
\newcommand{\C}{\mathbb{C}}
\newcommand{\CP}{\C\mathbb{P}^{\infty}}
\newcommand{\supp}{\textrm{supp}\,}
\begin{document}

\title[Deformations of symplectic cohomology]{Deformations of symplectic cohomology\\ and exact Lagrangians in ALE spaces}

\author{Alexander F. Ritter}

\address{Department of Mathematics, M.I.T., Cambridge, MA 02139, USA.}

\email{ritter@math.mit.edu}


\begin{abstract}
ALE spaces are the simply connected hyperk\"{a}hler manifolds
which at infinity look like $\C^2/G$, for any finite subgroup
$G\subset SL_2(\C)$. We prove that all exact Lagrangians inside
ALE spaces must be spheres. The proof relies on showing the
vanishing of a twisted version of symplectic cohomology.

This application is a consequence of a general deformation
technique. We construct the symplectic cohomology for non-exact
symplectic manifolds, and we prove that if the non-exact
symplectic form is sufficiently close to an exact one then the
symplectic cohomology coincides with an appropriately twisted
version of the symplectic cohomology for the exact form.
\end{abstract}
\maketitle
%
%
\section{Introduction}
An \emph{ALE hyperk\"{a}hler} manifold $M$ is a non-compact
simply-connected hyperk\"{a}hler $4-$manifold which asymptotically
looks like the standard Euclidean quotient $\C^2/\Gamma$ by a
finite subgroup $\Gamma \subset SU(2)$. These spaces can be
explicitly described and classified by a hyperk\"{a}hler quotient
construction due to Kronheimer \cite{Kronheimer}.

ALE spaces have been studied in a variety of contexts. In
theoretical physics they arise as gravitational instantons in the
work of Gibbons and Hawking. In singularity theory they arise as
the minimal resolution of the simple singularity $\C^2/\Gamma$. In
symplectic geometry they arise as plumbings of cotangent bundles
$T^*\C P^1$ according to ADE Dynkin diagrams:\\[1mm]

\begin{figure}[ht] \centering \scalebox{1.1}{
\input{a_dynkin.pstex_t} }
\end{figure}

Recall that the finite subgroups $\Gamma\subset SU(2)$ are the
preimages under the double cover $SU(2)\to SO(3)$ of the cyclic
group $\Z_n$, the dihedral group $\mathbb{D}_{2n}$, or one of the
groups $\mathbb{T}_{12}$, $\mathbb{O}_{24}, \mathbb{I}_{60}$ of
rigid motions of the Platonic solids. These choices of $\Gamma$
will make $\C^2/\Gamma$ respectively a singularity of type
$A_{n-1}$, $D_{n+2}$, $E_6$, $E_7$, $E_8$. The singularity is
described as follows. The $\Gamma-$invariant complex polynomials
in two variables are generated by three polynomials $x,y,z$ which
satisfy precisely one polynomial relation $f(x,y,z)=0$. The
hypersurface $\{f=0\}\subset \C^3$ has precisely one singularity
at the origin. The minimal resolution of this singularity over the
singular point $0$ is a connected union of copies of $\C P^1$ with
self-intersection $-2$, which intersect each other transversely
according to the corresponding $ADE$ Dynkin diagram. Each vertex
of the diagram corresponds to a $\C P^1$ and an edge between $C_i$
and $C_j$ means that $C_i \cdot C_j = 1$. We suggest Slodowy
\cite{Slodowy} or Arnol'd \cite{Arnold} for a survey of this
construction.

In the symplectic world these spaces can be described as the
plumbing of copies of $T^*\C P^1$ according to ADE Dynkin
diagrams. Each vertex of the Dynkin diagram corresponds to a disc
cotangent bundle $DT^*\C P^1$ and each edge of the Dynkin diagram
corresponds to identifying the fibre directions of one bundle with
the base directions of the other bundle over a small patch, and
vice-versa. The boundary can be arranged to be a standard contact
$S^3/\Gamma$, and along this boundary we attach an infinite
symplectic cone $S^3/\Gamma \times [1,\infty)$ to form $M$ as an
exact symplectic manifold.

Any hyperk\"{a}hler manifold comes with three canonical symplectic
forms $\omega_I, \omega_J, \omega_K$ which give rise to an
$S^2-$worth of symplectic forms: $\omega_u = u_I \omega_I + u_J
\omega_J + u_K \omega_K$, where $u=(u_I,u_J,u_K)\in S^2 \subset
\R^3$.

An ALE space is an exact symplectic manifold with respect to
$\omega_J$, $\omega_K$ or any non-zero combination $u_J\omega_J +
u_K \omega_K$. Let $d\theta$ be one of these forms. The copies of
$\C P^1$ described above are exact Lagrangian submanifolds with
respect to $d\theta$, and a neighbourhood of this chain of $\C
P^1$'s is symplectomorphic to the plumbing of $T^*\C P^1$'s by
Weinstein's Lagrangian neighbourhood theorem. The ALE space is not
exact for $\omega_u$ if $u_I \neq 0$, in which case the $\C P^1$'s
are symplectic submanifolds.
\\[2mm]
\textbf{Question.} \emph{What are the exact Lagrangian
submanifolds inside an ALE space?}

Recall that a submanifold $j:L^{n} \hookrightarrow M^{2n}$ inside
an exact symplectic manifold $(M,d\theta)$ is called \emph{exact
Lagrangian} if $j^*\theta$ is exact.\\[2mm]
\indent For example, the $A_1-$plumbing is $M=T^*S^2$ and the
graph of any exact $1-$form on $S^2$ is an exact Lagrangian sphere
in $T^*S^2$. Viterbo \cite{Viterbo3} proved that there are no
exact tori in $T^*S^2$. For homological reasons, the only
orientable exact Lagrangians in $T^*S^2$ are spheres, and we
proved in \cite{Ritter} that $L$ cannot be unorientable. Moreover,
for exact spheres $L\subset T^*S^2$, it is known that $L$ is
isotopic to the zero section (Eliashberg-Polterovich
\cite{Eliashberg}), indeed it is Hamiltonian isotopic (Hind
\cite{Hind}). Thus the only exact Lagrangians in $T^*S^2$ are
spheres isotopic to the zero section.

\begin{theorem*}
The only exact Lagrangians inside the ALE space $(M,d\theta)$ are
spheres, in particular there are no unorientable exact
Lagrangians. For example, this holds for the plumbing of copies of
$T^*\C P^1$ as prescribed by an ADE Dynkin diagram.
\end{theorem*}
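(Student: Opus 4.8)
The overall strategy is to show that an exact Lagrangian $L$ inside the ALE space $M$ forces strong topological constraints, ultimately that $L$ must be a homotopy sphere. The key tool is symplectic cohomology $SH^*(M)$ and its open–closed maps to and from the wrapped Floer cohomology of $L$. The guiding philosophy (due to Viterbo, Abouzaid, Seidel) is: if $L$ is an exact Lagrangian in a Liouville domain $M$, then $L$ "sees" $SH^*(M)$ via a commutative diagram relating the map $SH^*(M) \to SH^*(T^*L)$ (restriction, or the closed–open map into Hochschild cohomology of the Fukaya category) with the known computation $SH^*(T^*L) \cong H_{*}(\mathcal{L}L)$ (loop space homology, by Viterbo/Abbondandolo–Schwarz/Salamon–Weber). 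If $SH^*(M) = 0$, then $T^*L$ cannot receive a unital ring map from $SH^*(M)$ in a way compatible with the zero section being a nonzero object, which would be a contradiction unless $\mathcal{L}L$ has the homology of the zero section's loop space only in the trivial way — and in fact the sharp statement one extracts is that $L$ must be a rational homology sphere with finite fundamental group, hence (in dimension $2$) a sphere.

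\textbf{The first step} is to compute the relevant symplectic invariant of $M$. As an exact symplectic manifold $(M,d\theta)$, $M$ is the plumbing of $T^*S^2$'s along an ADE tree; one expects $SH^*(M,d\theta)$ to be nonzero in general (it is built from the loop spaces of the Lagrangian spheres and the intersection pattern). So the exact symplectic cohomology alone does not vanish. This is where the deformation technique advertised in the abstract enters: one twists by the cohomology class $[\omega_I] \in H^2(M;\R)$ — equivalently, one works with $SH^*(M)$ for the \emph{non-exact} form $\omega_u = u_I\omega_I + u_J\omega_J + u_K\omega_K$ with $u_I$ small but nonzero — and the paper's main deformation theorem identifies this with a version of $SH^*(M,d\theta)$ twisted by a Novikov/local coefficient system determined by $[\omega_I]$. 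The crucial computational claim, then, is that this twisted symplectic cohomology \emph{vanishes}: the twisting by $[\omega_I]$, which evaluates nontrivially on the classes $[C_i] = [\C P^1]$, kills all the Floer generators. I expect this to follow from a spectral-sequence or direct chain-level argument: the differential on the twisted complex, deformed by weights $\exp(\int \omega_I)$, becomes an isomorphism onto the augmentation-type part, giving $SH^*_{\mathrm{tw}}(M) = 0$.

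\textbf{The second step} is to feed this vanishing into the Lagrangian obstruction. If $L \subset M$ is exact Lagrangian, then $L$ is also Lagrangian (still exact, since $\omega_I$ integrates to zero on the exact Lagrangian in the appropriate sense, or rather: the twisting restricted to $L$ is trivial because $[\omega_I]|_L = 0$ as $L$ is Lagrangian for $\omega_I$ too — here one uses that $L$ is Lagrangian for the whole hyperkähler family, or at least that $j^*\omega_I$ is exact). Then the Viterbo restriction / closed–open map gives a unital ring homomorphism $SH^*_{\mathrm{tw}}(M) \to SH^*(T^*L) \cong H_{n-*}(\mathcal{L}L)$, where the twisting on the target is trivial by the previous sentence. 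Since the source is $0$ and the target contains the unit $1 \neq 0$ (as $\mathcal{L}L \neq \emptyset$), we get a contradiction unless the map is not forced to be unital — the standard fix is that the composite $H^*(M) \to SH^*_{\mathrm{tw}}(M) \to H_{n-*}(\mathcal{L}L)$ still hits the unit via the constant loops $H^*(L) \hookrightarrow H_{n-*}(\mathcal{L}L)$, forcing $H^*(L;\K) $ to vanish above degree $0$ for the relevant field $\K$, i.e. $L$ is a $\K$-homology sphere. Running this over $\K = \Q$ and $\K = \F_p$ for all $p$, together with the orientability subtlety handled as in \cite{Ritter} (twisted coefficients to exclude $\R P^2$ and other unorientable surfaces), shows $H_*(L;\Z)$ is that of $S^2$; since $\dim L = 2$ and $L$ is a closed surface, $L \cong S^2$.

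\textbf{The main obstacle} is the vanishing computation of the twisted symplectic cohomology of the ALE space, Step one: one must actually set up the deformation (controlling the non-exact $\omega_u$ at the cone end, where it is $d\theta$-cohomologous to the exact form so the deformation theorem applies) and then carry out the chain-level computation showing the $[\omega_I]$-twist is acyclic — this is the real content, and the rest is assembling known machinery (Viterbo functoriality, the computation of $SH^*(T^*L)$, and the surface classification). A secondary technical point is making precise in what sense $L$ is Lagrangian for $\omega_I$ so that the target twisting is trivial; if it is not, one instead needs the twisted wrapped cohomology of $L$ to be nonzero, which again reduces to a statement about $H_*(\mathcal{L}L)$ with the local system $[\omega_I]|_L$.
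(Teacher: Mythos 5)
Your overall architecture matches the paper's: deform to the non-exact form $\omega_I$ (equivalently twist $SH^*(M,d\theta)$ by the transgression of $[\omega_I]$), prove vanishing, and contradict twisted Viterbo functoriality for any exact Lagrangian on which the twist restricts trivially. However, there are two genuine gaps. The first is the vanishing itself, which you correctly flag as ``the real content'' but whose mechanism you misidentify: it is not a chain-level or spectral-sequence acyclicity of the twisted differential coming from the weights $t^{\int\omega_I}$. The paper instead proves $SH^*(M,\omega_I)=0$ on the non-exact side using the global Hamiltonian $S^1$-action lifting $(a,b)\mapsto(\lambda a,\lambda b)$ on $\C^2/\Gamma$: since this action is $I$-holomorphic and rotates the canonical bundle $\C\cdot(\omega_J+i\omega_K)$ by $\lambda^2$, accelerating the action by $k$ makes the Maslov indices of all $1$-periodic orbits grow linearly in $k$, so the Conley--Zehnder gradings of the generators tend to $-\infty$ and the continuation maps in the direct limit are eventually zero in each degree. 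Without this (or some substitute) the argument does not close, and your proposed mechanism is not obviously workable: the twisted differential is a deformation of the untwisted one by Novikov weights, and there is no reason for it to become an isomorphism ``onto the augmentation-type part.''

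The second gap is in why the twist restricts trivially to $L$. You argue that $[\omega_I]|_L=0$ because $L$ is Lagrangian for $\omega_I$ too; this is false in general --- the exceptional spheres are exact Lagrangian for $\omega_J$ but \emph{symplectic} for $\omega_I$, and nothing forces an arbitrary exact Lagrangian to be $\omega_I$-Lagrangian. The correct reason is purely topological: the relevant object is the transgression $\tau(j^*[\omega_I])\in H^1(\mathcal{L}L;\R)$, which factors through $\mathrm{Hom}(\pi_2(L),\R)$; for an orientable closed surface other than $S^2$ one has $\pi_2(L)=0$, and for an unorientable surface $H^2(L;\R)=0$, so the transgression vanishes automatically. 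The contradiction with $SH^*(M,d\theta;\underline{\Lambda}_{\tau\omega_I})=0$ then directly excludes every surface except $S^2$ (for which $\pi_2\neq 0$, so no contradiction arises). Your alternative conclusion --- that the diagram forces $H^{>0}(L;\K)=0$ and hence $L$ is a homology sphere --- is importing the logic of the cotangent-bundle/nearby-Lagrangian arguments and is not what this setup yields; here the dichotomy is simply ``transgression vanishes on $L$, hence contradiction'' versus ``transgression can be nonzero, hence $L=S^2$ is allowed.''
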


We approach this problem via \emph{symplectic cohomology}, which
is an invariant of symplectic manifolds with contact type
boundary. It is constructed as a direct limit of Floer cohomology
groups for Hamiltonians which become steep near the boundary.
Symplectic cohomology can be thought of as an obstruction to the
existence of exact Lagrangians in the following sense.

Viterbo \cite{Viterbo1} proved that an exact $j:L\hookrightarrow
(M,d\theta)$ yields a commutative diagram
$$
\xymatrix{ H_{n-*}(\mathcal{L}L) \cong SH^*(T^*L,d\theta)
\ar@{<-_{)}}^{c_*}@<-5ex>[d] \ar@{<-}[r]^-{SH^*(j)} &
SH^*(M,d\theta) \ar@{<-}^{c_*}@<1ex>[d] \\
H_{n-*}(L)\cong H^*(L) \ar@{<-}[r]^-{j^*} & H^*(M) }
$$
where $\mathcal{L}L = C^{\infty}(S^1,L)$ is the space of free
loops in $L$ and the left vertical map is induced by the inclusion
of constants $c:L \to \mathcal{L}L$. The element $c_*(j^*1)$
cannot vanish, and thus the vanishing of $SH^*(M,d\theta)$ would
contradict the existence of $L$.

It is possible to describe the ALE space $(M,\omega_u)$ as a
symplectic manifold with contact type boundary with a
semi-infinite collar attached along the boundary, so that
$SH^*(M,\omega_u)$ is well-defined. The symplectic cohomology
$SH^*(M,d\theta)$ is never zero, indeed it contains a copy of the
ordinary cohomology $H^*(M)\hookrightarrow SH^*(M,d\theta)$.
However, we will show that if we make a generic infinitesimal
perturbation of the closed form $d\theta$, then the symplectic
cohomology will vanish. From this it will be easy to deduce that
the only exact Lagrangians $L\subset M$ must be spheres.

We constructed the infinitesimally perturbed symplectic cohomology
in \cite{Ritter} as follows. For any $\alpha \in H^1(\LN)$, we
constructed the associated Novikov homology theory for
$SH^*(M,d\theta)$. This involves introducing a local system of
coefficients $\underline{\Lambda}_{\alpha}$ taking values in the
ring of formal Laurent series $\Lambda=\Z (\!( t )\!)$. Let's
denote this \emph{twisted symplectic cohomology} by
$SH^*(M,d\theta;\underline{\Lambda}_{\alpha})$.

We proved that the above functoriality diagram holds in this
context -- with the understanding that for unorientable $L$ we use
$\Z_2=\Z/2\Z$ coefficients and the Novikov ring $\Z_2 (\!( t )\!)$
instead.

Consider a transgressed form $\alpha = \tau \beta$, where $\tau :
H^2(M) \to H^1(\mathcal{L}M)$ is the transgression. The
functoriality diagram simplifies to
$$
\xymatrix{ H_{n-*}(\mathcal{L}L;\underline{\Lambda}_{\tau
j^*\beta}) \ar@{<-}^{c_*}@<-5ex>[d] \ar@{<-}[r]^-{SH^*(j)} &
SH^*(M,d\theta;\underline{\Lambda}_{\tau\beta}) \ar@{<-}^{c_*}@<1ex>[d] \\
H_{n-*}(L)\otimes \Lambda \cong H^*(L)\otimes \Lambda
\ar@{<-}[r]^-{j^*} & H^*(M)\otimes \Lambda }
$$

For surfaces $L$ which aren't spheres the transgression vanishes,
so $H_*(\mathcal{L}L;\underline{\Lambda}_{\tau j^*\beta})$
simplifies to $H_*(\mathcal{L}L) \otimes \Lambda$ and the left
vertical arrow $c_*$ becomes injective. Thus $c_*(j^*1)$ cannot
vanish, which contradicts the commutativity of the diagram if we
can show that $SH^*(M,d\theta;\underline{\Lambda}_{\tau\beta})=0$
for some $\beta$.

\begin{theorem*}
Let $M$ be an ALE space. Then for generic $\beta$,
$$SH^*(M,d\theta;\underline{\Lambda}_{\tau\beta})=0.$$
\end{theorem*}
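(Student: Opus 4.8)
The plan is to exploit the second theorem stated above — the deformation result relating $SH^*(M,\omega_u)$ for a non-exact but nearby form $\omega_u$ to the twisted symplectic cohomology $SH^*(M,d\theta;\underline\Lambda_{\tau\beta})$ — and then to show directly that $SH^*(M,\omega_u)$ vanishes for a suitable hyperkähler rotation $\omega_u$ with $u_I\neq 0$. The key point is that when $u_I\neq 0$ the chain of $\C P^1$'s is no longer Lagrangian but \emph{symplectic}, so the Weinstein cotangent-bundle neighbourhood is gone. First I would make precise the correspondence between the perturbation direction and the cohomology class $\beta$: the cohomology group $H^2(M)$ is generated by the Poincaré duals of the $\C P^1$'s (equivalently, by the $\omega_I$-periods), and the deformation theorem identifies the twisting class $\tau\beta$ with $\beta = [u_I\omega_I]$ (or its restriction), so that for generic $u$ we get a generic $\beta$ and it suffices to prove $SH^*(M,\omega_u)=0$ for such $u$.

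Next I would compute $SH^*(M,\omega_u)$ for $u_I\neq 0$. The model at infinity is the cone on $S^3/\Gamma$, and the Reeb dynamics are those of the standard contact structure on $S^3/\Gamma$ — the Reeb orbits are the images of the Hopf circles, so all closed Reeb orbits are contractible in $\C^2/\Gamma$ and have Conley–Zehnder-type indices tending to $+\infty$. The crucial structural fact is that $(M,\omega_u)$ with $u_I\neq 0$ is the symplectic manifold underlying the \emph{smoothing} (the affine variety $\{f = \epsilon\}\subset\C^3$, i.e. a Milnor fibre), not the resolution, and smoothings of the ADE singularities are Stein/Liouville-fillable in a way that makes their symplectic cohomology computable. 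In fact the affine variety $\{f=\epsilon\}$ is a smooth affine variety with a $\C^*$-action / plurisubharmonic exhaustion for which one can run the standard vanishing argument: one exhibits a Hamiltonian $S^1$-action, or more robustly uses that $M$ with this symplectic form deformation-retracts (as a Liouville domain) onto something whose wrapped/symplectic cohomology one can identify, and one shows the unit $1\in H^0(M)$ is killed in the direct limit because it is hit by the continuation/BV differential coming from a nonzero Reeb-orbit class. Concretely, I would either (i) invoke Seidel–Smith / McLean–type computations showing $SH^*$ of the $A_n$ (and more generally ADE) Milnor fibre vanishes, or (ii) give a direct argument: build an admissible Hamiltonian whose only generators in the relevant action window are the constants and one family of Reeb orbits, and show the continuation map from $H^*(M)$ to a higher Floer group is zero, forcing $SH^*=0$ in the limit. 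The non-exactness of $\omega_u$ is handled entirely by the deformation theorem, which packages the period contributions into the Novikov parameter $t$, so the monotonicity/energy estimates needed for the direct limit still go through over $\Lambda$.

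The main obstacle I anticipate is the vanishing computation itself: one must rule out that any nonzero class survives in the direct limit $\varinjlim HF^*(H_n)$, and this requires understanding the full Floer differential (including continuation maps) on the Reeb orbits of $S^3/\Gamma$, not just their indices. For the $A_n$ case the relevant $SH^*$ computation is essentially known (the $A_n$ Milnor fibre is the plumbing of $T^*S^2$'s along the $A_n$ chain, whose symplectic cohomology after the perturbation is computed in the author's earlier work via the Viterbo functoriality diagram degenerating), but for $D$ and $E$ one needs either an independent computation or an argument that reduces to the $A_1$ building block. I would try to arrange such a reduction: each $\C P^1$ in the chain gives, after perturbation, a subdomain symplectomorphic to the perturbed $T^*S^2=A_1$ case, and using the Künneth/functoriality properties of twisted $SH^*$ together with the fact that $SH^*(\text{perturbed }T^*S^2)=0$ (proved in \cite{Ritter}), one deduces the vanishing for the whole plumbing by a Mayer–Vietoris / gluing argument. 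If that gluing is available it is the cleanest route; if not, the fallback is a direct count showing the unit is exact, which is where the real work lies.
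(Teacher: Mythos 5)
Your high-level plan (deform to a non-exact $\omega_u$ with $u_I\neq 0$, show $SH^*(M,\omega_u)=0$ there, then transfer back by the deformation theorem) is the same as the paper's, but there is a substantive error at the heart of your vanishing step. You identify $(M,\omega_u)$ with $u_I\neq 0$ as the symplectic manifold underlying the \emph{smoothing} (Milnor fibre). That is backwards: when $u_I\neq 0$ the $\C P^1$'s become \emph{symplectic} submanifolds with positive $\omega_I$-area, which is the minimal \emph{resolution} picture; the smoothing/Milnor fibre with its Stein structure corresponds to the exact case $u_I=0$. This confusion is not cosmetic, because your proposed route (i) — invoke known computations that $SH^*$ of the ADE Milnor fibre vanishes — would in fact prove the opposite of what is true: the paper explicitly shows (Subsection \ref{Subsection Non-vanishing of the exact symplectic cohomology}) that $c_*:H^*(X)\otimes\Lambda\to SH^*(X,\omega_u)$ is injective when $\omega_u$ is exact, so the Milnor fibre's symplectic cohomology is decidedly nonzero. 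Consequently the Mayer--Vietoris/gluing reduction to the $A_1$ building block that you sketch as a fallback would also be hunting for a vanishing that does not occur in the exact regime, and in any case no gluing theorem for $SH^*$ of plumbings is invoked or available here.

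The mechanism the paper actually uses is a \emph{grading} argument, which you gesture at (``exhibits a Hamiltonian $S^1$-action'') but never formulate, and it is the load-bearing step. The diagonal $S^1\subset SU(2)$ acting on $\C^2/\Gamma$ lifts to a Hamiltonian $S^1$-action on $(X,\omega_I)$ with Hamiltonian that is linear at infinity with slope $\pi$; accelerating by a factor $k$ gives admissible Hamiltonians $H_k$ of slope $k\pi$. Because this $S^1$-action rotates the holomorphic volume form $\omega_J+i\omega_K$ (which trivializes the canonical bundle) by $\lambda^2$, each additional full rotation raises the Maslov index by $2$, so Conley--Zehnder indices of generators tend to $-\infty$ as $k\to\infty$. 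Hence the image of any fixed $SH^m(H_k)$ under the continuation map to $SH^m(H_{k+N})$ vanishes for large $N$, forcing $SH^*(X,\omega_I)=0$ in the direct limit. You also gloss over a genuine technical point on the deformation side: the deformation theorem requires $\beta$ compactly supported in the interior, but $\omega_I-\omega_J$ is not compactly supported (it changes the contact form at infinity). The paper handles this by an $S^1$-equivariant version of Gray stability (Lemmas \ref{Lemma Gray Stability}, \ref{Lemma Standard S3 in ADE}, and \ref{Lemma SH infinitesimal same as nonexact ADE}) to absorb the non-compactly-supported part into a change of contact hypersurface, which is covered by the invariance result in Theorem \ref{Theorem Invariance under Contactomorphs}. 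Without that, the application of the deformation theorem is not justified.
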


It turns out that there is a way to prove that the non-exact
symplectic cohomology $SH^*(M,\omega)$ vanishes for a generic form
$\omega$. So to prove the above vanishing result, we need to
relate the twisted symplectic cohomology to the non-exact
symplectic cohomology. We will prove the following general result.
\begin{theorem*}
Let $(M,d\theta)$ be an exact symplectic manifold with contact
type boundary and let $\beta$ be a closed two-form compactly
supported in the interior of $M$. Then, at least for $\|\beta\|<
1$, there is an isomorphism
$$SH^*(M,d\theta+\beta)
\to SH^*(M,d\theta;\underline{\Lambda}_{\tau\beta}).$$
\end{theorem*}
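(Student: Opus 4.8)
===

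The plan is to set up both sides as direct limits of Floer cohomology groups over the same cofinal family of Hamiltonians $H$ (linear of slope $\lambda \notin \mathrm{Spec}(\partial M)$ at infinity), and to exhibit a chain-level isomorphism between the $\omega$-twisted and $d\theta$-local-system Floer complexes. The key algebraic point is that both constructions only differ in how the ``action'' — equivalently the weight attached to a Floer trajectory — is defined. For $SH^*(M,d\theta+\beta)$ one works with a non-exact form and the energy of a trajectory $u$ involves $\int u^*(d\theta+\beta)$; convergence of the Floer differential forces one to count with Novikov coefficients weighted by $t^{\int u^*\beta}$ after fixing a reference. For $SH^*(M,d\theta;\underline{\Lambda}_{\tau\beta})$, the local system $\underline{\Lambda}_{\tau\beta}$ assigns to a path the exponent $t^{\langle \tau\beta, \cdot\rangle}$, and because $\tau\beta$ is the transgression of $\beta$, the pairing of $\tau\beta$ with the loop $\partial$ of a Floer cylinder equals precisely $\int u^*\beta$ (up to the $H^1(M)$-ambiguity, which vanishes since $\beta$ is exact on $M$ — crucially $H^2_c(M)\to H^2(M)$ and the hypothesis $\beta=d(\text{something})$, so one must be slightly careful and instead use that $\int u^*\beta$ depends only on the homotopy class of the cylinder rel endpoints). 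So the two weightings agree term by term, and the chain complexes are literally isomorphic once a compatible choice of capping/reference paths is made.

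Concretely I would proceed as follows. First, recall the definition of $\underline{\Lambda}_{\tau\beta}$ from \cite{Ritter}: fix for each $1$-periodic orbit $x$ a reference path (or cap) in $\mathcal{L}M$, and to a Floer solution $u$ from $x_-$ to $x_+$ assign the weight $t^{I(u)}$ where $I(u)=\int_{\Sigma}\bar u^*\beta$ for $\bar u$ the closed surface obtained by gluing the caps; this is the standard description of the transgressed local system. Second, observe that for the non-exact Floer theory of $d\theta+\beta$, the natural a priori energy estimate reads $E(u)=\int u^*\omega_H = \mathcal{A}_{H}^{d\theta}(x_-)-\mathcal{A}_H^{d\theta}(x_+) + \int u^*\beta$, so one \emph{cannot} bound energy by action difference alone; the Novikov completion in the variable $t$ tracking $\int u^*\beta$ is exactly what restores Gromov compactness of the $0$- and $1$-dimensional moduli spaces (for each power of $t$ there are finitely many trajectories). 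Third, identify the two differentials: the coefficient of $x_+$ in $\partial x_-$ is $\sum_u \pm t^{\int u^*\beta}$ in both theories, using the same moduli spaces $\mathcal{M}(x_-,x_+)$ since the almost complex structure and Hamiltonian vector field of $\omega_H$ versus $(d\theta)_H$ differ only by terms supported where $\beta$ lives — and one should choose $J$ compatible with $d\theta$ in a neighbourhood of infinity so the maximum principle still applies. Fourth, check that continuation maps defining the direct limit (increasing the slope $\lambda$) also match, giving an isomorphism of directed systems and hence on $SH^*$.

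The main obstacle, and where I expect to spend real effort, is the compactness/transversality package for the non-exact theory: verifying that for $\|\beta\|<1$ (the hypothesis) the Floer moduli spaces for $d\theta+\beta$ still enjoy the $C^0$-bound and the maximum principle near the contact boundary, so that no trajectory escapes to infinity and sphere bubbling is controlled. The smallness of $\beta$ and its compact support are used precisely here: $\omega_H = d\theta+\beta$ tames the same $J$ as $d\theta$ (an open condition), $\beta$ contributes nothing near $\partial M$, and the action functional $\mathcal{A}_H^{d\theta}$ still controls which orbits appear even though it is not literally the action of the perturbed form. A secondary subtlety is bookkeeping the reference-path/capping choices so that the isomorphism is canonical and commutes with continuation — essentially checking that the two ``$\Lambda$-gradings'' are induced by the \emph{same} cocycle on the relevant path space, which is where transgression enters. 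Once compactness is in hand, the identification of differentials and of the limit is essentially formal.
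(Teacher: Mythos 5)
Your proposal founders on the step where you identify the two differentials. You assert that the coefficient of $x_+$ in $\partial x_-$ is $\sum_u \pm t^{\int u^*\beta}$ ``in both theories, using the same moduli spaces $\mathcal{M}(x_-,x_+)$ since the almost complex structure and Hamiltonian vector field of $\omega_H$ versus $(d\theta)_H$ differ only by terms supported where $\beta$ lives.'' This is the crux, and it is false as stated: the twisted complex $SC^*(H,d\theta;\underline{\Lambda}_{\tau\beta})$ counts solutions of $\partial_s u + J(\partial_t u - X_H)=0$ with $X_H$ and $J$ defined via $d\theta$, while $SC^*(H,d\theta+\beta)$ counts solutions of $\partial_s u + J_\beta(\partial_t u - X_{H,\beta})=0$ with the Hamiltonian vector field and compatible almost complex structure of $d\theta+\beta$. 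These are different PDEs with different solution sets, and there is no a priori bijection between them just because the perturbation is compactly supported. (If the moduli spaces \emph{were} identified, matching the weights would indeed be routine — up to the change of basis by $t^{A(x)}$ accounting for the exact part $dA$ of the action one-form, which you gesture at via ``capping choices.'')

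The paper's entire Section \ref{Section Deformation of the Symplectic cohomology} exists to bridge exactly this gap, and it does so only for \emph{infinitesimally} small $\beta$: Theorem \ref{Theorem Transversality of the 1-parameter family} shows that $\mathcal{M}(x,y;d\theta+\lambda\beta)$ is a smoothly trivial family over small $\lambda$, so that the identity map on generators is a chain isomorphism for $\lambda$ near $0$. This requires real analytic input — the Lyapunov property of the $d\theta$-action along $(d\theta+\lambda\beta)$-trajectories (Theorem \ref{Theorem Lyapunov property}), an a priori energy estimate (Theorem \ref{Theorem A priori energy estimate}), and a Fredholm deformation/implicit-function argument (Theorem \ref{Theorem Transversality for Banach bundles}) — none of which appears in your outline. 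To pass from ``infinitesimal'' to the stated hypothesis $\|\beta\|<1$, the paper then conjugates by the Liouville flow $\varphi_\mu$ for time $\log\mu$ together with a metric rescaling and a Moser isotopy, replacing $\beta$ by $\mu^{-1}\beta$ at the cost of further checks that the resulting maps $\psi^m_\mu$ are independent of $\mu$ and commute with continuation before taking the direct limit. Your compactness concerns near infinity are legitimate but secondary (and handled by the usual maximum principle, since $\beta$ vanishes on the collar); the missing idea is the comparison of moduli spaces for the two symplectic forms in the interior.
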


For our ALE space $M$, we actually show that this result applies
to a large non-compact deformation from $d\theta$ to the non-exact
symplectic form $\omega_I$ which has a lot of symmetry. This
symmetry will be the key to proving the vanishing of
$SH^*(M,\omega_I)$ and therefore the vanishing of
$SH^*(M,d\theta;\underline{\Lambda}_{\tau\omega_I})=0$, which
concludes the proof of the non-existence of exact Lagrangians
which aren't spheres.

The key to the vanishing of $SH^*(M,\omega_I)$ is the existence of
a global Hamiltonian $S^1-$action, which at infinity looks like
the action $(a,b) \mapsto (e^{2\pi i t} a,e^{2\pi i t} b)$ on
$\C^2/\Gamma$. We will show that the grading of the $1-$periodic
orbits grows to negative infinity when we accelerate this
Hamiltonian $S^1-$action, and this will imply that
$SH^*(M,\omega_I)=0$ because a generator would have to have
arbitrarily negative grading. This concludes the argument.

The hyperk\"{a}hler construction of $M$ depends on certain
parameters, and the cohomology class of $\omega_I$ varies linearly
with these parameters. Indeed, it turns out that the form
$\omega_I$ can be chosen to represent a generic class in
$H^2(M;\R)$.

\begin{theorem*}
Let $M$ be an ALE space. Given a generic class in $H^2(M;\R)$, it
is possible to choose a symplectic form on $M$ representing this
class such that
$$SH^*(M,\omega)=0.$$
\end{theorem*}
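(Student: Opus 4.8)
The plan is to combine the three earlier theorems in the excerpt into a single chain of implications. First I would invoke Kronheimer's hyperk\"ahler quotient construction of the ALE space $M$: this produces $M$ together with the triple $\omega_I,\omega_J,\omega_K$, and crucially the cohomology class $[\omega_I]\in H^2(M;\R)$ depends linearly on the level-set parameters $\zeta_I$ of the moment map, while the exactness class of $d\theta=u_J\omega_J+u_K\omega_K$ is controlled by the complementary parameters $\zeta_J,\zeta_K$. I would record that $H^2(M;\R)\cong H^2(\text{ADE resolution})$ is spanned by the classes of the exceptional $\C P^1$'s, and that by varying $\zeta_I$ one can realize \emph{any} class in $H^2(M;\R)$ as $[\omega_I]$ --- so it suffices to show $SH^*(M,\omega_I)=0$ for the symmetric form $\omega_I$ at arbitrary parameter, since genericity is automatic (in fact the vanishing will hold for \emph{all} such classes, not merely generic ones, but stating it for generic classes is harmless and is all that is needed for the exact-Lagrangian application).

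Next I would write $\omega_I = d\theta + \beta$ where $\beta=\omega_I - d\theta$ is a closed two-form. The subtlety is that $\beta$ is not compactly supported and not small, so the deformation theorem as literally stated does not apply off the shelf. The key structural fact I would exploit is that at infinity $M$ looks like $\C^2/\Gamma$ with its flat metric, and there all three forms $\omega_I,\omega_J,\omega_K$ are the standard translation-invariant K\"ahler forms on $\C^2$; in particular $\beta$ is asymptotically a fixed, $S^1$-invariant form that extends smoothly over the cone. So although $\beta$ is not compactly supported, the pair $(M,\omega_I)$ still admits a contact-type collar and $SH^*(M,\omega_I)$ is well-defined, and the deformation argument from the general theorem can be carried through for this particular ``large'' deformation --- this is exactly the assertion made in the paragraph following the third theorem of the excerpt, that ``this result applies to a large non-compact deformation.'' Granting that, $SH^*(M,\omega_I)\cong SH^*(M,d\theta;\underline{\Lambda}_{\tau[\beta]})$, and since $[\beta]=[\omega_I]$ in $H^2(M;\R)$ (because $d\theta$ is exact), this is $SH^*(M,d\theta;\underline{\Lambda}_{\tau\omega_I})$.

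Then I would prove $SH^*(M,\omega_I)=0$ via the Hamiltonian $S^1$-action: there is a global Hamiltonian circle action on $(M,\omega_I)$ which at infinity is $(a,b)\mapsto(e^{2\pi i t}a, e^{2\pi i t}b)$ on $\C^2/\Gamma$, with moment map $H$ that is proper and grows quadratically at infinity. One uses $H$ (and its positive multiples $\lambda H$) as the defining Hamiltonians in the direct limit computing $SH^*$. The $1$-periodic orbits of $\lambda H$ are the fixed points of the action together with orbits sitting over critical circles; the point is that the Conley--Zehnder index of the nonconstant orbits decreases without bound as $\lambda\to\infty$, because each time $\lambda$ crosses an integer a new family of orbits appears whose index has dropped by a fixed positive amount (coming from the eigenvalues of the linearized flow). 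Hence every class in $SH^*(M,\omega_I)$, which lives in a fixed degree, must eventually die in the telescope: any generator would have to be represented in arbitrarily negative grading, a contradiction. Therefore $SH^*(M,\omega_I)=0$, and combining with the isomorphism above gives $SH^*(M,d\theta;\underline{\Lambda}_{\tau\omega_I})=0$, hence the statement.

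The main obstacle, and the step that needs the most care, is the middle one: justifying that the general deformation isomorphism survives the passage to a \emph{non-compact, non-small} deformation $d\theta\rightsquigarrow\omega_I$. One has to check that the $1$-parameter family $d\theta + s\beta$, $s\in[0,1]$, stays symplectic and of contact type at infinity (here the hyperk\"ahler geometry gives uniform control, since interpolating the constant coefficient vectors $u$ on $S^2$ keeps the asymptotic forms K\"ahler), that the relevant Floer moduli spaces remain compact (an energy/action estimate using the explicit asymptotic form of $\beta$ and the properness of $H$), and that the local system $\underline{\Lambda}_{\tau\beta}$ is still defined by the same transgression formula even though $\beta$ is only asymptotically compactly supported --- here one notes that $\tau\beta$ depends only on $[\beta]\in H^2(M;\R)$, so replacing $\beta$ by a cohomologous compactly supported representative is legitimate for the coefficient system while the geometric deformation is done with the symmetric $\beta$. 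Once this compatibility is in place, the rest is a matter of assembling the previously established theorems.
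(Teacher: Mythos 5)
Your core argument coincides with the paper's: realize the given class as $[\omega_I]$ via Kronheimer's hyperk\"ahler quotient construction, then use the global Hamiltonian $S^1$-action on $(M,\omega_I)$ to push the Conley--Zehnder indices of all generators to $-\infty$ and conclude $SH^*(M,\omega_I)=0$ from the direct-limit structure. That is the entire content of the paper's proof (Theorem \ref{Theorem vanishing of nonexact SH of ADE} together with Corollary \ref{Corollary vanishing of generic SH of ADE plumbings}). You reach the same conclusion.

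However, your proposal inserts a substantial unnecessary detour. Writing $\omega_I = d\theta + \beta$ and invoking the non-compact deformation theorem to obtain $SH^*(M,\omega_I)\cong SH^*(M,d\theta;\underline{\Lambda}_{\tau\omega_I})$ is not needed for the present statement; it is the mechanism by which the \emph{companion} theorem (vanishing of the twisted symplectic cohomology) is deduced from this one. You flag this step as ``the one that needs the most care'', and it genuinely is delicate --- but it is entirely avoidable here, and your closing line (``$SH^*(M,d\theta;\underline{\Lambda}_{\tau\omega_I})=0$, hence the statement'') suggests a confusion about which result is the target: the theorem asks only for $SH^*(M,\omega)=0$, which you have already established directly from the $S^1$-action argument.

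Two smaller points. First, your index accounting is slightly off: for generic $k$ the $1$-periodic orbits of $H_k=kH_\varphi$ are precisely the finitely many fixed points of the $S^1$-action, all lying in $\pi^{-1}(0)$ --- there are no ``new families appearing as $\lambda$ crosses an integer''. The Maslov index of these constant orbits grows like $2k$ because $\varphi_\lambda$ rotates the trivializing section $\omega_J+i\omega_K$ of the canonical bundle by $\lambda^2$, which forces the Conley--Zehnder index to $-\infty$. Second, your remark that ``genericity is automatic'' and that ``the vanishing will hold for all such classes'' is not justified: the hyperk\"ahler quotient $X_{\zeta_1,0,0}$ is smooth only when $\zeta_1$ avoids the root hyperplanes $D_\theta$, so $\omega_I$ is defined only for generic $[\omega]$. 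The genericity hypothesis in the theorem is genuinely used, not decorative.
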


The outline of the paper is as follows. In section \ref{Section
Symplectic manifolds with contact boundary} we recall the basic
terminology of symplectic manifolds with contact type boundary and
we define the moduli spaces used to define symplectic cohomology.
In section \ref{Section Symplectic cohomology} we construct the
symplectic cohomology $SH^*(M,\omega)$ for a (possibly non-exact)
symplectic form $\omega$, in particular in \ref{Subsection
Novikov-symplectic chain complex} we define the underlying Novikov
ring $\Lambda$ that we use throughout. In section \ref{Section
Twisted symplectic cohomology} we define the twisted symplectic
cohomology $SH^*(M,d\theta;\underline{\Lambda}_{\alpha})$, in
particular the Novikov bundle $\underline{\Lambda}_{\alpha}$ is
defined in \ref{SubsectionNovikovbundles} and the functoriality
property is described in \ref{Subsection Twisted Viterbo
Functoriality}. In section \ref{Section Grading of symplectic
cohomology} we define the grading on symplectic cohomology, which
is a $\Z-$grading if $c_1(M)=0$. In section \ref{Section
Deformation of the Symplectic cohomology} we prove the deformation
theorem which relates the twisted symplectic cohomology to the
non-exact symplectic cohomology. In section \ref{Section ADE
spaces} we recall Kronheimer's hyperk\"{a}hler quotient
construction of ALE spaces,
and we describe the details of the proof outlined above.\\[2mm]
\noindent \textbf{Acknowledgements:} I would like to thank Paul
Seidel for suggesting this project.
%
%
%
%
\section{Symplectic manifolds with contact boundary}
\label{Section Symplectic manifolds with contact boundary}
%
\subsection{Symplectic manifolds with contact type boundary}
\label{Subsection Symplectic manifolds with contact type boundary}
%
Let $(M^{2n},\omega)$ be a compact symplectic manifold with
boundary. The contact type boundary condition requires that there
is a Liouville vector field $Z$ defined near the boundary
$\partial M$ which is strictly outwards pointing along $\partial
M$. The Liouville condition is that near the boundary $\omega =
d\theta$, where $\theta=i_Z \omega$. This definition is equivalent
to requiring that $\alpha = \theta|_{\partial M}$ is a contact
form on $\partial M$, that is $d\alpha = \omega|_{\partial M}$ and
$\alpha \wedge (d\alpha)^{n-1} > 0$ with respect to the boundary
orientation on $\partial M$.

The Liouville flow of $Z$ is defined for small negative times $r$,
and it parametrizes a collar $(-\epsilon,0]\times
\partial M$ of $\partial M$ inside $M$. So we may glue an infinite
symplectic cone $([0,\infty)\times \partial M,d(e^r\alpha))$ onto
$M$ along $\partial M$, so that $Z$ extends to $Z=\partial_r$ on
the cone. This defines the completion $\widehat{M}$ of $M$,
$$
\widehat{M}=M \cup_{\partial M} [0,\infty)\times \partial M.
$$
We call $(-\epsilon,\infty)\times \partial M$ the collar of
$\widehat{M}$. We extend $\theta$ and $\omega$ to the entire
collar by $\theta=e^r \alpha$ and $\omega=d\theta$.

Let $J$ be an $\omega-$compatible almost complex structure on
$\widehat{M}$ and denote by $g=\omega(\cdot,J\cdot)$ the
$J-$invariant metric. We always assume that $J$ is of
\emph{contact type} on the collar, that is $J^*\theta=e^r dr$ or
equivalently $J\partial_r = \mathcal{R}$ where $\mathcal{R}$ is
the Reeb vector field. This implies that $J$ restricts to an
almost complex structure on the contact distribution $\ker
\alpha$. We will only need the contact type condition for $J$ to
hold for $e^r \gg 0$ so that a certain maximum principle applies
there.

From now on, we make the change of coordinates $R=e^r$ on the
collar so that, redefining $\epsilon$, the collar will be
parametrized as the tubular neighbourhood $(\epsilon,\infty)
\times \partial M$ of $\partial M$ in $\widehat{M}$, so that the
contact hypersurface $\partial M$ corresponds to $\{R = 1\}$.

In the exact setup, that is when $\omega=d\theta$ on all of $M$,
we call $(M,d\theta)$ a \emph{Liouville domain}. In this case $Z$
is defined on all of $\widehat{M}$ by $i_Z\omega=\theta$, and
$\widehat{M}$ is the union of the infinite symplectic collar
$((-\infty,\infty)\times
\partial M,d(R\alpha))$ and the zero set of $Z$.
%
\subsection{Reeb and Hamiltonian
dynamics}\label{Subsection Reeb Dynamics}
The Reeb vector field $\mathcal{R} \in C^{\infty}(T \partial M)$
on $\partial M$ is defined by $i_{\mathcal{R}} d\alpha = 0$ and
$\alpha(\mathcal{R})=1$. The periods of the Reeb vector field form
a countable closed subset of $[0,\infty)$, provided we choose
$\alpha$ generically.

For $H \in C^{\infty}(\widehat{M},\R)$ we define the Hamiltonian
vector field $X_H$ by
$$
\omega(\cdot,X_H) = dH.
$$
If inside $M$ the Hamiltonian $H$ is a $C^2$-small generic
perturbation of a constant, then the $1$-periodic orbits of $X_H$
inside $M$ are constants corresponding precisely to the critical
points of $H$.

Suppose $H=h(R)$ depends only on $R=e^r$ on the collar. Then $X_H=
h'(R) \mathcal{R}$. It follows that every non-constant
$1$-periodic orbit $x(t)$ of $X_H$ which intersects the collar
must lie in $\{ R \}\times \partial M$ for some $R$ and must
correspond to a Reeb orbit $z(t) = x(t/T):[0,T] \to
\partial M$ with period $T = h'(R)$. Since the Reeb periods
are countable, if we choose $h$ to have a generic constant slope
$h'(R)$ for $R \gg 0$ then there will be no $1$-periodic orbits of
$X_H$ outside of a compact set of $\widehat{M}$.
%
\subsection{Action 1-form}\label{Subsection Action 1-form}
%
Let $\mathcal{L}\widehat{M} = C^{\infty}(S^1,\widehat{M})$ be the
space of free loops in $\widehat{M}$. Suppose for a moment that
$\omega=d\theta$ were exact on all of $\widehat{M}$, then one
could define the $H-$perturbed action functional for $x\in
\mathcal{L}\widehat{M}$ by
$$
A_H(x) = - \int x^*\theta + \int_0^1 H(x(t)) \, dt.
$$
If $H=h(R)$ on the collar then this reduces to $A_H(x)=-R
h'(R)+h(R)$ where $x$ is a $1$-periodic orbit of $X_H$ in $\{ R \}
\times \partial M$.
The differential of $A_H$ at $x\in \mathcal{L}\widehat{M}$ in the
direction $\xi \in T_x\mathcal{L}\widehat{M} =
C^{\infty}(S^1,x^*T\widehat{M})$ is
$$
dA_H \cdot \xi = - \int_0^1 \omega(\xi, \dot{x} - X_H) \, dt.
$$

In the case when $\omega$ is not exact on all of $\widehat{M}$,
$A_H$ is no longer well-defined, however the formula for $dA_H$
still gives a well-defined $1-$form on $\mathcal{L}\widehat{M}$.
The zeros $x$ of $dA_H$ are precisely the $1$-periodic Hamiltonian
orbits $\dot{x}(t)=X_H(x(t))$.

It also meaningful to say how $A_H$ varies along a smooth path $u$
in $\mathcal{L}\widehat{M}$ by defining
$$
\partial_s A_H(u) = dA_H\cdot \partial_s u,
$$
but the total variation $\int \partial_s A_H(u)\, ds$ will depend
on $u$, not just on the ends of $u$.
%
%
%
\subsection{Floer's equation}\label{Subsection Floers Equation}
With respect to the $L^2-$metric $\int_0^1 g(\cdot,\cdot) \, dt$
the gradient corresponding to $dA_H$ is $\nabla A_H = J(\dot{x} -
X_H)$. For $u: \R \times S^1 \to M$, the negative $L^2-$gradient
flow equation $\partial_s u = -\nabla A_H(u)$ in the coordinates
$(s,t) \in \R \times S^1$ is
$$
\partial_s u + J(\partial_t u - X_H) = 0 \quad \textrm{(Floer's equation)}.
$$
Let $\mathcal{M}'(x_{-},x_{+})$ denote the moduli space of
solutions $u$ to Floer's equation, which at the ends converge
uniformly in $t$ to the $1$-periodic orbits $x_{\pm}$:
$$
\lim_{s \to \pm \infty} u(s,t) = x_{\pm}(t).
$$
These solutions $u$ occur in $\R-$families because we may
reparametrize the $\R$ coordinate by adding a constant. Denote the
quotient by $\mathcal{M}(x_{-},x_{+}) = \mathcal{M}'(x_{-},x_{+})/
\R$. To emphasize the context, we may also write
$\mathcal{M}^H(x_-,x_+)$ or $\mathcal{M}(x_{-},x_{+};\omega)$.

The action $A_H$ decreases along $u$ since
$$
\partial_s(A_H(u)) = dA_H \cdot \partial_s u = - \int_0^1 \omega(\partial_s u,
\partial_t u - X_H) \, dt = - \int_0^1 |\partial_s u|_g^2 \, dt \leq 0.
$$
If $\omega$ is exact on $M$, the action decreases by
$A_H(x_-)-A_H(x_+)$ independently of the choice of $u \in
\mathcal{M}(x_-,x_+)$.
%
%
\subsection{Energy}\label{Subsection Energy}
For a Floer solution $u$ the energy is defined as
$$
\begin{array}{ll}
E(u) & = \int |\partial_s u|^2 \, ds \wedge dt = \int
\omega(\partial_s u,
\partial_t u - X_H)\, ds \wedge dt \\
& = \int u^*\omega + \int H(x_-) \, dt - \int H(x_+) \, dt.
\end{array}
$$
If $\omega$ is exact on $M$ then for $u \in
\mathcal{M}(x_{-},x_{+})$ there is an a priori energy estimate,
$E(u) = A_H(x_{-}) - A_H(x_{+})$.

%
%
\subsection{Transversality and
compactness}\label{Subsection Transversality and Compactness}
%
Standard Floer theory methods can be applied to show that for a
generic time-dependent perturbation $(H_t,J_t)$ of $(H,J)$ there
are only finitely many $1-$periodic Hamiltonian orbits and the
moduli spaces $\mathcal{M}(x_{-},x_{+})$ are smooth manifolds. We
write
$\mathcal{M}_k(x_{-},x_{+})=\mathcal{M}_{k+1}'(x_{-},x_{+})/\R$
for the $k$-dimensional part of $\mathcal{M}(x_{-},x_{+})$.

As explained in detail in Viterbo \cite{Viterbo1} and Seidel
\cite{Seidel}, there is a maximum principle which prevents Floer
trajectories $u \in \mathcal{M}(x_{-},x_{+})$ from escaping to
infinity.

\begin{lemma}[Maximum principle]\label{Lemma Maximum principle}
If on the collar $H=h(R)$ and $J$ is of contact type, then for any
local Floer solution $u: \Omega \to [1,\infty) \times \partial M$
defined on a compact $\Omega \subset \R \times S^1$, the maxima of
$R \circ u$ are attained on $\partial \Omega$. If $H_s=h_s(R)$ and
$J=J_s$ depend on $s$, the result continues to hold provided that
$\partial_s h_s'\leq 0$. In particular, Floer solutions of
$\partial_s u + J(\partial_t u - X_H)=0$ or $\partial_s u +
J_s(\partial_t u - X_{H_s})=0$ converging to $x_{\pm}$ at the ends
are entirely contained in the region $R\leq \max R(x_{\pm})$.
\end{lemma}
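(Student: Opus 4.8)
The plan is to show that $\rho:=R\circ u$ is a subsolution of a second order uniformly elliptic operator having no zeroth order term, and then to invoke the classical weak and strong maximum principles. I would work on the collar, where $\omega=d\theta$ with $\theta=R\alpha$ (so $\omega=dR\wedge\alpha+R\,d\alpha$), the Hamiltonian is $H=h(R)$ with $X_H=h'(R)\mathcal{R}$, and the contact type condition on $J$ reads $\theta(J\,\cdot)=dR$, equivalently $J\partial_R=R^{-1}\mathcal{R}$ and $J\mathcal{R}=-R\partial_R$ (in particular $J$ preserves the contact distribution $\xi=\ker\alpha\cap T\partial M$). Writing $u^{*}\alpha=p\,ds+q\,dt$, Floer's equation $\partial_s u=-J(\partial_t u-X_H)$ together with $J^{2}=-1$, the identity $\theta(J\,\cdot)=dR$, and $\theta(JX_H)=0$ give
$$\partial_s\rho=\theta(\partial_t u-X_H)=\rho q-\rho h'(\rho),\qquad \partial_t\rho=-\rho p,$$
hence $p=-\rho^{-1}\partial_t\rho$ and $q=\rho^{-1}\partial_s\rho+h'(\rho)$.

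The core step is to evaluate $u^{*}\omega=d(u^{*}\theta)$ on $\partial_s\wedge\partial_t$ in two ways. Since $u^{*}\theta=\rho p\,ds+\rho q\,dt$, taking the exterior derivative gives $\partial_s(\rho q)-\partial_t(\rho p)=\Delta\rho+\big(h'(\rho)+\rho h''(\rho)\big)\partial_s\rho$. On the other hand, from $\partial_s u=-J(\partial_t u-X_H)$ one has $\omega(\partial_s u,\partial_t u)=|\partial_t u|_g^{2}-g(X_H,\partial_t u)$; decomposing $\partial_t u=\partial_t\rho\,\partial_R+q\mathcal{R}+\xi_t$ along the $g$-orthogonal splitting $\R\partial_R\oplus\R\mathcal{R}\oplus\xi$, and using $|\partial_R|_g^{2}=R^{-1}$ and $|\mathcal{R}|_g^{2}=R$, this becomes $\omega(\partial_s u,\partial_t u)=\rho^{-1}|\nabla\rho|^{2}+h'(\rho)\partial_s\rho+|\xi_t|^{2}$. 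Equating the two expressions and cancelling the common term leaves
$$\Delta\rho+\rho\,h''(\rho)\,\partial_s\rho=\frac{|\nabla\rho|^{2}}{\rho}+|\xi_t|^{2}\ \ge\ 0 .$$
Thus $\rho$ is a subsolution of $L:=\Delta+\rho h''(\rho)\,\partial_s$, which on the compact domain $\Omega$ is uniformly elliptic with bounded coefficients and no zeroth order term. The weak maximum principle then puts $\max_\Omega\rho$ on $\partial\Omega$, and the strong maximum principle forbids an interior maximum unless $\rho$ is constant.

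For the parametrised case $H_s=h_s(R)$, $J=J_s$ the computation is word for word the same, except that $\partial_s(\rho q)$ now also contributes $\rho\,(\partial_s h_s')(\rho)$; the identity becomes
$$\Delta\rho+\rho\,h_s''(\rho)\,\partial_s\rho=\frac{|\nabla\rho|^{2}}{\rho}+|\xi_t|^{2}-\rho\,(\partial_s h_s')(\rho),$$
and the hypothesis $\partial_s h_s'\le 0$ is exactly what makes the right hand side non-negative, so $\rho$ is again an $L$-subsolution. Finally, for the ``in particular'' assertion I would apply the local statement to the region $\{\,R\circ u>\max R(x_{\pm})+\epsilon\,\}$: by the asymptotic convergence $u(s,\cdot)\to x_{\pm}$ this region is precompact and contained in the collar, its closure has $\rho=\max R(x_{\pm})+\epsilon$ on the boundary, so the region is empty; letting $\epsilon\to0$ gives $R\circ u\le\max R(x_{\pm})$ everywhere.

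The step I expect to be the main obstacle is the local identity itself: one has to keep careful track of the first order term and verify that both hypothesised situations reduce to the single inequality $L\rho\ge0$ (this is where the contact type condition on $J$ and the slope condition $\partial_s h_s'\le0$ get used). Once that identity is in hand, invoking the maximum principle and passing from the local to the global statement by exhaustion are routine.
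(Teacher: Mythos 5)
Your proposal is correct and follows essentially the same route as the paper: both derive, from the contact-type splitting and the identity $u^*\omega=d(u^*\theta)$, the inequality $\bigl(\Delta+\rho h''(\rho)\partial_s\bigr)\rho\geq 0$ (with the extra term $-\rho\,\partial_s h_s'$ in the parametrized case, which is where $\partial_s h_s'\leq 0$ enters), and then invoke the classical elliptic maximum principle, passing to the global statement by the precompactness of $\{\rho>\max R(x_\pm)+\epsilon\}$. The only difference is cosmetic bookkeeping: you equate two evaluations of $u^*\omega(\partial_s,\partial_t)$, while the paper cross-differentiates the two scalar components of Floer's equation; the resulting operator and inequality are identical.
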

\begin{proof}
On the collar $u(s,t)=(R(s,t),m(s,t)) \in [1,\infty) \times
\partial M$ and we can orthogonally decompose
$$
T([1,\infty) \times \partial M) = \R \partial_r \oplus \R
\mathcal{R} \oplus \xi
$$
where $\xi=\ker \alpha$ is the contact distribution. By the
contact type condition, $J\partial_r=\mathcal{R}$, $J\mathcal{R} =
-\partial_r$ and $J$ restricts to an endomorphism of $\xi$. Since
$X_H=h'(R)\mathcal{R}$, Floer's equation in the first two summands
$\R \partial_r \oplus \R \mathcal{R}$ after rescaling by $R$ is
$$
\partial_s R - \theta(\partial_t u) + Rh' =0 \qquad\qquad
\partial_t R + \theta(\partial_s u) = 0.
$$
Adding $\partial_s$ of the first and $\partial_t$ of the second
equation, yields $\partial_s^2 R + \partial_t^2 R + R\partial_s h'
=|\partial_s u|^2$. So $LR\geq 0$ for the elliptic operator
$L=\partial_s^2 + \partial_t^2 + R h''(R)\partial_s$, thus a
standard result in PDE theory \cite[Theorem 6.4.4]{Evans} ensures
the maximum principle for $R\circ u$.

If $h_s$ depends on $s$ and $\partial_s h_s' \leq 0$, then we get
$LR =|\partial_s u|^2-R(\partial_s h_s')(R) \geq 0$ which
guarantees the maximum principle for $R$.
\end{proof}

\begin{figure}[ht] \centering \scalebox{0.5}{
\input{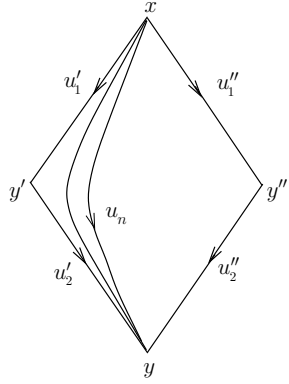} }
\caption{The $x,y',y'',y$ are $1$-periodic orbits of $X_H$, the
lines are Floer solutions in $M$. The $u_n \in \mathcal{M}_1(x,y)$
are converging to the broken trajectory $(u_1',u_2') \in
\mathcal{M}_0(x,y')\times \mathcal{M}_0(y',y)$.}\label{Figure
Compactness}
\end{figure}

If $\omega$ were exact on $M$, then the a priori energy estimate
for $\mathcal{M}(x_{-},x_{+})$ described in \ref{Subsection
Energy} together with the maximum principle would ensure that the
moduli spaces $\mathcal{M}(x_{-},x_{+})$ have compactifications
$\overline{\mathcal{M}}(x_{-},x_{+})$ whose boundaries are defined
in terms of broken Floer trajectories (Figure \ref{Figure
Compactness}). In the proof of compactness, the exactness of
$\omega$ excludes the possibility of bubbling-off of
$J-$holomorphic spheres.

In the non-exact case if we assume that no bubbling-off of
$J$-holomorphic spheres occurs, then the same techniques guarantee
that, for any $K \in \R$, the moduli space
$$
\mathcal{M}(x_-,x_+;K) = \{ u\in \mathcal{M}(x_-,x_+): E(u)\leq K
\}
$$
of bounded energy solutions has a compactification by broken
trajectories.

\textbf{Assumptions.} \emph{We assume henceforth that no bubbling
occurs. If $c_1(M)=0$ this will hold by Hofer-Salamon
\cite{Hofer-Salamon}, as in our applications. To keep the notation
under control we continue to write $(H,J)$ although one should use
perturbed $(H_t,J_t)$.}
%
%
%
%
\section{Symplectic cohomology}
\label{Section Symplectic cohomology}
%
\subsection{Novikov symplectic chain complex}\label{Subsection
Novikov-symplectic chain complex}
%
%

Let $\Lambda$ denote the Novikov ring,
$$\Lambda = \left\{ \sum_{j=0}^{\infty} n_{j} t^{a_j} :
n_{j} \in \Z, a_j \in \R, \lim_{j \to \infty} a_j = \infty
\right\}.$$
In \cite{Ritter} we allowed only integer values of $a_j$ because
we were always using integral forms. In that setup $\Lambda$ was
just the ring of formal integral Laurent series. In the present
paper the $a_j$ will arise from integrating real forms so we use
real $a_j$.

For an abelian group $G$ the Novikov completion $G(\!(t)\!)$ is
the $\Lambda-$module of formal sums $\sum_{j=0}^{\infty} g_j
t^{a_j}$ where $g_j \in G$ and the real numbers $a_j \to \infty$.

Let $H \in C^{\infty}(\widehat{M},\R)$ be a Hamiltonian which on
the collar is of the form $H=h(R)$, where $h$ is linear at
infinity. Define $CF^*$ to be the abelian group freely generated
by $1$-periodic orbits of $X_H$,
$$
CF^*(H) =\bigoplus \left\{ \Z x : x \in \mathcal{L}\widehat{M},\;
\dot{x}(t) = X_H(x(t)) \right\}.
$$
It is always understood that we first make a generic $C^2-$small
time-perturbation $H_t$ of $H$, so that there are only finitely
many $1-$periodic orbits of $X_{H_t}$ and therefore $CF^*(H)$ is
finitely generated.

The symplectic chain complex $SC^*(H)$ is the Novikov completion
of $CF^*(H)$:
$$
\begin{array}{ll}
SC^*=CF^*(\!(t)\!) & = \left\{ \sum_{j=0}^{\infty} c_{j} t^{a_j} :
c_{j} \in CF^*, \lim a_j = \infty \right\} \\ & = \left\{
\sum_0^{N} \lambda_i y_{i}: \lambda_i \in \Lambda, N\in \N, y_{i}
\, \textrm{is a 1--periodic orbit of } X_H \right\}.
\end{array}
$$

The differential $\delta$ is defined by
$$
\delta \left( \sum_{i=0}^{N} \lambda_i y_{i}\right) =
\sum_{i=0}^{N} \sum_{u\in \mathcal{M}_0(x,y_{i})} \epsilon(u)\,
t^{E(u)}\lambda_i x
$$
where $\mathcal{M}_0(x,y_{i})$ is the $0-$dimensional component of
the Floer trajectories connecting $x$ to $y_i$, and $\epsilon(u)$
are signs depending on orientations. The sum is well-defined
because there are only finitely many generators $x$, and below any
energy bound $E(u)\leq K$ the moduli space $\mathcal{M}_0(x,y_i)$
is compact and therefore finite.

\begin{lemma}
$SC^*(H)$ is a chain complex, i.e. $\delta\circ\delta = 0$. We
denote the cohomology of $(SC^*(H),\delta)$ by $SH^*(H)$.
\end{lemma}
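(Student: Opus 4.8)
The plan is to run the standard Floer-theoretic argument: $\delta^2$ counts the boundary points of the one-dimensional moduli spaces $\mathcal{M}_1(x,z)$, and since a compact one-manifold has an even number of boundary points (counted with sign), the total count vanishes. The only novelties here are bookkeeping the Novikov exponents $t^{E(u)}$ and handling the fact that compactness only holds energy-level by energy-level, so I need to check that the $t$-adic structure of $\Lambda$ makes everything converge.

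First I would fix two $1$-periodic orbits $x$ and $z$ and compute the coefficient of $x$ in $\delta(\delta(z))$. Expanding the definition, this coefficient is the formal sum $\sum_{y} \sum_{u \in \mathcal{M}_0(x,y)} \sum_{v \in \mathcal{M}_0(y,z)} \epsilon(u)\epsilon(v)\, t^{E(u)+E(v)}$, where $y$ ranges over the finitely many $1$-periodic orbits. By the gluing theorem, each pair $(u,v)$ is the boundary of a unique end of a one-dimensional component of $\mathcal{M}_1(x,z)$, and conversely every end of $\mathcal{M}_1(x,z)$ arises in this way (no bubbling by the standing assumption; the maximum principle of Lemma \ref{Lemma Maximum principle} keeps everything in a compact region). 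Crucially, along a connected component of $\mathcal{M}_1(x,z)$ the energy $E$ is \emph{constant}: the energy identity $E(u) = \int u^*\omega + \int H(x)\,dt - \int H(z)\,dt$ shows $E$ depends only on the homotopy class of $u$ rel endpoints, which is locally constant on the moduli space; and broken limits $(u,v)$ satisfy $E(u)+E(v) = E(\text{glued curve})$ by additivity of the energy integral. Hence each one-dimensional component $\mathcal{C} \subset \mathcal{M}_1(x,z)$ contributes a single monomial $(\#\partial\overline{\mathcal{C}})\, t^{E(\mathcal{C})}$ to the coefficient of $x$ in $\delta^2 z$, and the signed count of boundary points of the compact one-manifold $\overline{\mathcal{C}}$ is zero. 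Summing over components gives $0$.

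The step I expect to require the most care is verifying that $\delta^2 z$ is a well-defined element of $SC^*$ before one even talks about it vanishing --- i.e., that for each fixed $x$ the double sum over $y$, $u$, $v$ defines a legitimate Novikov series, with exponents $E(u)+E(v) \to \infty$ and only finitely many terms below any bound $K$. This follows because $E(u), E(v) \geq 0$ (energy is a nonnegative integral, after the usual normalization of $H$ so that the action-energy estimate has the right sign) and the set of Floer cylinders from $x$ to $z$ with total energy $\leq K$ is compact, hence has finitely many components, so only finitely many exponents $E(u)+E(v)$ lie below $K$ and each carries a finite coefficient. I would also note that $\delta$ is $\Lambda$-linear --- it commutes with multiplication by $t^{a}$ since multiplying $\lambda_i$ by $t^a$ just shifts every exponent by $a$ --- so it suffices to check $\delta^2 = 0$ on the generators $y_i$, which reduces the argument to the finite-type computation above. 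Finally, the sign verification $\epsilon(u)\epsilon(v)$ assembling correctly into the orientation of $\partial\overline{\mathcal{C}}$ is identical to the untwisted case and I would cite the standard references (Viterbo \cite{Viterbo1}, Seidel \cite{Seidel}) rather than reproduce it.
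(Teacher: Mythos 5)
Your proposal is correct and follows essentially the same route as the paper: compactify the one-dimensional moduli spaces, observe that the energy weight $t^{E}$ is constant on each connected component because $E$ is homotopy-invariant rel ends and additive under concatenation, and conclude that the two broken configurations bounding each component contribute equal powers of $t$ with opposite signs. Your additional remarks on the well-definedness of $\delta^2$ as a Novikov series (nonnegativity of $E$ and finiteness below any energy level) and on $\Lambda$-linearity are correct and consistent with the paper's setup.
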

\begin{proof}
This involves a standard argument (see Salamon \cite{Salamon}).
Observe Figure \ref{Figure Compactness}. The $1-$dimensional
moduli space $\mathcal{M}_1(x,y)$ has a compactification, such
that the boundary consists of pairs of Floer trajectories joined
at one end. Observe that $E(\cdot)$ is additive with respect to
concatenation and $E(u)$ is invariant under homotoping $u$
relative ends. Therefore, in Figure \ref{Figure Compactness}, $
E(u_1') + E(u_2') = E(u_1'') + E(u_2'')$. Since
$\epsilon(u_1')\epsilon(u_2')=-\epsilon(u_1'')\epsilon(u_2'')$, we
deduce
$$
\epsilon(u_1') \, t^{E(u_1')}  \, \epsilon(u_2') \, t^{E(u_2')} =
- \epsilon(u_1'') \, t^{E(u_1'')} \, \epsilon(u_2'') \,
t^{E(u_2'')}.
$$
Thus the broken trajectories contribute opposite
$\Lambda-$multiples of $x$ to $\delta(\delta y)$ for each
connected component of $\mathcal{M}_1(x,y)$. Hence, summing over
$x,y'$,
$$
\delta(\delta y)=\sum_{(u_1',u_2')\in \mathcal{M}_0(x,y') \times
\mathcal{M}_0(y',y)} \epsilon(u_1') \, t^{E(u_1')}  \,
\epsilon(u_2') \, t^{E(u_2')} \, x=0.\qedhere
$$
\end{proof}
%
%
%
\subsection{Continuation Maps}\label{Subsection Continuation Maps}
%
Under suitable conditions on two Hamiltonians $H_{\pm}$, it is
possible to define a \emph{continuation homomorphism}
$$\varphi: SC^*(H_{+}) \to SC^*(H_{-}).$$
This involves counting \emph{parametrized Floer trajectories}, the
solutions of
$$
\partial_s v + J_s(\partial_t v - X_{H_s}) = 0.
$$
Here $J_s$ are $\omega-$compatible almost complex structures of
contact type and $H_s$ is a homotopy from $H_{-}$ to $H_{+}$, such
that $(H_s,J_s)=(H_{-},J_{-})$ for $s \ll 0$ and
$(H_s,J_s)=(H_{+},J_{+})$ for $s \gg 0$. The conditions on $H_s$
will be described in Theorem \ref{Theorem Chain map}.

If $x$ and $y$ are respectively $1$-periodic orbits of $X_{H_{-}}$
and $X_{H_{+}}$, then let $\mathcal{M}(x,y)$ be the moduli space
of such solutions $v$ which converge to $x$ and $y$ at the ends.
This time there is no freedom to reparametrize $v$ in the
$s-$variable.

The continuation map $\varphi$ on a generator $y \in
\textnormal{Zeros}(dA_{H_{+}})$ is defined by
$$
\varphi(y) = \sum_{v\in \mathcal{M}_0(x,y)} \epsilon(v)\,
t^{E_0(v)}\, x
$$
where $\mathcal{M}_0(x,y)$ is the $0-$dimensional part of
$\mathcal{M}(x,y)$, $\epsilon(v)\in \{ \pm 1\}$ are orientation
signs and the power of $t$ in the above sum is
$$
\begin{array}{ll}
E_0(v) & =- \int_{-\infty}^{\infty} \partial_s A_{H_s} (v) \, ds
\\
& = \int |\partial_s v|_{g_s}^2 \, ds\wedge dt - \int (\partial_s
H_s)(v) \, ds\wedge dt\\
& = \int v^*(\omega-dK\wedge dt),
\end{array}$$
where $K(s,m)=H_s(m)$. The last expression shows that $E_0(v)$ is
invariant under homotoping $v$ relative ends.
%
%
%
\subsection{Energy of parametrized Floer trajectories}
\label{Subsection Action and energy of a parametrized Floer
trajectory}
%
Let $H_s$ be a homotopy of Hamiltonians. For an $H_s-$Floer
trajectory the above weight $E_0(v)$ will be positive if $H_s$ is
monotone decreasing, $\partial_sH_s \leq 0$. The energy is
$$
E(v) = E_0(v) + \int (\partial_s H_s)(v) \, ds \wedge dt.
$$

If $\partial_s H_s\leq 0$ outside of a compact subset of
$\widehat{M}$, then a bound on $E_0(v)$ imposes a bound on $E(v)$.
Note that $E(v)$ is not invariant under homotoping $v$ relative
ends.
%
%
%
\subsection{Properties of continuation maps}
\label{Subsection Properties of continuation maps}
%
\begin{theorem}[Monotone homotopies]\label{Theorem Chain map}

Let $H_s$ be a homotopy between $H_{\pm}$ such that
\begin{enumerate}

\item on the collar $H_s = h_s(R)$ for large $R$;

\item $\partial_s h'_s \leq 0$ for $R\geq R_{\infty}$, some
$R_{\infty}$;

\item $h_s$ is linear for $R \geq R_{\infty}$ (the slope may be a
Reeb period, but not for $h_{\pm}$).
\end{enumerate}

Then, after a generic $C^2$-small time-dependent perturbation of
$(H_s,J_s)$,

\begin{enumerate}
\item all parametrized Floer trajectories lie in the compact
subset $$C=M\cup \{ R\leq R_{\infty} \} \subset \widehat{M};$$

\item $\mathcal{M}(x;y)$ is a smooth manifold;

\item $\mathcal{M}(x,y;K)=\{ v\in \mathcal{M}(x,y): E_0(v)\leq K
\}$ has a smooth compactification by broken trajectories, for any
constant $K\in \R$;

\item the continuation map $\varphi: SC^*(H_{+}) \to SC^*(H_{-})$
is well-defined;

\item $\varphi$ is a chain map.
\end{enumerate}
\end{theorem}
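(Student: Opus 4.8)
The plan is to establish the five conclusions in the order listed, since each one feeds into the next. First I would invoke the Maximum Principle (Lemma \ref{Lemma Maximum principle}) in its $s$-dependent form: conditions (1) and (2) on the homotopy $H_s$ are precisely the hypotheses $H_s=h_s(R)$ and $\partial_s h_s'\leq 0$ needed there, so every parametrized Floer trajectory $v$ converging to $x_\pm$ at the ends satisfies $R\circ v \leq \max(R(x_-),R(x_+))$. Because condition (3) forces the slopes of $h_\pm$ to be non-Reeb-periods, all $1$-periodic orbits of $X_{H_\pm}$ lie in $\{R\leq R_\infty\}$ (by the discussion in \ref{Subsection Reeb Dynamics}), hence $v$ stays inside the compact set $C=M\cup\{R\leq R_\infty\}$. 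This is conclusion (1), and it is the crucial geometric input — everything afterward is standard Floer machinery confined to the compact region $C$.

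Conclusions (2) and (3) then follow by the usual Fredholm-and-Gromov-compactness arguments. For (2), one shows that for a generic $C^2$-small time-dependent perturbation of $(H_s,J_s)$ the linearized operator $D_v$ is surjective at every solution, so $\mathcal{M}(x,y)$ is cut out transversally; since all solutions live in the fixed compact set $C$, the standard Sard–Smale argument applies verbatim as in Viterbo \cite{Viterbo1} and Seidel \cite{Seidel}. For (3), the key point is the a priori energy estimate: the identity $E(v)=E_0(v)+\int(\partial_sH_s)(v)\,ds\wedge dt$ from \ref{Subsection Action and energy of a parametrized Floer trajectory}, combined with $\partial_sH_s\leq 0$ outside a compact set, shows that a bound $E_0(v)\leq K$ forces a bound on the genuine energy $E(v)$; then Gromov compactness together with the Maximum Principle (which keeps the limit curves in $C$) and the standing no-bubbling assumption gives the compactification of $\mathcal{M}(x,y;K)$ by broken trajectories, with the codimension-one stratum consisting of once-broken configurations.

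Conclusion (4), well-definedness of $\varphi$, is then a matter of checking that the defining sum is finite: there are finitely many generators $x$, and for each $x$ the $0$-dimensional moduli space $\mathcal{M}_0(x,y)$ intersected with any energy sublevel $\{E_0\leq K\}$ is compact hence finite by (3); since $E_0(v)\geq 0$ when $\partial_sH_s\leq 0$ (shown in \ref{Subsection Action and energy of a parametrized Floer trajectory}) and the weights $t^{E_0(v)}$ have exponents tending to $+\infty$, the expression $\varphi(y)=\sum_{v\in\mathcal{M}_0(x,y)}\epsilon(v)\,t^{E_0(v)}\,x$ defines an element of $SC^*(H_-)=CF^*(H_-)(\!(t)\!)$. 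Finally, for (5), that $\varphi$ is a chain map, one examines the $1$-dimensional moduli space $\mathcal{M}_1(x,y)$: by (3) its compactification has boundary $\bigcup_{x'}\mathcal{M}_0(x,x';H_-)\times\mathcal{M}_1(x',y)\ \cup\ \bigcup_{y'}\mathcal{M}_1(x,y')\times\mathcal{M}_0(y',y;H_+)$, i.e. breaking into a $\delta_{H_-}$-trajectory followed by a $\varphi$-trajectory, or a $\varphi$-trajectory followed by a $\delta_{H_+}$-trajectory. Using additivity of $E_0$ under concatenation together with its homotopy invariance (so the $t$-exponents match up), and the sign rule $\epsilon(u_1)\epsilon(u_2)=-\epsilon(u_1')\epsilon(u_2')$ exactly as in the proof that $\delta^2=0$, the total boundary count vanishes, which is precisely the identity $\delta_{H_-}\circ\varphi=\varphi\circ\delta_{H_+}$.

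I expect the main obstacle to be conclusion (1): verifying carefully that the three conditions on the homotopy are exactly what is needed to apply the $s$-dependent Maximum Principle and to exclude escape of both the trajectories and their Gromov limits, including the subtle point in condition (3) that intermediate slopes $h_s'$ are allowed to cross Reeb periods (so new orbits could in principle appear at those $s$-values) while $h_\pm$ are required to have non-Reeb slopes (so the endpoint orbit sets are well-behaved). Once the confinement to $C$ is secured, steps (2)–(5) are the standard compact-setting Floer-theoretic arguments and present no real difficulty beyond bookkeeping of orientations and Novikov weights.
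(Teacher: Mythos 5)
Your proposal follows essentially the same route as the paper: confinement to $C$ via the $s$-dependent maximum principle, an a priori bound on $E(v)$ in terms of $E_0(v)$ using that $\partial_s H_s$ is bounded on the compact set $C$ and varies only over a finite $s$-interval, and the standard once-broken boundary analysis with additivity and homotopy-invariance of $E_0$ for the chain-map identity. The only imprecision is your claim that $E_0(v)\geq 0$: the hypotheses give $\partial_s h_s'\leq 0$ only at infinity, not $\partial_s H_s\leq 0$ everywhere, so $E_0$ is merely bounded below by a constant depending on $C$ and the homotopy — but this, combined with the finiteness of $\mathcal{M}_0(x,y;K)$ for every $K$ from your step (3), already yields the Novikov condition, so no actual gap results.
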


\begin{proof}
(1) is a consequence of the maximum principle, Lemma \ref{Lemma
Maximum principle}, and (2) is a standard transversality result.
Let
$$B_C=\max_{x\in C} \{\partial_s
H_s(x),0\}.$$

Suppose $H_s$ varies in $s$ precisely for $s\in [s_0,s_1]$. Since
all $v\in \mathcal{M}(x,y;K)$ lie in $C$, $\int
\partial_s H_s(v)\, ds\wedge dt \leq (s_1-s_0)B_C$, so there is
an a priori energy bound
$$
E(v) \leq K + (s_1-s_0)B_C.
$$
From this the compactness of $\mathcal{M}(x,y;K)$ follows by
standard methods.

The continuation map $\varphi$ involves a factor of $t^{E_0(v)}$.
The lower bound $E_0(v) \geq E(v) - (s_1-s_0)B_C$ guarantees that
as the energy $E(v)$ increases also the powers $t^{E_0(v)}$
increase, which proves (4).

Showing that $\varphi$ is a chain map is a standard argument which
involves investigating the boundaries of broken trajectories of
the $1-$dimensional moduli spaces $\mathcal{M}_1(x,y;K)$. A
sequence $v_n$ in some $1-$dimensional component of
$\mathcal{M}_1(x,y;K)$ will converge (after reparametrization) to
a concatenation of two trajectories $u^+ \# v$ or $v \# u^-$,
where $u^+\in \mathcal{M}_0^{H_+}(x,x')$, $v\in
\mathcal{M}_0(x',y)$, or respectively $v\in \mathcal{M}_0(x,y')$,
$u^-\in \mathcal{M}_0^{H_-}(y',y)$. Such solutions get counted
with the same weight
$$E_0(v_n) = E_0(u^+ \# v) = E_0(v \# u^-)$$
because $E_0$ is invariant under homotopies which fix the ends,
and $v_n, u^+ \# v, v \# u^-$ are homotopic since they belong to
the compactification of the same $1-$dimensional component of
$\mathcal{M}_1(x,y)$. Therefore, $\partial_{H_-} \circ \varphi =
\varphi \circ
\partial_{H_+}$ as required.\end{proof}
%
%
\subsection{Chain homotopies}
\label{Subsection Chain homotopies}
%

\begin{theorem}\label{Theorem Chain Homotopy}\strut
\begin{enumerate}
\item Given monotone homotopies $H_s$, $K_s$ from $H_-$ to $H_+$,
there is a chain homotopy $Y: SC^*(H_+) \to SC^*(H_-)$ between the
respective continuation maps:
$
\varphi - \psi = \partial_{H_-} Y + Y \partial_{H_+};
$

\item the chain map $\varphi$ defines a map on cohomology,
$$
[\varphi]: SH^*(H_+) \to SH^*(H_-),
$$
which is independent of the choice of the homotopy $H_s$;

\item the composite of the maps induced by homotoping $H_{-}$ to
$K$ and $K$ to $H_{+}$,
$$SC^*(H_{+}) \to SC^*(K) \to SC^*(H_{-}),
$$
is chain homotopic to $\varphi$ and equals $[\varphi]$ on
$SH^*(H_+)$;

\item the constant homotopy $H_s=H$ induces the identity on
$SC^*(H)$;

\item if $H_{\pm}$ have the same slope at infinity, then
$[\varphi]$ is an isomorphism.
\end{enumerate}
\end{theorem}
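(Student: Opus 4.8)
The plan is to prove parts (1)--(5) together, since they are standard facts in Floer theory whose only subtlety in the present setting is bookkeeping the Novikov weights $t^{E_0}$; the Novikov-completeness of $\Lambda$ is what makes all the relevant sums converge. I would set things up so that the chain-homotopy construction in (1) is the workhorse and parts (2)--(5) are corollaries of it.

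\textbf{Parts (1) and (3).} First I would construct the chain homotopy $Y$ by the usual parametrized-moduli-space argument: given two monotone homotopies $H_s$, $K_s$ from $H_-$ to $H_+$, choose a one-parameter family $G_s^\lambda$, $\lambda\in[0,1]$, of homotopies interpolating between them, arranged so that each $G_s^\lambda$ still satisfies conditions (1)--(3) of Theorem \ref{Theorem Chain map} (one may have to insert a finite $s$-interval where the slope is constant to keep $\partial_s h'_s\le 0$). For generic such a family the moduli spaces $\mathcal{M}^{\{G^\lambda\}}(x,y)=\{(\lambda,v): v \text{ solves the } G^\lambda_s\text{-Floer equation}\}$ are smooth, and I would define $Y(y)=\sum_{(\lambda,v)\in\mathcal{M}^{\{G^\lambda\}}_0(x,y)}\epsilon(v)\,t^{E_0(v)}\,x$. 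As in the proof of Theorem \ref{Theorem Chain map}, the maximum principle (Lemma \ref{Lemma Maximum principle}) confines all trajectories to the compact set $C$, so one gets an a priori energy bound and hence $\mathcal{M}^{\{G^\lambda\}}(x,y;K)$ is compact with boundary described by broken trajectories; below any energy bound only finitely many $x$ contribute, so $Y$ is a well-defined $\Lambda$-linear map (it is $\Lambda$-linear by construction since the weight of a broken/concatenated configuration adds). The boundary of the $1$-dimensional part of $\mathcal{M}^{\{G^\lambda\}}$ consists of: endpoints $\lambda=0$ and $\lambda=1$ (giving $\varphi$ and $\psi$), and breakings $u^+\# v$ with $u^+\in\mathcal{M}_0^{H_+}$ and $v\# u^-$ with $u^-\in\mathcal{M}_0^{H_-}$ (giving $Y\partial_{H_+}$ and $\partial_{H_-}Y$). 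Since $E_0$ is invariant under homotopies fixing the ends, each broken configuration is counted with the same power of $t$ as the family it bounds, exactly as in the chain-map argument; reading off the signed count of this $0$-manifold's boundary gives $\varphi-\psi=\partial_{H_-}Y+Y\partial_{H_+}$. For (3) I would apply the standard gluing/concatenation of homotopies: the composite homotopy "$H_-\leadsto K\leadsto H_+$" (with a long constant neck in the middle, keeping conditions (1)--(3)) induces the composite chain map, and by (1) it is chain homotopic to any direct monotone homotopy $H_-\leadsto H_+$, hence induces $[\varphi]$ on cohomology; the weight bookkeeping is the same, using additivity of $E_0$ under concatenation.

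\textbf{Parts (2), (4), (5).} Part (2) is immediate: (1) shows any two monotone homotopies give chain-homotopic continuation maps, so $[\varphi]$ on $SH^*$ is independent of the homotopy. Part (4): for the constant homotopy $H_s\equiv H$ the only $s$-independent solutions of the (autonomous) Floer equation connecting $x$ to $x$ are the constant cylinders at $x$, with $E_0=0$, and these are regular and rigid, while there are no rigid $s$-dependent solutions between distinct orbits by the usual dimension count (the $s$-reparametrization family would force dimension $\ge 1$); hence $\varphi=\mathrm{id}$ on $SC^*(H)$, so in particular $[\varphi_{\mathrm{const}}]=\mathrm{id}$. Part (5): if $H_-$ and $H_+$ have the same slope at infinity, then $H_+$ can be reached from $H_-$ by a monotone homotopy and $H_-$ from $H_+$ by another monotone homotopy (this uses that the common slope is not a Reeb period so no orbits appear at infinity during the homotopy, and one may bridge through a Hamiltonian of intermediate slope as in condition (3)); composing, by (3) and (4) the round trip $SH^*(H_+)\to SH^*(H_-)\to SH^*(H_+)$ is induced by a homotopy which is chain homotopic to the constant one, hence is the identity, and similarly the other composite; therefore $[\varphi]$ is an isomorphism.

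\textbf{Main obstacle.} The genuinely delicate point is verifying that the interpolating families in (1) and (3) can be chosen to satisfy condition (2), $\partial_s h'_s\le 0$ for $R\ge R_\infty$, simultaneously for all $\lambda$, so that the maximum principle applies uniformly and produces the compact confinement set $C$ and the a priori energy bounds; without this the parametrized moduli spaces could have trajectories escaping to infinity and the whole argument collapses. The standard fix is to first homotope each of $H_s$, $K_s$ (rel endpoints, through admissible homotopies) to a normal form in which the slope $h'_s$ is a non-increasing function of $s$ for large $R$ that only takes values below the smallest relevant Reeb period except possibly at isolated $s$, and then interpolate linearly in this normal form; one must check this normalization is itself achievable by a monotone $(\lambda$-)homotopy, which is where the hypothesis that $h_\pm$ have non-Reeb slopes is used. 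Everything else --- transversality, gluing, sign coherence, and $\Lambda$-linearity/convergence of the Novikov sums (guaranteed because for each fixed target generator $x$ and each energy bound the moduli space is a finite set, and $a_j=E_0\to\infty$) --- is routine and parallels Salamon \cite{Salamon} and Viterbo \cite{Viterbo1}.
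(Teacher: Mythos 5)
Your proposal is correct and follows essentially the same route as the paper: a parametrized family of monotone homotopies interpolating $H_s$ and $K_s$, with $Y$ defined by counting pairs $(\lambda,v)$ where $v$ has virtual dimension $-1$, the boundary of the $1$-dimensional parametrized moduli space yielding $\varphi-\psi=\partial_{H_-}Y+Y\partial_{H_+}$ with matching Novikov weights by homotopy invariance of $E_0$, and (2)--(5) deduced exactly as you describe. The only difference is that the paper dispatches your ``main obstacle'' immediately by taking the \emph{linear} interpolation $H_{s,\lambda}=\lambda H_s+(1-\lambda)K_s$, which automatically satisfies $\partial_s h'_{s,\lambda}\le 0$ since a convex combination of non-increasing slopes is non-increasing, so no renormalization step is needed.
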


\begin{proof}
Let $(H_{s,\lambda})_{0\leq \lambda \leq 1}$ be a linear
interpolation of $H_s$ and $K_s$, so that $H_{s,\lambda}$ is a
monotone Hamiltonian for each $\lambda$. Consider the moduli
spaces $\mathcal{M}(x,y,\lambda)$ of parametrized Floer solutions
for $H_{s,\lambda}$. Let $Y$ be the oriented count of the pairs
$(\lambda,v)$, counted with weight $t^{E_0(v)}$, where
$0<\lambda<1$ and $v$ is in a component of
$\mathcal{M}(x,y,\lambda)$ of virtual dimension $-1$ (generically
$\mathcal{M}_{-1}(x,y,\lambda)$ is empty, but in the family
$\cup_{\lambda}\mathcal{M}_{-1}(x,y,\lambda)$ such isolated
solutions $(\lambda,v)$ can arise).

Consider a sequence $(\lambda_n,v_n)$ inside a $1-$dimensional
component of $\cup_{\lambda} \mathcal{M}(x,y,\lambda)$, such that
$\lambda_n\to \lambda$. If $\lambda= 0$ or $1$, then the limit of
the $v_n$ can break by giving rise to an $H_s$ or $K_s$ Floer
trajectory, and this breaking is counted by $\varphi - \psi$. If
$0<\lambda<1$, then the $v_n$ can break by giving rise to $u^-\#
v$ or $v\# u^+$, where $u^{\pm}$ are $H_{\pm}-$Floer trajectories
and the $v$ are as in the definition of $Y$. This type of breaking
is therefore counted by $\partial_{H_-} Y + Y
\partial_{H^+}$.

Both sides of the relation $\varphi - \psi =
\partial_{H_-} Y + Y \partial_{H_+}$ will count a (broken)
trajectory with the same weight because $E_0(\cdot)$ is a homotopy
invariant relative ends and the broken trajectories are all
homotopic, since they arise as the boundary of the same
$1-$dimensional component of
$\cup_{\lambda}\mathcal{M}(x,y,\lambda)$.

Claims (2) and (3) are standard consequences of (1) (see Salamon
\cite{Salamon}). Claim (4) is a consequence of the fact that any
non-constant Floer trajectory for $H_s=H$ would come in a
$1-$dimensional family of solutions, due to the translational
freedom in $s$. Claim (5) follows from (3) and (4): we can choose
$H_s$ to have constant slope for large $R$, therefore $H_{-s}$ is
also a monotone homotopy, and the composite of the chain maps
induced by $H_{s}$ and $H_{-s}$ is chain homotopic to the
identity.
\end{proof}
%
%
%
%
\subsection{Hamiltonians linear at
infinity}\label{Subsection Hamiltonians Linear At Infty}
%
Consider Hamiltonians $H^m$ which equal
$$
h^m(R) = mR+C
$$
for $R \gg 0$, where the slope $m>0$ is not the period of any Reeb
orbit. Up to isomorphism, $SH^*(H)$ is independent of the choice
of $C$ by Theorem \ref{Theorem Chain Homotopy}.

For $m_+<m_-$, a monotone homotopy $H_s$ defines a continuation
map
$$
\phi^{m_+m_-}: SC^*(H^{m_+}) \to SC^*(H^{m_-}),
$$
for example the homotopy $h_s(R) = m_s R + C_s$ for $R \gg 0$,
with $\partial_s m_s \leq 0$.

By Theorem \ref{Theorem Chain Homotopy} the continuation map $
[\phi^{m_+m_-}]: SH^*(H^{m_+}) \to SH^*(H^{m_-})$ on cohomology
does not depend on the choice of homotopy $h_s$. Moreover, such
continuation maps compose well: $\phi^{m_2m_3} \circ
\phi^{m_1m_2}$ is chain homotopic to $\phi^{m_1m_3}$ where
$m_1<m_2<m_3$, and so
$[\phi^{m_2m_3}] \circ [\phi^{m_1m_2}]= [\phi^{m_1m_3}].$
%
\subsection{Symplectic cohomology}
\label{Subsection Symplectic cohomology}
%
%
\begin{definition}
The symplectic cohomology is defined to be the direct limit
$$
SH^*(M,\omega) = \varinjlim SH^*(H)
$$
taken over the continuation maps between Hamiltonians linear at
infinity.
\end{definition}

Observe that $SH^*(M,\omega)$ can be calculated as the direct
limit
$$\lim_{k\to \infty} SH^*(H_k)$$
over the continuation maps $SH^*(H_k) \to SH^*(H_{k+1})$, where
the slopes at infinity of the Hamiltonians $H_k$ increase to
infinity as $k\to \infty$.
%
%
\subsection{The maps $\mathbf{c_*}$ from ordinary cohomology}
\label{Subsection The maps c from ordinary cohomology}
%
%

The symplectic cohomology comes with a map from the ordinary
cohomology of $M$ with coefficients in $\Lambda$,
$$c_*:
H^*(M;\Lambda) \to SH^*(M,\omega).
$$

We sketch the construction here, and refer to \cite{Ritter} for a
detailed construction. Fix a $\delta>0$ which is smaller than all
periods of the nonconstant Reeb orbits on $\partial M$. Consider
Hamiltonians $H^{\delta}$ which are $C^2$-close to a constant on
$M$ and such that on the collar $H^{\delta}=h(R)$ with constant
slope $h'(R)=\delta$.

A standard result in Floer cohomology is that, after a generic
$C^2$-small time-independent perturbation of $(H^{\delta},J)$, the
$1$-periodic orbits of $X_{H^{\delta}}$ and the connecting Floer
trajectories are both independent of $t\in S^1$. By the choice of
$\delta$ there are no $1$-periodic orbits on the collar, and by
the maximum principle no Floer trajectory leaves $M$. The Floer
complex $CF^*(H^{\delta})$ is therefore canonically identified
with the Morse complex $CM^*(H^{\delta})$, which is generated by
$\textnormal{Crit}(H^{\delta})$ and whose differential counts the
negative gradient trajectories of $H^{\delta}$ with weights
$t^{H^{\delta}(x_-)-H^{\delta}(x_+)}$. After the change of basis
$x \mapsto t^{H^{\delta}(x)}x$, the differential reduces to the
ordinary Morse complex defined over the ring $\Lambda$ which is
isomorphic to the singular cochain complex of $M$ with
coefficients in $\Lambda$. Thus
$$
SH^*(H^{\delta}) \cong HM^*(H^{\delta};\Lambda) \cong
H^*(M;\Lambda).
$$
Since $SH^*(H^{\delta})$ is part of the direct limit construction
of $SH^*(M,\omega)$, this defines a map $c_*:H^*(M;\Lambda) \to
SH^*(M,\omega)$ independently of the choice of $H^{\delta}$.
%
%
%
\subsection{Invariance under symplectomorphisms of contact type} \label{Section
Invariance under contactomorphisms}
%

\begin{definition}
Let $M,N$ be symplectic manifolds with contact type boundary. A
symplectomorphism $\varphi: \widehat{M} \to \widehat{N}$ is of
\emph{contact type at infinity} if on the collar
$$\varphi^*\theta_N = \theta_M + d(\textrm{compactly
supported function}).$$
This implies that at infinity $\varphi$ has the form
$$
\varphi(e^r,y) = (e^{r-f(y)}, \psi(y)),
$$
with $f: \partial M \to \R$ smooth, $\psi:
\partial M \to
\partial N$ a contactomorphism with $\psi^*\alpha_N = e^f \alpha_M$.
\end{definition}

Under such a map $\varphi: \widehat{M} \to \widehat{N}$, the Floer
solutions on $\widehat{N}$ for $(H,\omega_N,J_N)$ correspond
precisely to the Floer solutions on $\widehat{M}$ for
$(\varphi^*H,\omega_M,\varphi^*J_N)$ . However, for a Hamiltonian
$H$ on $\widehat{N}$ which is linear at infinity, the Hamiltonian
$\varphi^*H(e^r,y) = h(e^{r-f(y)})$ is not linear at infinity.
Thus we want to show that for this new class of Hamiltonians on
$\widehat{M}$ we still obtain the usual symplectic cohomology.

In order to relate the two symplectic cohomologies, we need a
maximum principle for homotopies of Hamiltonians which equal $H_s
= h_s(R_s)$ on the collar, where
$$R_s(e^r,y) = e^{r-f_s(y)},$$
and $f_s = f_-$, $h_s=h_-$ for $s\ll 0$ and $f_s = f_+, h_s=h_+$
for $s\gg 0$. We prove that if $h_-'\gg h_+'$ then one can choose
$h_s$ so that the maximum principle applies. We denote by $X_s$
the Hamiltonian vector field for $h_s$ and we assume that the
almost complex structures $J_s$ satisfy the contact type condition
$J_s^*\theta = dR_s$ for $e^r \gg 0$.

\begin{lemma}[Maximum principle]\label{Lemma Max principle for
contacto invariance} There is a constant $K>0$ depending only on
$f_s$ such that if $h_-' \geq K h_+'$ then it is possible to
choose a homotopy $h_s$ from $h_-$ to $h_+$ in such a way that the
maximum principle applies to the function
$$\rho(s,t) =
R_s(u(s,t)) = e^{r(u)-f_s(y(u))}$$
where $u$ is any local solution of Floer's equation $\partial_s u
+ J_s (\partial_t u - X_s)=0$ which lands in the collar $e^r \gg
0$, and where $J_s^*\theta = dR_s$ for $e^r \gg 0$. In particular,
a continuation map $SH^*(h_+) \to SH^*(h_-)$ can then be defined.
\end{lemma}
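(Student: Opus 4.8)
The plan is to adapt the proof of the standard maximum principle (Lemma \ref{Lemma Maximum principle}) to the twisted radial coordinate $\rho(s,t)=R_s(u(s,t))=e^{r(u)-f_s(y(u))}$. First I would compute, on the region $e^r\gg 0$, the analogue of Floer's equation in the splitting $T([1,\infty)\times\partial M)=\R\partial_r\oplus\R\mathcal{R}\oplus\xi$, now using the contact type condition $J_s^*\theta=dR_s$ rather than $J^*\theta=e^r\,dr$. Since $X_s=h_s'(R_s)\,(\text{Reeb-like vector field})$ — more precisely $X_s$ is determined by $\omega(\cdot,X_s)=d(h_s(R_s))=h_s'(R_s)\,dR_s$ — I expect Floer's equation to yield, after extracting the $\partial_r\oplus\mathcal{R}$ components and rescaling appropriately, two coupled first-order PDEs for $\rho$ of the shape $\partial_s\rho-\theta(\partial_t u)+(\text{something})\cdot h_s'=0$ and $\partial_t\rho+\theta(\partial_s u)=0$, but now with extra terms coming from the $s$- and $y$-dependence of $f_s$, i.e. from $\partial_s f_s$ and $d_yf_s$ applied to $\partial_s u$, $\partial_t u$.

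The second step is to form $\partial_s$ of the first equation plus $\partial_t$ of the second, exactly as in Lemma \ref{Lemma Maximum principle}, to obtain a subelliptic inequality $L\rho\ge 0$ for a second-order operator $L=\partial_s^2+\partial_t^2+(\text{first-order terms})$. The new feature is that the error terms involving $f_s$ are not manifestly of the right sign. Here is where the hypothesis $h_-'\ge Kh_+'$ and the freedom to choose the homotopy $h_s$ enter: the bad terms are linear in $h_s'$ and in $\partial_sf_s$ (which is supported in a fixed compact $s$-interval and bounded by a constant depending only on the chosen family $f_s$), whereas the good term $R_s\partial_sh_s'$ can be made arbitrarily negative-free, and the term $|\partial_su|^2$ is nonnegative. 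I would choose $h_s$ to interpolate from $h_-$ to $h_+$ with $\partial_sh_s'\le 0$, and so that along the homotopy $h_s'$ dominates all the $f_s$-error terms; the constant $K$ is precisely what is needed so that such an interpolation with $\partial_sh_s'\le 0$ exists (one needs $h_-'$ large enough that $h_s'$ can stay above the $f_s$-thresholds while decreasing monotonically to $h_+'$). Then $L\rho\ge 0$ and the standard PDE maximum principle \cite[Theorem 6.4.4]{Evans} gives that $\rho\circ u$ attains its maximum on $\partial\Omega$.

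The main obstacle I anticipate is bookkeeping the $f_s$-error terms correctly and verifying that they are all of lower order relative to the $h_s'$-term in a way that survives the two differentiations — in particular, controlling cross terms like $(d_yf_s)(\partial_t u)\cdot(d_yf_s)(\partial_s u)$ and the mixed $s$–$y$ derivatives of $f_s$, and checking that none of them can overwhelm $|\partial_s u|^2+R_s\partial_sh_s'$ once $h_s'$ is large. Since $f_s$ ranges over a fixed compact family of functions on the compact manifold $\partial M$, all its derivatives are uniformly bounded, so this is ultimately a matter of choosing $K$ large and $h_s$ carefully; I do not expect any conceptual difficulty beyond this. Once the maximum principle for $\rho$ is in place, the confinement of parametrized Floer trajectories to a compact set follows as in Theorem \ref{Theorem Chain map}(1), the $E_0$-energy bookkeeping goes through verbatim, and the continuation map $SH^*(h_+)\to SH^*(h_-)$ is defined by the usual count, completing the proof.
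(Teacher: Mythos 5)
Your proposal follows essentially the same route as the paper: derive a second-order elliptic inequality for $\rho$ (the paper packages the $\partial_s^2+\partial_t^2$ computation via $d^c\rho$ and $dd^c\rho$, which is the same thing), absorb the $f_s$-error terms using the freedom in choosing $h_s$, and invoke the Evans maximum principle. Two points where your sketch of the mechanism is off and would need correcting when you fill in the details. First, the error terms involving $d(\partial_s f_s)\cdot\partial_s u$ are tamed because the collar metric scales with $R$, giving $\|d(\partial_s f_s)\|_{\rho\times\partial M}\leq \rho^{-1}\|d(\partial_s f_s)\|_{1\times\partial M}$; this $\rho^{-1}$ factor is what lets everything be absorbed after completing the square against $|\partial_s u|^2$, and without noticing it the bookkeeping does not close. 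Second, the constant $K$ does \emph{not} arise from $h_s'$ having to stay above some $f_s$-determined threshold; rather, after completing the square the maximum principle requires the differential inequality $\partial_s\bigl(e^{Cs}h_s'\bigr)+e^{Cs}(C+C^2)\leq 0$ on the finite $s$-interval $I$ where $f_s$ actually varies (and merely $\partial_s h_s'\leq 0$ outside $I$). This is a Gronwall-type constraint on the \emph{rate of decrease} of $h_s'$: integrating over $I$ yields $h_+'\,e^{C\cdot\mathrm{length}(I)}-h_-'\leq C'$, which is why the hypothesis is the multiplicative one $h_-'\geq K h_+'$ with $K\approx e^{C\cdot\mathrm{length}(I)}$, rather than an absolute lower bound on $h_s'$. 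With these corrections your plan becomes the paper's proof.
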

\begin{proof}
We will seek an equation satisfied by $\Delta \rho$. Using
$J_s^*dR_s = -\theta$ we obtain
$$\begin{array}{ll}
\partial_s \rho & = \partial_s R_s(u) + dR_s \cdot \partial_s u =
- \rho \partial_s f_s + dR_s \cdot \partial_s u\\
& = - \rho \partial_s f_s + dR_s \cdot (J_s(X_s - \partial_t u))
= - \rho \partial_s f_s - \theta(X_s) + \theta(\partial_t u), \\
\partial_t \rho & = dR_s \cdot \partial_t u = dR_s \cdot (X_s +
J_s \partial_s u) = J_s^*dR_s \cdot \partial_s u = - \theta
(\partial_s u).\end{array}
$$
Since $X_s = h_s'(\rho)\mathcal{R}$ and $\theta(\mathcal{R}(u)) =
\rho$, we deduce $\theta(X_s)=\rho h_s'(\rho)$ so
$$d^c \rho = d\rho
\circ i = -\partial_s \rho \, dt +
\partial_t \rho \, ds =  -u^*\theta + \rho h_s'(\rho) \, dt +\rho \partial_s f_s \, dt.$$
Therefore $dd^c \rho = - \Delta \rho \, ds\wedge dt = -u^*\omega +
F\, ds\wedge dt$ where
$$
F = h_s' \partial_s \rho + \rho \partial_s h_s' + \rho h_s''
\partial_s \rho + \partial_s \rho \partial_s f_s + \rho
\partial_s^2 f_s + \rho
d(\partial_s f_s)\cdot \partial_s u.
$$
We now try to relate $u^*\omega$ with $|\partial_s u|^2$:
$$
\begin{array}{ll}
|\partial_s u| ^2 & = \omega (\partial_s u, \partial_t u - X_s) =
u^*\omega - dH_s\cdot \partial_s u = u^*\omega - h_s' dR_s \cdot
\partial_s u \\ & = u^*\omega - h_s' \partial_s \rho - h_s' \rho \partial_s f_s
\end{array}
$$
where we used that $\partial_s \rho = - \rho
\partial_s f_s + dR_s \cdot \partial_s u$.

Thus, $\Delta \rho = u^* \omega - F$ equals
$$|\partial_s u|^2 +
h_s' \rho
\partial_s f_s - \rho \partial_s h_s' - \rho h_s''
\partial_s \rho - \partial_s \rho \partial_s f_s - \rho
\partial_s^2 f_s - \rho d(\partial_s f_s) \cdot \partial_s u.
$$
We may assume that $f$ is $C^2-$bounded by a constant $C>0$. Then
in particular
$$
|d(\partial_s f_s) \cdot \partial_s u| \leq \|d(\partial_s
f_s)\|_{\rho \times \partial M} \cdot |\partial_s u| \leq
\rho^{-1} \|d(\partial_s f_s)\|_{1\times \partial M} \cdot
|\partial_s u| \leq \rho^{-1} C |\partial_s u|.
$$

We deduce an inequality for $\Delta \rho$,
$$
\begin{array}{lll}
\Delta \rho + \textrm{ first order terms} & \geq & |\partial_s
u|^2-\rho \partial_s h_s'
- \rho (h_s' C + C) - C |\partial_s u| \\
& \geq & (|\partial_s u| - \frac{1}{2}C)^2 - \rho (\partial_s h_s'
+ h_s' C + C) - \frac{1}{4}C^2.
\end{array}
$$
Adding $-\rho h_s''\partial_s \rho$ to both sides and dropping the
squared bracket, we deduce
$$
\Delta \rho + \textrm{ first order terms} \geq - \rho
e^{-Cs}\,[\,\partial_s(e^{Cs}h_s')+e^{Cs}(C+C^2)\,].
$$
Thus a maximum principle will apply for $\rho$ if the right hand
side is non-negative. Now $f_s$ only depends on $s$ on a finite
interval $I$ of $s$-values. On the complement of $I$, $\partial_s
f_s\equiv 0$ so actually $\Delta \rho + \textrm{first order} \geq
-\rho
\partial_s h_s'$, so we only require $\partial_s
h_s'\leq 0$. On $I$ we need the last square-bracket to be
negative. Integrate this condition over $I$:
$$
h_+' e^{C\cdot\textrm{length}(I)} -h_-' \leq C'
$$
where $C'$ is a constant depending only on $C$ and the length of
$I$. Thus it is possible to choose $h_s$ satisfying the above
conditions, provided that $h_-'\gg h_+'$.
\end{proof}

\begin{theorem}\label{Theorem Invariance under Contactomorphs}
If $\varphi: \widehat{M} \to \widehat{N}$ is a symplectomorphism
of contact type at infinity, then $SH^*(M)\cong SH^*(N)$.
\end{theorem}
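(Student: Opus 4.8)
The plan is to compare, by means of $\varphi$, the directed system of Hamiltonians linear at infinity on $\widehat{N}$ with a directed system on $\widehat{M}$ built out of the pulled-back Hamiltonians $\varphi^{*}H$, and then to show that this pulled-back system still computes the usual $SH^{*}(M,\omega_{M})$.

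First I would observe that, because $\varphi$ is a symplectomorphism, for any Hamiltonian $H$ on $\widehat{N}$ linear at infinity and any $\omega_{N}$-compatible contact-type $J_{N}$, composing with $\varphi$ sets up a bijection between Floer solutions on $\widehat{N}$ for $(H,\omega_{N},J_{N})$ and Floer solutions on $\widehat{M}$ for $(\varphi^{*}H,\omega_{M},\varphi^{*}J_{N})$, matching up $1$-periodic orbits and broken configurations. Since $\varphi^{*}\omega_{N}=\omega_{M}$, the weights $t^{E_{0}(v)}$ are preserved: recall $E_{0}(v)=\int v^{*}(\omega-dK\wedge dt)$ depends only on $\omega$ and hence does not see the compactly supported function $g$ by which $\varphi^{*}\theta_{N}$ differs from $\theta_{M}$. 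Thus $\varphi$ induces an isomorphism of chain complexes $SC^{*}(H)\cong SC^{*}(\varphi^{*}H)$ intertwining differentials and continuation maps, so $SH^{*}(N,\omega_{N})=\varinjlim_{H}SH^{*}(H)\cong\varinjlim_{H}SH^{*}(\varphi^{*}H)$, where the latter limit runs over the pulled-back Hamiltonians, which at infinity have exactly the shape $h(e^{r-f(y)})$ treated in Lemma \ref{Lemma Max principle for contacto invariance}. The continuation maps, chain homotopies, and independence statements for this enlarged class of Hamiltonians are obtained by re-running Theorems \ref{Theorem Chain map} and \ref{Theorem Chain Homotopy} with the maximum principle of Lemma \ref{Lemma Max principle for contacto invariance} replacing that of Lemma \ref{Lemma Maximum principle}; the only extra constraint is the slope condition $h_{-}'\geq Kh_{+}'$, which is harmless for a direct limit because one is free to let slopes grow as fast as one wishes.

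Next I would identify $\varinjlim_{H}SH^{*}(\varphi^{*}H)$ with $SH^{*}(M,\omega_{M})$ by a cofinality argument. As $f$ is bounded, say $|f|\leq C_{0}$, a Hamiltonian equal to $me^{r-f(y)}+C$ at infinity is sandwiched outside a compact set between the genuine linear Hamiltonians on $\widehat{M}$ of slopes $me^{-C_{0}}$ and $me^{C_{0}}$; conversely every genuine linear Hamiltonian on $\widehat{M}$ sits below some $\varphi^{*}H^{m}$. Continuation maps from a larger slope down to a smaller one are always available from the maximum principles, the linear-to-distorted and distorted-to-linear directions being handled respectively by Lemma \ref{Lemma Max principle for contacto invariance} and Lemma \ref{Lemma Maximum principle}, again exploiting that we may make the slope drop as large as the hypotheses demand. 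Interleaving the two families into a single directed system and using Theorem \ref{Theorem Chain Homotopy}, parts (3)--(5), to see that all resulting triangles commute on cohomology, the two subsystems are cofinal and so have the same direct limit. Combined with the previous step this gives $SH^{*}(M,\omega_{M})\cong SH^{*}(N,\omega_{N})$.

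I expect the main obstacle to be the bookkeeping in the second step: one must check that the maximum principle of Lemma \ref{Lemma Max principle for contacto invariance} survives homotopies interpolating between an honest linear Hamiltonian and a distorted one, so that the monotone-homotopy machinery of Theorem \ref{Theorem Chain map} applies in the mixed setting, and that every continuation square in the interleaved system commutes on cohomology, which amounts to checking that the chain-homotopy argument of Theorem \ref{Theorem Chain Homotopy} goes through verbatim with $R$ replaced by $\rho=R_{s}\circ u$. A secondary point to pin down is that the $E_{0}$-weights really are preserved under $\varphi$ and under these comparison maps, so that all the isomorphisms are isomorphisms of $\Lambda$-modules; this follows from the homotopy invariance of $E_{0}$ and the identity $\varphi^{*}\omega_{N}=\omega_{M}$, but deserves to be spelled out.
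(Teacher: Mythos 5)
Your proposal is correct and follows essentially the same route as the paper: identify Floer data via $\varphi$, reducing to a comparison of $SH^{*}(M)=\varinjlim SH^{*}(h)$ with the direct limit over the distorted Hamiltonians $h(e^{r-f(y)})$, and then conclude by an interleaving/cofinality argument using the maximum principle of Lemma \ref{Lemma Max principle for contacto invariance}. The only cosmetic slip is attributing one direction of the mixed continuation maps to Lemma \ref{Lemma Maximum principle} rather than Lemma \ref{Lemma Max principle for contacto invariance} (which handles arbitrary $f_{s}$, so covers both directions, with Lemma \ref{Lemma Maximum principle} as the $f_{s}\equiv 0$ special case), and referring to the differential's weight as $t^{E_{0}(u)}$ rather than $t^{E(u)}$; neither affects the argument.
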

\begin{proof}
By identifying the Floer solutions via $\varphi$, the claim
reduces to showing that the symplectic cohomology $SH^*(M) =
\varinjlim SH^*(h)$ is isomorphic to the symplectic cohomology
$SH_f^*(M) = \varinjlim SH_f^*(h)$ which is calculated for
Hamiltonians of the form $H(e^r,y) = h(e^{r-f(y)})$, where the $h$
are linear at infinity and $f:\partial M \to \R$ is a fixed smooth
function.

Pick an interpolation $f_s$ from $f$ to $0$, constant in $s$ for
large $|s|$. We can inductively construct Hamiltonians $h_n$ and
$k_n$ on $\widehat{M}$ with $h_n'\gg k_n'$ and $k_{n+1}' \gg
h_{n}'$, which by Lemma \ref{Lemma Max principle for contacto
invariance} yield continuation maps
$$
\phi_n:SH_f^*(k_{n}) \to SH^*(h_{n}), \; \psi_n:SH^*(h_{n}) \to
SH_f^*(k_{n+1}).
$$
We can arrange that the slope at infinity of the $h_n$, $k_n$ grow
to infinity as $n\to \infty$, so that $SH_f^*(M) = \varinjlim
SH_f^*(k_n)$ and $SH^*(M) = \varinjlim SH^*(h_n)$.

The composites $\psi_n \circ \phi_n$ and $\phi_{n+1}\circ \psi_n$
are equal to the ordinary continuation maps $SH_f^*(k_{n}) \to
SH_f^*(k_{n+1})$ and $SH^*(h_{n}) \to SH^*(h_{n+1})$.

Therefore the maps $\phi_n$ and $\psi_n$ form a compatible family
of maps and so define
$$
\phi:SH_f^*(M) \to SH^*(M), \; \psi:SH^*(M) \to SH_f^*(M).
$$
The composites $\psi\circ \phi$ and $\phi\circ \psi$ are induced
by the families $\psi_n \circ \phi_n$ and $\phi_{n+1} \circ
\psi_n$, which are the ordinary continuation maps defining the
direct limits $SH_f^*(M)$ and $SH^*(M)$. Hence $\phi\circ \psi$,
$\psi\circ \phi$ are identity maps, and so $\phi,\psi$ are
isomorphisms.
\end{proof}
%
%
%
%
%
\subsection{Independence from choice of cohomology representative}
\label{Subsection Independence from choice of cohomology
representative}
%
%
%
\begin{lemma}\label{Lemma Moser deformation}
Let $\eta$ be a one-form supported in the interior of $M$. Suppose
there is a homotopy $\omega_{\lambda}$ through symplectic forms
from $\omega$ to $\omega+d\eta$. By Moser's lemma this yields an
isomorphism $\varphi:(\widehat{M},\omega+d\eta) \to
(\widehat{M},\omega)$, and therefore a chain isomorphism
$$
\varphi:SC^*(H,\omega+d\eta) \to SC^*(\varphi^*H,\omega),
$$
which is the identity on orbits outside $M$ and sends the orbits
$x$ in $M$ to $\varphi^{-1}x$.
\end{lemma}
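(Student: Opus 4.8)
The plan is to invoke Moser's stability argument to produce the symplectomorphism $\varphi$, and then to check that it is supported in a compact set so that the induced map on Floer complexes is combinatorially transparent. First I would observe that the forms $\omega_\lambda = \omega + \lambda\, d\eta$ (or any homotopy through symplectic forms joining $\omega$ to $\omega + d\eta$) all agree with $\omega$ outside the support of $\eta$, which sits in the interior of $M$; in particular they agree on the entire collar of $\widehat M$. The cohomological derivative $\tfrac{d}{d\lambda}\omega_\lambda$ is exact, equal to $d\eta_\lambda$ with $\eta_\lambda$ compactly supported in the interior of $M$, so Moser's lemma applies verbatim on $\widehat M$: one solves $\iota_{X_\lambda}\omega_\lambda = -\eta_\lambda$ for a time-dependent vector field $X_\lambda$ supported in $\mathrm{supp}\,\eta \subset \mathrm{int}\,M$, and integrates its flow. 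Because $X_\lambda$ vanishes outside a compact set, the flow is defined for all $\lambda \in [0,1]$ and is the identity outside that compact set; its time-one map is the desired $\varphi : (\widehat M, \omega + d\eta) \to (\widehat M, \omega)$ with $\varphi^*\omega = \omega + d\eta$ and $\varphi = \mathrm{id}$ on the collar.

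Next I would transport the Floer data. Given a Hamiltonian $H$ on $(\widehat M, \omega + d\eta)$ which is linear at infinity, the pullback $\varphi^*H$ agrees with $H$ outside $\mathrm{supp}\,\eta$, hence is still of the form $h(R)$ on the collar and still linear at infinity. The Hamiltonian vector field satisfies the naturality relation $X^{\varphi^*H,\,\omega} = \varphi^* X^{H,\,\omega+d\eta}$, so $1$-periodic orbits correspond bijectively: an orbit $x$ of $X^{H,\omega+d\eta}$ in $M$ is sent to $\varphi^{-1}x$, and orbits outside $\mathrm{supp}\,\eta$ (in particular all orbits on the collar) are fixed. Likewise, choosing the almost complex structure $\varphi^*J$ on the source, Floer solutions for $(\varphi^*H, \omega, \varphi^*J)$ are exactly the $\varphi$-preimages of Floer solutions for $(H, \omega+d\eta, J)$, with the same energy $E(u)$ since $E$ is computed from $\omega$-integrals and $\varphi^*\omega = \omega + d\eta$. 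Therefore the assignment $x \mapsto \varphi^{-1}x$ on generators, extended $\Lambda$-linearly, intertwines the two differentials and gives the claimed chain isomorphism $\varphi : SC^*(H,\omega+d\eta) \to SC^*(\varphi^*H,\omega)$.

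The only genuinely delicate point is the compact support of the Moser vector field, and hence of $\varphi$: this is what guarantees that $\varphi^*H$ remains admissible (linear at infinity) and that no orbits or trajectories are created or destroyed near infinity. Since $\eta$ is supported in $\mathrm{int}\,M$ by hypothesis and the primitives $\eta_\lambda$ in Moser's construction can be taken with support in $\mathrm{supp}\,\eta$ (solve $\iota_{X_\lambda}\omega_\lambda = -\eta$, which already has compact support, using that $\omega_\lambda$ is nondegenerate), this is immediate. The remaining verifications — that $\varphi$ conjugates Hamiltonian vector fields, almost complex structures, and energies correctly — are the standard naturality of Floer theory under symplectomorphisms, already used in Section \ref{Section Invariance under contactomorphisms}, and require no new input.
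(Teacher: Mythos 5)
The paper gives no proof of this lemma, treating it as a standard consequence of Moser's argument together with the naturality of Floer data under compactly supported symplectomorphisms; your proposal supplies exactly that argument and is correct. The one point worth keeping explicit --- which you do --- is that the Moser vector field must be compactly supported in the interior of $M$ (for a general homotopy $\omega_{\lambda}$ this requires the primitives of $\partial_{\lambda}\omega_{\lambda}$ to be chosen with compact support, as is the case in every application in Section \ref{Section Deformation of the Symplectic cohomology}), so that $\varphi$ is the identity on the collar and $\varphi^*H$ remains linear at infinity.
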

%
%
%
%
%
%
%
%
\section{Twisted symplectic cohomology}
\label{Section Twisted symplectic cohomology}
%
%
\subsection{Transgressions}\label{SubsectionTransgressions}
Let $ev: \mathcal{L} M \times S^1 \to \mathcal{L} M$ be the
evaluation map. Define
$$\xymatrix{
\tau=\pi\circ ev^*: H^2(M;\R)\ar[r]^-{ev^*} & H^2(\mathcal{L} M
\times S^1;\R)\ar[r]^-{\pi} & H^1(\mathcal{L} M;\R) },
$$
where $\pi$ is the projection to the K\"{u}nneth summand.
Explicitly, $\tau \beta$ evaluated on a smooth path $u$ in
$\mathcal{L}M$ is given by
$$
\tau\beta(u) = \int \beta(\partial_s u, \partial_t u) \, ds\wedge
dt.
$$

In particular, $\tau\beta$ vanishes on time-independent paths in
$\mathcal{L} M$. If $M$ is simply connected, then $\tau$ is an
isomorphism. After identifying
$$H^1(\mathcal{L} M;\R) \cong
\textrm{Hom}_{\R}(H_1(\mathcal{L} M;\R),\R) \cong
\textrm{Hom}(\pi_1(\mathcal{L} M),\R)$$ and $\pi_1(\mathcal{L} M)
= \pi_2(M) \rtimes \pi_1(M)$, the $\tau \beta$ correspond
precisely to homomorphisms $\pi_2(M) \to \R$. In particular, if
$\beta$ is an integral class then this homomorphism is $f_*:
\pi_2(M) \to \Z$ where $f:M \to \CP$ is a classifying map for
$\beta$.
%
%
%
\subsection{Novikov bundles of coefficients}
\label{SubsectionNovikovbundles}
We suggest \cite{Whitehead} as a reference on local systems. Let
$\alpha$ be a singular smooth real cocycle representing $a \in
H^1(\mathcal{L} M; \R)$. The Novikov bundle
$\underline{\Lambda}_{\alpha}$ is the local system of coefficients
on $\mathcal{L} M$ defined by a copy $\Lambda_{\gamma}$ of
$\Lambda$ over each loop $\gamma \in \mathcal{L} M$ and by the
multiplication isomorphism
$$t^{-\alpha[u]} \co  \Lambda_{\gamma}
\to \Lambda_{\gamma'}$$
for each path $u$ in $\mathcal{L} M$ connecting $\gamma$ to
$\gamma'$. Here $\alpha[\cdot] \co C_1(\mathcal{L} M;\R) \to \R$
denotes evaluation on smooth singular one-chains, which is given
explicitly by
$$
\alpha[u] = \int \alpha(\partial_s u) \, ds.
$$
Changing $\alpha$ to $\alpha+df$ yields a change of basis
isomorphism $x \mapsto t^{f(x)}x$ for the local systems, so by
abuse of notation we write $\underline{\Lambda}_{a}$ and $a[u]$
instead of $\underline{\Lambda}_{\alpha}$ and $\alpha[u]$.

\begin{remark}
In \cite{Ritter} we used the opposite sign convention
$t^{\alpha[u]}$. In this paper we changed it for the following
reason. For a Liouville domain $(M,d\theta)$, the local system for
the action $1-$form $\alpha=dA_H$ acts on Floer solutions $u\in
\mathcal{M}(x,y;d\theta)$ by
$$
t^{-dA_H[u]} = t^{A_H(x) - A_H(y)}.
$$
Therefore large energy Floer solutions will occur with high powers
of $t$.
\end{remark}

We will be considering the (co)homology of $M$ or $\mathcal{L}M$
with local coefficients in the Novikov bundles, and we now mention
two recurrent examples. First consider a transgressed form $\alpha
= \tau\beta$ (see \ref{SubsectionTransgressions}). Since
$\tau(\beta)$ vanishes on time-independent paths,
$\underline{\Lambda}_{\tau\beta}$ pulls back to a trivial bundle
via the inclusion of constant loops $c \co M \to \mathcal{L}M$. So
for the bundle $c^*\underline{\Lambda}_{\tau\beta}$ we just get
ordinary cohomology with underlying ring $\Lambda$,
$$
H^*(M;c^*\underline{\Lambda}_{\tau(\beta)})\cong H^*(M;\Lambda).
$$

Secondly, consider a map $j\co L \to M$. This induces a map
$\mathcal{L}j \co \mathcal{L}L \to \mathcal{L}M$ which by the
naturality of $\tau$ satisfies
$ (\mathcal{L}j)^*\underline{\Lambda}_{\tau(\beta)} \cong
\underline{\Lambda}_{\tau(j^*\beta)}. $
For example if $\tau(j^*\beta)=0 \in H^1(\mathcal{L} L;\R)$ then
this is a trivial bundle, so the corresponding Novikov homology is
$$
H_*(\mathcal{L}
L;(\mathcal{L}j)^*\underline{\Lambda}_{\tau(\beta)}) \cong
H_*(\mathcal{L} L)\otimes\Lambda.
$$
%
%
\subsection{Twisted Floer
cohomology}\label{SubsectionNovikovFloerCohomology}
Let $(M^{2n},\theta)$ be a Liouville domain. Let $\alpha$ be a
singular cocyle representing a class in $H^1(\mathcal{L} M;\R)
\cong H^1(\mathcal{L} \widehat{M};\R)$. The Floer chain complex
for $H \in C^{\infty}(\widehat{M},\R)$ with twisted coefficients
in $\underline{\Lambda}_{\alpha}$ is the $\Lambda-$module freely
generated by the $1$-periodic orbits of $X_H$,
$$
CF^*(H;\underline{\Lambda}_{\alpha}) =\bigoplus \left\{ \Lambda x
: x \in \mathcal{L} \widehat{M},\; \dot{x}(t) = X_H(x(t))
\right\},
$$
and the differential $\delta$ on a generator $y \in
\textnormal{Crit}(A_H)$ is defined as
$$
\delta y = \sum_{u\in \mathcal{M}_0(x,y)} \epsilon(u)\,
t^{-\alpha[u]}\,  x,
$$
where $\epsilon(u)\in \{\pm 1\}$ are orientation signs and
$\mathcal{M}_0(x,y)$ is the $0-$dimensional component of Floer
trajectories connecting $x$ to $y$. It is always understood that
we perturb $(H,J)$ as explained in \ref{Subsection Transversality
and Compactness}.

The ordinary Floer complex (with underlying ring $\Lambda$) has no
weights $t^{-\alpha[u]}$ in $\delta$. These appear in the twisted
case because they are the multiplication isomorphisms $\Lambda_{x}
\to \Lambda_{y}$ of the local system
$\underline{\Lambda}_{\alpha}$ which identify the $\Lambda-$fibres
over $x$ and $y$ (see \ref{SubsectionNovikovbundles}).

\begin{proposition-definition}\cite{Ritter}
$CF^*(H;\underline{\Lambda}_{\alpha})$ is a chain complex:
$\delta\circ\delta = 0$, and its cohomology
$HF^*(H;\underline{\Lambda}_{\alpha})$ is a $\Lambda-$module
called twisted Floer cohomology.
\end{proposition-definition}
%
%
\subsection{Twisted symplectic
cohomology}\label{SubsectionNovikovSymplecticCohomology}
%
%
\begin{proposition}[Twisted continuation
maps, \cite{Ritter}] For the twisted Floer cohomology of
$(M,d\theta)$, Theorem \ref{Theorem Chain map} continues to hold
for the continuation maps
$ \phi: CF^*(H_{+};\underline{\Lambda}_{\alpha}) \to
CF^*(H_{-};\underline{\Lambda}_{\alpha}) $
defined on generators $y \in \textnormal{Crit}(A_{H_{+}})$ by
$$
\phi(y)= \sum_{v \in \mathcal{M}_0(x,y)} \epsilon(v)\,
t^{-\alpha[v]} \, x.
$$
\end{proposition}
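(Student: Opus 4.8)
The plan is to show that the proof of Theorem \ref{Theorem Chain map} goes through essentially verbatim once the weight $t^{E_0(v)}$ attached to a parametrized Floer trajectory $v$ is replaced by the local-system weight $t^{-\alpha[v]}$. First I would dispose of the analogues of items (1)--(3) of Theorem \ref{Theorem Chain map}: the confinement of parametrized trajectories to the compact region $C = M \cup \{R \le R_\infty\}$ (Lemma \ref{Lemma Maximum principle}), the transversality of the moduli spaces $\mathcal{M}(x,y)$ after a generic perturbation of $(H_s,J_s)$, and the existence of compactifications of $\mathcal{M}(x,y;K) = \{v : E_0(v) \le K\}$ by broken trajectories. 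These statements concern only the geometry of Floer solutions; the coefficient system $\underline{\Lambda}_\alpha$ plays no role in them, and neither do the orientation signs $\epsilon(v)$ change, so they carry over unchanged.

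Next I would isolate the one genuinely new ingredient: the elementary properties of the weight $t^{-\alpha[v]}$. By definition $\alpha[v] = \int \alpha(\partial_s v)\, ds$ is the value of the singular cocycle $\alpha$ on the $1$-chain in $\mathcal{L}M$ swept out by $v$. Hence $\alpha[\cdot]$ is additive under concatenation, $\alpha[v' \# v''] = \alpha[v'] + \alpha[v'']$, and it is invariant under homotopies of $v$ rel ends: if $v$ and $\tilde v$ are so homotopic then $v - \tilde v = \partial w$ for a $2$-chain $w$ in $\mathcal{L}M$, whence $\alpha[v] - \alpha[\tilde v] = \alpha[\partial w] = (d\alpha)[w] = 0$ since $\alpha$ is closed. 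These are precisely the two properties of $E_0(\cdot)$ that were exploited in the proof of Theorem \ref{Theorem Chain map}, so $t^{-\alpha[v]}$ is an equally serviceable bookkeeping device; this is the one place where the cocycle condition on $\alpha$ enters.

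With this in hand I would complete the analogues of items (4) and (5). For well-definedness of $\phi$: since $(M,d\theta)$ is a Liouville domain and $H_\pm$ are linear at infinity, Stokes' theorem applied to the last formula for $E_0(v)$ in \ref{Subsection Continuation Maps} gives $E_0(v) = A_{H_-}(x) - A_{H_+}(y)$ for every $v \in \mathcal{M}_0(x,y)$, so $\mathcal{M}_0(x,y) = \mathcal{M}_0(x,y;K)$ with $K$ just above this value; by (3) this space is compact, hence finite, and $\phi(y) = \sum_v \epsilon(v)\, t^{-\alpha[v]}\, x$ is a finite $\Lambda$-linear combination of the finitely many generators $x$. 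For the chain-map property: a sequence $v_n$ in a $1$-dimensional component of $\mathcal{M}_1(x,y;K)$ converges, after reparametrisation, to a broken configuration $u \# v$ one of whose factors is a genuine $H_+$- or $H_-$-Floer trajectory, exactly as in Theorem \ref{Theorem Chain map}; since $v_n$ and this broken limit lie in the compactification of one and the same component they are homotopic rel ends, so $-\alpha[v_n] = -\alpha[u] - \alpha[v]$ and the weight of the broken pair equals that of $v_n$. The two ends of each such component therefore contribute cancelling $\Lambda$-multiples of $x$, with signs as in the untwisted count, and summing over all components yields $\delta_{H_-} \circ \phi = \phi \circ \delta_{H_+}$.

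The main obstacle is nothing more than bookkeeping: one must check systematically that every appeal to a property of $t^{E_0(v)}$ in the original argument is matched by the corresponding property of $t^{-\alpha[v]}$, the only substantive new input being the identity $d\alpha = 0$ used above. The analytic content --- maximum principle, transversality, compactness, and gluing --- is untouched, and in the same way Theorem \ref{Theorem Chain Homotopy} carries over to the twisted complexes $CF^*(H;\underline{\Lambda}_\alpha)$.
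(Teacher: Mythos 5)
Your proposal is correct and follows exactly the route the paper intends: the paper states this proposition without proof (deferring to \cite{Ritter}), and the intended argument is precisely that the proof of Theorem \ref{Theorem Chain map} carries over once $t^{E_0(v)}$ is replaced by $t^{-\alpha[v]}$, with the closedness of $\alpha$ supplying the additivity and homotopy-invariance rel ends that $E_0$ provided, and with the exactness of $(M,d\theta)$ giving the a priori identity $E_0(v)=A_{H_-}(x)-A_{H_+}(y)$ that makes $\mathcal{M}_0(x,y)$ finite and $\phi$ well-defined. No gaps.
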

\begin{definition}
The twisted symplectic cohomology of $(M,d\theta;\alpha)$ is
$$
SH^*(M,d\theta;\underline{\Lambda}_{\alpha}) = \varinjlim
HF^*(H,d\theta;\underline{\Lambda}_{\alpha}),
$$
where the direct limit is over the twisted continuation maps
between Hamiltonians $H$ which are linear at infinity.
\end{definition}
%
%
%
%
\subsection{Independence from choice of cohomology representative}
\label{Subsection Independence from choice of cohomology
representative}
%
%
%
\begin{lemma}\label{Lemma Moser deformation twisted}
Let $f_H\in C^{\infty}(\mathcal{L}\widehat{M},\R)$ be an
$H-$dependent function. Then the change of basis isomorphisms
$x\mapsto t^{f_H(x)}x$ of the local systems induce chain
isomorphisms
$$SC^*(H,d\theta;\underline{\Lambda}_{\alpha})
\cong  SC^*(H,d\theta;\underline{\Lambda}_{\alpha+df_H})
$$
which commute with the twisted continuation maps.
\end{lemma}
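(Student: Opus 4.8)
The plan is to exhibit the change of basis $\Phi\colon x\mapsto t^{f_H(x)}x$, extended $\Lambda$-linearly, as the desired chain isomorphism and then verify its compatibility with continuation maps. First I would observe that $\Phi$ is trivially a $\Lambda$-module isomorphism: it permutes the free $\Lambda$-generators (the $1$-periodic orbits of $X_H$) up to multiplication by the invertible scalars $t^{f_H(x)}\in\Lambda$, and since $t$ carries degree zero it is also degree-preserving. So the lemma reduces to two checks: that $\Phi$ is a chain map from $(SC^*(H,d\theta;\underline{\Lambda}_{\alpha}),\delta_\alpha)$ to $(SC^*(H,d\theta;\underline{\Lambda}_{\alpha+df_H}),\delta_{\alpha+df_H})$, and that, over a monotone homotopy of Hamiltonians, the isomorphisms for the two endpoints intertwine the twisted continuation maps of \ref{SubsectionNovikovSymplecticCohomology}. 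Throughout, $\Phi$ changes only the algebraic weights, not the underlying moduli spaces, so transversality, the maximum principle (Lemma \ref{Lemma Maximum principle}) and the compactness of $\mathcal{M}_0$ below an energy bound are untouched.

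The chain-map check rests on one identity, the fundamental theorem of calculus in loop space: for any smooth path $u$ in $\mathcal{L}\widehat{M}$ from $\gamma_-$ to $\gamma_+$,
$$
df_H[u]\;=\;\int df_H(\partial_s u)\,ds\;=\;f_H(\gamma_+)-f_H(\gamma_-).
$$
Applying this to a rigid Floer trajectory $u\in\mathcal{M}_0(x,y)$, whose limits are $x$ at $s=-\infty$ and $y$ at $s=+\infty$, and comparing
$$
\Phi(\delta_\alpha y)=\sum_{u\in\mathcal{M}_0(x,y)}\epsilon(u)\,t^{f_H(x)-\alpha[u]}\,x
\quad\text{with}\quad
\delta_{\alpha+df_H}(\Phi y)=\sum_{u\in\mathcal{M}_0(x,y)}\epsilon(u)\,t^{f_H(y)-\alpha[u]-df_H[u]}\,x,
$$
the two sums agree term by term because $f_H(y)-df_H[u]=f_H(x)$. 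Thus $\Phi\,\delta_\alpha=\delta_{\alpha+df_H}\,\Phi$, and in particular $SC^*(H,d\theta;\underline{\Lambda}_{\alpha})\cong SC^*(H,d\theta;\underline{\Lambda}_{\alpha+df_H})$.

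For the continuation maps, let $H_s$ be a monotone homotopy from $H_-$ to $H_+$ as in Theorem \ref{Theorem Chain map} and interpolate a family $f_{H_s}$ with $f_{H_s}=f_{H_\mp}$ for $s$ near $\mp\infty$. Applying the same identity to the function $(s,\gamma)\mapsto f_{H_s}(\gamma)$ along a rigid parametrized trajectory $v$ produces $f_{H_+}(y)-f_{H_-}(x)$ as $\int df_{H_s}(\partial_s v)\,ds$ plus the ``vertical'' contribution $\int(\partial_s f_{H_s})(v)\,ds$. Conjugating a twisted continuation map $\phi_\alpha$ by $\Phi_{H_\pm}$ at the two ends therefore replaces the weight $t^{-\alpha[v]}$ by the weight of the twisted continuation map computed with the $s$-dependent cocycle $\alpha+df_{H_s}$, corrected by this vertical term; since every $df_{H_s}$ is exact, all these cocycles are cohomologous and the bundle-level identifications $\underline{\Lambda}_{\alpha+df_{H_\pm}}\cong\underline{\Lambda}_\alpha$ of \ref{SubsectionNovikovbundles} make the relevant squares commute. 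Passing to the direct limit over Hamiltonians linear at infinity, and invoking invariance under the choice of homotopy and composability from Theorem \ref{Theorem Chain Homotopy}, then yields the asserted isomorphism of symplectic cohomologies. The step that needs care — the main obstacle — is precisely the $H$-dependence of $f_H$ in the continuation-map compatibility: because the change of basis at the two ends uses the \emph{different} functions $f_{H_-}$ and $f_{H_+}$, one must check that the extra weight picked up along the homotopy of cocycles $\{df_{H_s}\}$, together with the vertical term $\int(\partial_s f_{H_s})(v)\,ds$, exactly accounts for the discrepancy; in the applications where $f_H$ is an action functional this vertical term is the familiar correction relating the monotone-homotopy weight $E_0$ to the energy $E$, so it is automatically present in the theory.
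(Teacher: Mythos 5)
Your verification is correct, and the key identity is exactly the right one: for a rigid Floer trajectory $u\in\mathcal{M}_0(x,y)$ viewed as a path in $\mathcal{L}\widehat M$ from $x$ at $s=-\infty$ to $y$ at $s=+\infty$, the fundamental theorem of calculus gives $df_H[u]=f_H(y)-f_H(x)$, and this is precisely what makes $\Phi\,\delta_\alpha = \delta_{\alpha+df_H}\Phi$ term by term. The paper states this lemma without proof — it is treated as an observation about local systems, as is standard — so there is no internal argument to compare against, but yours supplies the expected one-line computation.

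One remark on the continuation-map part: you make it more elaborate than it needs to be by interpolating a family $f_{H_s}$ and introducing the ``vertical'' correction $\int(\partial_s f_{H_s})(v)\,ds$. No interpolation is required. The twisted continuation map $\phi_\alpha\colon SC^*(H_+;\underline{\Lambda}_\alpha)\to SC^*(H_-;\underline{\Lambda}_\alpha)$ uses a single, fixed cocycle $\alpha$, and conjugating by the end-of-homotopy change-of-basis maps $\Phi_{H_\pm}$ produces a map with weight $t^{\,f_{H_-}(x)-f_{H_+}(y)-\alpha[v]}$. This conjugate is, by construction, the continuation map for the right-hand system, so the square commutes tautologically; the only content one needs for the direct-limit argument is that $\Phi_H$ is a chain isomorphism for each $H$, which you have established. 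Your extra discussion of the $s$-dependent cocycle and the analogy with $E_0$ versus $E$ is not wrong, but it imports considerations (the relationship between $E$ and $E_0$ in the non-exact theory) that belong to Theorem \ref{Theorem Transversality of the 1-parameter family continuation}, not to this lemma, and in this purely algebraic setting they risk obscuring rather than illuminating why the diagram commutes. The clean statement is: cohomologous cocycles give chain-isomorphic twisted complexes, and these isomorphisms are natural with respect to continuation maps because the continuation-map weights only involve $\alpha[v]$, not $H$ directly.

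Two trivial points worth noting for completeness: the change-of-basis scalars $t^{f_H(x)}$ are units in $\Lambda$, and they sit in degree zero, so $\Phi$ is indeed a graded $\Lambda$-module isomorphism as you say; and since $\Phi$ only rescales generators, it does not touch the moduli spaces, transversality, or the maximum principle, so no analytic input is being re-derived. All of this is as it should be.
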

%
%
%
\subsection{Twisted maps $\mathbf{c_*}$ from ordinary cohomology}
\label{Subsection The twisted maps c from ordinary cohomology}
%
%

The twisted symplectic cohomology comes with a map from the
induced Novikov cohomology of $M$,
$$c_*:
H^*(M;c^*\underline{\Lambda}_{\alpha}) \to
SH^*(M;d\theta,\underline{\Lambda}_{\alpha}).$$
The construction is analogous to \ref{Subsection The maps c from
ordinary cohomology}, and was carried out in detail in
\cite{Ritter}. The map $c_*$ comes automatically with the direct
limit construction of
$SH^*(M;d\theta,\underline{\Lambda}_{\alpha})$, since for the
Hamiltonian $H^{\delta}$ described in \ref{Subsection The maps c
from ordinary cohomology} we have
$$
H\!F^*(H^{\delta};\underline{\Lambda}_{\alpha}) \cong
H\!M^*(H^{\delta};c^*\underline{\Lambda}_{\alpha}) \cong
H^*(M;c^*\underline{\Lambda}_{\alpha}).$$
%
%
%
%
%
\subsection{Twisted Functoriality}
\label{Subsection Twisted Viterbo Functoriality}
%
%
In \cite{Ritter} we proved the following variant of Viterbo
functoriality \cite{Viterbo1}, which holds for \emph{Liouville
subdomains} $(W^{2n},\theta') \subset (M^{2n},\theta)$. These are
Liouville domains for which $\theta - e^\rho\theta'$ is exact for
some $\rho \in \R$. The standard example is the Weinstein
embedding $DT^*L \hookrightarrow DT^*N$ of a small disc cotangent
bundle of an exact Lagrangian $L \hookrightarrow DT^*N$ (see
\cite{Ritter}).
\begin{theorem}\cite{Ritter}\label{TheoremViterboTwistedFunctoriality}
Let $ i:(W^{2n},\theta') \hookrightarrow (M^{2n},\theta) $ be a
Liouville embedded subdomain. Then there exists a map
$$SH^*(i):SH^*(W,d\theta';\underline{\Lambda}_{(\mathcal{L}i)^*\alpha})
\leftarrow SH^*(M,d\theta;\underline{\Lambda}_{\alpha})$$
which fits into the commutative diagram
$$
\xymatrix{
SH^*(W,d\theta';\underline{\Lambda}_{(\mathcal{L}i)^*\alpha})
\ar@{<-}[r]^-{SH^*(i)} \ar@{<-}[d]^-{c_*} &
SH^*(M,d\theta;\underline{\Lambda}_{\alpha}) \ar@{<-}[d]^-{c_*} \\
H^*(W;c^*\underline{\Lambda}_{(\mathcal{L}i)^*\alpha})
\ar@{<-}[r]^-{i^*} & H^*(M;c^*\underline{\Lambda}_{\alpha}) }
$$
\end{theorem}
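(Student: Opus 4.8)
The idea is to realise $SH^*(i)$ as Viterbo's restriction (transfer) map while carrying the Novikov coefficients along; the crucial simplification in the twisted case is that $\underline{\Lambda}_{(\mathcal{L}i)^*\alpha}$ is the genuine pullback of $\underline{\Lambda}_{\alpha}$, hence transparent along any Floer cylinder that remains inside $W$.

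First I would fix a cofinal family of step-shaped Hamiltonians on $\widehat{M}$. After rescaling $\theta'$ so that $\theta-\theta'$ is exact and supported away from $\partial W$, the Liouville flows of $W$ and of $M$ agree on a collar of $\partial W$; write $R_W$ for the radial coordinate there. For $\nu>0$ not a Reeb period of $\partial M$ and $\mu\gg\nu$ not a Reeb period of $\partial W$, take $H=H_{\mu,\nu}$ which is $C^2$-small on $W$, of the form $h(R_W)$ with slope $\approx\mu$ on the collar of $\partial W$, constant on the rest of $M$, and linear of slope $\nu$ near $\partial M$ and on the cone. Lemma \ref{Lemma Maximum principle}, applied in the $R_W$-coordinate near $\partial W$ and in the $R$-coordinate near $\partial M$, confines all $1$-periodic orbits and all Floer and continuation trajectories to a compact set; as in \cite{Viterbo1}, the generators split according to the value of $A_H$ into an ``inner'' part (constants in $W$ and short Reeb orbits of $\partial W$) and an ``outer'' part.

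Next I would identify the inner subquotient. As in \cite{Viterbo1}, the inner generators span a subquotient complex $C_{\mathrm{in}}(H;\underline{\Lambda}_{\alpha})$ of $SC^*(H;\underline{\Lambda}_{\alpha})$ for the action filtration; the point that requires work — exactly as in the untwisted case — is a no-escape argument (an integrated maximum principle adapted to the Liouville subdomain) showing that a trajectory joining two inner orbits cannot leave $W$ together with the collar of $\partial W$, and in the limit $\mu\to\infty$ stays inside $W$. The induced differential on $C_{\mathrm{in}}$ then counts trajectories $u$ contained in $W$ with weights $\epsilon(u)\,t^{-\alpha[u]}$; pushing such $u$ through $\mathcal{L}i$ gives $\alpha[u]=\bigl((\mathcal{L}i)^*\alpha\bigr)[u]$, so $C_{\mathrm{in}}(H;\underline{\Lambda}_{\alpha})$ is precisely $CF^*(h|_W;\underline{\Lambda}_{(\mathcal{L}i)^*\alpha})$. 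That this is a chain complex and that the natural map to it is a chain map uses only additivity of $\alpha[\cdot]$ under concatenation and its invariance under homotopy rel ends, both automatic since $\alpha$ is closed, just as $E_0$ is used in Theorem \ref{Theorem Chain map}. Letting $\mu\to\infty$ for fixed $\nu$ identifies $C_{\mathrm{in}}$ with a twisted Floer complex on $\widehat{W}$ of slope tending to infinity, hence with $SH^*(W,d\theta';\underline{\Lambda}_{(\mathcal{L}i)^*\alpha})$, and letting $\nu\to\infty$ while noting that these maps commute with the twisted continuation maps yields $SH^*(i)$.

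For the square I would compute both maps $c_*$ with the $H^{\delta}$-Hamiltonians of \ref{Subsection The twisted maps c from ordinary cohomology}, taking $\delta$ below every Reeb period of $\partial W$ and of $\partial M$ and realising $H^{\delta}$ as a step Hamiltonian that is Morse on $W$, on the middle region and near $\partial M$. Then $C_{\mathrm{in}}$ is the twisted Morse complex of $H^{\delta}|_W$ and the natural map realises the topological restriction $i^*\colon H^*(M;c^*\underline{\Lambda}_{\alpha})\to H^*(W;c^*\underline{\Lambda}_{(\mathcal{L}i)^*\alpha})$, so commutativity of the diagram follows from naturality of the direct-limit definition of $c_*$ in \ref{Subsection The twisted maps c from ordinary cohomology}. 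The main obstacle is precisely this no-escape lemma: a Liouville subdomain need not be star-shaped in $M$, so confining the inner trajectories is the genuinely hard step, and one must check that the twisting plays no role there — which it does not, since $\underline{\Lambda}_{\alpha}$ enters only as coefficient data and not in Floer's equation. The remaining points — orientation signs, transversality, smoothing the corners of the step Hamiltonians, and cofinality of the family — are routine.
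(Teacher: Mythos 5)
Your sketch matches the approach that the paper attributes to \cite{Ritter}: step-shaped Hamiltonians with inner slope much larger than outer slope, the action filtration separating the generators into a $W$-part and a complement, the no-escape argument keeping trajectories joining inner orbits inside $W$, and the observation that since $\underline{\Lambda}_{(\mathcal{L}i)^*\alpha}$ is genuinely the pullback of $\underline{\Lambda}_{\alpha}$, the twisting weights $t^{-\alpha[u]}$ simply restrict along Floer cylinders contained in $W$, so the whole Viterbo machinery goes through unchanged. You also correctly identify the no-escape (integrated maximum principle for the Liouville subdomain) as the genuinely hard step and note, correctly, that the local system plays no role there because it never enters Floer's equation. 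The only imprecision is the description of the double limit: the transfer map is obtained from a cofinal diagonal family with both slopes $\mu\gg\nu$ tending to infinity together, rather than by iterating ``$\mu\to\infty$ then $\nu\to\infty$'' (the partial limit over $\mu$ alone, with $\nu$ fixed, does not compute $SH^*(M)$); this is a bookkeeping point that doesn't affect the substance of the argument.
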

The map $SH^*(i)$ is constructed using a ``step-shaped"
Hamiltonian, as in Figure \ref{FigureViterbo}, which grows near
$\partial W$ and reaches a slope $a$, then becomes constant up to
$\partial M$ where it grows again up to slope $b$. By a careful
construction, with $a\gg b$, one can arrange that all orbits in
$W$ have negative action with respect to $(d\theta,H)$, and for
orbits outside of $W$ they have positive actions. The map
$SH^*(i)$ is then the limit, as $a\gg b\to \infty$, of the action
restriction maps which quotient out by the generators of positive
action.
\begin{figure}[ht] \centering \scalebox{0.6}{
\input{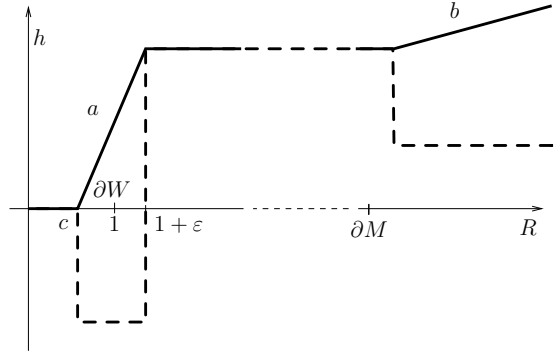} }
\caption{The solid line is a diagonal-step shaped Hamiltonian with
$a\gg b$. The dashed line is the action $A(R)=-R h'(R) +
h(R)$.}\label{FigureViterbo}
\end{figure}
\begin{theorem}\label{Theorem Viterbo Functoriality}\cite{Ritter}
Let $(M,d\theta)$ be a Liouville domain and let $L \subset M$ be
an exact orientable Lagrangian submanifold. By Weinstein's
Theorem, this defines a Liouville embedding $j: (DT^*L,d\theta)\to
(M,d\theta)$ of a small disc cotangent bundle of $L$. Then for all
$\alpha \in H^1(\mathcal{L}M;\R)$ there exists a commutative
diagram
$$
\xymatrix{
H_{n-*}(\mathcal{L}L;\underline{\Lambda}_{(\mathcal{L}j)^*\alpha})
\cong
SH^{*}(T^*L,d\theta;\underline{\Lambda}_{(\mathcal{L}j)^*\alpha})
\ar@{<-}^{c_*}@<-10ex>[d] \ar@{<-}[r]^-{} &
SH^*(M,d\theta;\underline{\Lambda}_{\alpha}) \ar@{<-}^{c_*}@<1ex>[d] \\
H_{n-*}(L;c^*\underline{\Lambda}_{(\mathcal{L}j)^*\alpha}) \cong
H^*(L;c^*\underline{\Lambda}_{(\mathcal{L}j)^*\alpha})
\ar@{<-}[r]^-{} & H^*(M;c^*\underline{\Lambda}_{\alpha}) }
$$
where the left vertical map is induced by the inclusion of
constant loops $c:L \to \mathcal{L}L$. If $c^*\alpha=0$ then the
bottom map is the pullback $H^*(L) \otimes \Lambda \leftarrow
H^*(M) \otimes \Lambda$.
\end{theorem}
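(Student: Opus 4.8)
The plan is to deduce this from the twisted Viterbo functoriality of Theorem~\ref{TheoremViterboTwistedFunctoriality}, applied to the Weinstein subdomain $W=DT^*L\hookrightarrow M$, combined with a twisted form of the Viterbo--Abbondandolo--Schwarz isomorphism $SH^*(T^*L,d\theta)\cong H_{n-*}(\mathcal{L}L)$ proved in \cite{Ritter}.

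First I would check that $j\co(DT^*L,d\theta)\hookrightarrow(M,d\theta)$ really is a Liouville embedded subdomain. Since $L$ is exact, $\theta|_L$ is exact, so after a compactly supported Hamiltonian isotopy Weinstein's neighbourhood theorem can be put in standard form, and shrinking the cotangent fibres makes $\theta-e^{\rho}\theta_{\mathrm{can}}$ exact for a suitable $\rho$ --- this is exactly the standard example recalled before Theorem~\ref{TheoremViterboTwistedFunctoriality}. Feeding $W=DT^*L$ into that theorem then yields the right-hand commutative square of the claimed diagram, with $(\mathcal{L}i)^*\alpha$ identified with $(\mathcal{L}j)^*\alpha$ after the homotopy equivalence $\mathcal{L}(DT^*L)\simeq\mathcal{L}L$. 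Along the way one uses Lemma~\ref{Lemma Moser deformation}, Theorem~\ref{Theorem Invariance under Contactomorphs} and Lemma~\ref{Lemma Moser deformation twisted} to see that the twisted symplectic cohomology of $T^*L$ computed with the restricted Liouville form agrees with the one for the canonical cotangent structure.

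Next I would install the two left-hand identifications. For the lower-left corner, $DT^*L$ deformation retracts onto its zero section $L$, so $H^*(DT^*L;c^*\underline{\Lambda}_{(\mathcal{L}j)^*\alpha})\cong H^*(L;c^*\underline{\Lambda}_{(\mathcal{L}j)^*\alpha})$, and Poincar\'e duality on the closed oriented manifold $L$ furnishes the further identification with $H_{n-*}(L;c^*\underline{\Lambda}_{(\mathcal{L}j)^*\alpha})$. For the upper-left corner I would quote the twisted isomorphism $SH^*(T^*L,d\theta;\underline{\Lambda}_{(\mathcal{L}j)^*\alpha})\cong H_{n-*}(\mathcal{L}L;\underline{\Lambda}_{(\mathcal{L}j)^*\alpha})$ from \cite{Ritter}. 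The point that has to be verified --- and which I expect to be the main obstacle --- is that this isomorphism intertwines the Floer-theoretic map $c_*$ of Section~\ref{Subsection The twisted maps c from ordinary cohomology} with the map on twisted homology induced by the inclusion of constant loops $c\co L\to\mathcal{L}L$. This is a chain-level comparison of the small-slope Morse model for $c_*$ with the Abbondandolo--Schwarz chain equivalence between the Floer complex of $T^*L$ and the Morse complex of the energy functional on $\mathcal{L}L$, now carrying Novikov coefficients; it was established in \cite{Ritter}, so here it suffices to recall it. Combining this compatible pair of left-hand identifications with the square provided by Theorem~\ref{TheoremViterboTwistedFunctoriality} then assembles the full diagram.

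Finally, for the last sentence, assume $c^*\alpha=0\in H^1(M;\R)$. Naturality of the inclusion of constant loops gives a commutative square relating $c\co L\to\mathcal{L}L$, $c\co M\to\mathcal{L}M$ and the Lagrangian inclusion, so $c^*(\mathcal{L}j)^*\alpha$ equals the pullback of $c^*\alpha$ to $H^1(L;\R)$, hence vanishes. By Section~\ref{SubsectionNovikovbundles} both bundles $c^*\underline{\Lambda}_{(\mathcal{L}j)^*\alpha}$ and $c^*\underline{\Lambda}_{\alpha}$ are then trivial, so the bottom row of the diagram becomes $H^*(L)\otimes\Lambda\leftarrow H^*(M)\otimes\Lambda$ and, under the retraction $DT^*L\simeq L$, the bottom map is precisely the pullback along $L\hookrightarrow M$. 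In particular the hypothesis $c^*\alpha=0$ is automatic for a transgressed class $\alpha=\tau\beta$, since $\tau\beta$ vanishes on time-independent loops; this recovers the simplified diagram used in the Introduction.
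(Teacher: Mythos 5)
The paper does not actually prove this theorem here: it is stated and attributed wholesale to \cite{Ritter}, with the present paper only re-proving the twisted Viterbo functoriality for Liouville subdomains (Theorem~\ref{TheoremViterboTwistedFunctoriality}) and the map $c_*$ as ingredients. Your reconstruction is the natural one and is consistent with how the paper assembles its machinery: apply Theorem~\ref{TheoremViterboTwistedFunctoriality} to $W=DT^*L$ (using the exactness of $L$ to get a Liouville subdomain, as in the ``standard example'' the paper recalls before that theorem), then glue on the twisted isomorphism $SH^*(T^*L;\underline{\Lambda}_{(\mathcal{L}j)^*\alpha})\cong H_{n-*}(\mathcal{L}L;\underline{\Lambda}_{(\mathcal{L}j)^*\alpha})$ together with its compatibility with $c_*$, both of which are results of \cite{Ritter} that are not re-derived here. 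You are also right that this compatibility of $c_*$ with the Abbondandolo--Schwarz type isomorphism is the genuine technical content and must be imported from \cite{Ritter} as a black box; there is no way to establish it from the material in the present paper alone. Your handling of the last sentence is correct: the commuting square $\mathcal{L}j\circ c_L=c_M\circ j$ gives $c_L^*(\mathcal{L}j)^*\alpha=j^*c_M^*\alpha=0$, so both pulled-back Novikov bundles trivialize and the bottom row becomes $j^*\otimes\mathrm{id}_\Lambda$. One minor clean-up: the phrase ``after a compactly supported Hamiltonian isotopy Weinstein's neighbourhood theorem can be put in standard form'' is not needed --- exactness of $j^*\theta$ already ensures $j^*\theta-\theta_{\mathrm{can}}$ is exact on the Weinstein neighbourhood, which is all the definition of Liouville subdomain requires.
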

\begin{corollary}\label{Corollary Functoriality trick}
Let $(M,d\theta)$ be a Liouville domain and let $L\subset M$ be an
exact orientable Lagrangian. Suppose $\beta \in
H^2(\widehat{M};\R)$ is such that $\tau(j^*\beta) =0 \in
H^1(\mathcal{L}L;\R)$. Then there is a commutative diagram
$$
\xymatrix{ H_{n-*}(\mathcal{L}L) \otimes \Lambda
\ar@{<-_{)}}_{c_*}@<-3ex>[d] \ar@{<-}[r] &
SH^{*}(M,d\theta;\underline{\Lambda}_{\tau\beta}) \ar@{<-}^{c_*}@<1ex>[d] \\
H_{n-*}(L) \otimes \Lambda\cong H^*(L) \otimes \Lambda
\ar@{<-}[r]^-{j^*} & H^*(M) \otimes \Lambda }
$$
Therefore $SH^*(M,d\theta;\underline{\Lambda}_{\tau\beta})$ cannot
vanish since $c_* j^* 1 = c_*1 \neq 0$.
\end{corollary}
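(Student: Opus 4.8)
The plan is to deduce this directly from Theorem~\ref{Theorem Viterbo Functoriality} applied to the class $\alpha = \tau\beta \in H^1(\mathcal{L}M;\R)$, using the hypothesis $\tau(j^*\beta)=0$ to trivialise the local systems appearing on the $L$--side of that theorem's diagram. First I would observe that, by naturality of the transgression, $(\mathcal{L}j)^*\tau\beta = \tau(j^*\beta) = 0$, so $\underline{\Lambda}_{(\mathcal{L}j)^*\tau\beta}$ is the trivial bundle on $\mathcal{L}L$; hence $H_{n-*}(\mathcal{L}L;\underline{\Lambda}_{(\mathcal{L}j)^*\tau\beta}) \cong H_{n-*}(\mathcal{L}L)\otimes\Lambda$ and $H^*(L;c^*\underline{\Lambda}_{(\mathcal{L}j)^*\tau\beta}) \cong H^*(L)\otimes\Lambda$. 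Moreover $c^*\tau\beta = 0$ holds automatically, since $\tau\beta$ vanishes on time--independent loops (see \ref{SubsectionTransgressions}), so the final sentence of Theorem~\ref{Theorem Viterbo Functoriality} identifies the bottom horizontal map with the pullback $j^*\colon H^*(M)\otimes\Lambda \to H^*(L)\otimes\Lambda$. Substituting these identifications into the diagram of Theorem~\ref{Theorem Viterbo Functoriality} produces exactly the commutative square asserted here.

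Next I would record the injectivity of the left vertical map and then chase the square. The evaluation map $\mathrm{ev}\colon \mathcal{L}L \to L$, $\gamma \mapsto \gamma(0)$, is a retraction of $c\colon L \to \mathcal{L}L$, i.e. $\mathrm{ev}\circ c = \mathrm{id}_L$, so $\mathrm{ev}_* \circ c_* = \mathrm{id}$ on (co)homology with any coefficients and $c_*\colon H_{n-*}(L)\otimes\Lambda \to H_{n-*}(\mathcal{L}L)\otimes\Lambda$ is a split monomorphism of $\Lambda$--modules (the hooked arrow in the diagram). Since $j^*$ is a unital ring homomorphism it sends $1 \in H^0(M)\otimes\Lambda$ to $1 \in H^0(L)\otimes\Lambda$, which under Poincar\'e duality corresponds to the fundamental class of the closed orientable $n$--manifold $L$ and is nonzero because $L$ is nonempty and $\Lambda \neq 0$. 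By the injectivity just noted, $c_* j^* 1 = c_* 1 \neq 0$ in $H_{n-*}(\mathcal{L}L)\otimes\Lambda$. If the map $c_*\colon H^*(M)\otimes\Lambda \to SH^*(M,d\theta;\underline{\Lambda}_{\tau\beta})$ sent $1$ to $0$, then by commutativity of the square the element $c_* j^* 1$ would also be $0$, a contradiction; hence $SH^*(M,d\theta;\underline{\Lambda}_{\tau\beta}) \neq 0$.

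I do not expect a genuine obstacle here: all of the Floer--theoretic content is already packaged in Theorem~\ref{Theorem Viterbo Functoriality}, and what remains is bookkeeping with trivialised Novikov bundles together with the elementary fact that $c\colon L\to\mathcal{L}L$ admits the retraction $\mathrm{ev}$. The one point that needs care is to invoke Theorem~\ref{Theorem Viterbo Functoriality} with precisely the identifications it supplies — using $\tau(j^*\beta)=0$ to trivialise the bundles over $L$ and $\mathcal{L}L$, and the automatic vanishing $c^*\tau\beta=0$ to recognise the bottom map as $j^*$ — so that the resulting square is literally the one in the statement.
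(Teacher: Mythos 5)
Your argument is correct and matches the paper's (very terse) justification: the corollary is obtained by specializing Theorem~\ref{Theorem Viterbo Functoriality} to $\alpha = \tau\beta$, trivializing the bundles over $L$ and $\mathcal{L}L$ via $\tau(j^*\beta)=0$ and $c^*\tau\beta=0$, and then noting that $c_*\colon H_{n-*}(L)\otimes\Lambda \to H_{n-*}(\mathcal{L}L)\otimes\Lambda$ is split injective because evaluation at $0$ retracts $\mathcal{L}L$ onto $L$. Your unpacking of the identifications and the diagram chase is exactly the intended reasoning, just spelled out in more detail than the paper gives.
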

\begin{remark}\label{Remark Unorientable exact Lagrangians} \textbf{Unorientable exact Lagrangians.}
In Theorem \ref{Theorem Viterbo Functoriality} we assumed that the
Lagrangian is orientable. However, the result easily extends to
the unorientable case: instead of using $\Z$ coefficients we use
$\Z_2$ coefficients. This means that the moduli spaces do not need
to be oriented and we can drop all orientation signs in the
definitions of the differentials for the Floer complexes and the
Morse complexes. The Novikov ring is now defined by
$$\textstyle \Lambda = \{ \sum_{n=0}^{\infty} a_n t^{r_n}: a_n \in \Z_2, r_n
\in \R, r_n\to \infty \}.$$
Note that the Novikov one-form $\alpha$ is still chosen in
$H^1(\mathcal{L}M;\R)$.

This is particularly interesting in dimension four since
$H^2(L;\R)=0$ for unorientable $L^2 \subset M^4$, therefore the
transgression vanishes. In particular the pullback of any
transgression from $M$ will vanish on $L$. This immediately
contradicts Corollary \ref{Corollary Functoriality trick} if
$SH^*(M,d\theta;\underline{\Lambda}_{\tau \beta})=0$. For example
in \cite{Ritter} we proved that
$SH^*(T^*S^2,d\theta;\underline{\Lambda}_{\tau\beta})=0$ for any
non-zero $\beta \in H^2(S^2;\R)$. Therefore there can be no
unorientable exact Lagrangians in $T^*S^2$.
\end{remark}
%
%
%
%
\section{Grading of symplectic cohomology}
\label{Section Grading of symplectic cohomology}
%
\subsection{Maslov index and Conley-Zehnder grading}
\label{Subsection Maslov index and Conley-Zehnder index}
%
%
We assume that $c_1(M)=0$: this condition will ensure that the
symplectic cohomology has a $\Z-$grading defined by the
Conley-Zehnder index.

Since $c_1(M)=0$, we can choose a trivialization of the canonical
bundle $\mathcal{K}=\Lambda^{n,0}T^*M$. Then over any $1-$periodic
Hamiltonian orbit $\gamma$, trivialize $\gamma^*TM$ so that it
induces an isomorphic trivialization of $\mathcal{K}$. Let
$\phi_t$ denote the linearization $D\varphi^t(\gamma(0))$ of the
time $t$ Hamiltonian flow written in a trivializing frame for
$\gamma^*TM$.

Let $\textrm{sign}(t)$ denote the signature of the quadratic form
$$
\omega(\cdot,\partial_t \phi_t\cdot):
\textrm{ker}(\phi_t-\textrm{id}) \to \R,
$$
assuming we perturbed $\phi_t$ relative endpoints to make the
quadratic form non-degenerate and to make
$\textrm{ker}(\phi_t-\textrm{id})=0$ except at finitely many $t$.

The Maslov index $\mu(\gamma)$ of $\gamma$ is
$$
\mu(\gamma) = \frac{1}{2}\, \textrm{sign}(0) + \sum_{0<t<1}
\textrm{sign}(t) + \frac{1}{2}\, \textrm{sign}(1).
$$

The Maslov index is invariant under homotopy relative endpoints,
and it is additive with respect to concatenations. If $\phi_t$ is
a loop of unitary transformations, then its Maslov index is the
winding number of the determinant, $\det \phi_t: \mathcal{K} \to
\mathcal{K}$. For example $\phi_t = e^{2\pi i t} \in U(1)$ for
$t\in [0,1]$ has Maslov index $1$.

In our applications, $\gamma$ will often not be an isolated orbit.
It will typically lie in an $S^1-$worth or an $S^3-$worth of
orbits. In this case it is possible to make a small time-dependent
perturbation of $H$ so that $\gamma$ breaks up into two isolated
orbits whose Maslov indices get shifted by $\pm
\textrm{dim}(S^1)/2$ or $\pm \textrm{dim}(S^3)/2$ respectively.

The grading we use on $SH^*$ is the Conley-Zehnder index, defined
by
$$
|\gamma| = \frac{\textrm{dim}(M)}{2} - \mu(\gamma).
$$
This grading agrees with the Morse index when $H$ is a generic
$C^2-$small Hamiltonian and $\gamma$ is a critical point of $H$.
%
%
%
%
\section{Deformation of the Symplectic cohomology}
\label{Section Deformation of the Symplectic cohomology}
Let $\beta$ be a compactly supported two-form representing a class
in $H^2(M;\R)$ such that $d\theta + \beta$ is symplectic. We want
to construct an isomorphism between the non-exact symplectic
cohomology and the twisted symplectic cohomology
$$SH^*(H,d\theta+\beta) \cong
SH^*(H,d\theta;\underline{\Lambda}_{\tau\beta}).$$
We will show that this holds if $d\theta+s\beta$ is symplectic for
$0\leq s \leq 1$. For example, it will always hold if
$\|\beta\|<1$.
%
%
\subsection{Outline of the argument}
\label{Subsection Outline of the argument}
%
Let $H^m$ denote a Hamiltonian which only depends on $R$ on the
collar and which has slope $m$ at infinity. Choosing $H^m$ generic
and $C^2-$small inside $M$ ensures that the only $1-$periodic
Hamiltonian orbits inside $M$ are the critical points of $H^m$. We
will prove that we may assume that the critical points lie outside
the support of $\beta$. Therefore $SC^*(H^m,d\theta+\beta)$ and
$SC^*(H^m, d\theta; \underline{\Lambda}_{\tau\beta})$ have the
same generators: the critical points of $H^m$ and the $1-$periodic
Hamiltonian orbits lying in the collar (we used that $\supp \beta
\subset M$).

We will build chain isomorphisms
$$
\psi_{\mu}^m: SC^*(H^m,d\theta+\beta) \to SC^*(H^m, d\theta;
\underline{\Lambda}_{\tau\beta})
$$
which are defined for a sufficiently large parameter $\mu$; which
are independent of $\mu$ on homology, say $\psi^m=\psi^m_{\mu}$;
and which commute with the continuation maps
$$
\xymatrix{SH^*(H^m,d\theta+\beta) \ar@{->}[r]^{\psi^m}\ar@{->}[d]
& SH^*(H^m,d\theta;\underline{\Lambda}_{\tau\beta})
\ar@{->}[d] \\
SH^*(H^{m'},d\theta+\beta) \ar@{->}[r]^{\psi^{m'}} &
SH^*(H^{m'},d\theta;\underline{\Lambda}_{\tau\beta})}
$$

Therefore, by exactness of direct limits, $\psi = \varinjlim
\psi^m$ is the desired isomorphism
$$
\psi: SH^*(M,d\theta+\beta) \to SH^*(M, d\theta,
\underline{\Lambda}_{\tau\beta}).
$$

The parameter $\mu$ arises in the construction of the maps
$\psi_{\mu}^m$ because for large $\mu$ the identity map provides a
natural chain isomorphism
$$\textrm{id}: SC^*(H^m,d\theta+\mu^{-1}\beta) \cong
SC^*(H^m,d\theta;\underline{\Lambda}_{\mu^{-1}\tau\beta}).
$$
This is proved by showing that the moduli spaces $\mathcal{M}(x,y;
d\theta+\lambda \beta)$ form a $1-$parameter family joining
$\mathcal{M}(x,y;d\theta+\mu^{-1}\beta)$ to
$\mathcal{M}(x,y;d\theta)$.

To define the maps $\psi_{\mu}^m$ we therefore just need to deform
$d\theta+\beta$ to $d\theta+\mu^{-1}\beta$. On the twisted side,
there are no difficulties:
$$SH^*(H^m,d\theta;\underline{\Lambda}_{\tau\beta}) \cong
SH^*(H^m,d\theta;\underline{\Lambda}_{\mu^{-1}\tau\beta})$$
is just a rescaling $t\mapsto t^{(\mu^{-1})}$.

For the non-exact symplectic cohomology we first combine the
Liouville flow $\varphi_{\mu}$ for time $\log \mu$ and a rescaling
of the metric by $\mu^{-1}$. This will change $d\theta+\beta$ to
$d\theta+\mu^{-1}\varphi_{\mu}^*\beta$. Then we want to make a
Moser deformation from $d\theta+\mu^{-1}\varphi_{\mu}^*\beta$ to
$d\theta+\mu^{-1}\beta$, so we need a deformation through
symplectic forms without changing the cohomology class. This is
possible if $d\theta+s\beta$ is symplectic for $0\leq s\leq 1$.

\begin{lemma}\label{Lemma deforming the sympl form}
If $d\theta+s\beta$ is symplectic for $0\leq s \leq 1$, then it is
possible to deform $d\theta+\mu^{-1}\beta$ to
$d\theta+\mu^{-1}\varphi_{\mu}^*\beta$ through symplectic forms
within its cohomology class.
\end{lemma}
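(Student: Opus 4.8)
The plan is to verify the hypothesis of Moser's lemma, namely that the closed two-form $\mu^{-1}(\varphi_\mu^*\beta-\beta)$ is exact and that it can be written as $d\eta$ for an $\eta$ whose primitive stays small enough to keep all the interpolating forms symplectic. First I would observe that since $\varphi_\mu$ is the time-$\log\mu$ Liouville flow, it is homotopic to the identity, so $\varphi_\mu^*\beta$ and $\beta$ represent the same class in $H^2(\widehat M;\R)$; hence $\varphi_\mu^*\beta-\beta=d\gamma_\mu$ for some one-form $\gamma_\mu$. More usefully, I would write this primitive explicitly via Cartan's formula: if $\varphi^t$ denotes the Liouville flow (so $\varphi_\mu=\varphi^{\log\mu}$ and $\tfrac{d}{dt}\varphi^t=Z$), then
$$
\varphi_\mu^*\beta-\beta=\int_0^{\log\mu}\tfrac{d}{dt}(\varphi^{t\,*}\beta)\,dt=d\Big(\int_0^{\log\mu}\varphi^{t\,*}(i_Z\beta)\,dt\Big),
$$
using $d\beta=0$. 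So $\gamma_\mu=\int_0^{\log\mu}\varphi^{t\,*}(i_Z\beta)\,dt$, and the interpolation I want is $\omega_\lambda=d\theta+\mu^{-1}\beta+\lambda\,\mu^{-1}d\gamma_\mu$ for $\lambda\in[0,1]$, joining $d\theta+\mu^{-1}\beta$ at $\lambda=0$ to $d\theta+\mu^{-1}\varphi_\mu^*\beta$ at $\lambda=1$, all within the fixed cohomology class of $d\theta$ (note $\mu^{-1}\beta$ is exact anyway since $\beta$ is compactly supported... more precisely, $d\theta+\mu^{-1}\beta$ and $d\theta+\mu^{-1}\varphi_\mu^*\beta$ are cohomologous, which is what Moser needs).

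Next I would check that each $\omega_\lambda$ is symplectic, i.e. non-degenerate everywhere on $\widehat M$. Away from $\supp\beta$, since $\beta$ is compactly supported in the interior of $M$ and the Liouville flow pushes the support outward but keeps it inside a compact region of $\widehat M$, the relevant forms are all just $d\theta$ where they differ from it at all — so I would restrict attention to the compact set $K=\supp\beta\cup\varphi_\mu(\supp\beta)$. On this compact set I want to bound $\lambda\mu^{-1}d\gamma_\mu$ together with $\mu^{-1}\beta$. Here is the point where the hypothesis ``$d\theta+s\beta$ symplectic for $0\le s\le1$'' gets used, but it must be combined with a rescaling estimate: applying $\varphi^{t\,*}$ to the relation and using $\varphi^{t\,*}d\theta=e^t d\theta$, one finds that $d\theta+s\varphi^{t\,*}\beta$ being non-degenerate is equivalent to $d\theta+se^{-t}\beta$ being non-degenerate, which holds for all $s\in[0,1]$ and $t\ge0$ by the hypothesis (since $se^{-t}\le1$). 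Because $\omega_\lambda=d\theta+\mu^{-1}\big((1-\lambda)\beta+\lambda\varphi_\mu^*\beta\big)$ is a convex combination of $d\theta+\mu^{-1}\beta$ and $d\theta+\mu^{-1}\varphi_\mu^*\beta$, and the set of symplectic forms cohomologous to $d\theta$ is not convex in general, I would instead argue pointwise: at each point $p\in\widehat M$, $\omega_\lambda(p)$ is a convex combination of two linear symplectic forms that are both of the form $d\theta(p)+(\text{small})$, and for $\mu$ large both correction terms have operator norm $<1$ relative to a fixed metric making $d\theta$ standard on $K$; a convex combination of two non-degenerate forms each within distance $<1$ of $d\theta(p)$ in this sense is again non-degenerate. (Alternatively, and more robustly, I would not interpolate linearly but instead take $\omega_\lambda=d\theta+\mu^{-1}\varphi^{\lambda\log\mu\,*}\beta$, which by the computation above is symplectic for \emph{every} $\mu\ge1$ and $\lambda\in[0,1]$ with no largeness needed, and whose primitive one-form is $\mu^{-1}\int_0^{\lambda\log\mu}\varphi^{t\,*}(i_Z\beta)\,dt$ — this is the cleaner path and I would adopt it.)

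Finally, with the path $\lambda\mapsto\omega_\lambda$ of symplectic forms in a fixed cohomology class and an explicit compactly-supported-in-$\widehat M$ family of primitives for $\tfrac{d}{d\lambda}\omega_\lambda$, Moser's lemma produces an isotopy realizing the deformation, completing the proof. The main obstacle is the non-degeneracy check: the naive linear interpolation between the two forms need not stay in the symplectic cone, so the real content is recognizing that one should deform along the Liouville flow itself (i.e. use $\varphi^{\lambda\log\mu\,*}\beta$), at which point the rescaling identity $\varphi^{t\,*}d\theta=e^td\theta$ converts the symplectic condition into exactly the hypothesis ``$d\theta+s\beta$ symplectic for $s\in[0,1]$'' evaluated at $s=e^{-t}\le1$, uniformly in $t\ge0$ and hence in $\mu$.
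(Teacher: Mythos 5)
Your ``cleaner path'' $\omega_\lambda=d\theta+\mu^{-1}(\varphi^{\lambda\log\mu})^*\beta$ is exactly the paper's deformation $\omega_s=(s\mu)^{-1}\varphi_{s\mu}^*(d\theta+s\beta)=d\theta+\mu^{-1}\varphi_{s\mu}^*\beta$ up to reparametrization, with the same Cartan-formula primitive, so the proposal is correct and follows the paper's argument. One small slip in the justification: since $(\varphi^t)^*d\theta=e^td\theta$, one has $d\theta+s(\varphi^t)^*\beta=e^{-t}(\varphi^t)^*\bigl(d\theta+se^{t}\beta\bigr)$, so the relevant condition is $se^{t}\le1$ rather than $se^{-t}\le1$ --- your claim of non-degeneracy uniformly in $t\ge0$ for fixed $s$ is therefore false in general, but for $s=\mu^{-1}$ and $t=\lambda\log\mu\le\log\mu$ one gets $se^t=\mu^{\lambda-1}\le1$, so the path you actually use is fine.
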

\begin{proof}
Since $d\theta+s\beta$ are symplectic for $0\leq s \leq 1$, so are
$$\omega_s=(s\mu)^{-1}\varphi_{s\mu}^*(d\theta+s\beta) =
d\theta+\mu^{-1}\varphi_{s\mu}^*\beta
$$
for $\frac{1}{\mu}\leq s \leq 1$. It remains to show that
$\partial_s \omega_s$ is exact. By Cartan's formula,
$$
\partial_s \omega_s = \varphi_{s\mu}^*\mathcal{L}_{Z/s\mu}
\beta = \varphi_{s\mu}^*(i_{Z/s\mu} d\beta + di_{Z/s\mu} \beta)=d
\varphi_{s\mu}^*(i_{Z/s\mu} \beta).\qedhere$$
\end{proof}

The argument hides a small technical challenge. The changes in
symplectic forms will change the Hamiltonian $H^m$ (without
affecting the slope at infinity). Since the $1-$parameter family
argument heavily depends on $H^m$, it is not clear that the same
large $\mu$ works for all Hamiltonians of a given slope. Therefore
we first apply a continuation isomorphism to change the
Hamiltonian back to the original $H^m$. Now it is no longer clear
that $\psi_{\mu}^m$ is independent of $\mu$ on homology, and when
we take the direct limit of continuation maps as $m\to \infty$ it
is not clear that the same choice of $\mu$ will work for different
Hamiltonians. Thus it is necessary to prove that the construction
is independent of $\mu$.

The $1-$parameter family of moduli spaces argument is presented in
\ref{Subsection 1-parameter family}. We will need several
preliminary results: the Palais-Smale Lemma (\ref{Subsection
Palais-Smale Lemma}); the Lyapunov property for the action
functional (\ref{Subsection Lyapunov property of the action
functional}); an a priori energy estimate (\ref{Subsection A
priori energy estimate}) and a transversality result
(\ref{Subsection Transversality for deformations}). In section
\ref{Subsection Construction of the isomorphism} we will construct
the maps $\psi_{\mu}^m$.
%
%
\subsection{Metric rescaling}\label{Subsection Metric rescaling}
%
\begin{lemma}\label{Lemma Metric rescaling} Let $\mu >0$. There is a natural
identification
$$SC^*(H,\omega) \to SC^*(\mu H,\mu\, \omega),$$
induced by the change of ring isomorphism $\Lambda \to \Lambda, t
\mapsto t^{\mu}$.
\end{lemma}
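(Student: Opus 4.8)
The plan is to observe that rescaling the pair $(H,\omega)$ to $(\mu H,\mu\omega)$ leaves essentially every piece of Floer data unchanged except for an overall multiplication of all energies by $\mu$, which the ring isomorphism $t\mapsto t^{\mu}$ precisely compensates.

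First I would check that the Hamiltonian vector field is unaffected: if $\omega(\cdot,X_H)=dH$ then $(\mu\omega)(\cdot,X_H)=\mu\,dH=d(\mu H)$, so the Hamiltonian vector field of $\mu H$ with respect to $\mu\omega$ is again $X_H$. Consequently the $1$-periodic orbits of the two problems literally coincide, so after the same generic $C^2$-small time-dependent perturbation $CF^*(H,\omega)$ and $CF^*(\mu H,\mu\omega)$ are freely generated by the same set of orbits. Taking the same $\omega$-compatible almost complex structure $J$ (which is automatically $\mu\omega$-compatible, now inducing the metric $\mu g$), Floer's equation $\partial_s u+J(\partial_t u-X_H)=0$ is the same equation in both cases. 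Hence the moduli spaces $\mathcal{M}(x,y)$, their cut-down components $\mathcal{M}_0(x,y)$, their smooth structures, and their coherent orientations all agree: the linearized operators differ only by the conformal factor $\mu$ in the metric, which changes neither the Fredholm index nor the determinant-line orientations. Likewise the linearized Hamiltonian flow along each orbit is the same for both problems, so the Maslov indices and therefore the Conley--Zehnder gradings coincide, and the identification will be grading-preserving.

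Next I would compute the energies. From $E(u)=\int \omega(\partial_s u,\partial_t u-X_H)\,ds\wedge dt$ one reads off that the energy of a given Floer trajectory $u$ with respect to $(\mu H,\mu\omega)$ equals $\mu\,E(u)$, where $E(u)$ is its energy with respect to $(H,\omega)$. Therefore the differential of $SC^*(\mu H,\mu\omega)$ is $\delta' y=\sum_{u\in\mathcal{M}_0(x,y)}\epsilon(u)\,t^{\mu E(u)}\,x$, which is exactly the image of $\delta y=\sum_{u\in\mathcal{M}_0(x,y)}\epsilon(u)\,t^{E(u)}\,x$ under the ring automorphism $\Lambda\to\Lambda$, $t^{a}\mapsto t^{\mu a}$ (this is a ring isomorphism precisely because $\mu>0$: the condition $a_j\to\infty$ is both preserved and reflected). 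Thus the $\Lambda$-linear map $SC^*(H,\omega)\to SC^*(\mu H,\mu\omega)$ that fixes each generator and twists scalars by $t\mapsto t^{\mu}$ is a grading-preserving chain isomorphism.

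For the word \emph{natural} I would note that the same argument applies verbatim to parametrized Floer trajectories: rescaling an $s$-dependent homotopy by $\mu$ leaves the parametrized moduli spaces unchanged and multiplies the weights $E_0(v)$ by $\mu$, so the continuation maps intertwine the above isomorphisms and the identification descends to the direct limits defining $SH^*$. The only point requiring a little care is the bookkeeping at infinity: under the rescaling a Hamiltonian $H=h(R)$ with slope $h'(R)$ and the contact form on $\partial M$ scale compatibly, so the property ``linear at infinity with slope not a Reeb period'' is preserved; this is the sole mild obstacle, and it is a purely routine normalization check.
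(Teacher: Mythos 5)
Your proof is correct and follows the same approach as the paper's: observe that $X_H$, the Floer equation, and hence the moduli spaces are unchanged under the simultaneous rescaling, while the energy scales by $\mu$, so the weights $t^{E(u)}$ become $t^{\mu E(u)}=(t^\mu)^{E(u)}$. The paper's proof is just a two-sentence version of your argument; your additional remarks on gradings, orientations, continuation maps, and the slope normalization at infinity are correct elaborations of points the paper leaves implicit.
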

\begin{proof}
Under the rescaling, $X_H$ does not change, so the Floer equations
don't change. The energy functional gets rescaled by $\mu$, so a
Floer trajectory contributes a factor $t^{\mu E(u)} =
(t^{\mu})^{E(u)}$ to the differential instead of $t^{E(u)}$.
\end{proof}
%
%
%
%
\subsection{Palais-Smale Lemma}\label{Subsection Palais-Smale Lemma}
%
Let $X_t$ be a time-dependent vector field. Define
$$
F:\mathcal{L}M \to \bigcup_{x\in \mathcal{L}M} x^*TM, \;
F(x)(t)=\dot{x}(t)-X_t(x(t)).
$$
The solutions of $F(x)=0$ are precisely the $1-$periodic orbits of
$X_t$. The following standard result (see Salamon \cite{Salamon})
ensures that $F$ is small only near such solutions.
\begin{lemma}Let $M$ be a compact Riemannian manifold, and $X_t$ a time-dependent vector
field on $M$ whose $1-$periodic orbits form a discrete set. Then
\begin{enumerate}
\item A sequence $x_n \in \mathcal{L}M$ with $\| F(x_n) \|_{L^2}
\to 0$ has a subsequence converging in $C^0$ to a solution of
$F(x)=0$.

\item For any $\epsilon>0$ there is a $\delta>0$ such that $\|
F(y) \|_{L^2}<\delta$ implies that there is some solution of
$F(x)=0$ close to $y$,
$
\sup_{t \in S^1} \textrm{dist}\, (x(t),y(t)) < \epsilon.
$
\end{enumerate}
\end{lemma}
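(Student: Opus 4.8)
The plan is to follow the standard argument of Salamon \cite{Salamon}, adapted to the free loop space. The key observation is that $\|F(x)\|_{L^2}$ controls how far $x$ is from being a $1$-periodic orbit, via elliptic-type a priori estimates for the loop.

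First I would establish an a priori $C^1$-bound. Since $M$ is compact and $X_t$ is smooth, $\|X_t\|_{C^0}$ is bounded, so $\|\dot{x}_n\|_{L^2} \leq \|F(x_n)\|_{L^2} + \|X_t(x_n)\|_{L^2}$ is uniformly bounded. Thus the loops $x_n$ lie in a bounded set of $W^{1,2}(S^1,M)$. By the Sobolev embedding $W^{1,2}(S^1) \hookrightarrow C^0(S^1)$ (with a compact inclusion into any H\"{o}lder space $C^{0,\gamma}$, $\gamma<1/2$), a subsequence of the $x_n$ converges in $C^0$ to some limit loop $x$. To upgrade this, I would bootstrap: once $x_n \to x$ in $C^0$, the equation $\dot{x}_n = X_t(x_n) + F(x_n)$ shows $\dot{x}_n$ converges in $L^2$ to $X_t(x)$, since $X_t(x_n) \to X_t(x)$ uniformly and $\|F(x_n)\|_{L^2} \to 0$. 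Hence $x_n \to x$ in $W^{1,2}$, so $x$ is $W^{1,2}$ with $\dot{x} = X_t(x)$; elliptic regularity (or simply integrating the smooth ODE) then shows $x$ is smooth and solves $F(x)=0$. This proves (1).

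For (2) I would argue by contradiction. If the claim fails, there is an $\epsilon>0$ and a sequence $y_n$ with $\|F(y_n)\|_{L^2} \to 0$ but $\sup_{t} \mathrm{dist}(x(t),y_n(t)) \geq \epsilon$ for every $1$-periodic orbit $x$. By part (1), after passing to a subsequence, $y_n \to x_*$ in $C^0$ for some solution $x_*$ of $F(x_*)=0$. But then $\sup_t \mathrm{dist}(x_*(t), y_n(t)) \to 0$, contradicting the assumption applied to $x = x_*$. This establishes (2).

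The only place where the hypothesis that the $1$-periodic orbits form a discrete set is genuinely used is implicit: it guarantees that the limiting solution $x$ in part (1) is isolated, which is what makes the statement of (2) meaningful (otherwise "a solution close to $y$" could be vacuous or the $\delta$ non-uniform). In the proof itself, discreteness is not needed for the compactness argument — that only uses compactness of $M$ and smoothness of $X_t$. The main technical point to be careful about is the bootstrapping step in (1): one must confirm that $C^0$-convergence of $x_n$ together with $L^2$-convergence of $F(x_n)$ to zero really does force $W^{1,2}$-convergence, which it does because $X_t$ is uniformly continuous on the compact manifold $M$, so $X_t(x_n) \to X_t(x)$ in $C^0$ hence in $L^2$. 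Everything else is routine Sobolev theory on the circle.
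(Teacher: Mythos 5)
Your proof is correct. The paper itself gives no argument for this lemma --- it is stated as a standard result with a citation to Salamon's lectures --- and your proof is exactly the standard one: the bound $\|\dot{x}_n\|_{L^2}\leq \|F(x_n)\|_{L^2}+\sup|X_t|$ gives equicontinuity via $\mathrm{dist}(x_n(s),x_n(t))\leq |s-t|^{1/2}\|\dot{x}_n\|_{L^2}$, Arzel\`a--Ascoli plus the bootstrap $\dot{x}_n=X_t(x_n)+F(x_n)\to X_t(x)$ in $L^2$ yields (1), and the compactness-contradiction argument yields (2); your closing remark that discreteness of the orbit set is not actually used in the proof (it matters only for the corollary drawn from the lemma) is also accurate.
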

\begin{corollary}\label{Corollary Palais-Smale}
Let $(M,d\theta)$ be a Liouville domain, fix $J$ as in
\ref{Subsection Symplectic manifolds with contact type boundary}.
Let $H_t$ be a time-dependent Hamiltonian on $\widehat{M}$ such
that $H_t=h(R)$ is linear with generic slope for $R\gg 0$. Then
for any $\delta>0$ there is an $\epsilon>0$ such that any smooth
loop $x:S^1 \to \widehat{M}$ with $\|F(x)\|<\delta$ will be within
distance $\epsilon$ of some $1$-periodic orbit of $H_t$.
\end{corollary}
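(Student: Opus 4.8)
The plan is to reduce Corollary \ref{Corollary Palais-Smale} to the Palais-Smale Lemma by localizing the non-compact manifold $\widehat{M}$ to a compact region. The key observation is the maximum-principle-type behaviour in the collar: since $H_t = h(R)$ with constant generic slope $m = h'(R)$ for $R \gg 0$, and $X_{H_t} = h'(R)\mathcal{R} = m\mathcal{R}$ there, a loop $x$ with $\|F(x)\|_{L^2}$ small must be $L^2$-close to satisfying $\dot x = m\mathcal{R}$, hence close to a Reeb orbit of period $m$. But $m$ was chosen generic, i.e.\ not a Reeb period, so no such loop can exist in the deep collar region unless $\|F(x)\|$ is bounded below. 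More precisely: I would first show there is a compact subset $C = M \cup \{R \le R_0\} \subset \widehat{M}$ and a $\delta_0 > 0$ such that any smooth loop $x \co S^1 \to \widehat{M}$ with $\|F(x)\|_{L^2} < \delta_0$ is contained in $\{R \le R_0\}$. This is because on the region $\{R \ge R_1\}$ the function $R \circ x$ satisfies a differential inequality forcing it toward a Reeb cylinder; quantitatively, if $x$ ventures far into the collar, then either $\dot R$ must be large somewhere (making $\|F(x)\|$ large via the $\partial_r$-component of $\dot x - X_{H_t}$) or $x$ stays on a level set $\{R = c\}$ and is nearly a period-$m$ Reeb orbit, contradicting genericity of $m$ once $\|F(x)\|$ is small enough.

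Having confined all loops with small $\|F\|$ to the compact region $C$, I would then apply part (2) of the Palais-Smale Lemma. The technical point is that the Lemma as stated is for a \emph{compact} Riemannian manifold, whereas $C$ is a compact manifold with boundary. I would handle this by embedding $C$ into a slightly larger compact manifold-with-boundary $C' = M \cup \{R \le R_0'\}$ with $R_0 < R_0'$, or more simply by noting that all the relevant loops lie well in the interior of $C$ (bounded away from $\partial C$ by the previous step with a slightly smaller $R_0$), so the standard elliptic estimates and Arzel\`a-Ascoli argument underlying the Palais-Smale Lemma go through verbatim on $C'$ with $X_t$ extended arbitrarily. The $1$-periodic orbits of $X_{H_t}$ in $C'$ form a discrete (indeed finite, after the standard $C^2$-small perturbation) set by the genericity assumptions, so the hypotheses of the Lemma are met.

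Concretely, given $\delta > 0$, I would: (i) shrink $\delta$ if necessary below the threshold $\delta_0$ above, so that $\|F(x)\| < \delta$ forces $x$ into the interior of $C'$; (ii) apply part (2) of the Lemma on $C'$ with the given $\epsilon$ to obtain a $\delta' > 0$ so that $\|F(x)\| < \delta'$ yields a $1$-periodic orbit within distance $\epsilon$; (iii) take the final $\delta$ to be $\min(\delta_0, \delta')$. The only subtlety in matching norms is that the $L^2$-norm $\|F(x)\|$ computed on $\widehat{M}$ with the cone metric agrees with the one computed on $C'$ since $x$ is supported in $C'$.

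The main obstacle I expect is step (i): making precise the confinement estimate in the collar. The clean way is to use the same computation as in Lemma \ref{Lemma Maximum principle} but now for the single loop $x$ rather than a Floer trajectory. Writing $x(t) = (R(t), m(t)) \in (\epsilon, \infty) \times \partial M$ in the collar, one has $F(x) = \dot x - X_{H_t}$, and decomposing into the $\partial_r$, $\mathcal{R}$, and $\xi$ summands gives $\theta(F(x)) = \dot R / R$-type terms (from $\theta(\partial_r) = 0$, $\theta(\mathcal R) = R$) controlling how much $R\circ x$ can move, while the $\mathcal R$-component controls how far $\dot x$ deviates from the Reeb direction scaled by $h'(R)$. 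A short argument shows that if $\|F(x)\|_{L^2}$ is below a threshold depending only on $h$ and the gap between $m$ and the nearest Reeb periods, then $R \circ x$ is essentially constant and $x$ is $L^2$-close to a genuine period-$m$ Reeb orbit --- impossible for generic $m$ --- unless $R \circ x$ never enters the region where $h$ has attained its terminal slope, i.e.\ $x \subset \{R \le R_0\}$. This is the one place real estimates are needed; everything else is a routine application of the stated Palais-Smale Lemma.
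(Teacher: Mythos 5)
Your overall plan is correct and is the natural way to fill the gap: the paper states this corollary without proof, treating it as an immediate consequence of the Palais--Smale Lemma, and your two ingredients --- confinement of small-$\|F\|$ loops to a compact region using the genericity of the slope, followed by invocation of the Lemma on a compact piece $C'$ --- are exactly what is needed. Two points in the confinement step deserve sharpening.

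First, the heuristic that large $\dot R$ far out in the collar ``makes $\|F(x)\|$ large via the $\partial_r$-component'' is not quite right as stated. In the cone metric one has $|\partial_R|_g^2 = 1/R$, so a large $\dot R$ at large $R$ contributes only $\dot R^2/R$ to $\|F(x)\|_{L^2}^2$, and this term alone need not blow up. What actually does the work is the Reeb component: $|\mathcal{R}|_g^2 = R$ on the collar, so for a loop $x=(R(t),y(t))$ lying in the linear region $\{R\ge R_1\}$ where $h'\equiv m$,
\[
\|F(x)\|_{L^2}^2 \;\ge\; R_{\min}\int_0^1 \bigl|\dot y - m\,\mathcal{R}(y)\bigr|_{g_{\partial M}}^2\,dt \;\ge\; R_{\min}\,\eta^2,
\]
where $\eta>0$ is the Palais--Smale constant for the vector field $m\mathcal{R}$ on the compact manifold $\partial M$, which is strictly positive precisely because the generic slope $m$ is not a Reeb period. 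This is what bounds $R_{\min}$ once $\|F(x)\|<\delta$. The $\dot R^2/R$ term enters only through Cauchy--Schwarz, $R_{\max}-R_{\min}\le \|F(x)\|_{L^2}\sqrt{R_{\max}}$, which controls the $R$-variation and in particular ensures that any loop meeting $\{R\le R_1\}$ remains in a compact set. Together these give the confinement, after which your reduction to the compact Palais--Smale Lemma, including the manifold-with-boundary caveat, goes through.

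Second, a quantifier mismatch: the corollary as printed reads ``for any $\delta$ there is $\epsilon$'', whereas your step (iii) produces, for a given target $\epsilon$, a threshold $\delta$. The latter is the stronger and actually useful form --- it is what Lemma~\ref{Lemma Difference of F}(2) requires --- so your proof establishes what is needed; just be aware that the statement's quantifiers are written the other way around.
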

%
%
%
%
\subsection{Lyapunov property of the action functional}
\label{Subsection Lyapunov property of the action functional}
%
Let $(M,d\theta)$ be a Liouville domain, and pick $J$ as in
\ref{Subsection Symplectic manifolds with contact type boundary}.
The metric we use will be $d\theta(\cdot,J\cdot)$, and denote by
$|\cdot|$ the norm and by $\| \cdot \|$ the $L^2-$norm integrating
over time. Let $X$ be the Hamiltonian vector field for
$(H,d\theta)$, where $H$ is linear at infinity, and recall
$F(x)=\partial_t x-X(x)$.

Let $\beta$ be a closed two-form compactly supported in $M$ such
that $d\theta+\beta$ is symplectic. Denote $X_{\beta}$ the
Hamiltonian vector field for $(H,d\theta+\beta)$, and let
$$F_{\beta}(x)=\partial_t x - X_{\beta}(x).$$

Let $\| \beta \| = \sup |\beta (Y,Z)|$ taken over all vectors
$Y,Z$ of norm $1$. We will also use the notation $Y_{\supp \beta}$
for a vector field $Y$, where
$$Y_{\supp \beta}(m)=Y(m) \textrm{ if } m\in \supp \beta, \textrm{ and } Y_{\supp \beta}(m)=0 \textrm{ otherwise}.$$
\begin{lemma}\label{Lemma Difference of F}
Let $V$ be a neighbourhood containing the $1-$periodic orbits of
$X$ in $M$, and let $\beta$ be a closed $2$-form compactly
supported in $M$ and vanishing on $V$.
\begin{enumerate}
\item If $\| \beta \|<1$ then
$\displaystyle \| F_{\beta}(x) - F(x) \| \leq \frac{\| \beta
\|}{1-\| \beta \|} \,
 \| X_{\supp \beta} \|.
$
\item  There is a $\delta>0$ depending on $(M,H,d\theta,J,V)$, but
not on $\beta$, such that
$$
\|F(x)\|<\delta \Longrightarrow x \textrm{ lies in }V \textrm{ or
outside }M, \textrm{ so } F_\beta(x)=F(x).
$$
\item If $\|\beta\|$ is sufficiently small, then
$\|F_{\beta} -F\| \leq \frac{1}{3} \| F\|$ and $\|F_{\beta}\| \leq
2 \| F\|.$
\end{enumerate}
\end{lemma}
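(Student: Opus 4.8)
The plan is to establish the three parts in the stated order, with parts (2) and (3) resting on part (1) and part (3) resting on part (2). The organizing observation is that $F$, the metric $g=d\theta(\cdot,J\cdot)$, the $1$-periodic orbits of $X$, and the neighbourhood $V$ are all built from the \emph{exact} data only and do not involve $\beta$; hence any Palais--Smale-type constant produced on the $d\theta$-side is automatically uniform in $\beta$. For part (1), I would compare $X$ and $X_\beta$ pointwise: from $d\theta(\cdot,X)=dH=(d\theta+\beta)(\cdot,X_\beta)$ we get $d\theta(\cdot,X_\beta-X)=-\iota_{X_\beta}\beta$, so $X_\beta=X$ off $\supp\beta$; and since $J$ is $g$-orthogonal, the $d\theta$-dual of a $1$-form has the same $g$-norm as its $g$-dual, so $|X_\beta-X|=|\iota_{X_\beta}\beta|\le\|\beta\|\,|X_\beta|\le\|\beta\|\,(|X_\beta-X|+|X|)$ pointwise. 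Rearranging on $\supp\beta$ (using $\|\beta\|<1$) and noting both sides vanish elsewhere gives $|X_\beta-X|\le\frac{\|\beta\|}{1-\|\beta\|}\,|X_{\supp\beta}|$ everywhere; composing with the loop $x$, integrating over $S^1$, and using $F_\beta(x)-F(x)=X(x)-X_\beta(x)$ then yields the asserted $L^2$ bound.

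For part (2), I would split the $1$-periodic orbits of $X$ into those lying in $M$ — all contained in the open set $V$ by hypothesis — and those lying in the collar $\{R>1\}$, i.e.\ outside $M$. Each orbit is a compact loop contained in one of these two open sets, so I can choose $\epsilon>0$ such that any loop that is $C^0$-within $\epsilon$ of an orbit of the first kind lies entirely in $V$, and any loop $C^0$-within $\epsilon$ of an orbit of the second kind lies entirely outside $M$. Feeding this $\epsilon$ into the Palais--Smale property (Corollary \ref{Corollary Palais-Smale}) produces the required $\delta>0$, which depends only on $(M,H,d\theta,J,V)$ and not on $\beta$: if $\|F(x)\|<\delta$ then $x$ is $C^0$-close to some orbit, so its image lies entirely in $V$ or entirely outside $M$; on both of those regions $\beta\equiv0$ (it is supported in $M$ and vanishes on $V$), so $X_\beta=X$ along $x$ and $F_\beta(x)=F(x)$.

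For part (3), the inequality $\|F_\beta\|\le2\|F\|$ is immediate from $\|F_\beta-F\|\le\frac13\|F\|$ and the triangle inequality, so it suffices to prove the latter for all loops $x$. I would take $\delta$ from part (2) and set $C=\sup_M|X_{\supp\beta}|$, which is finite since $\supp\beta$ is compact and $X=X_H$ is smooth. If $\|F(x)\|<\delta$, then $F_\beta(x)=F(x)$ by part (2) and there is nothing to prove. If $\|F(x)\|\ge\delta$, then part (1) together with the crude bound $\|X_{\supp\beta}(x)\|_{L^2}\le C$ gives $\|F_\beta(x)-F(x)\|\le\frac{\|\beta\|}{1-\|\beta\|}\,C$, and this is $\le\frac13\delta\le\frac13\|F(x)\|$ as soon as $\|\beta\|$ is small enough (for instance $\|\beta\|\le\frac12$ and $\|\beta\|\le\delta/(6C)$, so that $\frac{\|\beta\|}{1-\|\beta\|}\le2\|\beta\|$ and $2\|\beta\|C\le\frac13\delta$).

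The main obstacle I expect is part (2): one must produce a single neighbourhood scale $\epsilon$, hence a single Palais--Smale threshold $\delta$, that works \emph{uniformly in $\beta$} and that simultaneously separates the orbits sitting inside $V$ from those sitting out in the collar $\{R>1\}$ — this is the step where the geometry of $\partial M$ and of the orbit set really enters. Everything else is routine: part (1) is a pointwise linear-algebra estimate via the $\omega$-compatible $J$, and part (3) merely absorbs the bounded error of part (1) into a small $\|\beta\|$, using the lower bound $\|F(x)\|\ge\delta$ supplied by part (2) to handle the loops that are not already Palais--Smale-close to an orbit.
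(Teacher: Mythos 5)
Your argument is correct and follows essentially the same route as the paper: part (1) via the pointwise inequality $|X-X_\beta|\le\|\beta\|\,|X_\beta|$ and the self-improving bound $|X_\beta|\le\frac{1}{1-\|\beta\|}|X|$, part (2) via Palais--Smale (Corollary \ref{Corollary Palais-Smale}), and part (3) by combining the two exactly as the paper does with $C=\sup|X_{\supp\beta}|$ and the dichotomy $\|F\|<\delta$ versus $\|F\|\ge\delta$. You spell out part (2) in more detail than the paper's one-line appeal to the corollary, but the content is the same.
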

\begin{proof}
Observe that $F_{\beta}-F = X-X_{\beta}$ and that $
d\theta(X-X_{\beta},.) = \beta(X_{\beta},.) $, so
$$
| X-X_{\beta}|^2 =  \beta(X_{\beta},J(X-X_{\beta}))
 \leq  \| \beta \| \cdot | X_{\beta} | \cdot | X-X_{\beta} |.
$$
Dividing out by $|X-X_{\beta}|$ gives
$ | X-X_{\beta}| \leq \| \beta \| \cdot | X_{\beta}|$.
From $|X_{\beta}| \leq |X|+|X-X_{\beta}|$ we deduce that
$|X_{\beta}| \leq \frac{1}{1-\| \beta \|}\, |X|$. Therefore
$$
| F_{\beta}(x) - F(x) | \leq \frac{\| \beta \|}{1-\| \beta \|} \,
 | X_{\supp \beta}(x) |,
$$
since $F_{\beta}-F=X-X_{\beta}$ vanishes at $(x,t)$ if the loop
$x$ lies outside the support of $\beta$ at time $t$. The first
claim follows, and the second follows by Corollary \ref{Corollary
Palais-Smale}.

Let $C= \sup |X_{\supp \beta}|$. Then, whenever $\|F\| \geq
\delta$,
$$
\|F_{\beta} - F\| \leq \frac{\|\beta\|}{1-\|\beta\|} C \leq
\frac{\|\beta\|}{1-\|\beta\|} \frac{C}{\delta} \|F\| \leq
\frac{1}{3} \|F\|
$$
for small enough $\|\beta\|$. The last claim then follows from
$(2)$.
\end{proof}
For $\| \beta \|<1$, $d\theta+\beta$ is symplectic and
$(d\theta+\beta)(\cdot, J\cdot)$ is positive definite but may not
be symmetric. By symmetrizing, we obtain a metric
$$
\widetilde{g}_{\beta}(V,W) = \frac{1}{2} [(d\theta + \beta)(V,JW)+
(d\theta+\beta)(W, J V)].
$$
There is a unique endomorphism $B$ such that
$\widetilde{g}_{\beta}(BV,W)=(d\theta+\beta)(V,W)$, and this
yields an almost complex structure $J_{\beta}=(-B^2)^{-1/2}B$
compatible with $d\theta+\beta$, inducing the metric
$$
g_{\beta}(V,W)=(d\theta+\beta)(V,J_{\beta}W) =
\widetilde{g}_{\beta}((-B^2)^{1/2}V,W).
$$
For sufficiently small $\|\beta\|$, $J_{\beta}$ is $C^0$-close to
$J$ and is equal to $J$ outside the support of $\beta$, so in
particular $g_{\beta}$ induces a norm $|\cdot |_{\beta}$ which is
equivalent to the norm $|\cdot |$. Moreover, on the support of
$\beta$ we may perturb $J_{\beta}$ among
$(d\theta+\beta)-$compatible almost complex structures so that
transversality holds for $(d\theta+\beta)-$Floer trajectories. For
convenience, we use the abbreviations
$$
\delta J = J_{\beta} - J \qquad\quad \delta F = F_{\beta} - F.
$$
\begin{theorem}\label{Theorem Lyapunov property}
Let $V$ be a neighbourhood containing the $1-$periodic orbits of
$X$ in $M$, and let $\beta$ be a closed $2$-form compactly
supported in $M$ and vanishing on $V$. Then for sufficiently small
$\|\beta\|$,
$$
\partial_s A(u) \leq -\frac{1}{2} \| F(u) \|^2
$$
for all $u\in \mathcal{M}(x,y;d\theta+\beta,H)$, where $A(x)=-\int
x^*\theta+\int H(x)\, dt$ is the action functional for
$(H,d\theta)$. In particular, $A$ is a Lyapunov function for the
action $1-$form for $(H,d\theta+\beta)$.
\end{theorem}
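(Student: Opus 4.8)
The plan is to evaluate $\partial_s A(u)$ directly from Floer's equation for $(H,d\theta+\beta,J_\beta)$, to identify the leading term as the Lyapunov quantity $-\|F(u)\|^2$ of the exact-case computation in \ref{Subsection Floers Equation}, and to absorb the terms created by $\beta$ into the estimate using Lemma \ref{Lemma Difference of F}.

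Since $u\in\mathcal{M}(x,y;d\theta+\beta,H)$ satisfies $\partial_s u=-J_\beta F_\beta(u)$, the formula for the action $1$-form of $(H,d\theta)$ from \ref{Subsection Action 1-form} gives
$$\partial_s A(u)=dA\cdot\partial_s u=-\int_0^1 d\theta(\partial_s u,F(u))\,dt=\int_0^1 d\theta\bigl(J_\beta F_\beta(u),F(u)\bigr)\,dt.$$
Using $d\theta(V,W)=-g(V,JW)$ for the $J$-metric $g=d\theta(\cdot,J\cdot)$ and the fact that $J$ is a $g$-isometry, I would rewrite this as $\partial_s A(u)=-\int_0^1 g\bigl(JF(u),J_\beta F_\beta(u)\bigr)\,dt$. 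Substituting $J_\beta=J+\delta J$ and $F_\beta=F+\delta F$ and expanding, the integrand becomes $g(JF,JF)+g(JF,J\,\delta F)+g(JF,\delta J\,F)+g(JF,\delta J\,\delta F)=|F|^2+g(F,\delta F)+g(JF,\delta J\,F)+g(JF,\delta J\,\delta F)$, so
$$\partial_s A(u)=-\|F(u)\|^2-\int_0^1 g(F,\delta F)\,dt-\int_0^1 g(JF,\delta J\,F)\,dt-\int_0^1 g(JF,\delta J\,\delta F)\,dt.$$

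It then remains to bound the three error integrals. By Cauchy--Schwarz and Lemma \ref{Lemma Difference of F}(3), the first is at most $\|F\|\,\|\delta F\|\le\tfrac13\|F\|^2$; since $\delta J=J_\beta-J$ vanishes outside the compact set $\supp\beta$ and is $C^0$-small for small $\|\beta\|$, the second is at most $\|\delta J\|_{C^0}\|F\|^2$ and the third at most $\|\delta J\|_{C^0}\|F\|\,\|\delta F\|\le\tfrac13\|\delta J\|_{C^0}\|F\|^2$. Hence $\partial_s A(u)\le-\bigl(\tfrac23-\tfrac43\|\delta J\|_{C^0}\bigr)\|F(u)\|^2$, which is $\le-\tfrac12\|F(u)\|^2$ once $\|\beta\|$ is small enough that $\|\delta J\|_{C^0}\le\tfrac18$. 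Finally, $\|F_\beta-F\|\le\tfrac13\|F\|$ forces $\tfrac23\|F\|\le\|F_\beta\|$, so $F(u)=0$ precisely when $F_\beta(u)=0$, i.e. precisely when $\partial_s u=0$; thus $A$ strictly decreases along non-stationary $(H,d\theta+\beta)$-Floer trajectories and is a Lyapunov function for the action $1$-form of $(H,d\theta+\beta)$.

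The computation is routine; the single step carrying genuine content is the global $L^2$-estimate $\|\delta F\|\le\tfrac13\|F\|$ of Lemma \ref{Lemma Difference of F}(3), and this is where the hypothesis that $\beta$ vanishes on the neighbourhood $V$ of the $1$-periodic orbits is essential: by the Palais--Smale Corollary \ref{Corollary Palais-Smale} (together with $\beta|_V=0$) a loop with $\|F\|$ small must lie in $V$ or outside $M$, where $F_\beta=F$, while for $\|F\|$ bounded below the bound of Lemma \ref{Lemma Difference of F}(1) suffices. I expect no further obstacle: outside the compact set $\supp\beta$ we have $J_\beta=J$ and $X_\beta=X$, so $\delta J=\delta F=0$ there and every error term vanishes identically; in particular no uniformity issue arises near the contact boundary or at infinity.
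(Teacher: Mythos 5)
Your argument is correct and is essentially the paper's own proof: both compute $\partial_s A(u)$ by substituting $\partial_s u=-J_\beta F_\beta(u)$ into the action $1$-form, expand $J_\beta F_\beta=(J+\delta J)(F+\delta F)$ to isolate the leading term $-\|F(u)\|^2$, and absorb the three error terms via $\|\delta F\|\le\tfrac13\|F\|$ from Lemma \ref{Lemma Difference of F}(3) together with the smallness of $\|\delta J\|$. The only cosmetic difference is that you pass through the metric $g=d\theta(\cdot,J\cdot)$ whereas the paper bounds the $d\theta$-pairings directly; the constants and the role of the hypothesis $\beta|_V=0$ (via Palais--Smale) are identified exactly as in the paper.
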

\begin{proof}
The action $A$ for $(d\theta,H)$ varies as follows on $u\in
\mathcal{M}(x,y;d\theta+\beta,H)$,
$$
\begin{array}{lll}
-\partial_s A(u) & = &  \int_0^1 d\theta
(\partial_s u,F(u))\, dt\\
& = & \int_0^1 d\theta
(F(u),J_{\beta}F_{\beta}(u))\, dt\\
& = & \int_0^1 d\theta
(F(u),(J+\delta J)(F+\delta F)(u))\, dt\\
& \geq & \|F(u)\|^2 - \| \delta J \| \, \| F(u) \|^2 - \| \delta
F(u) \| \, \| F(u) \|- \|\delta F(u)\| \, \|\delta J\| \, \|
F(u) \| \\
& \geq & \left(1-\|\delta J\| - \frac{1}{3}- \frac{1}{3}\|\delta
J\|\right) \|F(u)\|^2,
\end{array}
$$
using Lemma \ref{Lemma Difference of F} in the last line.
\end{proof}
%
%
\subsection{A priori energy estimate}
\label{Subsection A priori energy estimate}
%
We now want an a priori energy estimate for all $u\in
\mathcal{M}(x,y;d\theta+\beta,H)$ when $\| \beta\|$ is small. The
key idea is to reparametrize the action $A$ by energy and then use
the Lyapunov inequality $\partial_s A(u) \leq -\frac{1}{2} \| F(u)
\|^2$ of Theorem \ref{Theorem Lyapunov property}.
Let $e(s)$ denote the energy up to $s$ calculated with respect to
$(d\theta+\beta,J_{\beta})$,
$$e(s) = \int_{-\infty}^s \int_0^1 |\partial_s u|_{\beta}^2 \, dt\, ds
=\int_{-\infty}^s \| \partial_s u\|_{\beta}^2 \, ds
$$
where $|\cdot|_{\beta}$ is the norm corresponding to the metric
$(d\theta+\beta)(\cdot,J_{\beta}\cdot)$, and $\|\cdot\|_{\beta}$
is the $L^2$ norm integrated over time.

\begin{theorem}\label{Theorem A priori energy estimate}
Let $\beta$ be as in Theorem \ref{Theorem Lyapunov property}. Then
there is a constant $k>1$ such that for all $u\in
\mathcal{M}(x,y;d\theta+\beta,H)$,
$$
E(u) \leq k (A(x)-A(y)).
$$
\end{theorem}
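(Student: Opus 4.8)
The plan is to combine the Lyapunov inequality of Theorem~\ref{Theorem Lyapunov property} with the observation that the $(d\theta+\beta,J_{\beta})$-energy density of a Floer solution is controlled, up to a uniform constant, by the quantity $\|F(u)\|^2$ computed with the \emph{exact} data $(d\theta,J,H)$. Throughout, $x$ and $y$ denote the asymptotes of $u$ at $s\to-\infty$ and $s\to+\infty$ respectively; along $u$ the exact action $A(x)=-\int x^*\theta+\int H(x)\,dt$ is decreasing by Theorem~\ref{Theorem Lyapunov property}, so $A(x)\geq A(y)$ and the right-hand side of the assertion is non-negative, consistent with $E(u)\geq 0$.

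First I would rewrite Floer's equation for $u\in\mathcal{M}(x,y;d\theta+\beta,H)$ as $\partial_s u=-J_{\beta}F_{\beta}(u)$. Since $J_{\beta}$ is orthogonal for the metric $g_{\beta}$, one gets $e'(s)=\|\partial_s u\|_{\beta}^2=\|F_{\beta}(u)\|_{\beta}^2$. For $\|\beta\|$ small the norm $|\cdot|_{\beta}$ is equivalent to $|\cdot|$, and by Lemma~\ref{Lemma Difference of F}(3) one has $\|F_{\beta}(u)\|\leq 2\|F(u)\|$; hence there is a constant $C>0$, depending only on $(M,H,d\theta,J,V)$ and on neither $u$ nor (the small) $\beta$, such that $e'(s)\leq C\,\|F(u(s,\cdot))\|^2$ for every $s$.

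Next, Theorem~\ref{Theorem Lyapunov property} gives $\partial_s A(u)\leq -\tfrac12\|F(u)\|^2\leq -\tfrac{1}{2C}\,e'(s)$. Integrating over $s\in\R$: the left side integrates to $A(y)-A(x)$ --- the boundary term being legitimate because $A(u(s,\cdot))$ is monotone decreasing and converges to $A(x)$ and $A(y)$ at the two ends --- while $\int_{-\infty}^{\infty}e'(s)\,ds=e(+\infty)=E(u)$, using $e(-\infty)=0$. This yields $A(y)-A(x)\leq-\tfrac{1}{2C}E(u)$, i.e. $E(u)\leq 2C\,(A(x)-A(y))$, and we may take $k=\max\{2C,2\}>1$. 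Equivalently, one may phrase the middle step as in the section preamble: after discarding the trivial case $x=y$ (where $u$ is constant and $E(u)=0$), reparametrise the action $A$ by the energy $e$; then $A$ decreases at rate at least $1/(2C)$ in $e$, which integrates to the same estimate.

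The theorem is thus essentially a corollary of the Lyapunov estimate, and the one point that needs care is the uniformity of $C$: the comparison $e'(s)\leq C\|F(u)\|^2$, the equivalence of $|\cdot|_{\beta}$ with $|\cdot|$, and the Lyapunov inequality of Theorem~\ref{Theorem Lyapunov property} must all hold simultaneously, under a threshold on $\|\beta\|$ independent of $u$ and of the orbits $x,y$. This is exactly what Lemma~\ref{Lemma Difference of F} and the construction of $J_{\beta}$ preceding Theorem~\ref{Theorem Lyapunov property} provide --- $J_{\beta}$ is $C^0$-close to $J$ and agrees with $J$ off $\supp\beta$ --- so for $\|\beta\|$ small enough all the constants involved depend only on $(M,H,d\theta,J,V)$, as required.
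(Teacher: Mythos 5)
Your proof is correct and follows essentially the same route as the paper: the Lyapunov inequality of Theorem~\ref{Theorem Lyapunov property}, the comparison $\|F_{\beta}\|\leq 2\|F\|$ from Lemma~\ref{Lemma Difference of F}, and the equivalence of $\|\cdot\|_{\beta}$ with $\|\cdot\|$ are combined in the same way. The only (harmless, arguably cleaner) difference is that you integrate the inequality directly in $s$ instead of reparametrising $A$ by the energy $e$, which lets you avoid the paper's side remark about discarding the points where $\partial_s e=0$.
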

\begin{proof}
$\partial_s e=\| \partial_s u\|_{\beta}^2$ vanishes at $s$
precisely if $F_{\beta}(u)=0$. By ignoring those $s$ for which
$\partial_s e=0$, we can assume that $\partial_s e>0$. Let $s(e)$
be the inverse of the function $e(s)$. Then reparametrize the
trajectory $u$ by
$$
\tilde{u}(e,t) = u(s(e),t).
$$
Since $\displaystyle\partial_e s = \frac{1}{\| \partial_s u
\|_{\beta}^2}$, we deduce
$\displaystyle\partial_e (A\circ \tilde{u}) = \frac{\partial_s
A(u)}{\|
\partial_s u \|_{\beta}^2} = \frac{\partial_s A(u)}{\|
F_{\beta}(u)\|_{\beta}^2}.$

\noindent Now apply respectively Theorem \ref{Theorem Lyapunov
property}, Lemma \ref{Lemma Difference of F} and the equivalence
of the norms $\|\cdot\|$ and $\|\cdot\|_{\beta}$,
$$
\partial_e (A\circ \tilde{u})  \leq
\frac{- \| F(u) \|^2}{2 \| F_{\beta}(u)\|_{\beta}^2}
\leq  \frac{- \| F_{\beta}(u) \|^2}{\textrm{constant}\cdot
\|F_{\beta}(u)\|^2}\\
 =\co\! -  \frac{1}{k} .
$$
Integrate in $e$ over $(e(-\infty),e(\infty))=(0,E(u))$ to get
$A(y)-A(x) \leq (-1/k) E(u)$. By making $\|\beta\|$ sufficiently
small, one can actually make $k$ arbitrarily close to $1$.
\end{proof}
%
%
\subsection{Transversality for deformations}
\label{Subsection Transversality for deformations}
%
%
We now prove a general result which guarantees transversality for
a $1-$parameter deformation $\mathcal{G}$ of a map $\mathcal{F}$
for which transversality holds. We need a preliminary lemma.
\begin{lemma}\label{Lemma transversality lemma}
Let $L:B_1 \to B_2$ be a surjective bounded operator of Banach
spaces, and consider a perturbation $L+P_{\varepsilon}:B_1 \to
B_2$ where $P_{\varepsilon}$ is a bounded operator which depends
on a topological parameter $\varepsilon$, with $P_0=0$ and $\|
P_{\varepsilon} \| \to 0$ as $\varepsilon \to 0$.
\begin{enumerate}
\item If $L$ is Fredholm then so is $L+P_{\varepsilon}$ for small
$\varepsilon$.

\item If $L$ is Fredholm and surjective, then so is
$L+P_{\varepsilon}$ for small $\varepsilon$.
\end{enumerate}
\end{lemma}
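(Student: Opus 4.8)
The plan is to reduce both parts to the stability of (one-sided) parametrices under small bounded perturbations, using the open mapping theorem and the Neumann series.

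For (1), I would start from a parametrix for $L$. Since $L$ is Fredholm there is a bounded operator $Q\colon B_2\to B_1$ and compact operators $K_1\colon B_1\to B_1$, $K_2\colon B_2\to B_2$ with $QL=\mathrm{id}_{B_1}+K_1$ and $LQ=\mathrm{id}_{B_2}+K_2$. Choose $\varepsilon$ small enough that $\|Q\|\,\|P_{\varepsilon}\|<1$; then $\mathrm{id}+QP_{\varepsilon}$ and $\mathrm{id}+P_{\varepsilon}Q$ are invertible by the Neumann series. Writing $Q(L+P_{\varepsilon})=(\mathrm{id}+QP_{\varepsilon})+K_1$ and multiplying on the left by $(\mathrm{id}+QP_{\varepsilon})^{-1}$ exhibits a left parametrix for $L+P_{\varepsilon}$ modulo the compact operator $(\mathrm{id}+QP_{\varepsilon})^{-1}K_1$; symmetrically $(L+P_{\varepsilon})Q(\mathrm{id}+P_{\varepsilon}Q)^{-1}$ differs from $\mathrm{id}_{B_2}$ by a compact operator, giving a right parametrix. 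Having both a left and a right parametrix modulo compacts forces $L+P_{\varepsilon}$ to be Fredholm. (Alternatively one simply quotes that the Fredholm operators form an open subset of the bounded operators $B_1\to B_2$, together with $\|P_{\varepsilon}\|\to 0$.)

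For (2), I would build a bounded right inverse of $L$ and then perturb it. As $L$ is Fredholm, $\ker L$ is finite dimensional, hence topologically complemented: $B_1=\ker L\oplus X$ with $X$ closed. Then $L|_{X}\colon X\to B_2$ is a continuous bijection (injective since $X\cap\ker L=0$, surjective since $L$ is), so by the open mapping theorem it is an isomorphism; let $S=(L|_{X})^{-1}\colon B_2\to B_1$, a bounded operator with $LS=\mathrm{id}_{B_2}$. For $\varepsilon$ small enough that $\|P_{\varepsilon}\|\,\|S\|<1$, the operator $\mathrm{id}_{B_2}+P_{\varepsilon}S$ is invertible, and then $S(\mathrm{id}_{B_2}+P_{\varepsilon}S)^{-1}$ is a bounded right inverse of $L+P_{\varepsilon}$; in particular $L+P_{\varepsilon}$ is surjective (and it is Fredholm by (1)).

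The computations are routine functional analysis, so there is no serious obstacle. The one point I would be careful to write out explicitly is in part (1): a one-sided parametrix modulo compacts yields only half of the Fredholm property — the left parametrix gives a finite dimensional kernel and closed range, the right one gives finite codimension — so one genuinely has to perturb both parametrices of $L$ simultaneously rather than just one. Everything else is the Neumann series plus the open mapping theorem.
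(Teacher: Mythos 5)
Your proof is correct, and for part (2) it takes a genuinely different route from the paper. Part (1) coincides: both reduce to the openness of the set of Fredholm operators in the norm topology (you additionally sketch a proof of that openness via perturbed parametrices, which the paper simply quotes). For part (2) the paper dualizes: it invokes that $L$ is surjective iff $L^*$ is injective with closed range, that $L$ Fredholm implies $L^*$ Fredholm, and that a Fredholm operator is injective iff bounded below; the quantity it then perturbs is the lower bound $\|L^*v\|\geq\delta_L\|v\|$, which survives any perturbation with $\|P_\varepsilon\|<\delta_L$. You instead stay on the primal side: you complement the finite-dimensional kernel $B_1=\ker L\oplus X$, note $L|_X\colon X\to B_2$ is an isomorphism by the open mapping theorem, and perturb the resulting bounded right inverse $S=(L|_X)^{-1}$ via the Neumann series, producing the explicit right inverse $S(\mathrm{id}+P_\varepsilon S)^{-1}$ for $L+P_\varepsilon$. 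Your argument is more constructive — it actually hands you the right inverse and its norm, and the only role of Fredholmness is to guarantee $\ker L$ is complemented — whereas the paper's is more economical once the three duality facts are granted, and makes transparent that the perturbation-stable quantity is the lower bound on the adjoint. Both are standard and unobjectionable.
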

\begin{proof}
The Fredholm property is a norm-open condition, hence (1). Recall
some general results relating an operator $L:B_1\to B_2$ to its
Banach dual $L^*:B_2^* \to B_1^*$:
\begin{enumerate}
\renewcommand{\labelenumi}{\roman{enumi})}
\item $L$ is surjective if and only if $L^*$ is injective and
$\textrm{im}\, L$ is closed;

\item if $L$ is Fredholm then $L^*$ is Fredholm;

\item a Fredholm operator is injective if and only if it is
bounded below.

\end{enumerate}
In (2), $L^*$ is bounded below, say $\| L^*v \| \geq \delta_L \|
v\|$ for all $v\in B_2^*$, so
$$\|(L+P_{\varepsilon})^*v\| \geq \| L^*v
\| - \| P_{\varepsilon}^* v \| \geq (\delta_L - \|
P_{\varepsilon}^*\| )\, \| v\|.$$
If $\varepsilon$ is so small that $\delta_L> \|
P_{\varepsilon}^*\|=\| P_{\varepsilon}\|$, then
$(L+P_{\varepsilon})^*$ is bounded below and so
$L+P_{\varepsilon}$ is surjective.
\end{proof}
\begin{theorem}\label{Theorem Transversality for Banach bundles}
Let $Y\to X$ be a Banach vector bundle. Suppose that a
differentiable section $\mathcal{F}:X\to Y$ is transverse to the
zero section with Fredholm differential $D_u\mathcal{F}$ at all
$u\in \mathcal{F}^{-1}(0)$. Let $\mathcal{S}:\R \times X \to Y$ be
a differentiable parameter-valued section with
$\mathcal{S}(0,\cdot)=0$. Then for the deformation
$\mathcal{G}=\mathcal{F}+\mathcal{S}:\R \times X \to Y$,
\begin{enumerate}
\item $\mathcal{G}^{-1}(0)$ is a smooth submanifold near
$\{0\}\times \mathcal{F}^{-1}(0)$;

\item $\mathcal{G}^{-1}(0)$ is transverse to $\{ \lambda = 0\} =\{
0 \} \times X$ $\mathrm{(}$where $\lambda$ is the
$\R$-coordinate$\mathrm{)}$;

\item If $0$ is an index zero regular value of $\mathcal{F}$ and
$\mathcal{G}^{-1}(0)$ is compact near $\lambda=0$, then the
deformation $\mathcal{G}^{-1}(0)$ of $\mathcal{F}^{-1}(0)$ is
trivial near $\lambda=0$,
$$
\mathcal{G}^{-1}(0) \cap \{ \lambda \in [-\lambda_0,\lambda_0]  \}
\cong [-\lambda_0,\lambda_0] \times \mathcal{F}^{-1}(0).
$$
\end{enumerate}
\end{theorem}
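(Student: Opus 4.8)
The plan is to treat all three claims as an application of the implicit function theorem for sections of Banach bundles, leveraging Lemma~\ref{Lemma transversality lemma} to control how the Fredholm and surjectivity properties persist under the small perturbation $\mathcal{S}$. First I would set up the relevant function spaces: near a point $u_0\in\mathcal{F}^{-1}(0)$, the differential $D_{u_0}\mathcal{F}\co T_{u_0}X\to Y_{u_0}$ is surjective Fredholm by hypothesis, so by the Banach-space implicit function theorem $\mathcal{F}^{-1}(0)$ is a smooth submanifold near $u_0$ of dimension equal to $\operatorname{ind}D_{u_0}\mathcal{F}$. For the deformation $\mathcal{G}=\mathcal{F}+\mathcal{S}$ on $\R\times X$, the differential at $(0,u_0)$ is $D_{(0,u_0)}\mathcal{G}(\ell,\xi)=D_{u_0}\mathcal{F}\cdot\xi+\ell\,\partial_\lambda\mathcal{S}(0,u_0)$, where I have used $\mathcal{S}(0,\cdot)=0$ so that the only new term is the $\lambda$-derivative. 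Since $D_{u_0}\mathcal{F}$ is already surjective, adding the extra rank-one (or at worst bounded) piece keeps $D_{(0,u_0)}\mathcal{G}$ surjective, and it is Fredholm of index one higher; hence $\mathcal{G}^{-1}(0)$ is a smooth submanifold near $\{0\}\times\mathcal{F}^{-1}(0)$, giving (1).

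For (2), transversality of $\mathcal{G}^{-1}(0)$ to the slice $\{\lambda=0\}$ amounts to showing that at each $(0,u_0)\in\mathcal{G}^{-1}(0)$ the tangent space $\ker D_{(0,u_0)}\mathcal{G}$ together with $T_{(0,u_0)}\{\lambda=0\}=\{0\}\oplus T_{u_0}X$ spans $\R\oplus T_{u_0}X$. This is immediate because the $\lambda$-direction $(1,\xi_0)$ lies in $\ker D_{(0,u_0)}\mathcal{G}$ for a suitable $\xi_0$ (using surjectivity of $D_{u_0}\mathcal{F}$ to solve $D_{u_0}\mathcal{F}\cdot\xi_0=-\partial_\lambda\mathcal{S}(0,u_0)$), and $(1,\xi_0)$ has nonzero $\lambda$-component. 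Equivalently, the restriction $\mathcal{G}^{-1}(0)\cap\{\lambda=0\}=\mathcal{F}^{-1}(0)$ is cut out transversally, which is the hypothesis. So the projection $\lambda\co\mathcal{G}^{-1}(0)\to\R$ is a submersion near $\lambda=0$.

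For (3), assume now that $\operatorname{ind}D_u\mathcal{F}=0$ and $0$ is a regular value, so $\mathcal{F}^{-1}(0)$ is a finite set of points (a zero-dimensional compact manifold, using the stated compactness). By (1) and (2), $\mathcal{G}^{-1}(0)$ is a one-dimensional manifold near $\{0\}\times\mathcal{F}^{-1}(0)$ on which the coordinate $\lambda$ restricts to a submersion, hence a local diffeomorphism onto an interval. Shrinking, pick $\lambda_0>0$ small enough that over $[-\lambda_0,\lambda_0]$ the compactness-near-$\lambda=0$ hypothesis guarantees $\mathcal{G}^{-1}(0)\cap\{|\lambda|\le\lambda_0\}$ is a compact one-manifold with boundary contained in $\{\lambda=\pm\lambda_0\}$; since $\lambda$ is a submersion without critical points, each component is an arc mapping diffeomorphically onto $[-\lambda_0,\lambda_0]$, so there is exactly one arc over each point of $\mathcal{F}^{-1}(0)$. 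Using $\lambda$ itself as the $[-\lambda_0,\lambda_0]$-coordinate then yields the product trivialization $\mathcal{G}^{-1}(0)\cap\{\lambda\in[-\lambda_0,\lambda_0]\}\cong[-\lambda_0,\lambda_0]\times\mathcal{F}^{-1}(0)$.

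The main obstacle is purely in the bookkeeping of part (3): one must ensure no component of the one-manifold $\mathcal{G}^{-1}(0)$ over $[-\lambda_0,\lambda_0]$ fails to meet $\{\lambda=0\}$, i.e.\ that after shrinking $\lambda_0$ no ``extra'' arcs or loops appear away from the original zero set. This is exactly where the hypothesis that $\mathcal{G}^{-1}(0)$ is compact near $\lambda=0$ is used: it lets us choose a compact neighbourhood of $\{0\}\times\mathcal{F}^{-1}(0)$ in $\mathcal{G}^{-1}(0)$ whose only boundary is over $\lambda=\pm\lambda_0$, and then the submersion property of $\lambda$ does the rest. Everything else is a routine application of the implicit function theorem and Lemma~\ref{Lemma transversality lemma}; in the concrete Floer-theoretic setting the role of $X$ is played by a space of maps with the appropriate Sobolev completion, $\mathcal{F}$ is the Floer operator for $d\theta+\mu^{-1}\beta$ (or the relevant endpoint form), and $\mathcal{S}$ encodes the deformation of the symplectic form, so the Fredholm and $\|P_\varepsilon\|\to 0$ hypotheses are the standard ones verified elsewhere in the paper.
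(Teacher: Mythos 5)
Your proposal is correct and follows essentially the same route as the paper: the implicit function theorem plus the perturbation lemma for (1), the computation $D_{(0,u)}\mathcal{G}(\eta,\xi)=D_u\mathcal{F}\xi+\eta\,\partial_\lambda\mathcal{S}$ (exploiting $\mathcal{S}(0,\cdot)=0$) to produce a kernel vector of the form $(1,\xi_0)$ for (2), and compactness near $\lambda=0$ together with the index-one-higher count to trivialize the one-dimensional zero set for (3). The only differences are cosmetic: you solve explicitly for $\xi_0$ where the paper argues by a dimension count, and you spell out the classification of the compact one-manifold in (3) in more detail than the paper does.
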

\begin{proof}

The first claim essentially follows from the implicit function
theorem and Lemma \ref{Lemma transversality lemma} applied to the
operators $L = D_u \mathcal{F}$ and $P_{\varepsilon} =
D_{\lambda,u} \mathcal{S}$ with parameter
$\varepsilon=(\lambda,u)$. More precisely, we reduce to the local
setup by choosing an open neighbourhood $U$ of $u$ so that $T_U X
\cong U \times B_1$, $T_U Y \cong U \times B_2$,
$$
T_{[-\lambda_0,\lambda_0] \times U} (\R \times X) \cong
([-\lambda_0,\lambda_0] \times U) \times \R \times B_1,
$$
so locally $D_u \mathcal{F} : B_1 \to B_2$ and $D_{\lambda,u}
\mathcal{S} : \R \times B_1 \to B_2$. Suppose
$\mathcal{F}(u_0)=0$, then apply Lemma \ref{Lemma transversality
lemma} to $L=D_{u_0}\mathcal{F}$ and $P_{(\lambda,u)} =
D_{u}\mathcal{F}-D_{u_0}\mathcal{F} + D_{(\lambda,u)}\mathcal{S}$.
Therefore $D_{\lambda,u} \mathcal{G}=L+P_{(\lambda,u)}$ is
Fredholm and surjective, so by the implicit function theorem
$\mathcal{G}^{-1}(0)$ is a smooth submanifold for $u$ close to
$u_0$. Thus claim (1) follows.

Observe that at $(\eta,\xi) \in T\R \oplus TX$,
$$
\begin{array}{lll}
D_{0,u}\mathcal{G}\cdot (\eta,\xi) & = & D_u \mathcal{F}\cdot \xi
+ D_{0,u} \mathcal{S}\cdot \xi +
\left.\partial_{\lambda}\right|_{\lambda=0} \mathcal{S} \cdot \eta \\
& = & D_u \mathcal{F}\cdot \xi +
\left.\partial_{\lambda}\right|_{\lambda=0} \mathcal{S} \cdot
\eta.
\end{array}
$$
Therefore, $D_{0,u}\mathcal{G}\cdot (0,\xi)=D_u \mathcal{F}\cdot
\xi$. We deduce that $\textrm{im} D_u \mathcal{F} \subset
\textrm{im} D_{0,u}\mathcal{G}$ and $\ker D_u \mathcal{F} \subset
\textrm{ker} D_{0,u}\mathcal{G}$. Since $D_u \mathcal{F}$ is
surjective whenever $\mathcal{F}(u)=0$ ($=\mathcal{G}(0,u)$), also
$D_{0,u} \mathcal{G}$ is surjective and therefore
$T_{0,u}\mathcal{G}^{-1}(0)\cong\ker\, D_{0,u} \mathcal{G}$ must
be $1$ dimension larger than $\ker D_u\mathcal{F}$, so it contains
some vector $(1,\xi)$, which implies claim (2).

This also relates the indices at solutions of $\mathcal{F}(u)=0$:
$$
\textrm{ind}\, D_{0,u}\mathcal{G} = \dim \ker\, D_{0,u}\mathcal{G}
=\dim \ker\, D_u\mathcal{F}+1 = \textrm{ind}\, D_u \mathcal{F} +1.
$$

If $0$ is an index zero regular value of $\mathcal{F}$, then
$\mathcal{F}^{-1}(0)$ is $0-$dimensional and $\mathcal{G}^{-1}(0)$
is a $1-$dimensional submanifold near $0\times
\mathcal{F}^{-1}(0)$ diffeomorphic to a product
$[-\lambda_0,\lambda_0] \times \mathcal{F}^{-1}(0)$, for some
small $\lambda_0$. If $\mathcal{G}^{-1}(0)$ is compact near
$\lambda=0$ then for sufficiently small $\lambda_0$ all solutions
of $\mathcal{G}(\lambda,u)=0$ with $|\lambda|\leq \lambda_0$ will
be close to $0\times \mathcal{F}^{-1}(0)$, proving claim (3).
\end{proof}
%
%
%
%
\subsection{The $1-$parameter family
of moduli spaces} \label{Subsection 1-parameter family}
%
%

Let $H$ be a Hamiltonian which is linear at infinity. In this
section we will prove
\begin{theorem}\label{Theorem Transversality of the 1-parameter
family} For $\beta$ as in Theorem \ref{Theorem Lyapunov property}
the family of moduli spaces
$$\mathcal{M}_{\lambda}(x,y) =
\mathcal{M}(x,y;d\theta+\lambda\beta,H)$$
is smoothly trivial near $\lambda=0$,
$$ \bigsqcup_{-\lambda_0 < \lambda < \lambda_0}
\mathcal{M}_{\lambda}(x,y) \cong \mathcal{M}(x,y;d\theta,H) \times
(-\lambda_0,\lambda_0).
$$
In particular, the identity map
$$
\textrm{id}:SC^*(H,d\theta+\lambda\beta) \to
SC^*(H,d\theta;\underline{\Lambda}_{dA+\lambda\tau\beta})
$$
is a chain isomorphism for all small $\lambda$, where $A(x)=-\int
x^*\theta + \int H(x)\, dt$ is the action functional for
$(H,d\theta)$.
\end{theorem}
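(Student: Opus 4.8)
The plan is to apply the abstract transversality result of Theorem \ref{Theorem Transversality for Banach bundles} to the Banach-bundle setup underlying Floer's equation, with the deformation parameter being $\lambda$ and the deforming term being the perturbation of the symplectic form $d\theta \mapsto d\theta + \lambda\beta$. First I would set up the usual Banach manifold $X$ of maps $u\colon \R \times S^1 \to \widehat M$ converging exponentially to $x$ and $y$ at the ends, together with the Banach vector bundle $Y \to X$ whose fibre over $u$ is $L^p(u^*T\widehat M)$ (after Sobolev completion $W^{1,p}$ on $X$). The section $\mathcal{F}\colon X \to Y$ is $\mathcal{F}(u) = \partial_s u + J_\beta(\partial_t u - X_\beta)$ for the $(d\theta)$-case when we run the argument, but more precisely: fix the section $\mathcal{F}(u) = \partial_s u + J(\partial_t u - X_H)$ for $(d\theta,H)$ (whose zero set is $\mathcal{M}(x,y;d\theta,H)$, already cut out transversally after the generic perturbation of \ref{Subsection Transversality and Compactness}), and let $\mathcal{S}(\lambda,u)$ be the difference $\big(\partial_s u + J_{\lambda\beta}(\partial_t u - X_{\lambda\beta})\big) - \big(\partial_s u + J(\partial_t u - X_H)\big)$, where $X_{\lambda\beta}$ is the Hamiltonian vector field of $H$ with respect to $d\theta+\lambda\beta$ and $J_{\lambda\beta}$ is the symmetrized compatible almost complex structure constructed in \ref{Subsection Lyapunov property of the action functional}. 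By construction $\mathcal{S}(0,\cdot)=0$, and $\mathcal{S}$ depends smoothly on $\lambda$ because $X_{\lambda\beta}$ and $J_{\lambda\beta}$ do. Then Theorem \ref{Theorem Transversality for Banach bundles}(3) applies once we check $0$ is an index-zero regular value and $\mathcal{G}^{-1}(0) = \{(\lambda,u) : \partial_s u + J_{\lambda\beta}(\partial_t u - X_{\lambda\beta})=0\}$ is compact near $\lambda=0$.

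The index-zero and regularity hypotheses are immediate: we are looking at the $0$-dimensional components $\mathcal{M}_0(x,y;d\theta,H) = \mathcal{M}(x,y;d\theta,H)$ (in the notation of \ref{Subsection Transversality and Compactness}, after quotienting by the $\R$-translation we would instead work one index up, but the cleanest formulation is to work on $\mathcal{M}'$ and carry the translation freedom through, or equivalently to fix a slice), and transversality there is exactly the standard Floer transversality already invoked. The compactness of $\mathcal{G}^{-1}(0)$ near $\lambda=0$ is the real content, and this is where the preparatory results feed in. The a priori energy estimate of Theorem \ref{Theorem A priori energy estimate} gives $E(u)\le k(A(x)-A(y))$ for all $u\in\mathcal{M}(x,y;d\theta+\lambda\beta,H)$, uniformly in $\lambda$ for $|\lambda|$ small (the constant $k$ depends only on $\|\lambda\beta\|$, which we bound). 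Combined with the maximum principle (Lemma \ref{Lemma Maximum principle}, which applies since $J_{\lambda\beta}=J$ is of contact type outside $\supp\beta$ and $H=h(R)$ is linear at infinity), the Floer trajectories stay in a fixed compact region with a fixed energy bound, and since we have assumed no bubbling, Gromov–Floer compactness gives the required compactness of $\mathcal{G}^{-1}(0)\cap\{|\lambda|\le\lambda_0\}$. Hence Theorem \ref{Theorem Transversality for Banach bundles}(3) yields the diffeomorphism $\bigsqcup_{|\lambda|<\lambda_0}\mathcal{M}_\lambda(x,y)\cong \mathcal{M}(x,y;d\theta,H)\times(-\lambda_0,\lambda_0)$, with the product structure being through $\lambda$.

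For the final sentence about the identity being a chain isomorphism: the triviality of the family of moduli spaces means that for small $\lambda$ the $0$-dimensional moduli space $\mathcal{M}_0(x,y;d\theta+\lambda\beta,H)$ is in canonical (orientation-preserving, for a coherent choice of orientations varying continuously in $\lambda$) bijection with $\mathcal{M}_0(x,y;d\theta,H)$. The differential on $SC^*(H,d\theta+\lambda\beta)$ counts $u\in\mathcal{M}_0(x,y;d\theta+\lambda\beta,H)$ with weight $t^{E(u)}$, where by the energy formula of \ref{Subsection Energy} together with Stokes, $E(u) = \int u^*(d\theta+\lambda\beta) + \int H(x)\,dt - \int H(y)\,dt = \big(A(x)-A(y)\big) + \lambda\int u^*\beta = -dA[u] + \lambda\,\tau\beta[u]$ along the corresponding trajectory in $\mathcal{M}_0(x,y;d\theta,H)$ (using that $\int u^*\beta = \int\beta(\partial_s u,\partial_t u)\,ds\wedge dt = \tau\beta[u]$). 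This is precisely the weight $t^{-(dA+\lambda\tau\beta)[u]}$ appearing in the differential of the twisted complex $SC^*(H,d\theta;\underline{\Lambda}_{dA+\lambda\tau\beta})$ (with the sign convention of \ref{SubsectionNovikovbundles}). Since the two complexes have the same generators (the critical points of $H$, which we have arranged to lie outside $\supp\beta$, together with the collar orbits) and, via the bijection of moduli spaces, matching differentials term by term, the identity map on generators is a chain isomorphism. The main obstacle is the uniform compactness of $\mathcal{G}^{-1}(0)$ near $\lambda=0$; once the a priori energy estimate and the maximum principle are in place this is routine, so the crux is really that Theorems \ref{Theorem Lyapunov property} and \ref{Theorem A priori energy estimate} already did the hard analytic work.
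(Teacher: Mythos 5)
Your proof follows essentially the same route as the paper's: set up the Floer equation as the zero set of a Fredholm section $\mathcal{F}$ of a Banach bundle, write the $\lambda$-deformed equation as $\mathcal{G}=\mathcal{F}+\mathcal{S}$ with $\mathcal{S}(\lambda,u)=\delta J\cdot(F(u)+\delta X)+J\delta X$, observe that $\mathcal{S}$ vanishes at $\lambda=0$ and is controlled by $\|\lambda\beta\|$ via Lemma \ref{Lemma Difference of F}, invoke Theorem \ref{Theorem Transversality for Banach bundles}(3) with compactness supplied by Theorem \ref{Theorem A priori energy estimate} (plus the maximum principle), and finish by matching the weight $t^{E(u)}$ on the non-exact side with the local-system weight. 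Your explicit mention of the maximum principle as a separate compactness ingredient is a small improvement over the paper's proof, which only cites the a priori energy estimate (the maximum principle is nevertheless implicit there via the general setup of \ref{Subsection Transversality and Compactness}).

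One remark worth flagging: in your final identification of weights you write $E(u)=-dA[u]+\lambda\tau\beta[u]$ and assert this equals $-(dA+\lambda\tau\beta)[u]=-dA[u]-\lambda\tau\beta[u]$, which disagrees in the sign of the $\tau\beta$ term. The paper's own displayed formula has a parallel sign inconsistency, and the discrepancy is harmless for the ultimate application (replacing $\beta$ by $-\beta$ preserves genericity and the vanishing statement), but strictly speaking the twist should be $\underline{\Lambda}_{dA-\lambda\tau\beta}$ with the paper's $t^{-\alpha[u]}$ convention. Also, the key step that the triviality of the family gives the \emph{same} local-system multiplication for $u_{\lambda_0}$ as for the corresponding $u_0$ relies on the homotopy-invariance of $\alpha[\cdot]$ under the homotopy $\{u_\lambda\}$ relative endpoints; you gesture at this with ``matching differentials term by term'' but the paper spells it out, and it is worth being explicit since this is what licenses computing $\alpha[u_{\lambda_0}]$ on the nose rather than up to a $\lambda$-dependent correction.
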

\begin{proof} Let $X_{\lambda \beta}$ be the Hamiltonian vector field
determined by $(H,d\theta+\lambda \beta)$. We want to compare the
following two maps,
$$
\mathcal{F}(u)=\partial_s u + J(\partial_t u-X)
\quad\textrm{and}\quad \mathcal{G}(u)=\partial_s u + J_{\lambda
\beta}(\partial_t u-X_{\lambda \beta}),
$$
since $\mathcal{F}^{-1}(0) = \mathcal{M}(x,y)$ and
$\mathcal{G}^{-1}(0) = \cup_{\lambda}\mathcal{M}_{\lambda}(x,y)$.

These maps can be extended to sections $X \to Y$ of an appropriate
Banach vector bundle and generically $\mathcal{F}$ is a Fredholm
map (its linearizations are Fredholm operators). Indeed for $k\geq
1$ and $p>2$, we can take $Y$ to be the $W^{k-1,p}$ completion of
the space of smooth sections of $u^*TM$ with suitable exponential
decay at the ends. The base $X$ is the space of $W^{k,p}$ maps
$u:\R \times S^1 \to M$ connecting two fixed $1-$periodic
Hamiltonian orbits. We refer to Salamon \cite{Salamon} and
McDuff-Salamon \cite{McDuff-Salamon} for a precise description.

For convenience, denote $\delta J=J_{\lambda \beta}-J$ and $\delta
X = X - X_{\lambda \beta}$. We may assume that $\delta J$ is
$C^2-$small, and we showed in Lemma \ref{Lemma Difference of F}
that
$$|\delta X| \leq \frac{|\lambda|\, \| \beta \|}{1-|\lambda|\,
\| \beta \|} \, |X_{\supp \beta}|.$$
So $\delta J$, $\delta X$ are small for small $\lambda$. We can
rewrite
$\mathcal{G}(\lambda,u)=\mathcal{F}(u)+\mathcal{S}(\lambda,u)$,
where
$$
\mathcal{S}(\lambda,u) = \delta J \cdot (F(u)+\delta X)+J\delta X,
$$
where $F(u) = \partial_t u - X(u)$. $\mathcal{S}$ is supported at
those $(u,s,t)$ with $u(s,t)\in\supp \beta$, and $\mathcal{S}: X
\to Y$ is a differentiable parameter-valued section vanishing at
$\lambda=0$.

By the a priori energy estimate of Theorem \ref{Theorem A priori
energy estimate}, $\mathcal{G}^{-1}(0)$ is compact near
$\lambda=0$. Theorem \ref{Theorem Transversality for Banach
bundles} implies that if $0$ is an index zero regular value of
$\mathcal{F}$ then $\mathcal{G}^{-1}(0)$ is a trivial
$1-$dimensional family in the parameter $\lambda$, for small
$\lambda$.

Thus, for sufficiently small $\lambda_0$, there is a natural
bijection between the moduli spaces which define the differentials
of $SC^*(H, d\theta + \lambda_0\beta)$ and
$SC^*(H,d\theta;\underline{\Lambda}_{dA+\lambda_0\tau\beta})$.
Indeed, if $u_{\lambda_0} \in
\mathcal{M}_0(x,y;H,d\theta+\lambda_0\beta)$ then there is a
natural $1-$parameter family
$$u_{\lambda} \in
\mathcal{M}_0(x,y;H,d\theta+\lambda\beta)$$
connecting $u_{\lambda_0}$ to some $u_0 \in
\mathcal{M}_0(x,y;H,d\theta)$. Since $u_{\lambda_0}$ is homotopic
to $u_0$ relative endpoints via $u_{\lambda}$, the local system
$\underline{\Lambda}_{dA+\lambda_0\tau\beta}$ yields the same
isomorphism for $u_{\lambda_0}$ as for $u_0$, which is
multiplication by
$$
t^{-\int u^*d\theta + \int_0^1 (H(x)- H(y))\, dt - \int
u^*(\lambda_0\beta)} = t^{-\int u^*(d\theta + \lambda_0\beta) +
\int_0^1 (H(x)- H(y))\, dt}
$$
and which is the same weight used in the definition of $\partial
y$ for $SC^*(H,d\theta+\lambda_0\beta)$. Therefore the two
complexes have exactly the same generators and the same
differential, and in particular the identity map between them is a
chain isomorphism.
\end{proof}
%
%
%
%
\subsection{Continuation of the $1-$parameter family}
\label{Subsection Continuation of the 1-parameter family}
%
%

%
\begin{theorem}\label{Theorem Transversality of the 1-parameter
family continuation} Let $\beta$ be as in Theorem \ref{Theorem
Lyapunov property}. Let $H_s$ be a monotone homotopy. Then the
family of moduli spaces of parametrized Floer trajectories
$$\mathcal{M}_{\lambda}(x,y;H_s) =
\mathcal{M}(x,y;d\theta+\lambda\beta,H_s)$$
is smoothly trivial near $\lambda=0$. In particular, the following
diagram commutes for all small enough $\lambda$,
$$
\xymatrix{SC^*(H_+,d\theta+\lambda\beta)
\ar@{->}[r]^-{\textrm{id}}\ar@{->}[d]_{\textrm{continuation}} &
SC^*(H_+,d\theta;\underline{\Lambda}_{dA+\tau\lambda\beta})
\ar@{->}[d]^{\textrm{continuation}} \\
SC^*(H_-,d\theta+\lambda\beta) \ar@{->}[r]^-{\textrm{id}} &
SC^*(H_-,d\theta;\underline{\Lambda}_{dA+\tau\lambda\beta}) }
$$
\end{theorem}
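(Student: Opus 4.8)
The plan is to repeat the argument of Theorem~\ref{Theorem Transversality of the 1-parameter family} for the parametrized Floer equation of the monotone homotopy $H_s$. First I would set up, over the Banach manifold $X$ of $W^{k,p}$ cylinders joining a $1$-periodic orbit of $H_-$ to one of $H_+$, the two sections of the corresponding Banach bundle
$$
\mathcal{F}(v)=\partial_s v + J_s(\partial_t v - X_s),\qquad
\mathcal{G}(\lambda,v)=\partial_s v + J_{s,\lambda\beta}(\partial_t v - X_{s,\lambda\beta}),
$$
where $X_s$, $X_{s,\lambda\beta}$ are the Hamiltonian vector fields of $(H_s,d\theta)$ and $(H_s,d\theta+\lambda\beta)$, and $J_s$, $J_{s,\lambda\beta}$ are $s$-dependent almost complex structures compatible with $d\theta$, resp.\ $d\theta+\lambda\beta$, obtained by symmetrization as in~\ref{Subsection Lyapunov property of the action functional} and agreeing outside $\supp\beta$. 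Then $\mathcal{F}^{-1}(0)=\mathcal{M}(x,y;d\theta,H_s)$ and $\mathcal{G}^{-1}(0)=\bigsqcup_\lambda\mathcal{M}_\lambda(x,y;H_s)$, and exactly as before $\mathcal{G}=\mathcal{F}+\mathcal{S}$ with $\mathcal{S}(\lambda,v)=\delta J_s\cdot(F_s(v)+\delta X_s)+J_s\,\delta X_s$ (here $F_s(v)=\partial_t v-X_s(v)$), supported where $v(s,t)\in\supp\beta$, vanishing at $\lambda=0$, and with $\delta J_s,\delta X_s=O(\lambda)$ by Lemma~\ref{Lemma Difference of F}. Granting that $\mathcal{F}$ is generically transverse with Fredholm linearizations (standard, cf.\ McDuff--Salamon~\cite{McDuff-Salamon}), Theorem~\ref{Theorem Transversality for Banach bundles} will give the smooth triviality of $\mathcal{G}^{-1}(0)$ near $\lambda=0$ once I know that $\mathcal{G}^{-1}(0)$ is compact near $\lambda=0$.

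The hard part is this $\lambda$-uniform compactness: the genuinely new feature compared with Theorem~\ref{Theorem Transversality of the 1-parameter family} is that the action is not monotone along a parametrized trajectory, so Theorems~\ref{Theorem Lyapunov property} and~\ref{Theorem A priori energy estimate} do not apply verbatim. I would argue as follows. Since $\beta$ is compactly supported in $M$ it vanishes on the collar, so there $H_s=h_s(R)$ and $d\theta+\lambda\beta=d\theta$ are unchanged; the monotone hypothesis $\partial_s h_s'\le 0$ then lets the maximum principle, Lemma~\ref{Lemma Maximum principle}, confine every $v\in\mathcal{G}^{-1}(0)$ to the fixed compact set $C=M\cup\{R\le R_\infty\}$, independently of $\lambda$. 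Writing $[s_0,s_1]$ for the interval on which $H_s$ varies, I would use $E(v)=E_0(v)+\int_{[s_0,s_1]\times S^1}(\partial_s H_s)(v)\,ds\wedge dt\le E_0(v)+(s_1-s_0)B_C$ as in the proof of Theorem~\ref{Theorem Chain map}, and then bound $E_0(v)$ using that it is homotopy invariant relative ends and $\beta$ is compactly supported, so that $E_0(v)$ equals a constant determined by the ends $x,y$ plus a correction $\lambda\int v^*\beta$. To control this correction I would restrict to the half-cylinders $s\le s_0$ and $s\ge s_1$, on which $v$ solves an honest $(d\theta+\lambda\beta,H_-)$-, resp.\ $(d\theta+\lambda\beta,H_+)$-Floer equation, so that the Lyapunov estimate of Theorem~\ref{Theorem Lyapunov property} (for the exact action) applies there and forces the familiar a priori bound on the energy over the ends; on the compact middle region $[s_0,s_1]\times S^1$ the no-bubbling assumption yields the gradient control needed to bound the energy and $\int v^*\beta$ there. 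For $\|\beta\|$ small this absorbs $|\lambda|\,|\int v^*\beta|$ just as in Theorem~\ref{Theorem A priori energy estimate} (where $k$ may be taken close to $1$), giving a $\lambda$-uniform bound $E(v)\le k\bigl(A_{H_-}(x)-A_{H_+}(y)\bigr)+(s_1-s_0)B_C$; then $\mathcal{G}^{-1}(0)$ is compact near $\lambda=0$ and Theorem~\ref{Theorem Transversality for Banach bundles} applies.

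Finally, from the triviality $\bigsqcup_{|\lambda|<\lambda_0}\mathcal{M}_\lambda(x,y;H_s)\cong\mathcal{M}(x,y;d\theta,H_s)\times(-\lambda_0,\lambda_0)$ the commutativity of the square follows formally, exactly as at the end of the proof of Theorem~\ref{Theorem Transversality of the 1-parameter family}. For fixed small $\lambda_0$ the triviality gives a canonical bijection between the index-$0$ parametrized trajectories $v_{\lambda_0}$ counted by the continuation map of $(H_s,d\theta+\lambda_0\beta)$ and those $v_0$ counted by the continuation map of $(H_s,d\theta)$, with $v_{\lambda_0}$ homotopic to $v_0$ rel ends through the family $v_\lambda$. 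Because $E_0(\cdot)$ is invariant under rel-ends homotopies and the multiplication isomorphism of $\underline{\Lambda}_{dA+\tau\lambda_0\beta}$ along $v$ depends only on the rel-ends homotopy class of $v$, the weight attached to $v_{\lambda_0}$ by the non-exact continuation map and the weight attached to it by the twisted continuation map agree, by the same bookkeeping as at the end of the proof of Theorem~\ref{Theorem Transversality of the 1-parameter family} with $E$ there replaced by $E_0$. Together with the identifications of generators already supplied by Theorem~\ref{Theorem Transversality of the 1-parameter family} at each slope, both routes around the square then count the same trajectories with the same weights, so the diagram commutes. I expect the step controlling $\lambda\int v^*\beta$ in the absence of monotonicity of the action along parametrized trajectories to be the main technical obstacle.
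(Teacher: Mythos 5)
Your overall strategy agrees with the paper's, which simply instructs the reader to mimic the proof of Theorem~\ref{Theorem Transversality of the 1-parameter family} for the parametrized sections $\mathcal{F},\mathcal{G}$. You are also right that the one genuinely new ingredient is a $\lambda$-uniform compactness statement, since Theorem~\ref{Theorem A priori energy estimate} is stated and proved only for un-parametrized trajectories with a fixed Hamiltonian. However, the way you propose to supply it has a gap: the assertion that on the middle region $[s_0,s_1]\times S^1$ the no-bubbling assumption ``yields the gradient control needed to bound the energy and $\int v^*\beta$'' is circular. No-bubbling is a hypothesis about what can go wrong once an energy bound is in hand; it does not produce gradient or energy bounds. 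Likewise, the Lyapunov estimate on the two half-cylinders controls the energy there only in terms of $A(x)-A(v(s_0,\cdot))$ and $A(v(s_1,\cdot))-A(y)$, and you have no a priori bound on the actions $A(v(s_0,\cdot))$, $A(v(s_1,\cdot))$ of the intermediate loops, so the ends argument doesn't close on its own.

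The clean way to ``mimic'' is to note that the computation in Theorem~\ref{Theorem Lyapunov property} runs verbatim for the $s$-dependent equation, except that the $s$-dependence of $H_s$ contributes one extra term: for $u$ solving the parametrized $(d\theta+\lambda\beta,H_s)$-Floer equation,
\[
\partial_s\bigl(A_{H_s}(u)\bigr)\ \leq\ -\tfrac12\,\|F_s(u)\|^2 + \int_0^1 (\partial_s H_s)(u)\,dt ,
\]
where $A_{H_s}$ is the exact action and $F_s(u)=\partial_t u - X_{H_s,d\theta}(u)$. By Lemma~\ref{Lemma Maximum principle}, $u$ is confined to the compact set $C = M\cup\{R\le R_\infty\}$, so the extra term is supported on $[s_0,s_1]$ and bounded there by the constant $B_C$ of the proof of Theorem~\ref{Theorem Chain map}. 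Integrating over $s\in\R$ gives
\[
\tfrac12\int_{\R}\|F_s(u)\|^2\,ds\ \leq\ A_{H_-}(x)-A_{H_+}(y)+(s_1-s_0)B_C,
\]
and the equivalence of $\|F_s\|$ with $\|F_{s,\lambda\beta}\|=\|\partial_s u\|_{\lambda\beta}$ for small $\lambda$ (Lemma~\ref{Lemma Difference of F}) converts this into a $\lambda$-uniform bound on $E(u)$. This is the parametrized analogue of Theorem~\ref{Theorem A priori energy estimate} and replaces your ends/middle split; with it in hand, the application of Theorem~\ref{Theorem Transversality for Banach bundles} and the weight-matching argument you give for the commutativity of the square are correct.
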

\begin{proof}
Let $X_{s,\lambda \beta}$ be the Hamiltonian vector field
determined by $(H_s,d\theta+\lambda \beta)$, and let
$X_s=X_{s,0}$. The claim follows by mimicking the proof of Theorem
\ref{Theorem Transversality of the 1-parameter family} for
$$
\mathcal{F}(u)=\partial_s u + J_s(\partial_t u-X_s)
\quad\textrm{and}\quad \mathcal{G}(u)=\partial_s u + J_{s,\lambda
\beta}(\partial_t u-X_{s,\lambda \beta}). \qedhere
$$
\end{proof}
\begin{theorem}\label{Theorem Transversality of the 1-parameter
family continuation 2} Let $\beta$ be as in Theorem \ref{Theorem
Lyapunov property}. Let $\lambda$ be so small that Theorem
\ref{Theorem Transversality of the 1-parameter family} holds for
$H$. Let $\varphi_{\varepsilon}$ be a smooth parameter-valued
isotopy of $\widehat{M}$, with $\varphi_0=\textrm{id}$, such that
$\varphi_{\varepsilon}^*H$ is a monotone homotopy in
$\varepsilon$. Let $H_{s,\varepsilon}=\varphi_{s\varepsilon}^*H$
for $s\in [0,1]$ be the homotopy from $H$ to
$\varphi_{\varepsilon}^*H$. Then the family of moduli spaces of
parametrized Floer trajectories
$\mathcal{M}_{\varepsilon}(x,y;H_{s,\varepsilon}) =
\mathcal{M}(x,y;d\theta+\lambda\beta,H_{s,\varepsilon})$ is
smoothly trivial near $\varepsilon=0$. So there is a commutative
diagram of chain isomorphisms for all small $\varepsilon$,
$$
\xymatrix{SC^*(H,d\theta+\lambda\beta)
\ar@{->}[r]^-{\textrm{id}}\ar@{->}[d]_{\textrm{continuation}} &
SC^*(H,d\theta;\underline{\Lambda}_{dA+\tau\lambda\beta})
\ar@{->}[d]^{\textrm{continuation}} \\
SC^*(\varphi_{\varepsilon}^*H,d\theta+\lambda\beta)
\ar@{->}[r]^-{\textrm{id}} &
SC^*(\varphi_{\varepsilon}^*H,d\theta;\underline{\Lambda}_{dA+\tau\lambda\beta})
}
$$
where the vertical maps send the generators $x\mapsto
\varphi_{\varepsilon}^{-1}(x)$.
\end{theorem}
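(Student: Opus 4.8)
The plan is to mimic the proof of Theorem \ref{Theorem Transversality of the 1-parameter family}, now using $\varepsilon$ as the deformation parameter with $\lambda$ held fixed, and then to read off the square as in the final paragraph of that proof. Let $X_{s,\varepsilon}$ be the Hamiltonian vector field of $(H_{s,\varepsilon}, d\theta+\lambda\beta)$ and $J_{s,\varepsilon}$ a family of $(d\theta+\lambda\beta)$-compatible almost complex structures of contact type, equal to the fixed $J_{\lambda\beta}$ at $\varepsilon = 0$. Set
$$\mathcal{F}(u) = \partial_s u + J_{\lambda\beta}(\partial_t u - X_{0,0}), \qquad \mathcal{G}(\varepsilon, u) = \partial_s u + J_{s,\varepsilon}(\partial_t u - X_{s,\varepsilon}),$$
so that $\mathcal{F}^{-1}(0) = \mathcal{M}(x,y; d\theta+\lambda\beta, H)$ is the space of parametrized trajectories for the \emph{constant} homotopy $H_{s,0}\equiv H$, and $\mathcal{G}^{-1}(0) = \bigsqcup_\varepsilon \mathcal{M}_\varepsilon(x,y; H_{s,\varepsilon})$. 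Writing $\mathcal{G} = \mathcal{F} + \mathcal{S}$ exactly as before, the section $\mathcal{S}$ vanishes at $\varepsilon = 0$ (since $H_{s,0}\equiv H$) and is $C^1$-small for small $\varepsilon$, because $\varphi_{s\varepsilon}\to\mathrm{id}$ in $C^2$.

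First I would check that $\mathcal{F}$ is transverse along its index-zero zero set: Theorem \ref{Theorem Transversality of the 1-parameter family} holds for $H$ by hypothesis, so the $(d\theta+\lambda\beta)$-moduli of $H$ are regular; and for the constant homotopy the parametrized operator has index one larger than the unparametrized one, so in index zero $\mathcal{F}^{-1}(0)$ is just the constant cylinders over the (non-degenerate) $1$-periodic orbits, which are automatically regular. The main work is the compactness of $\mathcal{G}^{-1}(0)$ near $\varepsilon = 0$. The maximum principle — Lemma \ref{Lemma Maximum principle}, or Lemma \ref{Lemma Max principle for contacto invariance} according to the collar behaviour of $\varphi_\varepsilon$, using that $\varphi_\varepsilon^* H$ is by hypothesis a monotone homotopy — confines the relevant trajectories to a fixed compact $C\subset\widehat{M}$. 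On $C$ one has $\partial_s H_{s,\varepsilon} = \varepsilon\,\left.\partial_\tau\right|_{\tau = s\varepsilon}\varphi_\tau^* H = O(\varepsilon)$ and $X_{s,\varepsilon} - X_{0,0} = O(\varepsilon)$, so adapting the Lyapunov inequality of Theorem \ref{Theorem Lyapunov property} together with the reparametrization-by-energy argument of Theorem \ref{Theorem A priori energy estimate}, the $O(\varepsilon)$ terms being absorbed into the error, gives a bound $E(v)\le k(A(x) - A(y)) + O(\varepsilon)$ uniform in small $\varepsilon$. Then Theorem \ref{Theorem Transversality for Banach bundles} applies verbatim and yields the smooth trivialization of $\bigsqcup_\varepsilon \mathcal{M}_\varepsilon(x,y; H_{s,\varepsilon})$ near $\varepsilon = 0$.

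Finally I would deduce the square. For $\varepsilon$ small the fixed $\lambda$ still satisfies the hypotheses of Theorem \ref{Theorem Transversality of the 1-parameter family} for $\varphi_\varepsilon^* H$ (these are open conditions, by the above estimates, and $\beta$ still vanishes near the orbits involved), so the top and bottom arrows are the identity chain isomorphisms of that theorem. By the trivialization, each index-zero member of $\mathcal{M}_\varepsilon(x,y; H_{s,\varepsilon})$ is the unique $\varepsilon$-deformation of a constant cylinder at $\varepsilon = 0$; since a critical point $x$ of $H$ gives the critical point $\varphi_\varepsilon^{-1}(x)$ of $\varphi_\varepsilon^* H$ (and similarly for collar orbits), the moving end is forced to be $\varphi_\varepsilon^{-1}(x)$, and the attached weight $t^{E_0(v_\varepsilon)}$ equals $t^{E_0(v_0)} = t^{0} = 1$ because $E_0$ is invariant under the homotopy rel ends through the trivializing family. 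The same holds on the twisted side, where the local-system weight $t^{-(dA+\tau\lambda\beta)[v_\varepsilon]} = t^{-(dA+\tau\lambda\beta)[v_0]} = 1$ since $dA[\cdot]$ and the closed $1$-form $\tau\lambda\beta$ are likewise invariant under homotopies fixing the ends. Hence both continuation maps are the relabeling $x\mapsto\varphi_\varepsilon^{-1}(x)$, which visibly commutes with the two identity identifications; this is the asserted commutativity, and the vertical maps send generators $x\mapsto\varphi_\varepsilon^{-1}(x)$ as claimed. The main obstacle is the uniform compactness step: because $H_{s,\varepsilon}$ genuinely varies and $d\theta+\lambda\beta$ is non-exact, the action is not monotone, so one must control the maximum principle, the $\lambda\beta$-contribution to the energy, and the $O(\varepsilon)$ contribution of $\partial_s H_{s,\varepsilon}$ simultaneously and uniformly in $\varepsilon$ near $0$.
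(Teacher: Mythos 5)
Your proposal follows exactly the paper's route: the paper's proof is a one-line instruction to mimic the proof of Theorem \ref{Theorem Transversality of the 1-parameter family} with $\mathcal{F}(u)=\partial_s u + J_s(\partial_t u-X_{s,0})$ and $\mathcal{G}(u)=\partial_s u + J_{s,\varepsilon}(\partial_t u-X_{s,\varepsilon})$, which is precisely your setup, and your filled-in details (regularity of the constant cylinders, uniform compactness near $\varepsilon=0$, matching of weights on the two sides) are the intended ones. No discrepancies worth flagging.
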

\begin{proof}
Let $X_{s,\varepsilon}$ be the Hamiltonian vector field determined
by $(H_{s,\varepsilon},d\theta+\lambda \beta)$, and let
$X=X_s=X_{s,0}$. The claim follows by mimicking the proof of
Theorem \ref{Theorem Transversality of the 1-parameter family} for
$$
\mathcal{F}(u)=\partial_s u + J_s(\partial_t u-X)
\quad\textrm{and}\quad \mathcal{G}(u)=\partial_s u +
J_{s,\varepsilon}(\partial_t u-X_{s,\varepsilon}). \qedhere
$$
\end{proof}
%
%
%
%
\subsection{Construction of the isomorphism}
\label{Subsection Construction of the isomorphism}
%
%

We now give the proof outlined in \ref{Subsection Outline of the
argument}.

Let $\beta$ be a closed two-form compactly supported in the
interior of $M$, and suppose that $d\theta+s\beta$ is symplectic
for all $0\leq s \leq 1$ (so that Lemma \ref{Lemma deforming the
sympl form} applies).

Let $H^m$ be a Hamiltonian linear at infinity with slope $m$. Up
to a continuation isomorphism on symplectic cohomologies, we may
assume that all critical points of $H^m$ in the interior of $M$
lie in a neighbourhood $V$ contained in $M \setminus \supp\beta$.
This technical remark is explained in section \ref{Subsection
Technical remark}.
Define $\psi_{\mu}^m$ by the diagram of isomorphisms
$$
\xymatrix{SC^*(H^m,d\theta+\beta)
\ar@{->}[d]_{(1)}^{\textrm{Liouville } \varphi_{\mu}}
\ar@{-->}[r]^{\psi_{\mu}^m} &
SC^*(H^m,d\theta;\underline{\Lambda}_{\tau\beta})
\ar@{->}[ddd]_{(6)}^{\textrm{rescale}} \\
SC^*(\varphi_{\mu}^*H^m,\mu d\theta + \varphi_{\mu}^*\beta)
\ar@{->}[d]_{(2)}^{\textrm{Moser } \sigma_{\mu}} &\\
SC^*(\phi_{\mu}^*H^m,\mu d\theta + \beta)
\ar@{->}[d]_{(3)}^{\textrm{continuation}} &
\\
SC^*(\mu H^m,\mu d\theta + \beta)
\ar@{->}[d]_{(4)}^{\textrm{rescale}} & SC^*(H^m,
d\theta;\underline{\Lambda}_{\mu^{-1}\tau\beta})
\ar@{->}[d]_{(7)}^{\textrm{change of basis}}
\\
SC^*(H^m, d\theta + \mu^{-1}\beta)
\ar@{->}[r]^-{\textrm{id}}_-{(5)} & SC^*(H^m,
d\theta;\underline{\Lambda}_{dA+\mu^{-1}\tau\beta}) }
$$
where the maps are defined as follows:
\begin{enumerate}
\item apply $\varphi_{\mu}$, the Liouville flow for time $\log
\mu$ (see \ref{Subsection Symplectic manifolds with contact type
boundary} for the definition of the Liouville vector field);

\item apply the Moser symplectomorphism
$\sigma_{\mu}:(\widehat{M},\mu\, d\theta + \beta) \to
(\widehat{M},\mu \, d\theta+\varphi_{\mu}^*\beta)$ obtained by
Lemmas \ref{Lemma deforming the sympl form} and \ref{Lemma Moser
deformation}, and denote
$\phi_{\mu}=\sigma_{\mu}\circ\varphi_{\mu}$;

\item observe that $\phi_{\mu}^*H^m$ has slope $\mu m$ at
infinity, so the linear interpolation from $\mu H^m$ to
$\phi_{\mu}^* H^m$ is a compactly supported homotopy and therefore
induces a continuation isomorphism;

\item metric rescaling by $\mu^{-1}$ (Lemma \ref{Lemma Metric
rescaling}), which changes $t$ to $T=t^{(\mu^{-1})}$;

\item the identity map is a chain isomorphism by Theorem
\ref{Theorem Transversality of the 1-parameter family} provided
$\mu$ is sufficiently large (depending on $m$);

\item rescale $\tau \beta$ to $\mu^{-1} \tau \beta$, so change $t$
to $T=t^{(\mu^{-1})}$;

\item adding an exact form $dA$ to $\mu^{-1}\tau \beta$, where $A$
is the action $1-$form for $(H^m,d\theta)$, corresponds to a
change of basis $x\mapsto T^{A(x)} x$ by Lemma \ref{Lemma Moser
deformation twisted}.
\end{enumerate}

\begin{lemma}\label{Lemma Tau maps independent of mu}
The map $\psi_{\mu}^m: SH^*(H^m,d\theta+\beta) \to SH^*(H^m,
d\theta; \underline{\Lambda}_{\tau\beta})$ on homology does not
depend on the choice of large $\mu$.
\end{lemma}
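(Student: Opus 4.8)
The plan is to reduce the $\mu$-independence to a statement about composing the seven chain isomorphisms that make up $\psi_\mu^m$. First observe that the admissible values of $\mu$ (those large enough for step~(5), and keeping all the intermediate Moser deformations symplectic) form an interval $(\mu_m,\infty)$, hence a connected set; so by transitivity it suffices to prove $[\psi_{\mu_0}^m]=[\psi_{\mu_1}^m]$ on cohomology for any two admissible $\mu_0<\mu_1$ with, say, $\mu_1\le 2\mu_0$. Abbreviate
\[
L_\mu=(4)\circ(3)\circ(2)\circ(1)\co SC^*(H^m,d\theta+\beta)\to SC^*(H^m,d\theta+\mu^{-1}\beta),\qquad \mathrm{id}_\mu=(5),\qquad R_\mu=(7)\circ(6),
\]
so $\psi_\mu^m=R_\mu^{-1}\circ\mathrm{id}_\mu\circ L_\mu$.

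The key claim is that, on cohomology, $L_{\mu_1}=L'\circ L_{\mu_0}$ and $R_{\mu_1}=R'\circ R_{\mu_0}$, where $L'$ and $R'$ are the left and right columns of the analogous $\psi$-construction with parameter $\mu_1/\mu_0$ but \emph{starting from} $(\widehat M,d\theta+\mu_0^{-1}\beta)$; these are defined because $\mu_0^{-1}\|\beta\|$ and $\mu_1^{-1}\|\beta\|$ stay admissible when $\mu_0$ is large. For the left column I would use: the Liouville flows are additive in time, $\varphi_{\mu_1}=\varphi_{\mu_1/\mu_0}\circ\varphi_{\mu_0}$; the metric rescalings compose, since $t\mapsto t^{\mu_0^{-1}}$ followed by $t\mapsto t^{\mu_0/\mu_1}$ is $t\mapsto t^{\mu_1^{-1}}$ (Lemma~\ref{Lemma Metric rescaling}); the Hamiltonian-straightening continuations of step~(3) compose on cohomology and are independent of the chosen homotopy (Theorem~\ref{Theorem Chain Homotopy}); and although the Moser symplectomorphisms of step~(2) (from Lemmas~\ref{Lemma deforming the sympl form} and~\ref{Lemma Moser deformation}) do not literally compose, any two of them realise deformations between the \emph{same} pair of cohomologous forms, hence differ by a symplectic isotopy and induce the same map on cohomology; and rearranging the order of these operations past one another costs only a chain homotopy. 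On the twisted side the rescaling of step~(6) and the change of basis of step~(7) are literal chain isomorphisms, so $R_{\mu_1}=R'\circ R_{\mu_0}$ exactly, with $R'$ a further rescaling together with a change of basis (Lemmas~\ref{Lemma Metric rescaling},~\ref{Lemma Moser deformation twisted}).

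It then remains to show that the extra pieces cancel across the identity, i.e.\ $\mathrm{id}_{\mu_1}\circ L'=R'\circ\mathrm{id}_{\mu_0}$ on cohomology. Here $L'$ is itself a composite of a Liouville flow, a Moser symplectomorphism, a Hamiltonian continuation and a metric rescaling, and the step~(5) identity chain isomorphism --- established and analysed in Theorems~\ref{Theorem Transversality of the 1-parameter family}, \ref{Theorem Transversality of the 1-parameter family continuation} and~\ref{Theorem Transversality of the 1-parameter family continuation 2} --- commutes with continuations and with the chain maps induced by these symplectomorphisms, converting each into the corresponding rescaling or change of basis on the twisted side (Lemmas~\ref{Lemma Metric rescaling},~\ref{Lemma Moser deformation twisted}); the composite of these conversions is exactly $R'$. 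Combining everything,
\[
\psi_{\mu_1}^m=R_{\mu_1}^{-1}\circ\mathrm{id}_{\mu_1}\circ L_{\mu_1}=R_{\mu_0}^{-1}\circ(R')^{-1}\circ\mathrm{id}_{\mu_1}\circ L'\circ L_{\mu_0}=R_{\mu_0}^{-1}\circ\mathrm{id}_{\mu_0}\circ L_{\mu_0}=\psi_{\mu_0}^m
\]
on cohomology, which is what we want.

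\textbf{Main obstacle.} The delicate point is the bookkeeping behind the ``key claim'': the Liouville flow, the Moser symplectomorphism and the straightening continuations are canonical only up to symplectic isotopy and choice of homotopy, so one must check that every such ambiguity and every rearrangement of the order in which these operations are applied changes the induced chain maps only by a chain homotopy --- hence not at all on cohomology --- with all the weight exponents (which involve the action $A$ and the $\beta$-periods $\int u^*\beta$ of the Floer cylinders) tracked consistently through all seven steps; and one must verify that the auxiliary construction $L'$ genuinely satisfies the hypotheses of Theorems~\ref{Theorem Transversality of the 1-parameter family continuation}--\ref{Theorem Transversality of the 1-parameter family continuation 2}, which is precisely where largeness of $\mu_0$ and the bound $\mu_1\le 2\mu_0$ get used.
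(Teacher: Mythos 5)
Your general strategy---decompose $\psi_\mu^m$ into left/right columns, reduce to comparing $\mu_0$ and a nearby $\mu_1$, and invoke the $1$-parameter family theorems to compare the identity chain maps---is broadly aligned with the paper's, which also compares $\psi_\mu^m$ and $\psi_{\mu'}^m$ by assembling a commutative diagram built from the seven steps. But there are two substantive gaps.

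First, your reduction to ``$\mu_0<\mu_1\le 2\mu_0$'' is not small enough to be of any use. Everything you want to extract from Theorems~\ref{Theorem Transversality of the 1-parameter family continuation} and~\ref{Theorem Transversality of the 1-parameter family continuation 2} holds only for isotopies $\varphi_\varepsilon$ with $\varepsilon$ \emph{sufficiently small}, i.e.\ close to the identity; the allowed size of $\varepsilon$ depends implicitly on $H$, $\beta$, and $\mu_0$, and there is no a priori reason it admits $\mu_1/\mu_0$ up to $2$. The same difficulty sinks your claim that two Moser symplectomorphisms realizing the same cohomologous deformation ``induce the same map on cohomology'': the paper has no such general invariance statement for arbitrary symplectic isotopies, only the local triviality of Theorem~\ref{Theorem Transversality of the 1-parameter family continuation 2}. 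So the auxiliary maps $L'$, $R'$ with parameter $\mu_1/\mu_0$ are not covered by the cited results unless $\mu_1/\mu_0$ is close to $1$, and then your connectedness reduction collapses to the trivial observation that it suffices to compare nearby $\mu$'s --- which still does not give global $\mu$-independence by itself, since ``agreement with nearby $\mu'$'' must be bootstrapped to all of $(\mu_m,\infty)$. The paper does this by a maximality/contradiction argument (suppose the identity $[\psi_\mu^m]=[\psi_{\mu'}^m]$ holds on $[\mu_0,\mu_1)$ with $\mu_1$ maximal finite, then apply the local statement at $\mu=\mu_1$ to extend past $\mu_1$), which your write-up omits.

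Second, the paper's local step is more concrete than your sketch: it identifies the last vertical composite in a specific commutative diagram with $\phi^{-1}\circ C$, where $\phi=\phi_{\mu'}\phi_\mu^{-1}$ and $C$ is a continuation map, and then uses Theorem~\ref{Theorem Transversality of the 1-parameter family continuation 2} to conclude that $C$ sends each generator $x$ to $\phi(x)$, so $\phi^{-1}\circ C=\mathrm{id}$. This cancellation is exactly what makes the left column of the diagram agree with the right (twisted) column, where the analogous cancellation is a change of basis. Your formulation ``$\mathrm{id}_{\mu_1}\circ L'=R'\circ\mathrm{id}_{\mu_0}$ on cohomology'' names the right goal but does not exhibit this cancellation, and the hand-waving about rearrangements only costing chain homotopies would need to be justified term by term since the intermediate operations live over different symplectic forms and metrics.
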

\begin{proof}
In this proof we abbreviate $H^m$ by $H$ and pullbacks $\phi^*$ by
$\phi$. Consider $\mu'$ close to $\mu$, and write $ \phi =
\phi_{\mu'}\phi_{\mu}^{-1}$ and $\varphi =
\varphi_{\mu'}\varphi_{\mu}^{-1}$. Observe the following
commutative diagram, in which the top row and bottom diagonal are
part of the construction of the maps $\psi_{\mu}^m$ and
$\psi_{\mu'}^m$, for $\mu'>\mu$.
$$
\xymatrix{ SH^*(H,d\theta+\beta)
\ar@{->}[r]^-{\phi_{\mu}}\ar@{->}[dr]_-{\phi_{\mu'}} &
SH^*(\phi_{\mu} H,\mu\, d\theta+\beta) \ar@{->}[d]^{\phi^{-1}}
\ar@{->}[r]^{\textrm{continu.}}\ar@{->}[dr]^{\;\quad\textrm{continuation}}
& SH^*(\mu H, \mu \,
d\theta + \beta) \ar@{->}[d]^{\textrm{continuation}} \\
& SH^*(\phi_{\mu'} H,\mu'\,d\theta+\beta)
\ar@{->}[dr]_{\textrm{continuation}\quad} &
SH^*(\phi^{-1}\mu' H, \mu \, d\theta + \beta) \ar@{->}[d]^{\phi^{-1}}\\
& & SH^*(\mu' H, \mu' \, d\theta + \beta) }
$$

The last vertical composite, after a metric rescaling, is the map
$$ \phi^{-1} \circ C: SH^*(H, d\theta + \mu^{-1}\beta) \to
SH^*(H, d\theta + {\mu'}^{-1}\beta) $$
where $C$ is the continuation map
$$C:
SH^*(H, d\theta + \mu^{-1}\beta) \to SH^*({\mu}^{-1}\phi^{-1} \mu'
H, d\theta + {\mu}^{-1}\beta).$$
For $\mu'$ sufficiently close to $\mu$, $\phi^{-1}$ is an isotopy
of $\widehat{M}$ close to the identity, therefore by Theorem
\ref{Theorem Transversality of the 1-parameter family continuation
2}, $C$ maps the generators by $\phi$. Thus $\phi^{-1} \circ C =
\textrm{id}$ for $\mu'$ close to $\mu$.

For the twisted symplectic cohomology we just apply changes of
basis so we deduce the following commutative diagram (using
abbreviated notation),
$$
\xymatrix{ SH^*(d\theta+\beta) \ar@{->}[r] \ar@{->}[dr] & SH^*(
d\theta + \mu^{-1} \beta) \ar@{->}[d]^{\textrm{id}}
\ar@{->}[r]^-{\textrm{id}} &
SH^*(\underline{\Lambda}_{dA+\mu^{-1}\tau\beta})
\ar@{->}[d]^-{\textrm{id}} \ar@{->}[r] &
SH^*(\underline{\Lambda}_{\tau\beta})
\\
& SH^*(d\theta + (\mu')^{-1}\beta) \ar@{->}[r]^-{\textrm{id}} &
SH^*(\underline{\Lambda}_{dA+(\mu')^{-1}\tau\beta}) \ar@{->}[ur]
 }
$$
We showed that this diagram holds for all $\mu'$ close to $\mu$.
Suppose it holds for all $\mu,\mu' \in [\mu_0,\mu_1)$, for some
maximal such $\mu_1<\infty$. Apply the above result to $\mu =
\mu_1$, then the diagram holds for all $\mu,\mu' \in
(\mu_1-\epsilon,\mu_1+\epsilon)$, for some $\epsilon>0$. Thus it
holds for all $\mu,\mu'\in [\mu_0,\mu_1+\epsilon)$. So there is no
maximal such $\mu_1$ and the diagram must hold for all large
enough $\mu,\mu'$, and thus the map $SH^*(H,d\theta+\beta) \to
SH^*(H,d\theta;\underline{\Lambda}_{\tau\beta})$ does not depend
on the choice of (large) $\mu$.
\end{proof}
\begin{lemma}\label{Lemma Tau maps commute with continuation}
The maps $\psi^m: SH^*(H^m,d\theta+\beta) \to SH^*(H^m, d\theta;
\underline{\Lambda}_{\tau\beta})$ commute with the continuation
maps induced by monotone homotopies.
\end{lemma}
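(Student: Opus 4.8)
The plan is to exploit the fact, established in Lemma~\ref{Lemma Tau maps independent of mu}, that on homology $\psi^m$ equals the composite of the seven isomorphisms labelled $(1)$--$(7)$ in the diagram defining $\psi_\mu^m$ and is independent of the (large) parameter $\mu$. Fix slopes $m<m'$, let $H_s$ be the monotone homotopy realizing the continuation map $SH^*(H^m,\cdot)\to SH^*(H^{m'},\cdot)$, and choose a single $\mu$ large enough that Theorem~\ref{Theorem Transversality of the 1-parameter family} applies to $H^m$, to $H^{m'}$ and along $H_s$ (the existence of such a $\mu$ is part of the bootstrap argument in Lemma~\ref{Lemma Tau maps independent of mu}). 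I would then stack the diagram defining $\psi_\mu^m$ above the one defining $\psi_\mu^{m'}$, insert between each pair of corresponding entries the continuation map induced by the appropriate homotopy (namely $H_s$, then $\varphi_\mu^*H_s$, then $\phi_\mu^*H_s$, then $\mu H_s$, then the metrically rescaled homotopy, and finally the twisted counterparts), and verify that each of the seven resulting rectangles commutes on homology.

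Rectangles $(1)$ and $(2)$ commute because a symplectomorphism identifies Floer data and hence intertwines continuation maps: the Liouville flow $\varphi_\mu$, resp. the Moser map $\sigma_\mu$ of Lemma~\ref{Lemma Moser deformation}, carries the parametrized Floer equations for a homotopy bijectively to those for the pulled-back homotopy, and monotonicity is preserved since these maps rescale $\theta$ and the slopes at infinity uniformly. Rectangle $(3)$ commutes by the composition property of continuation maps, Theorem~\ref{Theorem Chain Homotopy}(3): the composite $\phi_\mu^*H^m\to\mu H^m\to\mu H^{m'}$ agrees on homology with $\phi_\mu^*H^m\to\phi_\mu^*H^{m'}\to\mu H^{m'}$, both being the continuation map of some monotone homotopy between Hamiltonians linear at infinity. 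Rectangles $(4)$ and $(6)$ commute because the metric rescaling of Lemma~\ref{Lemma Metric rescaling} and the rescaling $\tau\beta\mapsto\mu^{-1}\tau\beta$ merely substitute $t\mapsto t^{\mu^{-1}}$ uniformly and leave the (parametrized) Floer equations untouched. Rectangle $(7)$ commutes by Lemma~\ref{Lemma Moser deformation twisted}, whose content is precisely that the change-of-basis isomorphisms commute with the twisted continuation maps. Finally, rectangle $(5)$ is the commutative square of Theorem~\ref{Theorem Transversality of the 1-parameter family continuation}, applied with deformation parameter $\lambda=\mu^{-1}$ and homotopy $H_s$.

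Composing the seven commuting rectangles gives $(\text{continuation})\circ\psi^m=\psi^{m'}\circ(\text{continuation})$ on homology, which is the assertion. I expect the only genuine obstacle to be the bookkeeping around $\mu$: a priori the admissible $\mu$ depends on the slope, so one must first secure a common value valid for $H^m$, $H^{m'}$ and the interpolating homotopy, and then invoke the $\mu$-independence of $\psi^m$; this is where Theorem~\ref{Theorem Transversality of the 1-parameter family continuation 2} and the maximal-interval argument of Lemma~\ref{Lemma Tau maps independent of mu} re-enter. Everything else is the routine naturality of continuation maps under the operations (symplectomorphism pullback, metric rescaling, change of basis, and the one-parameter deformation in $\lambda$) out of which $\psi_\mu^m$ is assembled.
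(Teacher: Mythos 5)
Your proof is correct and follows essentially the same approach as the paper's much terser argument, which invokes Theorem~\ref{Theorem Transversality of the 1-parameter family continuation} for your rectangle $(5)$ and Lemma~\ref{Lemma Moser deformation twisted} for rectangle $(7)$, leaving the commutativity of the remaining (routine) rectangles -- naturality under symplectomorphism pullback, metric rescaling, and composition of continuation maps -- implicit. Your more explicit treatment of those steps and of the $\mu$-bookkeeping is a faithful expansion of the paper's sketch.
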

\begin{proof}
Let $H_s$ be a monotone homotopy from $H^{m'}$ to $H^m$. By
theorem \ref{Theorem Transversality of the 1-parameter family
continuation}, for sufficiently large $\mu$ the following diagram
commutes
$$
\xymatrix{SC^*(H_+,d\theta+\mu^{-1}\beta)
\ar@{->}[r]^-{\textrm{id}}\ar@{->}[d]_{\textrm{continuation}} &
SC^*(H_+,d\theta;\underline{\Lambda}_{dA+\mu^{-1}\tau\beta})
\ar@{->}[d]^{\textrm{continuation}} \\
SC^*(H_-,d\theta+\mu^{-1}\beta) \ar@{->}[r]^-{\textrm{id}} &
SC^*(H_-,d\theta;\underline{\Lambda}_{dA+\mu^{-1}\tau\beta}) }
$$
and by Lemma \ref{Lemma Moser deformation twisted} we deduce the
required commutative diagram
$$
\xymatrix{SC^*(H_+,d\theta+\beta)
\ar@{->}[r]^{\psi^m}\ar@{->}[d]_{\textrm{continuation}} &
SC^*(H_+,d\theta;\underline{\Lambda}_{\tau\beta})
\ar@{->}[d]^{\textrm{continuation}} \\
SC^*(H_-,d\theta+\beta) \ar@{->}[r]^{\psi^{m'}} &
SC^*(H_-,d\theta;\underline{\Lambda}_{\tau\beta})}
$$
\renewcommand{\qed}{}
\end{proof}
\begin{theorem}\label{Theorem Lim of tau twisting maps}
Let $\beta$ be a closed two-form compactly supported in the
interior of $M$, and suppose that $d\theta+s\beta$ is symplectic
for $0\leq s \leq 1$. Then there is an isomorphism
$$\psi: SH^*(M,d\theta+\beta)
\to SH^*(M,d\theta;\underline{\Lambda}_{\tau\beta}).$$
\end{theorem}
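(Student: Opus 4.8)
The plan is to assemble $\psi$ as a direct limit of the chain-level isomorphisms $\psi^m_\mu$ constructed above, taken over the directed system of Hamiltonians $H^m$ linear at infinity with slopes $m\to\infty$. First I would record that each $\psi^m_\mu$ is genuinely an isomorphism of chain complexes: by construction it is the composite of the seven maps in the big diagram, and each of these is an isomorphism — the Liouville symplectomorphism $\varphi_\mu$ in step (1), the Moser symplectomorphism $\sigma_\mu$ furnished by Lemma \ref{Lemma deforming the sympl form} together with Lemma \ref{Lemma Moser deformation} in step (2), the continuation isomorphism between Hamiltonians of equal slope at infinity from Theorem \ref{Theorem Chain Homotopy}(5) in step (3), the metric rescaling of Lemma \ref{Lemma Metric rescaling} in step (4), the identity chain map of Theorem \ref{Theorem Transversality of the 1-parameter family} (valid once $\mu$ is large enough, depending on $m$) in step (5), the Novikov rescaling $t\mapsto t^{\mu^{-1}}$ in step (6), and the change of basis of Lemma \ref{Lemma Moser deformation twisted} in step (7). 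Passing to cohomology yields isomorphisms $\psi^m\co SH^*(H^m,d\theta+\beta)\to SH^*(H^m,d\theta;\underline{\Lambda}_{\tau\beta})$.

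Next I would check the two compatibility statements needed to make the direct limit meaningful, both of which are already in hand. By Lemma \ref{Lemma Tau maps independent of mu} the induced map $\psi^m$ on cohomology is independent of the (large) choice of $\mu$, so it is canonically attached to the slope $m$. By Lemma \ref{Lemma Tau maps commute with continuation} the $\psi^m$ commute with the continuation maps induced by monotone homotopies $H^{m'}\to H^m$. Hence $\{\psi^m\}$ is a morphism of directed systems of $\Lambda$-modules.

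Finally I would take the direct limit as $m\to\infty$. Since $SH^*(M,d\theta+\beta)=\varinjlim SH^*(H^m,d\theta+\beta)$ and $SH^*(M,d\theta;\underline{\Lambda}_{\tau\beta})=\varinjlim SH^*(H^m,d\theta;\underline{\Lambda}_{\tau\beta})$, and the direct limit is an exact functor on $\Lambda$-modules, the map $\psi=\varinjlim\psi^m$ is well defined and is an isomorphism, because a morphism of directed systems that is an isomorphism at each stage induces an isomorphism of colimits.

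I expect no genuine obstacle here, only bookkeeping: the one truly delicate point — that the large parameter $\mu$ required in step (5) depends on $m$, so that no single $\mu$ works for all slopes at once — was already defused by establishing $\mu$-independence on homology (Lemma \ref{Lemma Tau maps independent of mu}) and compatibility with continuation maps (Lemma \ref{Lemma Tau maps commute with continuation}); with those in place the direct-limit assembly is formal. The step I would be most careful about is verifying that the preliminary reduction of section \ref{Subsection Technical remark}, which arranges all interior critical points of $H^m$ to lie in a neighbourhood $V\subset M\setminus\supp\beta$, is carried along compatibly by the continuation maps of the directed system, so that the identification of the generators of the two complexes (the critical points together with the collar orbits) is preserved under $m\mapsto m'$ and the squares of Lemma \ref{Lemma Tau maps commute with continuation} literally commute.
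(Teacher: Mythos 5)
Your proposal is correct and follows essentially the same route as the paper: invoke Lemma \ref{Lemma Tau maps independent of mu} for $\mu$-independence on cohomology, Lemma \ref{Lemma Tau maps commute with continuation} for compatibility with continuation maps, and then conclude by exactness of direct limits. The paper's own proof is just a terser rendering of exactly this argument.
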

\begin{proof}
By Lemma \ref{Lemma Tau maps independent of mu} the map
$\psi^m=\psi_{\mu}^m$ on homology is independent of $\mu$ for
large $\mu$, and by Lemma \ref{Lemma Tau maps commute with
continuation} the maps $\psi^m$ commute with continuation maps.
The direct limit is an exact functor, so $\psi=\varinjlim \psi^m$
is an isomorphism.
\end{proof}
\begin{remark}
The theorem can sometimes be applied to deformations $\omega_s$
which are not compactly supported by using Gray's stability
theorem e.g. see Lemma \ref{Lemma SH infinitesimal same as
nonexact ADE}.
\end{remark}
\begin{remark}
Let $\beta \in H^2(M;\R)$ come from $H^2(\partial M;\R)$ by the
Thom construction. Then $SH^*(M,d\theta+\beta)\cong
SH^*(M,d\theta;\Lambda)$, the ordinary symplectic cohomology with
underlying ring $\Lambda$. Indeed, suppose $\beta$ vanishes on
$$\textrm{Fix}\,(\varphi_{\mu}) = \lim_{\mu \to -\infty} \varphi_{\mu}(M).$$
Let $H=h(R)$ be a convex Hamiltonian defined in a neighbourhood
$R< R_0$ of $\textrm{Fix}\,(\varphi_{\mu})$ where $\beta$
vanishes, such that $h'(R)\to \infty$ as $R\to R_0$. Let $H^m=h$
if $h'\leq m$ and let $H^m$ be linear with slope $m$ elsewhere.
Then the Floer solutions concerned in the symplectic chain groups
will all lie in the subset of $M$ where $\beta=0$.
\end{remark}
%
%
\subsection{Technical remark}
\label{Subsection Technical remark}
%
%
We assumed in \ref{Subsection Construction of the isomorphism}
that all critical points of $H$ in the interior of $M$ lie in a
neighbourhood $V\subset M \setminus \supp\beta$. We can do this as
follows.

Pick a small neighbourhood $V$ around $\textrm{Crit}(H)$ so that
$\beta|_V = d\alpha$ is exact. We may assume that $\alpha$ is
supported in $V$. To construct the isomorphism of \ref{Subsection
Construction of the isomorphism} we need to homotope
$d\theta+\mu^{-1}\beta$ to $d\theta+\mu^{-1}(\beta-d\alpha)$ for
all large $\mu$. This can be done by a Moser isotopy compactly
supported in $V$ via the exact deformation
$\omega_s=d\theta+\mu^{-1}(\beta-sd\alpha)$. Since $\partial_s
\omega_s = -\mu^{-1}d\alpha$, for large $\mu$ the Moser isotopy
$\phi_s$ is close to the identity. Therefore during the isotopy
the critical points of $\phi_s^*H$ stay within $V$. This
guarantees that the Palais-Smale Lemma \ref{Corollary
Palais-Smale} can be applied for $V$ independently of large $\mu$,
and the construction \ref{Subsection Construction of the
isomorphism} can be carried out with minor modifications.
%
%
%
%
%
%
%
%

\section{ALE spaces} \label{Section ADE spaces}
%
%
%
%
\subsection{Hyperk\"{a}hler manifolds}
\label{Subsection Hyperkahler manifolds}
%
We suggest \cite{Hitchin} for a detailed account of
Hyperk\"{a}hler manifolds and ALE spaces.

Recall that a symplectic manifold $(M,\omega)$ is
\emph{K\"{a}hler} if there is an integrable $\omega-$compatible
almost complex structure $I$. Equivalently, a Riemannian manifold
$(M,g)$ is K\"{a}hler if there is an orthogonal almost complex
structure $I$ which is covariant constant with respect to the
Levi-Civita connection. $(M,g)$ is called \emph{hyperk\"{a}hler}
if there are three orthogonal covariant constant almost complex
structures $I,J,K$ satisfying the quaternion relation $IJK=-1$.

The hyperk\"{a}hler manifold $(M,g)$ is therefore K\"{a}hler with
respect to each of the (integrable) complex structures $I,J,K$,
with corresponding K\"{a}hler forms
$$
\omega_I = g(I\cdot,\cdot),\quad \omega_J = g(J\cdot,\cdot),\quad
\omega_K = g(K\cdot,\cdot).
$$

Indeed, there is an $S^2$ worth of K\"{a}hler forms: any vector
$(u_I,u_J,u_K)\in S^2 \subset \R^3$ gives rise to a complex
structure $I_u=u_I I +u_J J+u_K K$ and a K\"{a}hler form
$$\omega_u=u_I \omega_I+u_J \omega_J+u_K \omega_K.$$

We will always think of $M$ as a complex manifold with respect to
$I$, and we recall from \cite{Hitchin2} that $\omega_J + i
\omega_K$ is a holomorphic symplectic structure on $M$ (a
non-degenerate closed holomorphic $(2,0)$ form). The form
$\omega_J+i\omega_K$ determines a trivialization of the canonical
bundle $\Lambda^{2,0}T^*M$, so $c_1(M)=0$ and the Conley-Zehnder
indices give a $\Z-$grading on symplectic cohomology (see
\ref{Subsection Maslov index and Conley-Zehnder index}).

\begin{lemma}\label{Lemma Lagr submfds in Hyperkahler}
Let $L\subset \mathbb{H}$ be an $I-$complex vector subspace of the
space of quaternions with $\dim_{\R}L=2$. Then $L$ is a real
Lagrangian subspace with respect to $\omega_J$ and $\omega_K$, and
a symplectic subspace with respect to $\omega_I$. After an
automorphism of $\mathbb{H}$, $L$ is identified with $\C\oplus 0
\subset \mathbb{H}$.
\end{lemma}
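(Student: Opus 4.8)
\emph{Proof plan.} The strategy is to reduce to the model complex line $\C\oplus 0=\mathrm{span}_{\R}\{1,i\}\subset\mathbb{H}$, for which all three assertions become immediate, by producing an automorphism of the flat hyperk\"ahler structure on $\mathbb{H}$ that carries this model line onto $L$.

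First I would fix conventions: realize $I,J,K$ as left quaternion multiplications $L_i,L_j,L_k$, so that $IJK=-1$ and, with $g$ the standard Euclidean metric, $\{1,i,j,k\}$ is orthonormal. Under $I$ the space $\mathbb{H}$ becomes a complex vector space, with $\mathbb{H}=\C\cdot 1\oplus\C\cdot j$ where the scalars act by left multiplication; thus $\C\oplus 0$ is precisely the model complex line, and an $I$-complex subspace $L$ with $\dim_{\R}L=2$ is the same as a complex line in this $\C^2$. Next I would observe that right multiplications $R_q\co x\mapsto xq$ commute with all left multiplications, hence are $\C$-linear and conjugate each of $I,J,K$ to itself; and for a unit quaternion $q$ the map $R_q$ is moreover an isometry of $g$, so it preserves $\omega_I,\omega_J,\omega_K$. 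Choosing a unit vector $v\in L$ and viewing it as a unit quaternion, $R_v(1)=v$, so $R_v$ sends $\C\cdot 1$ onto $\C\cdot v=L$; hence $R_v^{-1}=R_{\bar v}$ is an automorphism of $(\mathbb{H},g,I,J,K)$ taking $L$ to $\C\oplus 0$, which is the last assertion of the lemma.

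Since $R_{\bar v}$ preserves $I$ and each of $\omega_I,\omega_J,\omega_K$, it then suffices to check the remaining claims for $L_0=\mathrm{span}_{\R}\{1,i\}$, where they are one-line computations: $\omega_I(1,i)=g(i,i)=1\neq0$, so $\omega_I|_{L_0}$ is a nonzero alternating $2$-form on a $2$-dimensional space, hence nondegenerate, i.e.\ $L_0$ is $\omega_I$-symplectic; and $\omega_J(1,i)=g(j,i)=0$, $\omega_K(1,i)=g(k,i)=0$, so $\omega_J|_{L_0}=\omega_K|_{L_0}=0$, i.e.\ $L_0$ is Lagrangian for $\omega_J$ and for $\omega_K$. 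Transporting back along $R_v$ gives the statement for $L$.

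The argument is essentially bookkeeping; the one place to be careful is the choice of automorphism group. I would stress that one must use right multiplications by unit quaternions — these preserve the whole hyperk\"ahler structure and act transitively on complex lines — rather than $\R$-algebra automorphisms of $\mathbb{H}$, which fix $1$ and hence cannot move a general $L$. (One could also bypass the reduction entirely: with $v\in L$ a unit vector, $\{v,Iv\}$ is an orthonormal basis of $L$, $\omega_I(v,Iv)=g(Iv,Iv)=1$, and using that $I$ is $g$-orthogonal together with $IJ=K$ one gets $\omega_J(v,Iv)=g(Jv,Iv)=-g(IJv,v)=-g(Kv,v)=0$, and likewise $\omega_K(v,Iv)=0$.)
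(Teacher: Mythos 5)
Your proof is correct, and its overall structure matches the paper's: both arguments pick a unit vector $v\in L$, note $L=\mathrm{span}\{v,Iv\}$, verify $\omega_I(v,Iv)=g(Iv,Iv)>0$, and identify $L$ with $\C\oplus 0$ via the quaternionic frame $v,Iv,Jv,Kv$ (your right multiplication $R_v$ is exactly the map sending the standard basis to that frame, and your remark that one must use $R_q$ rather than algebra automorphisms fixing $1$ is a worthwhile precision). The one genuine difference is in how the Lagrangian condition is established: the paper observes that $\omega_J+i\omega_K$ is the $I$-holomorphic symplectic form, i.e.\ of type $(2,0)$, so it vanishes identically on any $I$-complex line, giving $\omega_J|_L=\omega_K|_L=0$ in one conceptual stroke; you instead verify $\omega_J(v,Iv)=\omega_K(v,Iv)=0$ by a direct metric computation using the orthogonality of $I$ and the relation $IJ=K$ (or by transport from the model line). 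Your route is more elementary and self-contained; the paper's is shorter and generalizes immediately to complex Lagrangians in higher-dimensional holomorphic symplectic manifolds. Both are complete proofs of the lemma.
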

\begin{proof}
$L$ is a complex 1-dimensional subspace of $(\mathbb{H},I)$, so
$L$ is complex Lagrangian with respect to the $I$-holomorphic
symplectic form $\omega_c=\omega_J + i\omega_K$. Thus $L$ is a
real Lagrangian vector subspace of $\mathbb{H}$ with respect to
$\omega_J$ and $\omega_K$.

Moreover, given any vector $e_1 \in L$, let $e_2=Ie_1$, $e_3=Je_1$
and $e_4=Ke_1$. Then $L=\textrm{span}\{e_1,e_2\}$ and
$\omega_I(e_1,e_2)=g(e_2,e_2)>0$, so $L$ is symplectic with
respect to $\omega_I$ and corresponds to $\C\oplus 0$ in the
hyperk\"{a}hler basis $e_1,\ldots,e_4$.
\end{proof}

\subsection{Hyperk\"{a}hler quotients}
\label{Subsection Hyperkahler quotients}
%
Let $M$ be a simply connected hyperk\"{a}hler manifold. Let $G$ be
a compact Lie group $G$ acting on $M$ and preserving $g,I,J,K$.
Then corresponding to the forms $\omega_I$, $\omega_J$, $\omega_K$
there exist moment maps $\mu_I$, $\mu_J$, $\mu_K$. Recall that if
$\zeta$ is in the Lie algebra $\mathfrak{g}$ of $G$, then it
generates a vector field $X_{\zeta}$ on $M$. A moment map $\mu:M
\to \mathfrak{g}^{\vee}$ is a $G-$equivariant map such that
$$d\mu_m (\zeta) = \omega(X_\zeta(m),\cdot) \;\textrm{ at }\; m\in M.$$
For simply connected $M$ such a $\mu$ exists and is determined up
to the addition of an element in $Z=(\mathfrak{g}^{\vee})^G$, the
invariant elements of the dual Lie algebra $\mathfrak{g}^{\vee}$.

Putting these moment maps together yields $\mu =
(\mu_I,\mu_J,\mu_K): M \to \R^3\otimes \mathfrak{g}^{\vee}, $
and for $\zeta \in \R^3\otimes Z$ we may define the
hyperk\"{a}hler quotient space
$$
X_{\zeta} = \mu^{-1}(\zeta)/F.
$$

If $F$ acts freely on $\mu^{-1}(\zeta)$ then $X_{\zeta}$ is a
smooth manifold of dimension $\dim M - 4 \dim F$ and the
structures $g,I,J,K$ descend to $X_{\zeta}$ making it
hyperk\"{a}hler (see \cite{Hitchin2}).

%
\subsection{ALE spaces and ADE singularities}
\label{ALE and ADE spaces}
%

\begin{definition}
Let $\Gamma$ be any finite subgroup of $SU(2)$ (or, equivalently,
$SL_2(\C)$). An \emph{ALE space} (asymptotically locally
Euclidean) is a hyperk\"{a}hler $4-$manifold with precisely one
end which at infinity is isometric to the quotient $\C^2/\Gamma$,
where $\C^2/\Gamma$ is endowed with a metric that differs from the
Euclidean metric by order $\mathcal{O}(r^{-4})$ terms and which
has the appropriate decay in the derivatives.
\end{definition}

Kronheimer proved in \cite{Kronheimer2} that ALE spaces are
(particularly nice) models for the minimal resolution of the
quotient singularities $\C^2/\Gamma$. More precisely, any ALE
space is diffeomorphic to such a minimal resolution, and
vice-versa.

We now recall Kronheimer's construction \cite{Kronheimer} of ALE
spaces as hyperk\"{a}hler quotients. Let $R$ be the left regular
representation of $\Gamma \subset SU(2)$ endowed with the natural
Euclidean metric,
$$R=\bigoplus_{\gamma\in \Gamma} \C e_{\gamma} \cong \C^{|\Gamma|}.$$
Denoting by $\C^2$ the natural left $SU(2)-$module, let
$$M=(\C^2 \otimes_{\C} \textrm{Hom}_{\C}(R,R))^{\Gamma}$$
be the pairs of endomorphisms $(\alpha,\beta)$ of $R$, which are
invariant under the induced left action of $\Gamma$. We make $M$
into a hyperk\"{a}hler vector space by letting $I$ act by $i$ and
$J$ by $J(\alpha,\beta)=(-\beta^*,\alpha^*)$.

The Lie group
$$F=\textrm{Aut}_{\C}(R,R)^{\Gamma}/\{\textrm{scalar maps}\}$$
of unitary automorphisms of $R$ which are $\Gamma-$invariant act
by conjugation on $M$, $f\cdot (\alpha,\beta)=(f\alpha f^{-1},
f\beta f^{-1})$, where we quotiented by the scalar matrices since
they act trivially. The corresponding Lie algebra $\mathfrak{f}$
corresponds to the traceless elements of $\textrm{Hom}_{\C}(R,R)$,
and the moment maps are:
$$
\mu_I(\alpha,\beta)=\frac{1}{2} i
([\alpha,\alpha^*]+[\beta,\beta^*]), \quad
(\mu_J+i\mu_K)(\alpha,\beta)=[\alpha,\beta].
$$

By McKay's correspondence, this description can be made explicit.
Recall that $R=\oplus n_i R_i$, where the $R_i$ are the complex
irreducible representations of $\Gamma$ of complex dimension
$n_i$. Then $\C^2 \otimes R_i \cong \oplus_j A_{ij} R_j$ where $A$
is the adjacency matrix describing an extended Dynkin diagram of
ADE type (the correspondence between $\Gamma$ and the type of
diagram is described in the Introduction). It follows that
$$
M = \bigoplus_{i\to j} \textrm{Hom}(\C^{n_i},\C^{n_j})
$$
where each edge $i\to j$ of the extended Dynkin diagram appears
twice, once for each choice of orientation. Moreover,
$$
F = (\oplus_{i} U(n_i)) / \{\textrm{scalar maps}\}
$$
where the unitary group $U(n_i)$ acts naturally on $\C^{n_i}$.

The hyperk\"{a}hler quotient for $\zeta\in
Z=\textrm{centre}(\mathfrak{f}^{\vee})$ is therefore
$$
X_{\zeta} = \mu^{-1}(\zeta)/F.
$$

\begin{definition}
Let $\mathfrak{h}_{\R}$ denote the real Cartan algebra associated
to the Dynkin diagram for $\Gamma$. Let the hyperplanes
$D_{\theta}= \ker \theta$ denote the walls of the Weyl chambers,
where the $\theta$ are the roots. We identify the centre $Z$ with
$\mathfrak{h}_{\R}$ by dualizing the map
$$
\textrm{centre}(\mathfrak{f}) \to \mathfrak{h}_{\R}^{\vee}, \; i
\, \pi_k \mapsto n_k \theta_k,
$$
where $\pi_k: R \to \C^{n_k} \otimes R_k$ are the projections to
the summands.

We call $\zeta \in \R^3 \otimes Z$ \emph{generic} if it does not
lie in $\R^3 \otimes D_{\theta}$ for any root $\theta$, i.e.
$\theta(\zeta_1)$, $\theta(\zeta_2)$, $\theta(\zeta_3)$ are not
all zero for any root $\theta$.
\end{definition}

\begin{theorem}[Kronheimer, \cite{Kronheimer}]\label{Theorem
Kronheimer} Let $\zeta \in \R^3 \otimes Z$ be generic. Then
$X_{\zeta}$ is a smooth hyperk\"{a}hler four-manifold with the
following properties.
\begin{enumerate}
\item $X_{\zeta}$ is a continuous family of hyperk\"{a}hler
manifolds in the parameter $\zeta$;

\item $X_0$ is isometric to $\C^2/\Gamma$;

\item there is a map $\pi:X_{\zeta} \to X_0$ which is an
$I-$holomorphic minimal resolution of $\C^2/\Gamma$, and $\pi$
varies continuously with $\zeta$;

\item \label{Item holo min resolution} in particular, $\pi$ is a
biholomorphism away from $\pi^{-1}(0)$ and $\pi^{-1}(0)$ consists
of a collection of $I-$holomorphic spheres with self-intersection
$-2$ which intersect transversely according to the Dynkin diagram
from $\Gamma$;

\item \label{Item Kronheimer thm forms} $H^2(X_{\zeta};\R) \cong
Z$ such that
$[\omega_I], [\omega_J], [\omega_K]$ map to
$\zeta_1,\zeta_2,\zeta_3.$

\item \label{Item Kronheimer H2} $H_2(X_{\zeta};\Z) \cong \{
\textrm{root lattice for } \Gamma \}$, such that the classes
$\Sigma$ with self-intersection $-2$ correspond to the roots;

\item $X_{\zeta}$ and $X_{\zeta'}$ are isometric hyperk\"{a}hler
manifolds if $\zeta,\zeta'$ lie in the same orbit of the Weyl
group;

\item Every ALE space asymptotic to $\C^2/\Gamma$ is isomorphic to
$X_{\zeta}$ for some generic $\zeta$.
\end{enumerate}
\end{theorem}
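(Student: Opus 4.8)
The plan is to build everything on the machinery of hyperk\"ahler reduction together with a representation-theoretic analysis of when $F$ acts freely, handling the ALE asymptotics and the converse (8) as the two substantial remaining points. First I would invoke the structural fact underlying \ref{Subsection Hyperkahler quotients}: if $F$ acts freely on $\mu^{-1}(\zeta)$ then $\mu$ is submersive along that fibre, so $\mu^{-1}(\zeta)$ is a smooth principal $F$-bundle and $X_\zeta=\mu^{-1}(\zeta)/F$ inherits a hyperk\"ahler structure of real dimension $\dim_\R M-4\dim_\R F$. Using the McKay relation $\sum_j A_{ij}n_j=2n_i$ one computes $\dim_\C M=\sum_{i,j}A_{ij}n_in_j=2\sum_i n_i^2$, hence $\dim_\R M=4\sum_i n_i^2$, while $\dim_\R F=\sum_i n_i^2-1$; therefore $\dim_\R X_\zeta=4$. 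Smooth dependence on $\zeta$ --- part (1) --- then follows because the total space $\{(\zeta,x):\mu(x)=\zeta\}$ is cut out transversally wherever $F$ acts freely, and freeness is an open condition in $\zeta$.

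The heart of the construction is to show that \emph{for generic $\zeta$, $F$ acts freely on $\mu^{-1}(\zeta)$}. If some $(\alpha,\beta)\in\mu^{-1}(\zeta)$ had a nontrivial stabiliser, a stabilising element would be (up to scalars) a $\Gamma$-equivariant endomorphism of $R$ commuting with $\alpha$ and $\beta$, hence would split $R$ nontrivially as a $\Gamma$-module, $R=R'\oplus R''$, with $\alpha$ and $\beta$ block-diagonal. Restricting the moment-map equations $\mu_I(\alpha,\beta)=\zeta_1$ and $(\mu_J+i\mu_K)(\alpha,\beta)=\zeta_2+i\zeta_3$ to the two summands and pairing with the projections $\pi_k$ shows that $\zeta$ must annihilate the root $\theta$ recording how the irreducibles $R_i$ are distributed between $R'$ and $R''$, i.e. $\zeta\in\R^3\otimes D_\theta$. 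Contrapositively, genericity forces the action to be free, giving smoothness of $X_\zeta$ and, with the count above, its four-dimensionality.

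For (2)--(6) I would first treat a \emph{K\"ahler-type} generic parameter $\zeta=(\zeta_1,0,0)$. In the complex structure $I$ the pair $(\alpha,\beta)$ is $\C$-linear with holomorphic moment map $[\alpha,\beta]=\mu_J+i\mu_K=0$, and the Kempf--Ness theorem identifies $X_\zeta$ with the GIT quotient $\{[\alpha,\beta]=0\}^{ss}/\!\!/_{\zeta_1}F_\C$, together with the projective morphism $\pi$ onto the affine quotient $\{[\alpha,\beta]=0\}/\!\!/F_\C=X_0$; the latter is identified with $\C^2/\Gamma$ (flat metric included) by the classical ADHM-type computation, which is the concrete form of McKay's correspondence --- this gives (2). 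Properness and generic bijectivity of $\pi$, with smoothness of $X_\zeta$, make $\pi$ a resolution (3); it is crepant because $X_\zeta$ is hyperk\"ahler, hence $K$-trivial via $\omega_J+i\omega_K$ (cf. \ref{Subsection Hyperkahler manifolds}), so it is the \emph{minimal} resolution of the rational double point $\C^2/\Gamma$ and $\pi^{-1}(0)$ is the standard $(-2)$-curve configuration shaped like the ADE diagram (4). The exceptional lattice of a Du Val singularity is the root lattice with $(-2)$-classes the roots (6), and $H^2(X_\zeta;\R)\cong Z$ follows by Poincar\'e--Lefschetz duality and the identification $Z\cong\mathfrak{h}_{\R}$ of the preceding definition, the periods of $\omega_I,\omega_J,\omega_K$ being $\zeta_1,\zeta_2,\zeta_3$ by the linear variation of the reduced K\"ahler class with the moment-map level (Duistermaat--Heckman) (5). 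Finally, the generic locus $\R^3\otimes Z\setminus\bigcup_\theta(\R^3\otimes D_\theta)$ is the complement of codimension-three subspaces, hence connected, so Ehresmann's lemma transports the diffeomorphism type and the topological content of (4)--(6) to \emph{every} generic $\zeta$, with continuity in $\zeta$ giving (1).

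It remains to establish the ALE asymptotics, the Weyl symmetry (7), and the converse (8). Away from $\pi^{-1}(0)$ the map $\pi$ is a biholomorphism from $X_\zeta$ onto $(\C^2\setminus 0)/\Gamma$, and the real point is to show that under this identification the hyperk\"ahler metric differs from the Euclidean one by $O(r^{-4})$ with decaying derivatives; one writes the deviation in terms of the moment-map equations and estimates it directly. I expect this to be \textbf{the main obstacle}: everything preceding it is formal or representation-theoretic, whereas this step is genuine analysis, and it is exactly where \cite{Kronheimer} does its essential work. Part (7) is the observation that an element $w$ of the Weyl group acts on $Z\cong\mathfrak{h}_{\R}$ and is induced by an automorphism of the McKay quiver data, yielding an isometry $X_\zeta\cong X_{w\zeta}$. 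Finally (8) --- that \emph{every} ALE space asymptotic to $\C^2/\Gamma$ is some $X_\zeta$ --- is a Torelli-type classification lying outside the quotient construction proper, and here I would simply invoke Kronheimer's companion paper \cite{Kronheimer2}, in which an ALE space is shown to be determined up to isometry by the periods of $\omega_I,\omega_J,\omega_K$, with nondegeneracy of the data matching precisely the genericity of $\zeta$.
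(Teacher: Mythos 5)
The paper does not prove this theorem: it is stated verbatim as a citation to Kronheimer (note the theorem header ``[Kronheimer, \cite{Kronheimer}]'') and the text moves on without any argument; part (8) is further attributed to the companion Torelli paper \cite{Kronheimer2}, cited elsewhere in the bibliography. Your submission is therefore not comparable to a proof in the paper --- there is none --- but it is a reasonable high-level reconstruction of Kronheimer's original argument, and the honest self-assessment at the end (deferring the $O(r^{-4})$ asymptotic estimate and the converse to \cite{Kronheimer} and \cite{Kronheimer2}) puts you on the same footing as the paper: both ultimately rest on citing Kronheimer for the analytic core.

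A few remarks on the sketch itself. The dimension count and the freeness-via-genericity argument (nontrivial stabiliser forces a $\Gamma$-equivariant splitting of $R$, which pins $\zeta$ into $\R^3\otimes D_\theta$) are correctly the contrapositive form of Kronheimer's Proposition 2.8, and the Kempf--Ness reduction to a GIT quotient for $\zeta=(\zeta_1,0,0)$ is indeed how the complex-geometric statements (3)--(4) are extracted. The identification of periods in (5) needs more than the Duistermaat--Heckman principle as stated, since you need linearity of all three K\"ahler classes simultaneously; Kronheimer in fact proves the period statement directly using the structure of the hyperk\"ahler quotient, and this is tied to the Torelli argument in \cite{Kronheimer2}, so a pure appeal to Duistermaat--Heckman undersells what is required. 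The transport of topological data across the connected generic locus via Ehresmann is fine since the walls $\R^3\otimes D_\theta$ have real codimension three. Where your sketch would genuinely fail to stand alone is precisely where you flagged it: establishing that the reduced metric is ALE (decay of the deviation from Euclidean and of its derivatives), which is nontrivial elliptic analysis, and the classification statement (8), which requires the hyperk\"ahler Torelli theorem and cannot be deduced from the quotient construction alone.
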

%
%
%
\subsection{Plumbing construction of ALE spaces}
\label{Subsection Plumbing construction of ADE spaces}
%
Our goal is to prove that for any ALE space $X$,
$SH^*(X;\omega)=0$ for a generic choice of (non-exact) symplectic
form $\omega$. By Theorem \ref{Theorem Kronheimer}.(\ref{Item
Kronheimer thm forms}) the cohomology class $[\omega_I]$ ranges
linearly in $\zeta_1$ over all of $H^2(X;\R)$. Therefore it
suffices to consider the hyperk\"{a}hler quotient $X=X_{\zeta}$
for all generic $\zeta=(\zeta_1,0,0) \in Z\otimes \R^3$.

\begin{lemma}\label{Lemma exceptional divisors are Lagr}
The exceptional divisors in $X$ are exact Lagrangian spheres with
respect to $\omega_J$ and $\omega_K$ and they are symplectic
spheres with respect to $\omega_I$. Moreover, the areas
$<\omega_I,\Sigma_m>$ of the exceptional spheres $\Sigma_m$ range
linearly in $\zeta_1$ over all possible positive values.
\end{lemma}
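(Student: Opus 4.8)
The plan is to exploit that $\zeta=(\zeta_1,0,0)$ has vanishing $\omega_J$ and $\omega_K$ periods, together with the pointwise linear-algebra fact recorded in Lemma~\ref{Lemma Lagr submfds in Hyperkahler}.

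First I would observe that by Theorem~\ref{Theorem Kronheimer}.(\ref{Item Kronheimer thm forms}) the classes $[\omega_J],[\omega_K]\in H^2(X;\R)$ correspond to $\zeta_2=\zeta_3=0$, so there are global one-forms with $\omega_J=d\theta_J$ and $\omega_K=d\theta_K$ on $X$. By Theorem~\ref{Theorem Kronheimer}.(\ref{Item holo min resolution}) each exceptional divisor $\Sigma_m$ is an $I$-holomorphic sphere, so at every point $p\in\Sigma_m$ the tangent plane $T_p\Sigma_m$ is an $I$-complex line in the hyperk\"{a}hler vector space $T_pX\cong\mathbb{H}$. Lemma~\ref{Lemma Lagr submfds in Hyperkahler} applied to this line shows $T_p\Sigma_m$ is a real Lagrangian subspace for $\omega_J$ and $\omega_K$ and a symplectic subspace for $\omega_I$; as this holds at every $p$, the submanifold $\Sigma_m$ is Lagrangian for $\omega_J$ and $\omega_K$ and symplectic for $\omega_I$. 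Exactness is then automatic: $\theta_J|_{\Sigma_m}$ is closed since $d(\theta_J|_{\Sigma_m})=\omega_J|_{\Sigma_m}=0$, and $H^1(\Sigma_m;\R)=H^1(S^2;\R)=0$ forces $\theta_J|_{\Sigma_m}$ to be exact; the same applies to $\theta_K$. Hence $\Sigma_m$ is an exact Lagrangian sphere with respect to $\omega_J$ and $\omega_K$.

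For the second claim I would evaluate the period $\langle\omega_I,\Sigma_m\rangle$. Under the identifications $H^2(X;\R)\cong Z$ of Theorem~\ref{Theorem Kronheimer}.(\ref{Item Kronheimer thm forms}) and $H_2(X;\Z)\cong\{\textrm{root lattice}\}$ of Theorem~\ref{Theorem Kronheimer}.(\ref{Item Kronheimer H2}), the pairing $\langle\omega_I,\Sigma_m\rangle$ becomes the evaluation on $\zeta_1$ of the root $\alpha_m$ represented by the self-intersection $-2$ sphere $\Sigma_m$, which is a linear functional of $\zeta_1$. Since $\Sigma_m$ is an $I$-holomorphic curve and $\omega_I$ its K\"{a}hler form, $\langle\omega_I,\Sigma_m\rangle=\int_{\Sigma_m}\omega_I>0$, consistently with the sign $\omega_I(e_1,Ie_1)>0$ appearing in Lemma~\ref{Lemma Lagr submfds in Hyperkahler}; thus $\alpha_m$ is a positive (indeed simple) root for the Weyl chamber containing $\zeta_1$. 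As $\zeta_1$ varies within its chamber, which is an open cone, the linear function $\alpha_m(\zeta_1)$ stays positive and, by rescaling $\zeta_1$ inside the chamber, realizes every value in $(0,\infty)$; this is the asserted linear dependence over all positive values.

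The genuinely routine steps are the pointwise use of Lemma~\ref{Lemma Lagr submfds in Hyperkahler} and the $H^1(S^2)=0$ argument. The step requiring care is the bookkeeping in the last paragraph: one must know that, throughout a fixed chamber of $\zeta_1$, the exceptional divisor $\Sigma_m$ consistently carries the homology class of the same simple root, so that its $\omega_I$-area is genuinely one fixed linear functional of $\zeta_1$ and not a piecewise-linear one. This is exactly the content packaged by Theorem~\ref{Theorem Kronheimer}, so no input beyond a careful match of conventions is needed, and I expect this to be the only point needing attention.
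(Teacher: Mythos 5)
Your proposal is correct and follows essentially the same route as the paper: the pointwise application of Lemma \ref{Lemma Lagr submfds in Hyperkahler} to the $I$-holomorphic exceptional spheres, exactness from $H^1(S^2;\R)=0$, and the linearity of $[\omega_I]$ in $\zeta_1$ combined with the fact that the $\Sigma_m$ generate $H_2(X;\Z)$. The extra detail you supply about roots and Weyl chambers is a correct unpacking of what the paper compresses into ``immediate.''
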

\begin{proof}
The first statement is an immediate consequence of Lemma
\ref{Lemma Lagr submfds in Hyperkahler}, using the fact that the
exceptional divisors in $X$ are holomorphic spheres by Theorem
\ref{Theorem Kronheimer}.(\ref{Item holo min resolution}). Note
that if a sphere is Lagrangian then it is exact since
$H^1(S^2;\R)=0$. The second statement is immediate since
$[\omega_I]$ ranges linearly in $\zeta_1$ over $H^2(X;\R)$ and the
$\Sigma_m$ generate $H_2(X;\Z)$, by Theorem \ref{Theorem
Kronheimer}.(\ref{Item Kronheimer H2}).
\end{proof}

The space $(X,\omega_J)$ is the plumbing of copies of $T^*\C P^1$,
plumbed according to the Dynkin diagram for $\Gamma$. Indeed, by
mimicking the proof of Weinstein's Lagrangian neighbourhood
theorem, one observes that a neighbourhood of the collection of
exceptional Lagrangian spheres is symplectomorphic to a plumbing
of copies of small disc cotangent bundles $DT^*\C P^1$. That
neighbourhood can be chosen so that its complement is a symplectic
collar diffeomorphic to $(S^3/\Gamma)\times [1,\infty)$, since $X$
is biholomorphic to $\C^2/\Gamma$ away from $0$.

\begin{remark} \label{Remark Plumbing or ALE does not matter}
We will show that the exact Lagrangians inside an ALE space $X$
must be spheres. To prove that this holds also for the above
plumbing, we don't actually need to know that the plumbing $Y$ is
all of $X$, the embedding $Y\hookrightarrow X$ provided by
Weinstein's theorem is enough. Indeed, the argument relies on
contradicting Corollary \ref{Corollary Functoriality trick} by
showing that $c_*1$ maps to $0$ via
$SH^*(Y,\omega_J;\underline{\Lambda}_{\tau \omega_I}) \to
H_{n-*}(\mathcal{L}L)\otimes \Lambda$. This is true since $c_*1$
is in the image of $SH^*(X,\omega_J;\underline{\Lambda}_{\tau
\omega_I}) \to SH^*(Y,\omega_J;\underline{\Lambda}_{\tau
\omega_I})$ by Theorem \ref{TheoremViterboTwistedFunctoriality},
and we will prove that $SH^*(X,\omega_J;\underline{\Lambda}_{\tau
\omega_I})=0$.
\end{remark}
%
%
\subsection{Contact hypersurfaces inside ALE spaces}
\label{Subsection Contact hypersurface inside ADE spaces}
%

\begin{lemma}\label{Lemma ADE contact type} Recall that any
$(u_I,u_J,u_K)\in S^2\subset \R^3$ gives rise to a K\"{a}hler form
$$\omega_u = u_I
\omega_I + u_J \omega_J + u_K \omega_K.$$

Then $(X,\omega_u)$ is a symplectic manifold such that
$\pi^{-1}(S_r^3/\Gamma)$ is a contact hypersurface in $X$ for all
sufficiently large $r$, so that $X$ can be thought of as a
symplectic manifold with contact type boundary with an infinite
collar attached. Moreover, $X$ is exact symplectic precisely when
$u_I=0$.
\end{lemma}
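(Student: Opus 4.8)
The plan is to reduce every assertion to the flat model $\C^2/\Gamma$, where they are transparent, and then to treat the ALE metric as a fast-decaying perturbation of the flat one. That $\omega_u$ is symplectic is immediate, since it is a K\"{a}hler form. By Theorem \ref{Theorem Kronheimer}.(\ref{Item holo min resolution}) the resolution $\pi\co X\to X_0=\C^2/\Gamma$ is a biholomorphism away from $\pi^{-1}(0)$, so for $r$ large $\pi^{-1}(S_r^3/\Gamma)$ is a smooth hypersurface, diffeomorphic to $S^3/\Gamma$, bounding a compact region $X_{\le r}\subset X$, and the end $X\setminus X_{\le r}$ is biholomorphic to $(\C^2\setminus B_r)/\Gamma$ carrying the \emph{same} complex structure $I$ as the flat model. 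Since $\Gamma\subset SU(2)$ acts on $\mathbb{H}$ by hyperk\"{a}hler isometries, the flat K\"{a}hler forms $\omega_I^0,\omega_J^0,\omega_K^0$ descend to $\C^2/\Gamma$, and because the ALE metric satisfies $g=g_{\mathrm{flat}}+\mathcal{O}(r^{-4})$ with the corresponding decay of derivatives, while $I$ agrees exactly with the flat one on the end, we get $\omega_u=\omega_u^0+\varepsilon_u$ there, where $\omega_u^0=u_I\omega_I^0+u_J\omega_J^0+u_K\omega_K^0$ is a flat K\"{a}hler form and $\varepsilon_u$ is a closed $2$-form that is $\mathcal{O}(r^{-4})$ in the conical metric $dr^2+r^2 g_{S^3/\Gamma}$ ($r$ the radial coordinate of the flat model, pulled back by $\pi$). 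In this flat model the rescaled Euler field $Z_0=\tfrac12 r\partial_r$ is $\Gamma$-invariant and satisfies $\mathcal{L}_{Z_0}\eta=\eta$ for every constant-coefficient form $\eta$, so $\theta_u^0:=i_{Z_0}\omega_u^0$ is a Liouville primitive of $\omega_u^0$; since $Z_0$ is outward transverse to each $S_r^3/\Gamma$ and $(\omega_u^0)^2$ is a positive volume form, $\theta_u^0|_{S_r^3/\Gamma}$ is a positive contact form for every $u\in S^2$.

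Next I would carry out the perturbation step. The end $X\setminus X_{\le r}$ is homotopy equivalent to $S^3/\Gamma$, whose real cohomology vanishes in degrees $1$ and $2$ because $\pi_1=\Gamma$ is finite. Hence on the end $\varepsilon_u=d\sigma_u$ for a $1$-form $\sigma_u$, which can be taken to decay (by integrating the $\mathcal{O}(r^{-4})$ bound) so as to be arbitrarily small relative to $\theta_u^0$ over $\pi^{-1}(S_{r_0}^3/\Gamma)$ once $r_0$ is large. Then $\theta_u^0+\sigma_u$ is a primitive of $\omega_u$ on the end, and its restriction to $\pi^{-1}(S_{r_0}^3/\Gamma)$ is a $C^1$-small perturbation of the flat contact form $\theta_u^0|_{S_{r_0}^3/\Gamma}$; since $\alpha\wedge d\alpha>0$ is an open condition, this restriction is still contact for all large $r_0$. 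Thus $\pi^{-1}(S_{r_0}^3/\Gamma)$ is a contact hypersurface in $(X,\omega_u)$, and $X$ is realised as a symplectic manifold with contact type boundary together with an infinite collar. Equivalently one may solve $i_W\omega_u=-\sigma_u+(\text{a bounded primitive of }\varepsilon_u-d\sigma_u)$ for a decaying correction $W=\mathcal{O}(r^{-3})$, so that $Z_0+W=\tfrac12 r\partial_r+\mathcal{O}(r^{-3})$ is a genuine Liouville field for $\omega_u$ near the hypersurface, still outward transverse.

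For the exactness dichotomy, recall from section \ref{Subsection Plumbing construction of ADE spaces} that the generic parameter in force is $\zeta=(\zeta_1,0,0)$; by Theorem \ref{Theorem Kronheimer}.(\ref{Item Kronheimer thm forms}), $H^2(X;\R)\cong Z$ with $[\omega_J]=\zeta_2=0$, $[\omega_K]=\zeta_3=0$, $[\omega_I]=\zeta_1$, and $\zeta_1\neq0$ since genericity of $(\zeta_1,0,0)$ forces $\theta(\zeta_1)\neq0$ for every root $\theta$. Therefore $[\omega_u]=u_I\zeta_1$ vanishes precisely when $u_I=0$. If $u_I=0$ then $\omega_u$ has a global primitive $\Theta$; on the end $\Theta-(\theta_u^0+\sigma_u)$ is closed, hence exact ($H^1$ of the end vanishing), so after modifying $\Theta$ near infinity we may assume $\Theta=\theta_u^0+\sigma_u$ there, whence the Liouville field $Z$ defined by $i_Z\omega_u=\Theta$ is the field $Z_0+W$ of the previous paragraph near $\pi^{-1}(S_{r_0}^3/\Gamma)$ and $(X_{\le r_0},\omega_u)$ is a Liouville domain whose completion is identified with $X$. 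If $u_I\neq0$ then $[\omega_u]\neq0$, so $\omega_u$ is not even cohomologically trivial and $X$ is not exact symplectic.

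The step I expect to be the main obstacle is the perturbation estimate: one must verify that $\varepsilon_u$, and hence the chosen $\sigma_u$, is genuinely small compared with the flat data $\theta_u^0$, uniformly as $r_0\to\infty$. This is exactly where Kronheimer's $\mathcal{O}(r^{-4})$ decay and the decay of its derivatives are used, and where care is needed to measure sizes of forms in the conical metric $dr^2+r^2 g_{S^3/\Gamma}$ rather than in a fixed background metric; the underlying reduction ``$\omega_u=\omega_u^0+\varepsilon_u$ with $\varepsilon_u$ small'' itself relies on $I$ coinciding with the flat complex structure on the end. Everything else is bookkeeping with the cohomology of $X$ and of its end.
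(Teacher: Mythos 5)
Your proposal is correct and takes essentially the same route as the paper: compute the flat Liouville field and contact form on $\C^2/\Gamma$, observe that $Z_0=\tfrac12 r\partial_r$ gives a Liouville primitive $\theta_u^0$ whose restriction to $S_r^3/\Gamma$ is a contact form, then use Kronheimer's $\mathcal{O}(r^{-4})$ asymptotics and the open-ness of the contact condition to transport this to $\pi^{-1}(S_r^3/\Gamma)$ for $r\gg 0$; and for the exactness dichotomy, use that $[\omega_u]=u_I\zeta_1$ with $\zeta_1\ne 0$ by genericity. The paper phrases the last step via Lemma \ref{Lemma exceptional divisors are Lagr} (the $\Sigma_m$ are $\omega_J,\omega_K$-Lagrangian and have $\omega_I$-area controlled by $\zeta_1$), but since those spheres generate $H_2(X;\Z)$ this is the same cohomological computation; your version, working directly with Theorem \ref{Theorem Kronheimer}.(\ref{Item Kronheimer thm forms}), and with the extra care about a decaying primitive $\sigma_u$ for the error term $\varepsilon_u$, is if anything more explicit than the paper's one-line dispatch.
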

\begin{proof}
Recall $\pi: X \to \C^2/\Gamma$ denotes the resolution. Let
$\omega_u'$ denote the corresponding combination of forms for
$\C^2/\Gamma=\mathbb{H}/\Gamma$. On $\C^2/\Gamma$ the Liouville
vector field for any $\omega_u'$ is $Z=\partial_r$, and $\omega_u'
= d \theta_u'$ where $\theta_u' = i_Z\omega_u'$. Restricted to any
sphere $S_r^3/\Gamma$ of radius $r>0$, $\theta_u'$ is the
corresponding contact one-form.

By Theorem \ref{Theorem Kronheimer}, $X$ is asymptotic to
$\C^2/\Gamma = \mathbb{H}/\Gamma$ at infinity, therefore on
$\pi^{-1}(S_r^3/\Gamma)$, $\omega_u=d\theta_u$ where $\theta_u$
can be chosen to be asymptotic to $\theta_u'$. In particular,
since $\theta_u'\wedge d\theta_u'>0$ also $\theta_u\wedge
d\theta_u>0$ on $\pi^{-1}(S_r^3/\Gamma)$ for large $r$. Thus $X$
can be thought of as a contact type manifold with boundary
$\pi^{-1}(S_r^3/\Gamma)$ with the infinite collar
$\pi^{-1}(\cup_{\rho\geq r} S_{\rho}^3/\Gamma)$ attached. The last
statement follows by Lemma \ref{Lemma exceptional divisors are
Lagr}.
\end{proof}
%
%
\subsection{An $S^1-$action on ALE spaces}
\label{Subsection S1 action on ADE spaces}
%

Let $X=X_{\zeta_1,0,0}$ for generic $(\zeta_1,0,0)$. The
resolution $\pi: X \to \C^2/\Gamma$ can be described explicitly as
follows (following \cite{Hitchin}). The moment map equations are
$[\alpha,\beta]=0$ and $[\alpha,\alpha^*]+[\beta,\beta^*] =
-2i\zeta_1$. Since $\alpha,\beta$ commute by the first equation,
they have a common eigenvector $e$, say $(\alpha,\beta) e = (a,b)
e$. By $\Gamma-$invariance, $e^{\gamma} = R(\gamma)\cdot e$ is
also a common eigenvector such that
$$
(\alpha,\beta) e^{\gamma} = (\gamma\cdot (a,b)) e^{\gamma}.
$$

The map $X \to \C^2/\Gamma$, $(\alpha,\beta) \mapsto \Gamma\cdot
(a,b)$ is then an $I-$holomorphic minimal resolution. In fact
$\pi$ is also compatible with $J$ and $K$ if we identify
$\C^2/\Gamma = \mathbb{H}/\Gamma$.

\begin{theorem}\label{Theorem S1 action on ADE}
The $S^1-$action $\lambda\cdot (a,b)=(\lambda a,\lambda b)$ on
$\C^2/\Gamma$ lifts to a unique $I-$holomorphic $S^1-$action on
$(X,\omega_I)$. Moreover the $S^1-$action preserves the contact
hypersurface $\pi^{-1}(S_r^3/\Gamma)$ inside $(X,\omega_I)$
described in Lemma \ref{Lemma ADE contact type}, and the contact
form $\theta_I$ can be chosen to be $S^1-$equivariant.
\end{theorem}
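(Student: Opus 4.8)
The plan is to produce the action upstairs, on Kronheimer's hyperk\"ahler vector space $M=(\C^2\otimes\mathrm{Hom}_\C(R,R))^\Gamma$, and then descend it through the reduction. I would set $\lambda\cdot(\alpha,\beta)=(\lambda\alpha,\lambda\beta)$ for $\lambda\in S^1$, i.e.\ scalar multiplication on the $\C^2$-factor. This map is $\C$-linear (so it commutes with $I=$ multiplication by $i$), scalars commute with the $SU(2)$-action on $\C^2$ (so it preserves the $\Gamma$-invariant subspace $M$), and it commutes with the conjugation action of $F$. The key computation is that it preserves the level set $\mu^{-1}(\zeta_1,0,0)$: since $(\mu_J+i\mu_K)(\lambda\alpha,\lambda\beta)=[\lambda\alpha,\lambda\beta]=\lambda^2[\alpha,\beta]$, the equation $[\alpha,\beta]=0$ (which encodes $\zeta_2=\zeta_3=0$) is preserved, and since $[\lambda\alpha,(\lambda\alpha)^*]=|\lambda|^2[\alpha,\alpha^*]$ with $|\lambda|=1$, the value of $\mu_I$ is unchanged. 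Hence the action descends to an $S^1$-action on $X=\mu^{-1}(\zeta_1,0,0)/F$; it is $I$-holomorphic because $I$ descends and the action on $M$ is $I$-linear, and it preserves $\omega_I$ because the action on $M$ is unitary.

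Next I would check that $\pi$ intertwines the two actions, using the explicit description of $\pi$ by common eigenvectors: if $(\alpha,\beta)e=(a,b)e$ then $(\lambda\alpha,\lambda\beta)e=(\lambda a,\lambda b)e$, so the $\Gamma$-orbit of eigenvalues of $\lambda\cdot(\alpha,\beta)$ is $\Gamma\cdot(\lambda a,\lambda b)=\lambda\cdot(\Gamma\cdot(a,b))$, i.e.\ $\pi\circ\rho_\lambda=\rho_\lambda\circ\pi$. So the constructed action is a lift of the given $S^1$-action on $\C^2/\Gamma$. For uniqueness: on the dense, open, connected set $X\setminus\pi^{-1}(0)$ the map $\pi$ is a biholomorphism, so any $I$-holomorphic lift must agree there with the transported $\rho_\lambda$; since two holomorphic self-maps of the connected complex manifold $X$ that agree on a nonempty open subset coincide, the lift is unique.

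For the last two claims: the $S^1$-action $(a,b)\mapsto(\lambda a,\lambda b)$ is unitary, hence preserves every sphere $S_r^3/\Gamma$, and by equivariance of $\pi$ the lifted action preserves $\pi^{-1}(S_r^3/\Gamma)$ for every $r$. To obtain an $S^1$-invariant contact form I would take the Liouville vector field $Z$ transverse to $\pi^{-1}(S_r^3/\Gamma)$ supplied by Lemma \ref{Lemma ADE contact type} and average it, $Z^{\mathrm{av}}=\int_{S^1}(\rho_\lambda)_*Z\,d\lambda$. Since $\omega_I$ is $S^1$-invariant, $\mathcal L_{Z^{\mathrm{av}}}\omega_I=\int_{S^1}(\rho_\lambda)_*(\mathcal L_Z\omega_I)\,d\lambda=\int_{S^1}(\rho_\lambda)_*\omega_I\,d\lambda=\omega_I$, so $Z^{\mathrm{av}}$ is still Liouville; and it remains transverse to $\pi^{-1}(S_r^3/\Gamma)$ because this hypersurface separates $X$ and each $(\rho_\lambda)_*Z$ points to the same (outer) side of it, so their average does too. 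Then $\theta_I:=i_{Z^{\mathrm{av}}}\omega_I$ is $S^1$-invariant and, being $i_{Z^{\mathrm{av}}}\omega_I$ for a transverse Liouville field, is the desired $S^1$-equivariant contact form.

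I do not expect a serious obstacle; the result is soft. The one point deserving care is the contact form at the end, namely that the $S^1$-average of the Liouville field stays transverse to $\pi^{-1}(S_r^3/\Gamma)$; this is handled by the separation argument just indicated, or, more robustly, by performing the averaging already in the asymptotic model $\C^2/\Gamma$, where $Z=\partial_r$ and $\theta_I'=i_{\partial_r}\omega_I'$ are manifestly $S^1$-invariant, and then transferring the conclusion to $X$ via the asymptotic identification of Theorem \ref{Theorem Kronheimer}. The other genuinely essential input, used in the first paragraph, is that $\zeta$ has vanishing $J$- and $K$-components — without this, scalar multiplication would rescale $\mu_J+i\mu_K$ by $\lambda^2$ and fail to preserve the level set.
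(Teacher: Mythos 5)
Your proposal is correct and follows essentially the same route as the paper: the lift is $(\alpha,\beta)\mapsto(\lambda\alpha,\lambda\beta)$ on Kronheimer's quotient, and the equivariant contact form is obtained by $S^1$-averaging (the paper averages the one-form $\theta_I$ itself rather than the Liouville field, which is equivalent). You additionally spell out two points the paper leaves implicit --- that the level set $\mu^{-1}(\zeta_1,0,0)$ is preserved only because $\zeta_J=\zeta_K=0$, and the uniqueness of the holomorphic lift via the identity theorem on $X\setminus\pi^{-1}(0)$ --- both of which are correct.
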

\begin{proof}
Since $\Gamma$ is a complex group, it commutes with the diagonal
$S^1-$action on $\C^2$, therefore the action is well-defined on
$\C^2/\Gamma$. The lift of the action is
$$\varphi_{\lambda} (\alpha,\beta) = (\lambda \alpha,\lambda \beta).$$

In particular, the $S^1-$action preserves $\omega_I$ because it
preserves the metric $g$ and it commutes with the action of $I$.

Let $\theta_I$ denote the contact form constructed in Lemma
\ref{Lemma ADE contact type} for the hypersurface
$\pi^{-1}(S_r^3/\Gamma)$ and the symplectic form $\omega_I$. To
make $\theta_I$ an $S^1-$equivariant contact form, we simply
replace it by the $S^1-$averaged form
$\overline{\theta}_I=\int_0^1 \varphi_{e^{2\pi i t}}^* \theta_I \,
dt$. Since $\varphi_{\lambda}^*\omega_I=\omega_I$, it satisfies
$d\overline{\theta}_I=\omega_I$ and the positivity condition
$$\textstyle \overline{\theta}_I \wedge d\overline{\theta}_I
= \left(\int_0^1 \varphi_{e^{2\pi i t}}^* \theta_I \, dt\right)
\wedge \omega_I = \int_0^1 \varphi_{e^{2\pi i t}}^* (\theta_I
\wedge \omega_I) \, dt >0. \qedhere$$
\end{proof}

\begin{remark}
The $S^1-$action does not preserve $\omega_J$ and $\omega_K$. That
is why the symplectic cohomology for $\omega_I$ will be very
different from the one for $\omega_J$ or $\omega_K$.
\end{remark}
%
%
\subsection{Changing the contact hypersurface to a standard $S^3/\Gamma$}
\label{Subsection Changing the contact hypersurface to a standard
S3}
%
Our aim is to change the contact hypersurface in $(X,\omega_I)$ so
that it becomes a standard $S_r^3/\Gamma$. We want to do this
compatibly with the $S^1-$actions on $X$ and $\C^2/\Gamma$, so
that the $S^1-$action on $(X,\omega_I)$ will coincide with the new
Reeb flow. To do this, we need an $S^1-$equivariant version of
Gray's stability theorem.

\begin{lemma}[$S^1-$equivariant Gray stability]\label{Lemma Gray Stability}
For $t\in [0,1]$, let $\xi_t = \ker \alpha_t$ be a smooth family
of contact structures on some closed manifold $N^{2n-1}$. Then
there is an isotopy $\psi_t$ of $N$ and a family of smooth
functions $f_t$ such that
$$
\psi_t^*\alpha_t = e^{f_t} \alpha_0.
$$
If there is an $S^1-$action on $N$ preserving each $\alpha_t$,
then $f_t$ and $\psi_t$ are $S^1-$equivariant.
\end{lemma}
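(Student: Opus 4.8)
The plan is to run Moser's deformation argument (the usual proof of Gray stability) and to observe that each object it produces depends only on the family $\alpha_t$, hence is automatically $S^1$-invariant once the $\alpha_t$ are.

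I would seek $\psi_t$ as the flow of a time-dependent vector field $X_t$, i.e. $\tfrac{d}{dt}\psi_t = X_t\circ\psi_t$ with $\psi_0 = \mathrm{id}$, and impose $f_0 = 0$. Differentiating the desired identity $\psi_t^*\alpha_t = e^{f_t}\alpha_0$ in $t$ and substituting $e^{f_t}\alpha_0 = \psi_t^*\alpha_t$ back in, the condition becomes
$$\dot\alpha_t + \mathcal{L}_{X_t}\alpha_t = \mu_t\,\alpha_t, \qquad \mu_t := \dot f_t\circ\psi_t^{-1}.$$
Looking for $X_t \in \xi_t = \ker\alpha_t$ and using Cartan's formula, this reduces to $\dot\alpha_t + i_{X_t}d\alpha_t = \mu_t\alpha_t$. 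Evaluating on the Reeb field $\mathcal{R}_t$ of $\alpha_t$ (defined by $\alpha_t(\mathcal{R}_t)=1$, $i_{\mathcal{R}_t}d\alpha_t=0$) \emph{forces} $\mu_t = \dot\alpha_t(\mathcal{R}_t)$; the $1$-form $\eta_t := \mu_t\alpha_t - \dot\alpha_t$ then annihilates $\mathcal{R}_t$, and since $d\alpha_t$ is nondegenerate on $\xi_t$ there is a unique $X_t\in\xi_t$ with $i_{X_t}d\alpha_t = \eta_t$ on $\xi_t$; as both sides also vanish on $\mathcal{R}_t$, in fact $i_{X_t}d\alpha_t = \eta_t$ on all of $TN$. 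Because $N$ is closed the flow $\psi_t$ exists for $t\in[0,1]$, and $f_t$ is recovered by integrating $\dot f_t = \mu_t\circ\psi_t$ from $f_0 = 0$ (so $e^{f_t}>0$ automatically). This yields the non-equivariant statement.

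Now suppose an $S^1$-action $\{\rho_\sigma\}_{\sigma\in S^1}$ on $N$ satisfies $\rho_\sigma^*\alpha_t = \alpha_t$ for all $\sigma,t$. Differentiating in $t$ gives $\rho_\sigma^*\dot\alpha_t = \dot\alpha_t$, and $(\rho_\sigma)_*\mathcal{R}_t = \mathcal{R}_t$ since the Reeb field is canonically determined by $\alpha_t$; hence $\mu_t = \dot\alpha_t(\mathcal{R}_t)$ is an $S^1$-invariant function, $\eta_t$ is an $S^1$-invariant $1$-form, and $\xi_t$ is an $S^1$-invariant distribution. As $X_t$ is the \emph{unique} solution of $i_{X_t}d\alpha_t = \eta_t$ with $X_t\in\xi_t$, and $(\rho_\sigma)_*X_t$ solves the same equation, we get $(\rho_\sigma)_*X_t = X_t$. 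Then $\rho_\sigma\circ\psi_t\circ\rho_\sigma^{-1}$ is the flow of $(\rho_\sigma)_*X_t = X_t$, so by uniqueness of flows $\psi_t\circ\rho_\sigma = \rho_\sigma\circ\psi_t$, i.e. $\psi_t$ is $S^1$-equivariant. Finally $\mu_t\circ\psi_t$ is $S^1$-invariant, so the pointwise ODE $\dot f_t = \mu_t\circ\psi_t$ with invariant initial value $f_0 = 0$ has $S^1$-invariant solution, i.e. $f_t\circ\rho_\sigma = f_t$.

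The argument presents no real obstacle beyond bookkeeping; the one point deserving emphasis — and the precise reason equivariance comes for free — is that the function $\mu_t$ in Gray's trick is not a free parameter but is forced to equal $\dot\alpha_t(\mathcal{R}_t)$, so the entire construction is natural with respect to any diffeomorphism (in particular any $S^1$-action) preserving the family $\alpha_t$.
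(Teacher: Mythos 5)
Your proposal is correct and follows essentially the same route as the paper's proof: the Moser/Gray trick with $X_t\in\xi_t$, the observation that inserting the Reeb field forces the conformal factor $\dot\alpha_t(\mathcal{R}_t)$, and equivariance deduced from the uniqueness of the solution $X_t$ of the resulting linear equation on $\xi_t$. Your write-up is in fact slightly more careful than the paper's on the bookkeeping point that $\mu_t$ should be defined intrinsically first (so that $f_t$ is recovered afterwards by integrating along the flow), but the substance is identical.
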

\begin{proof}
Let $X_t$ be a vector field inducing a flow $\psi_t$. By Cartan's
formula,
$$\partial_t \psi_t^*\alpha_t = \psi_t^*(\dot{\alpha}_t+
\mathcal{L}_{X_t}\alpha_t) = \psi_t^*(\dot{\alpha}_t+
di_{X_t}\alpha_t + i_{X_t}d\alpha_t).$$
Observe now that if $\psi_t$ satisfied the claim, then $\partial_t
\psi_t^*\alpha_t = \dot{f}_t
e^{f_t}\alpha_0=\psi_t^*(\dot{f}_t(\psi_t^{-1})\, \alpha_t)$.

We can reverse the argument to obtain the required $\psi_t$ if we
can find a vector field $X_t$ in $\xi_t$ (so $i_{X_t}\alpha_t=0$)
satisfying the equation
$$
i_{X_t}d\alpha_t =\dot{f}_t(\psi_t^{-1})\, \alpha_t -
\dot{\alpha}_t.
$$
Inserting the Reeb vector field $\mathcal{R}_t$ we obtain
$0=\dot{f}_t(\psi_t^{-1}) - \dot{\alpha}_t(\mathcal{R}_t)$.
Solving the latter equation determines $f_t$ with $f_0=0$. Then
the original equation determines $X_t\in \xi_t$ since $d\alpha_t$
is non-degenerate on $\xi_t$.

Suppose we had an $S^1-$action $\varphi_{\lambda}$ preserving
$\alpha$, $\varphi_{\lambda}^*\alpha_t = \alpha_t$. Applying
$\varphi_{\lambda}^*$ to the equation which determines $X_t$ at
$x$ we obtain the equation
$$
i_{\varphi_{\lambda}^* X_t}d\alpha_t =\dot{f}_t(\psi_t^{-1})\,
\alpha_t - \dot{\alpha}_t
$$
at $y=\varphi_{\lambda}^{-1}(x)$. The solution $f_t$ does not
change and so by uniqueness and $\varphi_{\lambda}^* X_t = X_t$,
which proves that $f_t$ and $\psi_t$ are $S^1-$equivariant.
\end{proof}

\begin{lemma} \label{Lemma Standard S3 in ADE}
The contact hypersurface $\pi^{-1}(S_r^3/\Gamma)$ can be deformed
inside $(X,\omega_I)$ into a copy of the standard $S_r^3/\Gamma$
via an $S^1-$equivariant contactomorphism.
\end{lemma}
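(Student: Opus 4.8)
The plan is to interpolate between the contact form $\theta_I$ on $\pi^{-1}(S_r^3/\Gamma)$ (inherited from $(X,\omega_I)$ via Lemma~\ref{Lemma ADE contact type}, made $S^1$-equivariant by Theorem~\ref{Theorem S1 action on ADE}) and the standard contact form $\theta_I'$ on the round $S_r^3/\Gamma \subset \C^2/\Gamma$, and then invoke the $S^1$-equivariant Gray stability theorem, Lemma~\ref{Lemma Gray Stability}, to produce the desired $S^1$-equivariant contactomorphism. First I would fix, for large $r$, the sphere bundle picture: since $\pi$ is a biholomorphism away from $\pi^{-1}(0)$ (Theorem~\ref{Theorem Kronheimer}.(\ref{Item holo min resolution})), we may identify $\pi^{-1}(S_r^3/\Gamma)$ diffeomorphically with $S_r^3/\Gamma$ through $\pi$, and this identification is $S^1$-equivariant because the lifted action $\varphi_\lambda(\alpha,\beta) = (\lambda\alpha,\lambda\beta)$ intertwines with the diagonal $S^1$-action on $\C^2/\Gamma$ (Theorem~\ref{Theorem S1 action on ADE}). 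Under this identification, $\theta_I$ and $\theta_I'$ are two contact one-forms on the fixed manifold $N = S_r^3/\Gamma$, both $S^1$-invariant.

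Next I would form the linear interpolation $\alpha_t = (1-t)\theta_I' + t\,\theta_I$ for $t\in[0,1]$. The key point to check is that $\alpha_t$ is a contact form for every $t$, i.e.\ $\alpha_t \wedge (d\alpha_t)^{n-1} > 0$ (here $2n-1 = 3$, so this is $\alpha_t \wedge d\alpha_t > 0$). This is where the ALE asymptotics enter: by Theorem~\ref{Theorem Kronheimer} the metric on $X$ differs from the Euclidean one by $\mathcal{O}(r^{-4})$ with matching decay in derivatives, so $\omega_I = d\theta_I$ on $\pi^{-1}(S_r^3/\Gamma)$ is $C^1$-close to $d\theta_I'$ for $r$ large; more precisely I would arrange, exactly as in the proof of Lemma~\ref{Lemma ADE contact type}, that $\theta_I$ is chosen asymptotic to $\theta_I'$, so that $\|\theta_I - \theta_I'\|_{C^1} \to 0$ as $r \to \infty$. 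Since $\theta_I' \wedge d\theta_I' > 0$ is an open condition, for $r$ sufficiently large the whole segment $\alpha_t$ stays contact. Each $\alpha_t$ is $S^1$-invariant because it is a convex combination of two $S^1$-invariant forms and the $S^1$-action preserves the averaging.

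Finally, apply Lemma~\ref{Lemma Gray Stability} to the family $\xi_t = \ker\alpha_t$ on $N = S_r^3/\Gamma$: it produces an isotopy $\psi_t$ of $N$ and functions $f_t$ with $\psi_t^*\alpha_t = e^{f_t}\alpha_0$, and because the $S^1$-action preserves every $\alpha_t$, the lemma guarantees $\psi_t$ and $f_t$ are $S^1$-equivariant. At $t=1$ we obtain an $S^1$-equivariant diffeomorphism $\psi_1$ of $S_r^3/\Gamma$ with $\psi_1^*\theta_I = e^{f_1}\theta_I'$, i.e.\ an $S^1$-equivariant contactomorphism from the standard $(S_r^3/\Gamma, \ker\theta_I')$ to $(\pi^{-1}(S_r^3/\Gamma), \ker\theta_I)$; pushing this forward through $\pi$ realizes the claimed deformation of the contact hypersurface inside $(X,\omega_I)$ into a standard $S_r^3/\Gamma$. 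The main obstacle is the uniform contact estimate in the middle step: one must be careful that "asymptotic to $\theta_I'$" can be upgraded to genuine $C^1$-closeness of $\theta_I$ and its differential to $\theta_I'$ and $d\theta_I'$ on the specific hypersurface $\{|(a,b)| = r\}$, using the decay rates in Kronheimer's theorem, so that the open condition $\alpha \wedge d\alpha > 0$ persists along the entire interpolation; everything else is a direct application of the already-established $S^1$-equivariant Gray stability.
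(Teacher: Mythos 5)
Your proposal is correct and ends exactly where the paper does -- with an application of the $S^1$-equivariant Gray stability theorem (Lemma \ref{Lemma Gray Stability}) to a path of $S^1$-invariant contact forms on the fixed manifold $S_r^3/\Gamma$ joining the standard form to the one induced from $(X,\omega_I)$ -- but you produce that path by a genuinely different mechanism. You take the \emph{linear interpolation} $\alpha_t=(1-t)\theta_I'+t\,\theta_I$ and justify that every $\alpha_t$ is contact by a quantitative argument: Kronheimer's $\mathcal{O}(r^{-4})$ decay makes $\theta_I$ relatively $C^1$-close to $\theta_I'$ on the sphere of large radius $r$, and the contact condition $\alpha\wedge d\alpha>0$ is open. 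The paper instead varies the hyperk\"ahler parameter: it considers $X_t=X_{t\zeta_1,0,0}$, which by Theorem \ref{Theorem Kronheimer}.(1) is a continuous family interpolating between $X$ and the flat orbifold $\mathbb{H}/\Gamma$, and pushes forward the equivariant contact forms $\theta_t$ of Lemma \ref{Lemma ADE contact type} via the resolutions $\pi_t$ to get $\alpha_t=(\pi_t)_*\theta_t$ on $S^3_r/\Gamma$. There each $\alpha_t$ is a contact form for free, because it is the restriction of an honest contact form on an ALE hypersurface, so no closeness estimate is needed; the price is that one leans on the structural content of Kronheimer's theorem (continuity of the family and of $\pi_t$ in $\zeta$). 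Your route keeps the complex structure fixed and is more self-contained analytically, but it does carry the burden you yourself flag: one must check that ``asymptotic to $\theta_I'$'' survives the $S^1$-averaging of Theorem \ref{Theorem S1 action on ADE} and gives a \emph{relative} $C^1$ error tending to zero (the forms themselves scale like $r^2$ on $S^3_r$), so that the whole convex segment stays in the open cone of contact forms. That check does go through with the stated decay rates, so I regard your argument as a valid alternative rather than a gap.
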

\begin{proof}
Consider $X_t = X_{t\zeta_1,0,0}$ and denote by $\omega_t$ its
form $\omega_I$, ($0\leq t \leq 1$), and let $\pi_t: X_t \to
X_0=\mathbb{H}/\Gamma$ denote the minimal resolution. By Lemma
\ref{Lemma ADE contact type}, each $X_t$ comes with an
$S^1-$equivariant contact form $\theta_t$ with
$d\theta_t=\omega_t$ and such that $\theta_0$ is the standard
contact form on $S_r^3/\Gamma \subset X_0$. This defines a family
of $S^1-$equivariant contact forms $\alpha_t=(\pi_t)_*\theta_t$ on
$S_r^3/\Gamma$. By Lemma \ref{Lemma Gray Stability} there is an
$S^1-$equivariant isomorphism $(S_r^3/\Gamma,e^{f_1}\alpha_1) \to
(S_r^3/\Gamma,\alpha_0)$. In particular, this proves that $X$
arises by attaching an infinite collar to the manifold
$$\{ (R,x):
R\leq e^{f_1(\pi x)}, x\in \pi^{-1}(S_r^3/\Gamma)\} \subset X$$
along the boundary $\{ (e^{f_1(\pi x)},x)\} \subset X$ which is a
standard contact $S_r^3/\Gamma$.
\end{proof}

%
\subsection{Non-vanishing of the exact symplectic cohomology}
\label{Subsection Non-vanishing of the exact symplectic
cohomology}
%

\begin{theorem} $SH^*(X,\omega_u) \neq 0$ for $u=(0,u_J,u_K) \in S^2$,
indeed $c_*: H^*(X)\otimes \Lambda\to SH^*(X,\omega_u)$ is an
injection.
\end{theorem}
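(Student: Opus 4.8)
The plan is to deduce the statement from the untwisted Viterbo functoriality of Theorem~\ref{Theorem Viterbo Functoriality}, applied simultaneously to \emph{all} the exceptional spheres. Since $u=(0,u_J,u_K)$, the form $\omega_u=u_J\omega_J+u_K\omega_K$ is exact (Lemma~\ref{Lemma ADE contact type}), so $(X,\omega_u)$ is the completion of a Liouville domain. By Theorem~\ref{Theorem Kronheimer}.(\ref{Item holo min resolution}) the exceptional divisors $\Sigma_1,\dots,\Sigma_r$, one for each vertex of the Dynkin diagram, are $I$-holomorphic $2$-spheres, hence by Lemma~\ref{Lemma Lagr submfds in Hyperkahler} Lagrangian for $\omega_J$ and $\omega_K$, and therefore for $\omega_u$; since $H^1(\Sigma_i;\R)=0$ they are in fact exact Lagrangian spheres in $(X,\omega_u)$ (cf.\ Lemma~\ref{Lemma exceptional divisors are Lagr}). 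Weinstein's theorem then gives Liouville embeddings $j_i:(DT^*\Sigma_i,d\theta)\hookrightarrow(X,\omega_u)$.

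Feeding each $j_i$ into Theorem~\ref{Theorem Viterbo Functoriality} with $\alpha=0$ (so $c^*\alpha=0$) produces, for every $i$, a commutative square
$$
\xymatrix{
SH^*(T^*\Sigma_i,d\theta)\cong H_{n-*}(\mathcal{L}\Sigma_i)\otimes\Lambda \ar@{<-}[r]^-{SH^*(j_i)} & SH^*(X,\omega_u)\\
H^*(\Sigma_i)\otimes\Lambda\cong H_{n-*}(\Sigma_i)\otimes\Lambda \ar[u]^-{c_*}\ar@{<-}[r]^-{j_i^*} & H^*(X)\otimes\Lambda \ar[u]_-{c_*}
}
$$
whose left vertical map is induced by the inclusion of constant loops $c:\Sigma_i\hookrightarrow\mathcal{L}\Sigma_i$. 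Because the evaluation map $\mathrm{ev}_0:\mathcal{L}\Sigma_i\to\Sigma_i$ satisfies $\mathrm{ev}_0\circ c=\mathrm{id}_{\Sigma_i}$, the induced $c_*:H_*(\Sigma_i)\to H_*(\mathcal{L}\Sigma_i)$ is a split injection, so the left vertical $c_*$ is injective and stays injective after $\otimes\Lambda$.

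Now I argue as follows. Suppose $a\in H^*(X)\otimes\Lambda$ has $c_*a=0$. Commutativity of the $i$-th square gives $c_*(j_i^*a)=SH^*(j_i)(c_*a)=0$, and injectivity of the left vertical $c_*$ forces $j_i^*a=0$ for every $i$. It remains to see that $\bigoplus_i j_i^*:H^*(X)\otimes\Lambda\to\bigoplus_i H^*(\Sigma_i)\otimes\Lambda$ is injective. The space $X$ deformation retracts onto its exceptional divisor $\bigcup_i\Sigma_i$, a tree of $2$-spheres, hence is homotopy equivalent to $\bigvee_r S^2$; thus $H^*(X)$ is free and concentrated in degrees $0$ and $2$, with $H^0(X)=\Z\cdot1$ and $H^2(X;\Z)\cong\mathrm{Hom}(H_2(X;\Z),\Z)$ (as $H_1(X)=0$). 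In degree $0$, $j_i^*1=1\neq0$. In degree $2$, Theorem~\ref{Theorem Kronheimer}.(\ref{Item Kronheimer H2}) says the $[\Sigma_i]$ form a $\Z$-basis of $H_2(X;\Z)$, and $j_i^*$ on $H^2$ is evaluation against $[\Sigma_i]$; hence $\bigoplus_i j_i^*$ is an isomorphism $H^2(X;\Z)\xrightarrow{\ \sim\ }\Z^r$. Tensoring with $\Lambda$, $\bigoplus_i j_i^*$ is injective, so $a=0$, which proves $c_*:H^*(X)\otimes\Lambda\to SH^*(X,\omega_u)$ injective. In particular $SH^*(X,\omega_u)\neq0$; this last fact alone already follows from Corollary~\ref{Corollary Functoriality trick} applied to a single sphere $\Sigma_i$ with $\beta=0$.

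The step I expect to be the main obstacle is the final one. Each individual $j_i^*$ has a large kernel on $H^2(X)$, so a single exact Lagrangian sphere only detects a rank-one piece of $H^2$ and cannot witness injectivity of $c_*$ on all of $H^*(X)\otimes\Lambda$. The argument must use that the exceptional spheres together span $H_2(X;\Z)$ --- equivalently that the intersection form, the negative Cartan matrix, is nondegenerate --- which is precisely where the hyperk\"ahler geometry of Theorem~\ref{Theorem Kronheimer} enters; everything else is formal, given Viterbo functoriality and the retraction $\mathrm{ev}_0\circ c=\mathrm{id}$.
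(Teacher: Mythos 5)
Your proposal is correct and follows essentially the same route as the paper: apply Viterbo functoriality to each exceptional Lagrangian sphere, use that $c_*$ on the cotangent-bundle side is split injective via evaluation at $0$, and conclude from the fact that the exceptional spheres generate $H_2(X;\Z)$ so that $\bigoplus_i j_i^*$ is injective on $H^*(X)\otimes\Lambda$. Your write-up merely spells out in more detail the final step, which the paper compresses into the remark that ``$i^*$ is the projection to the summands of $H^*(X)$.''
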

\begin{proof}
The exceptional spheres in $X$ are exact by Lemma \ref{Lemma
exceptional divisors are Lagr}. For each such $S^2$ we have a
commuting diagram by Theorem \ref{Theorem Viterbo Functoriality},
using the bundles described in \ref{SubsectionNovikovbundles}:
$$
\xymatrix{ H_{4-*}(\mathcal{L}S^2)\otimes \Lambda\cong
SH^*(T^*S^2,d\theta) \ar@{<-}[r] \ar@{<-}@<-10ex>[d]^-{c_*} &
SH^*(X,\omega_u) \ar@{<-}[d]^-{c_*} \\
H_{4-*}(S^2)\otimes \Lambda \cong H^*(S^2)\otimes \Lambda
\ar@{<-}[r]^-{i^*} & H^{*}(X)\otimes \Lambda }
$$

The left vertical map is induced by the inclusion of constant
loops and it is injective on homology because it has a left
inverse by evaluation at $0$. Since $H^*(X)$ is generated by the
exceptional spheres by Theorem \ref{Theorem Kronheimer}, and $i^*$
is the projection to the summands of $H^*(X)$, the claim follows.
\end{proof}

%
\subsection{Vanishing of the non-exact symplectic cohomology}
\label{Subsection Vanishing of the non-exact symplectic
cohomology}
%
\begin{theorem}\label{Theorem vanishing of nonexact SH of ADE}
$SH^*(X,\omega_I)=0$.
\end{theorem}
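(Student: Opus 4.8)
**Plan for proving $SH^*(X,\omega_I)=0$.**

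The plan is to use the global Hamiltonian $S^1$-action on $(X,\omega_I)$ constructed in Theorem \ref{Theorem S1 action on ADE} as the generating Hamiltonian, after first straightening the boundary to a standard $S_r^3/\Gamma$ via the $S^1$-equivariant contactomorphism of Lemma \ref{Lemma Standard S3 in ADE}. Let $\mu$ be a moment map for this $S^1$-action, so that $X_\mu$ generates the circle action. At infinity the action is $(a,b)\mapsto (e^{2\pi i t}a, e^{2\pi i t}b)$ on $\C^2/\Gamma$, whose moment map grows like $R=$ (radius)$^2$, so $\mu$ is (up to normalization) a function of $R$ that is linear in $R$ at infinity: $\mu$ is an admissible Hamiltonian of the type used to define $SH^*$. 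The key point is that its $1$-periodic orbits are exactly the fixed points of the $S^1$-action together with the closed Reeb orbits on the contact hypersurface that are traversed an integer number of times — and on $\C^2/\Gamma$ the Reeb flow of the straightened contact form \emph{is} the circle action, so every point lies on a periodic Reeb orbit.

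First I would compute the Conley--Zehnder index of the $1$-periodic orbits of the accelerated Hamiltonian $\mu_k = k\mu$ (or, equivalently, reparametrize the $S^1$-action to speed it up by a factor $k$). Because the canonical bundle is trivialized by $\omega_J+i\omega_K$ (see \ref{Subsection Hyperkahler manifolds}), the grading is a genuine $\Z$-grading. The linearized flow along an orbit that wraps $k$ times is essentially $e^{2\pi i k t}$ acting on $\C^2$ (on the contact distribution plus Reeb directions), so its Maslov index grows linearly in $k$; by the formula $|\gamma| = \dim(X)/2 - \mu(\gamma) = 2-\mu(\gamma)$, the Conley--Zehnder index of every generator of $SC^*(\mu_k)$ tends to $-\infty$ as $k\to\infty$, uniformly. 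Here I must be careful about the non-isolated orbit families: the fixed locus and the Reeb orbits come in $S^1$- or $S^3$-families, and by \ref{Subsection Maslov index and Conley-Zehnder index} a small time-dependent perturbation splits each family into isolated orbits with indices shifted by at most $\pm\dim(S^3)/2 = \pm 3/2$, which is a bounded correction and does not affect the divergence.

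The conclusion then follows from the direct-limit definition of symplectic cohomology: $SH^*(X,\omega_I)=\varinjlim SH^*(\mu_k)$ over continuation maps as $k\to\infty$. Fix any degree $d$. For $k$ large enough, $SC^*(\mu_k)$ has no generators in degree $d$ (all Conley--Zehnder indices are below $d$), hence $SH^d(\mu_k)=0$ for all large $k$, and since the direct limit of an eventually-zero system is zero, $SH^d(X,\omega_I)=0$; as $d$ was arbitrary, $SH^*(X,\omega_I)=0$. \textbf{The main obstacle} is making the index computation rigorous: one must verify that the straightened contact form on $S_r^3/\Gamma$ really has the $S^1$-action as its Reeb flow (so that all $1$-periodic orbits of $\mu$ outside a compact set are accounted for and none escapes the index estimate), and one must control the Maslov index contribution of the Reeb orbits in the quotient $\C^2/\Gamma$ rather than in $\C^2$ itself — the finite group $\Gamma\subset SU(2)$ acts freely on $S^3$ but the orbifold structure near the exceptional divisor requires using the trivialization of $\mathcal{K}$ coming from $\omega_J+i\omega_K$ to get the indices right, and checking that the perturbation needed for transversality shifts indices only by a bounded amount independent of $k$.
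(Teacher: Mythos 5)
Your proposal is correct and follows essentially the same route as the paper: straighten the boundary to a standard $S^3/\Gamma$ via the equivariant Gray stability argument, take the Hamiltonian generating the $S^1$-action (linear of slope proportional to $k$ at infinity after acceleration), show via the trivialization of $\mathcal{K}$ by $\omega_J+i\omega_K$ that the Maslov indices of all generators diverge with $k$, and conclude vanishing from the direct limit. The only minor difference is that the paper sidesteps the collar orbits entirely by choosing $k$ generic so that no $1$-periodic orbits exist outside $\pi^{-1}(0)$, rather than perturbing the $S^1$- and $S^3$-families of Reeb orbits as you suggest.
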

\begin{proof}
By Theorem \ref{Theorem Invariance under Contactomorphs} the
symplectic cohomology changes by an isomorphism if we choose a
different contact hypersurface in the collar. By Lemma \ref{Lemma
Standard S3 in ADE}, we changed the hypersurface by an
$S^1-$equivariant contactomorphism so that the collar of $X$
(after metric rescaling) can be assumed to be the standard
$S^3/\Gamma \times [1,\infty)$ with $S^1-$action $(a,b)\mapsto
(\lambda a, \lambda b)$. The symplectic $S^1-$action
$\varphi_{\lambda}$ on $(X,\omega_I)$ defines a vector field
$$
X_{\varphi}(x) = \left.\frac{\partial}{\partial t}\right|_{t=0}
\varphi_{e^{2\pi i t}}(x).
$$
By Cartan's formula $0=\partial_{\lambda}\varphi_{\lambda}^*\omega
= \varphi_{\lambda}^* \mathcal{L}_{X_{\varphi}} \omega =
di_{X_{\varphi}}\omega$. Thus, since $H^1(X;\R)=0$, we obtain a
Hamiltonian via $i_{X_{\varphi}} \omega = -dH_{\varphi}$.
Moreover, accelerating the flow by a factor $k$, we obtain an
$S^1-$action $\varphi_{k\lambda}$ with Hamiltonian $H_k=
kH_{\varphi}$. On the collar, $H_k(a,b)=k \pi (|a|^2+|b|^2)$ and
since $R=|a|^2+|b|^2$, the Hamiltonian is linear at infinity:
$h_k(R)=k\pi R$.

The $1-$periodic orbits of $H_k$ either lie in $\pi^{-1}(0)$ or
come from lifts of nonconstant $1-$periodic orbits on
$\C^2/\Gamma$ for the flow $(a,b)\mapsto (\lambda^k a,\lambda^k
b)$. But for generic $k$, there are no $1-$periodic orbits of
$H_k$ on $\C^2/\Gamma$ except for $0$. So we reduce to calculating
the Maslov indices of $1-$periodic orbits in $\pi^{-1}(0)$.

Since the flow $\varphi_{\lambda}$ is holomorphic, the
linearization over a periodic orbit will be a loop of unitary
transformations. Its Maslov index can therefore be calculated as
the winding number of the determinant of the linearization in the
trivialization $\C\cdot (\omega_J + i \omega_K)$ of the canonical
bundle. Since
$$
\varphi_{\lambda}^*\omega_J(V,W) = g(J\varphi_{\lambda *} V,
\varphi_{\lambda *} W) = g(J \lambda V, \lambda W) = \lambda^2
\omega_J(V,W),
$$
and similarly for $K$, we deduce that $\varphi_{\lambda}$ acts on
the canonical bundle by rotation by $\lambda^2$. Therefore the
Maslov index increases by $2$ for each full rotation of $\lambda$.

We deduce that the Maslov indices for $H_k$ grow to infinity as
$k\to \infty$. Therefore the generators of
$SH^*(H_{k+N},\omega_I)$ have arbitrarily negative Conley-Zehnder
indices as $N\to \infty$, and so the image of $SH^m(H_k,\omega_I)$
under the continuation map
$$SH^m(H_k,\omega_I)\to SH^m(H_{k+N},\omega_I)$$
vanishes for large $N$. Thus the direct limit $SH^m(X,\omega_I)=0$
for all $m$.
\end{proof}

\begin{corollary}
\label{Corollary vanishing of generic SH of ADE plumbings} Let $X$
be an ALE space. Given a generic class in $H^2(X;\R)$, it is
possible to choose a symplectic form $\omega$ on $X$ representing
that class such that
$$
SH^*(X,\omega) =0.$$
Here, genericity refers to choosing $[\omega]$ in the complement
of certain finitely many hyperplanes in $H^2(X;\R)$.
\end{corollary}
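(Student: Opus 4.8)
The plan is to reduce the statement to Theorem~\ref{Theorem vanishing of nonexact SH of ADE} by realizing the given ALE space as a Kronheimer quotient $X_{\zeta_1,0,0}$ whose hyperk\"ahler form $\omega_I$ represents the prescribed class; the only genuine input is then the $S^1$-symmetry argument already carried out in Theorem~\ref{Theorem vanishing of nonexact SH of ADE}, and what remains is keeping track of what ``generic'' means.

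First I would note that the generic locus $U=(\R^3\otimes Z)\setminus\bigcup_\theta(\R^3\otimes D_\theta)$ is path-connected, since each $\R^3\otimes D_\theta$ has real codimension $3$ in the vector space $\R^3\otimes Z$ and there are only finitely many roots $\theta$. By Theorem~\ref{Theorem Kronheimer}.(1) the manifolds $\{X_\zeta\}_{\zeta\in U}$ form a continuous family, hence are mutually diffeomorphic, and by Theorem~\ref{Theorem Kronheimer}.(8) the given ALE space $X$ is isomorphic to $X_{\zeta'}$ for some $\zeta'\in U$; therefore $X$ is diffeomorphic to $X_{\zeta_1,0,0}$ for \emph{every} regular $\zeta_1\in Z$. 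Fixing such diffeomorphisms and composing with the isomorphism $H^2(X_\zeta;\R)\cong Z$ of Theorem~\ref{Theorem Kronheimer}.(\ref{Item Kronheimer thm forms}) identifies $H^2(X;\R)$ with $Z\cong\mathfrak{h}_{\R}$, and under this identification the ``finitely many hyperplanes'' of the statement are precisely the walls $D_\theta$.

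Given a generic class $c\in H^2(X;\R)$, let $\zeta_1\in Z$ be the corresponding element, so $\theta(\zeta_1)\neq 0$ for all roots $\theta$. Then $\zeta=(\zeta_1,0,0)$ avoids every $\R^3\otimes D_\theta$, hence is generic in Kronheimer's sense, so $X_{\zeta_1,0,0}$ is a smooth ALE space with $[\omega_I]=\zeta_1$ corresponding to $c$. By Lemma~\ref{Lemma ADE contact type}, $(X_{\zeta_1,0,0},\omega_I)$ is a symplectic manifold with contact type boundary $\pi^{-1}(S_r^3/\Gamma)$ and an infinite collar attached, so its symplectic cohomology is defined, and Theorem~\ref{Theorem vanishing of nonexact SH of ADE} gives $SH^*(X_{\zeta_1,0,0},\omega_I)=0$. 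Pulling $\omega_I$ (and a collar) back to $X$ along the fixed diffeomorphism produces a symplectic form $\omega$ on $X$ with $[\omega]=c$; since the diffeomorphism is then a symplectomorphism carrying one contact hypersurface to the other, $SH^*(X,\omega)\cong SH^*(X_{\zeta_1,0,0},\omega_I)=0$, as required.

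The main point requiring care --- and essentially the only one --- is the passage from ``$c$ generic in $H^2(X;\R)$'' to ``$\zeta=(\zeta_1,0,0)$ generic in $\R^3\otimes Z$'': one must make sure that setting the last two hyperk\"ahler parameters to zero does not throw $\zeta$ onto a wall. This is immediate, because with $\zeta_2=\zeta_3=0$ the condition $\zeta\in\R^3\otimes D_\theta$ reduces to $\theta(\zeta_1)=0$, so the regular elements $\zeta_1\in Z$ are in bijection with the generic $\zeta$ of the form $(\zeta_1,0,0)$. Everything else --- well-definedness of $SH^*$ for the non-exact form $\omega_I$, and the fact that $[\omega_I]$ ranges over all of $H^2(X;\R)$ as $\zeta_1$ varies --- is supplied directly by Lemma~\ref{Lemma ADE contact type} and Theorem~\ref{Theorem Kronheimer}.(\ref{Item Kronheimer thm forms}).
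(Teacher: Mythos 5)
Your proposal is correct and follows essentially the same route as the paper: realize the target cohomology class as $[\omega_I]$ for a Kronheimer quotient $X_{\zeta_1,0,0}$ via a diffeomorphism of ALE spaces, then invoke Theorem~\ref{Theorem vanishing of nonexact SH of ADE}. You are somewhat more explicit than the paper on two points the paper leaves implicit --- path-connectedness of the generic locus as the reason the $X_\zeta$ are mutually diffeomorphic, and the observation that genericity of $\zeta_1\in Z$ is equivalent to genericity of $(\zeta_1,0,0)\in\R^3\otimes Z$ --- but the underlying argument is the same.
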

\begin{proof}
All the $X_{\zeta_1,0,0}$ for generic $\zeta_1$ are diffeomorphic
(Theorem \ref{Theorem Kronheimer}.1). We fix one such choice
$X=X_{a,0,0}$, and we consider the family of forms $\omega_I$
induced on $X$ by pull-back from $X_{\zeta_1,0,0}$ via the
diffeomorphism $X\cong X_{\zeta_1,0,0}$. By Lemma \ref{Lemma
exceptional divisors are Lagr}, $[\omega_I]$ will range over all
generic choices in $H^2(X;\R)$ (genericity of $\omega_I$
corresponds to the genericity of $\zeta_1$). The result now
follows from Theorem \ref{Theorem vanishing of nonexact SH of
ADE}.
\end{proof}

%
\subsection{Vanishing of the twisted symplectic cohomology}
\label{Subsection Vanishing of the twisted symplectic cohomology}
%
\begin{lemma} \label{Lemma SH infinitesimal same as nonexact ADE}
The non-compactly supported deformation from $\omega_J$ to
$\omega_I$ can be made to satisfy Theorem \ref{Theorem Lim of tau
twisting maps}, thus $
SH^*(X,\omega_J;\underline{\Lambda}_{\tau\omega_I}) \cong
SH^*(X,\omega_I).$
\end{lemma}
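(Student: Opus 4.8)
The plan is to reduce to the deformation Theorem~\ref{Theorem Lim of tau twisting maps}, which demands a \emph{compactly supported} perturbation, by trading the non-compactly-supported difference $\omega_I-\omega_J$ for a compactly supported two-form via a Gray-stability argument at infinity (the $S^1$-equivariance of Lemma~\ref{Lemma Gray Stability} is not needed here). First I would take the path $\omega_s=(1-s)\omega_J+s\omega_I$, $s\in[0,1]$: up to a positive scalar this is a K\"ahler form $\omega_u$ with $u\in S^2$, so every $\omega_s$ is symplectic; it is exact near the hypersurface $N=\pi^{-1}(S_r^3/\Gamma)$ (both $\omega_J$ and $\omega_I$ being so by Lemma~\ref{Lemma ADE contact type}), and its Liouville primitive $\theta_s$ there restricts to a contact form $\alpha_s=\theta_s|_N$ depending smoothly on $s$, with symplectization collar $d(R\alpha_s)$.

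Next I would apply Gray stability (Lemma~\ref{Lemma Gray Stability}) to get an isotopy $\psi_s$ of $N$ with $\psi_0=\mathrm{id}$ and functions $f_s$, $f_0=0$, such that $\psi_s^*\alpha_s=e^{f_s}\alpha_0$. As in \ref{Section Invariance under contactomorphisms}, $\Phi_s(R,y)=(Re^{-f_s(y)},\psi_s(y))$ is then a symplectomorphism of cones from $d(R\alpha_0)$ to $d(R\alpha_s)$ of contact type at infinity. I would extend $\Phi_s$ to a diffeomorphism $\widehat\Phi_s$ of $X$ equal to $\Phi_s$ on a neighbourhood of infinity, equal to the identity on the compact core, and isotopic to the identity, and set $\widetilde\omega_s:=\widehat\Phi_s^{\,*}\omega_s$. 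Then $\widetilde\omega_s$ is symplectic, equals $\omega_s$ on the core, and equals $d(R\alpha_0)=\omega_J$ near infinity; hence $\beta:=\widetilde\omega_1-\omega_J$ is closed and compactly supported in the interior of $X$, with $[\beta]=[\widetilde\omega_1]=[\omega_I]$ in $H^2(X;\R)$ (as $\widehat\Phi_1$ is isotopic to the identity and $\omega_J$ is exact by Lemma~\ref{Lemma exceptional divisors are Lagr}).

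It then remains to check that $\omega_J+s\beta$ is symplectic for $0\le s\le1$, the last hypothesis of Theorem~\ref{Theorem Lim of tau twisting maps}: on the core it equals $(1-s)\omega_J+s\omega_I$, a positive multiple of a K\"ahler form $\omega_u$; near infinity it equals $\omega_J$; and on the transition shell it is a convex combination of $\omega_J$ with a form close to $\omega_J$ and to $\omega_I$, whose non-degeneracy I would secure by placing the shell far out in the collar, where $\omega_I$, $\omega_J$ and hence the Gray data $(f_s,\psi_s)$ and the cutoffs $\widehat\Phi_s$ are arbitrarily close to their explicit values on $\C^2/\Gamma=\mathbb{H}/\Gamma$, so that $\omega_J\wedge(\omega_J+s\beta)>0$ there. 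Granting this, Theorem~\ref{Theorem Lim of tau twisting maps} gives $SH^*(X,\omega_J+\beta)\cong SH^*(X,\omega_J;\underline{\Lambda}_{\tau\beta})$, Theorem~\ref{Theorem Invariance under Contactomorphs} applied to the contact-type-at-infinity symplectomorphism $\widehat\Phi_1:(X,\omega_J+\beta)=(X,\widetilde\omega_1)\to(X,\omega_1)=(X,\omega_I)$ gives $SH^*(X,\omega_J+\beta)\cong SH^*(X,\omega_I)$, and since $\tau$ factors through $H^2(X;\R)$ and $[\beta]=[\omega_I]$ we have $\underline{\Lambda}_{\tau\beta}=\underline{\Lambda}_{\tau\omega_I}$; composing these yields the claim.

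I expect the main obstacle to be exactly that last verification --- quantifying the asymptotic closeness of $\omega_I$ and $\omega_J$ to the model cone and choosing the cutoff region for $\widehat\Phi_s$ compatibly with $s$ so that the linear path $\omega_J+s\beta$ stays symplectic across the transition shell for all $s$ simultaneously. The remainder is a formal combination of Gray stability, the contact-type invariance Theorem~\ref{Theorem Invariance under Contactomorphs}, and the deformation Theorem~\ref{Theorem Lim of tau twisting maps}.
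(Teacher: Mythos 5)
Your strategy is genuinely different from the paper's, and the step you yourself flag as the ``main obstacle'' is a real gap rather than a routine verification. You construct a single compactly supported $\beta$ in the class $[\omega_I]$ by cutting off the Gray isotopy $\Phi_1$ over a transition shell, and must then check that the \emph{linear} path $\omega_J+s\beta$, $s\in[0,1]$, is symplectic. Placing the shell far out in the collar does not, by itself, secure this: on the model $\mathbb{H}/\Gamma$ the forms $\omega_I$ and $\omega_J$ are not close to one another, the Gray contactomorphism is a nontrivial quaternionic rotation rather than something near the identity, and in dimension four a convex combination of two cohomologous symplectic forms inducing the same orientation can degenerate (the intersection form on $\Lambda^2$ is indefinite of signature $(3,3)$). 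What actually saves the model computation is that if the cutoff takes the shape $\widehat\Phi_1(R,y)=(R,\rho_{\chi(R)}y)$ for a path of quaternionic rotations $\rho_t$ with $\rho_1^*\omega_I=\omega_J$, then $\widehat\Phi_1^*\omega_I$ agrees to leading order with a K\"ahler form $\omega_{u(R)}$ in the hyperk\"ahler $S^2$, plus a correction of size $O(\chi')$ coming from the radial dependence; a convex combination $(1-s)\omega_J+s\,\omega_u$ is again a positive multiple of some $\omega_{u'}$, hence symplectic, and a sufficiently long, gradual shell keeps the $O(\chi')$ error harmless. You would need to carry out this model computation and then invoke the $O(r^{-4})$ ALE decay to transplant it to $X$; without it the proposal does not close.

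The paper's proof is organised specifically to bypass this estimate. It works first with $\omega_\varepsilon=\omega_J+\varepsilon\omega_I$ for small $\varepsilon$: after changing the contact hypersurface via Gray stability and Theorem \ref{Theorem Invariance under Contactomorphs}, the resulting compactly supported correction $\beta_\varepsilon=\omega'_\varepsilon-\omega_J$ is itself $O(\varepsilon)$, so the linear-path hypothesis of Theorem \ref{Theorem Lim of tau twisting maps} holds automatically, giving $SH^*(X,\omega_J;\underline{\Lambda}_{\varepsilon\tau\omega_I})\cong SH^*(X,\omega'_\varepsilon)$. It then rescales $t\mapsto t^{1/\varepsilon}$ (Lemma \ref{Lemma Metric rescaling}), replacing the twist by $\tau\omega_I$ and the form by $\omega'_\varepsilon/\varepsilon$, which is cohomologous to $\omega_I$; a final Gray-plus-Moser step (Lemmas \ref{Lemma Gray Stability} and \ref{Lemma Moser deformation}) identifies $SH^*(X,\omega'_\varepsilon/\varepsilon)\cong SH^*(X,\omega_I)$. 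The ``shrink, then rescale'' device is exactly what trades your quantitative transversality of the interpolation for a soft smallness argument; your route avoids the rescaling but at the price of an analytic verification that you have not supplied. You are, however, right that the $S^1$-equivariance in Lemma \ref{Lemma Gray Stability} plays no role in either version of this argument.
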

\begin{proof}
Let $\omega_{\varepsilon}=\omega_J+\varepsilon \omega_I$. By the
proof of Lemma \ref{Lemma ADE contact type}, we can find a family
of contact forms $\theta_{\varepsilon}|_S$ on
$S=\pi^{-1}(S_r^3/\Gamma)$ with
$d\theta_{\varepsilon}=\omega_{\varepsilon}$. By Gray's stability
theorem, there is a family of contactomorphisms
$\psi_{\varepsilon}:S\to S$ such that
$\psi_{\varepsilon}^*(\theta_{0}|_S) =
e^{f_{\varepsilon}}\theta_{\varepsilon}|_S$. As we deform
$\omega_0$ to $\omega_{\varepsilon}$ we simultaneously change the
hypersurface in $X$ by
$$
S \to X, (R,x)\mapsto
(e^{-f_{\varepsilon}(R,x)}R,\psi_{\varepsilon}(R,x)),
$$
so that on the collar determined by this hypersurface the one-form
is $\theta_0$ instead of $\theta_{\varepsilon}$. This change of
hypersurface will change the symplectic cohomology by an
isomorphism (Theorem \ref{Theorem Invariance under
Contactomorphs}). The ``interior part" of $X$ has changed by a
diffeomorphism, and we have reduced the setup to the case where we
deform $\omega_0$ to a form $\omega_{\varepsilon}'$ which is
cohomologous to $\omega_{\varepsilon}$ but which equals
$d\theta_0$ on the collar.

Now it is possible to make a compactly supported deformation from
$\omega_J$ to $\omega_{\varepsilon}'$ and, for small
$\varepsilon$, Theorem \ref{Theorem Lim of tau twisting maps}
implies that
$ SH^*(X,\omega_J;\underline{\Lambda}_{\varepsilon\tau\omega_I})
\cong SH^*(X,\omega_{\varepsilon}')$. Rescale by $1/\varepsilon$
via $t \mapsto t^{1/\varepsilon}$ to deduce that
$$
SH^*(X,\omega_J;\underline{\Lambda}_{\tau\omega_I}) \cong
SH^*(X,\omega_{\varepsilon}'/{\varepsilon}).
$$
Now $\omega_{\varepsilon}'/{\varepsilon}$ is cohomologous to
$\omega_I$. By applying Gray's theorem as above, we can change
$\omega_{\varepsilon}'/{\varepsilon}$ within its cohomology class
so that on the collar it becomes equal to $\omega_I$. Finally we
apply a compactly-supported Moser symplectomorphism as in Lemma
\ref{Lemma Moser deformation} to deform the form to $\omega_I$ on
all of $X$. Hence
$$
SH^*(X,\omega_{\varepsilon}'/{\varepsilon}) \cong
SH^*(X,\omega_I).\qedhere
$$
\end{proof}

\begin{theorem}
Let $X$ be an ALE space and let $d\theta$ denote any non-zero
linear combination of $\omega_J$ and $\omega_K$. For any generic
closed two-form $\beta$ on $X$,
$$
SH^*(X,d\theta;\Lambda_{\tau\beta}) =0.
$$
Again, generic is understood in the sense of Corollary
\ref{Corollary vanishing of generic SH of ADE plumbings}.
\end{theorem}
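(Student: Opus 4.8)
The plan is to deduce this from Lemma \ref{Lemma SH infinitesimal same as nonexact ADE} together with Theorem \ref{Theorem vanishing of nonexact SH of ADE}, by arranging that the twisting cocycle $\tau\beta$ becomes, up to isomorphism, the cocycle $\tau\omega_I$ for a suitably chosen hyperk\"ahler structure on $X$.

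First I would normalise $d\theta$. Write $d\theta = u_J\omega_J + u_K\omega_K$ with $(u_J,u_K)\neq(0,0)$; rescaling the symplectic form only rescales the Novikov ring by the isomorphism $t\mapsto t^c$, so we may assume $u_J^2+u_K^2=1$. The rotation of $\R^3$ fixing the $I$-axis and sending $(0,u_J,u_K)$ to $(0,1,0)$ induces a hyperk\"ahler rotation of $(X,g)$: it fixes $\omega_I$ and its cohomology class, acts at infinity as a linear symmetry of the model $\mathbb{H}/\Gamma$ (hence preserves the contact-type structure and the completion there), and carries $d\theta$ to $\omega_J$, where $\omega_J$ is now the K\"ahler form of a hyperk\"ahler structure on $X$ for which $X$ is exact with contact-type boundary. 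So it suffices to show $SH^*(X,\omega_J;\underline{\Lambda}_{\tau\beta})=0$ for generic $\beta$.

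By Lemma \ref{Lemma Moser deformation twisted} and the naturality of $\tau$, the group $SH^*(X,\omega_J;\underline{\Lambda}_{\tau\beta})$ depends only on $[\beta]\in H^2(X;\R)$ up to isomorphism. As in the setup of Corollary \ref{Corollary vanishing of generic SH of ADE plumbings}, I would fix the smooth manifold $X=X_{a,0,0}$ and, for each generic $\zeta_1$, transport the hyperk\"ahler structure of $X_{\zeta_1,0,0}$ to $X$ via the diffeomorphism $X\cong X_{\zeta_1,0,0}$ of Theorem \ref{Theorem Kronheimer}.1 (using the Weyl action of Theorem \ref{Theorem Kronheimer}.7 to pass between chambers). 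This gives a family of hyperk\"ahler structures on $X$ whose forms $\omega_J^{(\zeta_1)}$ are exact symplectic forms with contact-type boundary, all cohomologous to $0$ and joined to $\omega_J$ by paths of such forms, and whose forms $\omega_I^{(\zeta_1)}$ represent classes ranging over all generic classes in $H^2(X;\R)$ as $\zeta_1$ varies (Lemma \ref{Lemma exceptional divisors are Lagr}). Given generic $\beta$, choose $\zeta_1$ with $[\omega_I^{(\zeta_1)}]=[\beta]$. Then, using the twisted forms of Lemma \ref{Lemma Moser deformation} and Theorem \ref{Theorem Invariance under Contactomorphs} to replace $\omega_J$ by $\omega_J^{(\zeta_1)}$, and Lemma \ref{Lemma Moser deformation twisted} to replace $\tau\beta$ by the cohomologous cocycle $\tau\omega_I^{(\zeta_1)}$, one gets $SH^*(X,\omega_J;\underline{\Lambda}_{\tau\beta})\cong SH^*(X,\omega_J^{(\zeta_1)};\underline{\Lambda}_{\tau\omega_I^{(\zeta_1)}})$. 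Now Lemma \ref{Lemma SH infinitesimal same as nonexact ADE}, applied to the hyperk\"ahler structure $(\cdot)^{(\zeta_1)}$, gives $SH^*(X,\omega_J^{(\zeta_1)};\underline{\Lambda}_{\tau\omega_I^{(\zeta_1)}})\cong SH^*(X,\omega_I^{(\zeta_1)})$, which is $0$ by Theorem \ref{Theorem vanishing of nonexact SH of ADE} since $\zeta_1$ is generic.

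The main obstacle is the non-compactness of all the symplectic deformations involved: passing from $\omega_J$ to $\omega_J^{(\zeta_1)}$, and then the deformation from $\omega_J^{(\zeta_1)}$ to $\omega_I^{(\zeta_1)}$ inside Lemma \ref{Lemma SH infinitesimal same as nonexact ADE}, are not compactly supported, so neither Lemma \ref{Lemma Moser deformation} nor Theorem \ref{Theorem Lim of tau twisting maps} applies on the nose; one must, exactly as in the proof of Lemma \ref{Lemma SH infinitesimal same as nonexact ADE}, first use Gray's stability theorem to standardise the contact hypersurface at infinity (Lemma \ref{Lemma ADE contact type}), invoke Theorem \ref{Theorem Invariance under Contactomorphs} to reduce to a compactly supported deformation, then apply Theorem \ref{Theorem Lim of tau twisting maps} and a Novikov rescaling $t\mapsto t^{1/\varepsilon}$. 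The remaining work is the bookkeeping that tracks how the rescaling, the hyperk\"ahler rotation, the Weyl group, and the Kronheimer-family diffeomorphisms act on cohomology classes and on the transgressed cocycle $\tau\beta$, so that one indeed lands on the generic class one started with.
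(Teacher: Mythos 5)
Your proposal is correct and follows essentially the same route as the paper: reduce to $SH^*(X,\omega_I)=0$ (Theorem \ref{Theorem vanishing of nonexact SH of ADE}) via the deformation isomorphism of Lemma \ref{Lemma SH infinitesimal same as nonexact ADE}, using genericity to arrange $[\omega_I]=[\beta]$ and Lemma \ref{Lemma Moser deformation twisted} to see that the local system depends only on $[\beta]$. Your hyperk\"ahler rotation about the $I$-axis is a clean, explicit justification of the step the paper dispatches with ``the proof of Lemma \ref{Lemma SH infinitesimal same as nonexact ADE} can easily be adapted to $d\theta=c\,\omega_u$,'' and your attention to the non-compactness bookkeeping when varying $\zeta_1$ is more careful than, but consistent with, the paper's appeal to the proof of Corollary \ref{Corollary vanishing of generic SH of ADE plumbings}.
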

\begin{proof} By the proof of
Corollary \ref{Corollary vanishing of generic SH of ADE plumbings}
we may assume that $\omega_I$ represents $[\beta]$. In particular,
$SH^*(X,\omega_I)=0$. Note that a non-zero linear combination of
$\omega_J$ and $\omega_K$ is of the form $c\, \omega_u$ for some
$c>0$ and $u=(0,u_J,u_K)\in S^2$, and is therefore exact. The
proof of Lemma \ref{Lemma SH infinitesimal same as nonexact ADE}
can easily be adapted to $d\theta=c\,\omega_u$, so
$SH^*(X,d\theta;\underline{\Lambda}_{\tau \omega_I})=0$. By Lemma
\ref{Lemma Moser deformation twisted}, $\Lambda_{\tau \omega_I}$
only depends on the cohomology class of $\omega_I$ up to
isomorphism, therefore $SH^*(X,d\theta;\Lambda_{\tau\beta})=0$.
\end{proof}
%
\subsection{Exact Lagrangians in ALE spaces}
\label{Subsection Exact Lagrangians in ADE spaces}
%
%
\begin{theorem}
Let $X$ be an ALE space. Then any exact Lagrangian submanifold $j:
L \hookrightarrow (X,\omega_J)$ must be a sphere, in particular
$L$ cannot be unorientable. This result also holds if we replace
$\omega_J$ by any non-zero combination of $\omega_J$ and
$\omega_K$.
\end{theorem}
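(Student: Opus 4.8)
The plan is to combine the twisted Viterbo functoriality of Corollary \ref{Corollary Functoriality trick} (and its $\Z_2$-coefficient variant in Remark \ref{Remark Unorientable exact Lagrangians}) with the vanishing $SH^*(X,d\theta;\underline{\Lambda}_{\tau\beta})=0$ established in the preceding Theorem. Write $d\theta=c\,\omega_u$ with $c>0$ and $u=(0,u_J,u_K)\in S^2$; this is exact by Lemma \ref{Lemma exceptional divisors are Lagr}, and by Lemma \ref{Lemma ADE contact type} $(X,d\theta)$ is a Liouville domain. Let $j:L\hookrightarrow (X,d\theta)$ be an exact Lagrangian. Since $\dim X=4$, $L$ is a closed surface, and we may assume $L$ is connected (a component of an exact Lagrangian is again exact). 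Suppose, for contradiction, that $L$ is not a sphere. Then $L$ is either orientable of genus $\geq 1$, or non-orientable.

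The first step is to observe that in either case $\tau(j^*\beta)=0\in H^1(\mathcal{L}L;\R)$ for \emph{every} closed two-form $\beta$ on $X$. By naturality of the transgression, $\tau(j^*\beta)=(\mathcal{L}j)^*(\tau\beta)$, and under $H^1(\mathcal{L}L;\R)\cong\mathrm{Hom}(\pi_1(\mathcal{L}L),\R)$ with $\pi_1(\mathcal{L}L)=\pi_2(L)\rtimes\pi_1(L)$, the class $\tau(j^*\beta)$ is the homomorphism supported on the $\pi_2(L)$-factor given by pairing $j^*\beta$ with the image of the Hurewicz class. If $L$ is orientable of genus $\geq 1$ then $\pi_2(L)=0$, so this homomorphism is zero; if $L$ is non-orientable then $H^2(L;\R)=0$, so already $j^*\beta=0$ in $H^2(L;\R)$ and the pairing vanishes. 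Either way $\tau(j^*\beta)=0$.

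Next I would fix a generic $\beta$ in the sense of Corollary \ref{Corollary vanishing of generic SH of ADE plumbings} — for instance $\beta=\omega_I$ for a suitable parameter $\zeta_1$, so that $[\beta]\in H^2(X;\R)$ is a generic class — and invoke the preceding Theorem to get $SH^*(X,d\theta;\underline{\Lambda}_{\tau\beta})=0$. On the other hand, by Weinstein's theorem $L$ determines a Liouville embedding $j:(DT^*L,d\theta)\hookrightarrow(X,d\theta)$, and since $\tau(j^*\beta)=0$ the relevant Novikov bundle pulls back trivially. In the orientable case Corollary \ref{Corollary Functoriality trick} then yields a commutative square in which $c_*(j^*1)=c_*1\neq 0$ survives in $SH^*(X,d\theta;\underline{\Lambda}_{\tau\beta})$, contradicting its vanishing. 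In the non-orientable case I would run the identical argument with $\Z_2$-coefficients as in Remark \ref{Remark Unorientable exact Lagrangians}: the vanishing argument of Theorem \ref{Theorem vanishing of nonexact SH of ADE} is insensitive to the coefficient ring — it only uses that the Conley--Zehnder indices of the generators of $SH^*(H_k,\omega_I)$ tend to $-\infty$ — so it runs verbatim over $\Z_2(\!(t)\!)$, and the $\Z_2$-version of the functoriality diagram again forces $c_*1\neq 0$. Hence $L$ must be a sphere; in particular no exact Lagrangian can be non-orientable. Since the preceding Theorem is already stated for an arbitrary non-zero combination $d\theta=c\,\omega_u$ of $\omega_J$ and $\omega_K$, the conclusion holds in that generality.

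The only real subtlety is in the first step: one must ensure that whichever generic $\beta$ is needed to make $SH^*(X,d\theta;\underline{\Lambda}_{\tau\beta})$ vanish is \emph{simultaneously} transgression-trivial on $L$. This is automatic here precisely because ``$L$ is not a sphere'' makes $(\mathcal{L}j)^*\circ\tau$ the zero map on all of $H^2(X;\R)$, so the two genericity requirements never conflict; the sphere case is exactly the one where $\pi_2(L)$ (hence $\tau(j^*\beta)$) can be non-zero and the obstruction genuinely disappears, which is why spheres are not — and should not be — excluded.
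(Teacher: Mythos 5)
Your proposal is correct and follows essentially the same route as the paper: you combine the vanishing of $SH^*(X,d\theta;\underline{\Lambda}_{\tau\beta})$ for generic $\beta$ with Corollary \ref{Corollary Functoriality trick}, observe that for any non-sphere surface $L$ the pullback transgression $\tau(j^*\beta)$ vanishes (using $\pi_2(L)=0$ in the orientable case and $H^2(L;\R)=0$ in the unorientable case), and derive a contradiction, switching to $\Z_2(\!(t)\!)$ coefficients for the unorientable case as in Remark \ref{Remark Unorientable exact Lagrangians}. Your explicit remark that the vanishing argument of Theorem \ref{Theorem vanishing of nonexact SH of ADE} is coefficient-insensitive is a small but worthwhile clarification that the paper leaves implicit.
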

\begin{proof}
Since $SH^*(X,\omega_J;\underline{\Lambda}_{\tau\omega_I})=0$,
Corollary \ref{Corollary Functoriality trick} implies that the
transgression $\tau(j^*[\omega_I])$ cannot vanish. But for
orientable $L$ which are not spheres all transgressions must
vanish since $\pi_2(L)=0$. Therefore the only allowable orientable
exact Lagrangians are spheres. The unorientable case follows by
Remark \ref{Remark Unorientable exact Lagrangians}.
\end{proof}
\begin{corollary}
Let $(Y,d\theta)$ be the plumbing of copies of $T^*S^2$ as
prescribed by any $ADE$ Dynkin diagram. Then any exact Lagrangian
$L \subset Y$ must be a sphere, in particular $L$ cannot be
unorientable.
\end{corollary}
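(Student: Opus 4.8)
The plan is to reduce this to the vanishing results for ALE spaces already established, following Remark~\ref{Remark Plumbing or ALE does not matter}. Choose the finite subgroup $\Gamma\subset SU(2)$ whose associated Dynkin diagram is the prescribed $ADE$ diagram, and let $X=X_{\zeta_1,0,0}$ be a corresponding ALE space for a generic $\zeta_1$. By Lemma~\ref{Lemma exceptional divisors are Lagr} the exceptional $(-2)$-spheres of $X$ are exact Lagrangian for $\omega_J$, and (the proof of) Weinstein's Lagrangian neighbourhood theorem produces a Liouville embedding $i:(Y,d\theta)\hookrightarrow(X,\omega_J)$ of the plumbing $Y$ of copies of $DT^*\C P^1=DT^*S^2$; see \ref{Subsection Plumbing construction of ADE spaces}. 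As stressed in Remark~\ref{Remark Plumbing or ALE does not matter}, we do not need $\widehat Y$ to coincide with all of $X$: the embedding $i$ is enough.

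First I would suppose, for contradiction, that $L\subset Y$ is an exact Lagrangian which is not a sphere. If $L$ is orientable then $\pi_2(L)=0$, and if $L$ is unorientable then $H^2(L;\R)=0$; in either case the transgression $\tau(j^*[\omega_I])\in H^1(\mathcal{L}L;\R)$ vanishes, where $j:L\hookrightarrow X$ and $[\omega_I]\in H^2(X;\R)$ (in the unorientable case one uses $\Z_2$ coefficients and the $\Z_2$-Novikov ring throughout, as in Remark~\ref{Remark Unorientable exact Lagrangians}). Corollary~\ref{Corollary Functoriality trick}, applied to the Liouville domain $(Y,d\theta)$ with the class $i^*[\omega_I]$, then yields a commutative diagram whose content is that the image of $c_*1\in SH^*(Y,d\theta;\underline{\Lambda}_{\tau\omega_I})$ in $H_{n-*}(\mathcal{L}L)\otimes\Lambda$ equals $c_*1\neq 0$; in particular $c_*1\neq 0$ in $SH^*(Y,d\theta;\underline{\Lambda}_{\tau\omega_I})$. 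On the other hand, twisted Viterbo functoriality (Theorem~\ref{TheoremViterboTwistedFunctoriality}, with $W=Y$ and $M=X$) provides a map $SH^*(i):SH^*(X,\omega_J;\underline{\Lambda}_{\tau\omega_I})\to SH^*(Y,d\theta;\underline{\Lambda}_{\tau\omega_I})$ compatible with the two maps $c_*$, so this $c_*1$ is the image of $c_*1\in SH^*(X,\omega_J;\underline{\Lambda}_{\tau\omega_I})$, which vanishes since $SH^*(X,\omega_J;\underline{\Lambda}_{\tau\omega_I})\cong SH^*(X,\omega_I)=0$ by Lemma~\ref{Lemma SH infinitesimal same as nonexact ADE} and Theorem~\ref{Theorem vanishing of nonexact SH of ADE}. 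Hence $c_*1=0$, a contradiction, so $L$ must be a sphere. For a non-zero combination $c\,\omega_u$ of $\omega_J$ and $\omega_K$ the same argument works verbatim with $\omega_J$ replaced by $c\,\omega_u$ (again exact by Lemma~\ref{Lemma exceptional divisors are Lagr}), since the proof of Lemma~\ref{Lemma SH infinitesimal same as nonexact ADE} adapts to this case. One could also argue more directly: a Liouville embedding preserves exactness of Lagrangians, so any exact $L\subset Y$ is an exact Lagrangian in the ALE space $X$, whereupon the theorem just proved for ALE spaces applies at once.

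The real content lies in the earlier results; the difficulty in assembling this corollary is purely bookkeeping: identifying the combinatorially-defined plumbing $Y$ with (a Liouville subdomain of) the hyperk\"{a}hler quotient $X=X_{\zeta_1,0,0}$ for the correct $\Gamma$, and keeping the exact symplectic form $d\theta$ on $Y$ and the twisting class $\tau[\omega_I]$ restricted from $X$ aligned with the hypotheses of Corollary~\ref{Corollary Functoriality trick} and Theorem~\ref{TheoremViterboTwistedFunctoriality}. One must also read ``plumbing of copies of $T^*S^2$'' as a Liouville domain (or its completion), so that the contact-type boundary conditions in those results hold; the whole point of Remark~\ref{Remark Plumbing or ALE does not matter} is that only the Weinstein embedding $Y\hookrightarrow X$, and not an identification of completions, is required.
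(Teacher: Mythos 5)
Your proposal is correct and follows essentially the same route as the paper, whose own proof of this corollary simply invokes Section \ref{Subsection Plumbing construction of ADE spaces} and Remark \ref{Remark Plumbing or ALE does not matter}: embed the plumbing $Y$ as a Liouville subdomain of the ALE space $X$ with the same Dynkin diagram, and run the functoriality/vanishing argument (or, equivalently, note that an exact Lagrangian in $Y$ is exact in $X$ and apply the ALE theorem directly). Your write-up just makes explicit the bookkeeping that the paper leaves implicit.
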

\begin{proof}
This follows immediately by Section \ref{Subsection Plumbing
construction of ADE spaces} (or, as mentioned in Remark
\ref{Remark Plumbing or ALE does not matter}, by embedding $Y$
into an ALE space $X$ with the same Dynkin diagram).\end{proof}
%
%
%
%
%
%
%
%

\end{document}